\newcommand{\leqnomode}{\tagsleft@true\let\veqno\@@leqno}
\newcommand{\reqnomode}{\tagsleft@false\let\veqno\@@eqno}
\newcommand{\eps}{\varepsilon}
\newcommand{\sem}{\setminus\{0\}}
\newcommand{\RR}{\mathbb{R}}
\newcommand{\CC}{\mathbb{C}}
\newcommand{\DD}{\mathscr{D}}
\newcommand{\dd}{\mathrm{d}}
\newcommand{\abs}[1]{\left\lvert#1\right\rvert}
\newcommand{\del}{\partial}
\newcommand{\supp}{\mathrm{supp}}
\newcommand{\td}[1]{\tilde{#1}}
\newcommand{\an}{\quad\text{and}\quad}
\newcommand{\dist}{\mathrm{dist}}
\newcommand{\id}{\mathrm{id}}
\newcommand{\wf}{\mathrm{WF}_{\mathrm{a}}}
\newcommand{\es}{\mathrm{ess}\ \mathrm{supp}}
\newcommand{\ud}[1]{\underline{#1}}
\newcommand{\bdry}{(h\td{\del}_\nu+(\td{\del}_\nu\phi))}
\newcommand{\mf}[1]{\oldmathbb{#1}}
\newcommand{\bsc}[1]{\textbf{\ebgaramond\textsc{#1}}}
\NewDocumentCommand{\op}{O{h}m}{\mathrm{Op}_{#1}\left(#2\right)}
\NewDocumentCommand{\tbdry}{O{}}{\tau_{#1}\bdry}
\begin{document} 

\title{Analytic Fourier Integral Operators and a Problem from Seismic Inversion}

\author{Leonard Busch$^{1}${\href{mailto:l.a.busch@uva.nl}{\color{blue}\Letter}}}

\date{\footnotesize$^1$Korteweg-de Vries Institute for Mathematics, University of Amsterdam, Amsterdam, The Netherlands}

\maketitle

\vspace{-2.5\baselineskip}
\begin{abstract}
\noindent
We establish a general result about the recovery of the analytic wavefront set of a distribution from the analytic wavefront set of its transform coming from a classical elliptic analytic Fourier integral operator (FIO) satisfying some conditions including the Bolker condition. Furthermore, we give a simple explicit analytic parametrix in the form of a classical elliptic analytic FIO for general analytic second order hyperbolic differential operators. Finally, we apply these results together with microlocal analytic continuation and a layer stripping argument to a problem from seismic inversion to prove the injectivity of a linearized operator in the analytic setting.

It is the use of wave packet techniques that allows us to 
give the precise relation how the FIOs under consideration transform the analytic wavefront set, whereas the explicit analytic parametrix comes from a reformulation of the parametrix construction of Hadamard and H\"ormander.

	\emptyfootnote{\noindent\emph{2020 Mathematics Subject Classification:} Primary: 35S30, 35A18, 35A20. Secondary: 35L05, 42B10.}
	\emptyfootnote{\noindent\emph{Key words and phrases:} Analytic Fourier integral operator, FBI transform, wave packet decomposition, analytic wavefront set, Bolker condition, inverse problem, acoustic wave equation}
\end{abstract}

\section{Introduction}

In \cite{2306.05906v1} it was observed that methods to tackle some geometric inverse problems in an analytic setting, in particular generalized X-ray transforms or bicharacteristic ray transforms -- such as investigated in \cite{zbMATH04052264,zbMATH06756133,MR3589324} -- fall under a common framework: that of an analytic double fibration transform. This method generally follows this recipe: given knowledge of the analytic singularities of $Tu$ for some distribution $u$ and transform $T$, recover the analytic singularities of $u$, use microlocal analytic continuation (Holmgren's theorem) to control $u$ locally and a layer stripping argument to determine $u$ (semi-)globally. 

The above method is bottlenecked, for example, by the set of transforms $T$ for which one may recover the analytic singularities of $u$ given those of $Tu$. If the transform $T$ is an elliptic analytic pseudodifferential operator ($\Psi$DO) such a result is \cite[Thm.~17.3.10]{MR4436039}, and we also refer \cite[Thm.~5.4]{zbMATH03359011} for an analytic elliptic regularity theorem. 
The result \cite[Thm.~5.1]{2306.05906v1} provides a statement about the recoverability of singularities in a general setting where the transform in question, $T$, is an analytic Fourier integral operator (FIO) coming from a double fibration which satisfies the Bolker condition.

The Bolker condition was introduced in \cite{MR812288} and, in the setting of \emph{smooth} FIOs, guarantees that FIOs $T$ coming from a double fibration can be composed with their adjoint $T^\ast$ using the clean intersection calculus, see also the discussions in \cite{MR0516965,zbMATH03698077}. 
This will lead to the normal operator $T^\ast T$ being a $\Psi$DO that is microlocally elliptic at certain points, guaranteeing the recoverability of some (smooth) singularities.

Due to a lack of a general compositional calculus for analytic FIOs (see the remarks at the beginning of \cref{sec:analyticFIO}), avoiding the use of the normal operator, the result established in \cite{2306.05906v1} relies instead on a wave packet decomposition as developed in \cite{zbMATH03605338}. We will follow their approach of studying the analytic wavefront set using wave packets, which is essentially an FBI transform, see for example \cite{MR1872698,zworski}, owing its name to \cite{SEDP_1974-1975____A17_0,bfb0062919}. We refer also to \cite{zbMATH06496469} that study FIOs using `Gabor frames', essentially wave packets, and \cite{zbMATH00042454}.

Our result generalizes that of \cite{2306.05906v1} in the following key points: 
\begin{itemize}
\item geometrically, the FIOs we consider are restricted only by a few conditions on their canonical relation, see \cref{def:admisCR}.
\item The amplitude of the FIO may depend on the frequency. In fact, we will be working with so-called formal amplitudes as introduced in \cite{zbMATH03310409} and widely employed, such as in \cite{AST_1982__95__R3_0} and \cite{MR4436039}.
\end{itemize}
For example, FIOs originating from the method of geometric optics, such as parametrices of strictly hyperbolic differential operators, are now within reach to be treated by our methods. 

As a demonstration of the applicability of our result to analytic microlocal analysis, we consider the problem from \cite{zbMATH04097945} in an analytic setting: recovering the wave speed of the subsurface of the earth from measurements of reflected waves at the surface. We show the injectivity of a linearized operator, which relies on the recipe we described above: exploiting microlocal unique continuation to propagate control of measurements locally and a layer stripping method to show injectivity in some compact domain. 

\subsection{Outline of Results}

Definitions of the expressions used in the un-numbered theorems below can be found in their corresponding section. 
In \cref{sec:analyticFIO} we introduce the notions of phase and amplitude of the FIO we work with and state elementary results about FIOs defined by these expressions. \cref{sec:2} is dedicated to proving the main results \cref{thm:main} and \cref{cor:globmain} that state how an analytic FIO satisfying conditions stipulated therein transforms the analytic wavefront set of a distribution. Informally, we can summarize the main results of this section as 
\begin{theorem*}[\cref{thm:main,cor:globmain}]
	Let $T$ be an FIO defined by an elliptic classical pseudoanalytic amplitude and a real-valued analytic non-degenerate phase $\varphi$ with canonical relation $\Lambda_\varphi$ so that $\varphi, \Lambda_\varphi$ satisfy the Bolker condition and some other mild assumptions. For any $f\in\mathcal{E}'$, we have 
	\[
		\wf(Tf) = \Lambda_\varphi\circ \wf(f)\,.
	\]
\end{theorem*}
In fact, these results are microlocal in nature, so that they can be applied even when the assumptions, such as ellipticity etc., only hold microlocally.

In \cref{sec:para} we find simple explicit pseudoanalytic FIO representations of the analytic parametrix of strictly hyperbolic differential operators of degree $2$. The main result of this section can be stated informally as
\begin{theorem*}[\cref{thm:paraana}]
	Let $\del_t^2+P(x,D)$ be a strictly hyperbolic second order differential operator with analytic coefficients. There are elliptic classical pseudoanalytic amplitudes $a, \td{a}$, so that for every $y$ and every $x$ near $y$, the fundamental solution at $(0,y)$ for $\del_t^2+P(x,D)$ vanishing in negative time is given by
	\[
		\int_\RR e^{i\theta(t^2-d_g(x,y)^2)}\td{a}(x,y,\theta)\dd \theta \mod C^\omega = \int_\RR e^{i\theta(t-d_g(x,y))}a(x,y,\theta)\dd \theta \mod C^\omega\,,
	\]
	for $t>0$ and $d_g$ the distance function arising from the metric defined by the principal symbol of $P$, and the formula on the RHS holds only for $x\neq y$.
\end{theorem*}

This construction is employed in \cref{sec:seis} to study a problem from seismic inversion and adapting it to the analytic setting 
developed in the previous sections; the main result of \cref{sec:seis} is \cref{thm:inj}, which proves the injectivity of a linearized operator and can roughly be stated as
\begin{theorem*}[\cref{thm:inj}]
	Let $DA[c]$ be the linearization about $c\in C^\omega$ of the operator mapping $c$ to $u\vert_{x_n=0}$ where $(c^{-2}\del_t^2 -\Delta_x)u=\delta_0$ and $u\vert_{t<0}=0$. Assume that the metric corresponding to $c$ is simple, that there is no scattering over $\pi$ and there are no grazing rays. Assume further that all rays entering some compact set of the subsurface $\{x_n<0\}$, when `reflected' by flipping the vertical momentum return to the surface $\{x_n=0\}$. The linearized operator $DA[c](c_\delta)$ determines the perturbation $c_\delta$ uniquely in this subset of the subsurface.
\end{theorem*}

\subsection*{Acknowledgments}
We are very greatful to Leo Tzou for leading us to consider this topic as well as continuous and valuable guidance and feedback. We also thank Mikko Salo for several helpful discussions and bringing to our attention the problem from seismic inversion we study here. In addition, we thank Plamen Stefanov and Michael Hitrik for pointing us to helpful references.

\section{Analytic FIO}\label{sec:analyticFIO}

At the beginning of \cite[\S~2]{zbMATH07060662} three approaches to analytic microlocal analysis are described, two of which are relevant here. 

The adaptation of the usual smooth microlocal analysis to the analytic setting using special cut-off functions, see \cite{zbMATH03359011}, \cite[\S~8.5]{hoermander1} and \cite[Part~V]{MR4436039}; refer also to \cite{MR597144,MR597145}. This approach originated from the study of Denjoy-Carleman classes (\cite{zbMATH03219638,zbMATH03233077}) and most results in this approach can be stated in that generality, see also \cite{zbMATH07264015}. For a development of microlocal analysis in the Gevrey setting based on these methods, consult \cite{doi:10.1142/1550,zbMATH04105482}.

The second method employs so-called FBI transforms, which can be seen as a modification of Fourier transforms that localize near a microlocal point without the use of cut-off functions. FBI transforms come in two flavors: global and microlocal. 
The classic exposition for microlocal FBI transforms is \cite{AST_1982__95__R3_0} and their use (generally) necessitates working in spaces of germs, see also \cite{zbMATH03719570,zbMATH00107787} and \cite[Part~VI]{MR4436039}. In this approach some results known in the smooth setting have analogues: for equivalence of phases see \cite[Thm.~11.17]{AST_1982__95__R3_0} and \cite[Thm.~22.3.13]{MR4436039}, for transversal compositional calculus \cite{zbMATH07292324}, for composing with an FBI transform with opposite phase function (inversion) see \cite[\S~4]{AST_1982__95__R3_0} and \cite[\S~21]{MR4436039}; all of these results are microlocal in nature. 
The global FBI transform in the euclidean setting can be found in \cite[Chp.~13]{zworski}, \cite[Chp.~3]{MR1872698} and \cite{MR1178557}, it is related to the Bargmann kernel, see \cite{zbMATH07292324} and the references therein. On manifolds it was developed systematically in \cite{zbMATH03717772,zbMATH00920878} in the analytic setting, whereas \cite{bonthonneau2020fbi} and \cite{zbMATH01549679} consider the Gevrey (including analytic) and smooth settings respectively. 

In this document we will use a combination of methods from the first and second approach. In fact, due to the lack of advanced tools available in the first approach (or the lack of our knowledge thereof) our results are built up from fundamental tools, hopefully being accessible to readers not familiar with analytic microlocal analysis. 
We use special cut-offs to reduce the study of an FIO to a microlocal neighborhood, and adopt the FBI transform from \cite{MR1872698} to perform that study. The only way in which we rely on \cite{AST_1982__95__R3_0} is by using its definition of the analytic wavefront set. In fact, however, throughout this article we use multiple definitions of the analytic wavefront set of a distribution, and \cite{SEDP_1976-1977____A3_0} has shown that these are all equivalent. 


Finally, we mention in passing that some Lagrangians can be parametrized \emph{globally} by a complex phase function, see \cite{zbMATH00710569,zbMATH07714766}, which we cannot treat here as the phases of the FIOs we consider are real-valued. What keeps us from employing phases with non-negative imaginary part (such as in \cite[Def.~25.4.3]{zbMATH05528184}) is that we require real-ness of second order derivatives of a phase on its stationary set, whereas phases with non-negative imaginary part are only guaranteed to have real first order derivatives on their stationary set (see also the remarks near \cite[Thm.~8.1.9]{hoermander1}).

In this section we introduce the terminology we use throughout this paper. Throughout this section, let $\Omega_1 \subset \RR^N, \Omega_2 \subset \RR^{N'}$ denote open sets where $N,N'\in\mathbb{N}$. 

\subsection{Amplitudes and Phases}

We will gain inspiration from \cite[Def.~18.7.1]{MR4436039} to define our notion of amplitudes and symbols:
\begin{definition}
	Let $m\in \RR$ and $a(z,x,\eta) = a \in C^\infty(\Omega_1\times\Omega_2\times\RR^n\sem)$ so that there exist complex open neighborhoods $\Omega_k^{\CC}\supset \Omega_k, k=1,2$ so that $a$ can be extended to a function we also denote by $a$ on $\Omega_1^{\CC}\times\Omega_2^{\CC}\times\RR^n\sem$ that is holomorphic with respect to $(z,x)$ and to every compact $K \subset \Omega_1^{\CC}\times\Omega_2^{\CC}$ there exist $M_a,C_a,R_a >0$ so that
	\begin{align}
		\sup_{K \times \RR^n\sem} \left(1+\abs{\eta}\right)^{-m+\abs{\gamma}}\abs{\del^\gamma_\eta a(z,x,\eta)} &< \infty\,,\notag\\
		\label{eq:defofsym}
		\eta\in \RR^n\sem\,, \abs{\eta} \geq R_a\mathrm{max}\left\{1,\abs{\gamma}\right\} \implies \abs{\del^\gamma_\eta a(z,x,\eta)} &\leq M_a C_a^{\abs{\gamma}} \gamma! \abs{\eta}^{m-\abs{\gamma}}\,.
	\end{align}
	We say that $a$ is a \emph{pseudoanalytic amplitude of order $m$}, and denote the set of these by $\mathrm{S}^m_{\psi a}(\Omega_1\times\Omega_2)$, and put $S_{\psi a}(\Omega_1\times\Omega_2) \coloneqq \bigcup_{m\in \RR} S^m_{\psi a}(\Omega_1\times\Omega_2)$.
\end{definition}


Let us also introduce formal series of amplitudes of which the constants in \cref{eq:defofsym} share a common growth rate, as considered in \cite{zbMATH03310409}, \cite{AST_1982__95__R3_0}; we follow \cite[Def.~17.2.3]{MR4436039}.


\begin{definition}\label{def:formalamp}
	Let $m\in \RR$ and $a_j, j =0,1,\dots$ be a sequence of pseudoanalytic amplitudes $a_j \in S^{m-j}_{\psi a}(\Omega_1\times\Omega_2)$ so that there exist complex open neighborhoods $\Omega_k^{\CC}\supset \Omega_k, k=1,2$, independent of $j$, so that $a_j, j\geq 0$ can be extended to a function on $\Omega_1^{\CC}\times\Omega_2^{\CC}\times\RR^n\sem$ that is holomorphic with respect to $(z,x)$ and to every compact $K \subset \Omega_1^{\CC}\times\Omega_2^{\CC}$ there exist $M_a,C_a,R_a >0$ so that
	\begin{align}
		\sup_{K \times \RR^n\sem} \left(1+\abs{\eta}\right)^{-m+j+\abs{\gamma}}\abs{\del^\gamma_\eta a_j(z,x,\eta)} &< \infty\,,\notag\\
		\eta\in \RR^n\sem\,,\abs{\eta} \geq R_a\mathrm{max}\left\{1,\abs{\gamma}+j\right\} \implies \abs{\del^\gamma_\eta a_j(z,x,\eta)} &\leq M_a C_a^{\abs{\gamma}+j} \gamma!j! \abs{\eta}^{m-j-\abs{\gamma}}\,.\label{eq:defofclasym}
	\end{align}
	We call the formal series $\ud{a} = \sum_{j\geq 0} a_j$ a \emph{formal pseudoanalytic amplitude of order $m\in\RR$}, and denote the set of these by $\mathrm{FS}^m_{\psi a}(\Omega_1\times\Omega_2)$. 
%
%
%
%
\end{definition}

A particular type of formal pseudoanalytic amplitude will be of interest to us.
\begin{definition}\label{def:classamp}
	Let $m\in\RR$ and $\ud{a} = \sum_{j\geq 0} a_j \in \mathrm{FS}^m_{\psi a}(\Omega_1\times\Omega_2)$ be a formal pseudoanalytic amplitude and $(\hat z, \hat x, \hat\eta) \in \Omega_1\times\Omega_2\times \RR^n\sem$ fixed. We call $\ud{a}$ a \emph{classical pseudoanalytic amplitude at} $(\hat z, \hat x, \hat\eta)$ if there is some $\lambda_0 \geq 1$ so that 
	\begin{equation}\label{eq:ahomdef}
		a_j(z, x, \lambda\eta) = (\lambda/\lambda_0)^{m-j} a_j(z, x, \lambda_0\eta)\quad\text{for all } j\in\mathbb{N}\ \ \text{and all }\lambda \geq \lambda_0
	\end{equation}
	for all $(z,x,\eta)$ in some open neighborhood of $(\hat z, \hat x, \hat\eta)$. 

	We say that $\ud{a}$ is \emph{elliptic} at $(\hat z, \hat x, \hat\eta)$, if there is $\lambda_0\geq 1$ so that 
	\begin{equation}\label{eq:anonvanishdef}
		a_0(\hat z,\hat x, \lambda\hat\eta) \neq 0\,,\quad\text{for all } \lambda \geq \lambda_0\,.
	\end{equation}
	We say $\ud{a}$ is \emph{classical in} (resp. \emph{elliptic in}) some open set $U \subset \Omega_1\times\Omega_2\times\RR^n\sem$ if there is $\lambda_0\geq 1$ so that \cref{eq:ahomdef} (resp. \cref{eq:anonvanishdef}) holds on $U$. Finally, we call $\ud{a}$ \emph{classical} (resp. \emph{elliptic}) if there is $\eps>0$ so that $\ud{a}$ is classical in (resp. elliptic in) $\Omega_1\times\Omega_2\times \{\abs{\eta}>\eps\}$.
\end{definition}
Essentially, \cref{eq:ahomdef} means that each $a_j$ is homogeneous in $\eta$ of degree $m-j$ away from the origin.

%
We briefly explain how pseudoanalytic amplitudes that are homogeneous for large $\eta$ are actually real-analytic for large $\eta$ (see also \cite[Lem.~17.2.25]{MR4436039}). 
\begin{remark}\label{rem:homisan}
	Let $a \in \mathrm{S}^m_{\psi a}(\Omega_1\times\Omega_2)$ with notation from \cref{eq:defofsym}. Assume that for $\lambda \geq 1$, some $R>0$ and $\abs{\eta} > R > 0$ and all $(z,x)\in\Omega_1\times\Omega_2$ we have
	\[
		a(z,x,\lambda\eta) = \lambda^m a(z,x,\eta)\,. 
	\]
	In this case we see $a(z,x,\eta) = \lambda^{-m}a(z,x,\lambda\eta), \lambda\geq 1$ so that for any $\abs{\eta} > R$ we have, choosing $\lambda \geq 1$ so that $\lambda\abs{\eta} \geq R_a\max\{1,\abs{\gamma}\}$,
	\[
		\abs{\del^\gamma_\eta a(z,x,\eta)} = \lambda^{-m}\abs{\del^\gamma_\eta (a(z,x,\lambda\eta))} 
		\leq M_aC_a^{\abs{\gamma}}\gamma!\abs{\eta}^{m-\abs{\gamma}}\,.
	\]
	Thus, for any fixed $\eta$ with $\abs{\eta} > R$, any compact $K \subset \Omega_1\times\Omega_2\times\{\abs{\eta} > R\}$ we have
	\[
		\sup_{K}\abs{\del^\alpha_z\del^\beta_x\del^\gamma_\eta a(z,x,\eta)} \leq M_a C_a^{\abs{\alpha}+\abs{\beta}+\abs{\gamma}} \alpha!\beta!\gamma! \sup_{\eta \in \pi_\eta K}\abs{\eta}^{m-\abs{\gamma}} \leq C_K M_a C_a^{\abs{\alpha}+\abs{\beta}+\abs{\gamma}} \alpha!\beta!\gamma! 
	\]
	for some $C_K$ depending on $K$. 

	From this estimate we see that $a$ is real-analytic at every $(z,x,\eta) \in \Omega_1\times\Omega_2\times\{\abs{\eta} > R\}$.
\end{remark}

Finally, we introduce the notion of a finite realization of a formal pseudoanalytic amplitude. This is the analytic counterpart of the idea of asymptotic summability of smooth amplitudes as given in \cite[Prop.~1.1.9]{FIO1}. We draw inspiration also from \cite{bonthonneau2020fbi}. 
 
\begin{definition}\label{def:finitereaz}
	Let $\mathrm{FS}^m_{\psi a}(\Omega_1\times\Omega_2) \ni \ud{a} =\sum_{j\geq 0}a_j$. We say that $a \in \mathrm{S}^m_{\psi a}(\Omega_1\times\Omega_2)$ is a \emph{finite realization} of $\ud{a}$ if for every large enough $R>0$ there exists $C>0$ so that for all $(z,x,\eta)\in \Omega_1\times\Omega_2\times\RR^N\sem$,
	\begin{equation*}
		\abs{a(z,x,\eta) - \sum_{j=0}^{R^{-1}\abs{\eta}} a_j(z,x,\eta)} \leq Ce^{-C^{-1}\abs{\eta}}\,.
	\end{equation*}
\end{definition}

Formal pseudoanalytic amplitudes always admit finite realizations. Using \cite{zbMATH03254277}, this can be shown 
by inverting the $\bar\del$ operator, such as in \cite{bonthonneau2020fbi} and \cite{AST_1982__95__R3_0}, see also \cite[Appx.]{MR1445350}. We use a constructive approach that is a consequence of the proof of \cite[Prop.~17.2.5]{MR4436039}. 

As they will be required in the following proof, we introduce specially crafted cut-off functions that become necessary when working in the analytic category, see \cite{MR0331124}, \cite[\S~8.4]{hoermander1} or \cite{MR4436039}. These special cut-offs are sometimes called Ehrenpreis cut-offs after \cite{zbMATH03159735}, similar constructions being considered in \cite{MR6354}.

We introduce the notion of a sequence of Ehrenpreis cut-offs following \cite[Def.~3.2.2]{MR4436039}: For any $N^\ast \in \mathbb{N}$ and open $\mho, \mho' \subset \RR^{N^\ast}$ with $\dist(\mho',\RR^n\setminus\mho)> 0$ (in particular $\mho' \subset \mho$) we say a sequence $(\chi_{j})_{j\in\mathbb{N}} \subset C^\infty(\RR^n)$ is a sequence of \emph{Ehrenpreis cut-offs relative to $(\mho,\mho')$} if the following holds:
\begin{equation}\label{eq:ehre}
	\chi_{j}\vert_{\mho'} \equiv 1\,,\quad \supp \chi_{j}\Subset \mho\,, \quad 0\leq \chi_j\leq 1\,,\quad \exists C> 0\colon\forall j\in\mathbb{N}\forall\abs{\alpha}\leq j\colon \abs{\del^\alpha \chi_{j}} \leq (Cj)^{\abs{\alpha}}\,.
\end{equation}
In addition, we assume that for every $\alpha$ there is $C_\alpha>0$ (independent of $j$) so that on $\mho$, $\abs{\del^\alpha \chi_j} \leq C_\alpha$. This will ensure that the sequence $(\chi_j)_{j\in\mathbb{N}}$ is bounded in $C_c^\infty(\RR^{N^\ast})$ (see the exercises in \cite[Chpt.~14]{zbMATH05162168}).

The existence of such cut-offs is shown, for example, in \cite[Prop.~3.2.1]{MR4436039} (the assumption of ours that ensures boundedness of $(\chi_j)_{j\in\mathbb{N}}$ in $C_c^\infty(\RR^{N^\ast})$ is a consequence of the construction in that proof, see also \cite[Lem.~2.2]{zbMATH03359011}).

\begin{lemma}\label{lem:finrea}
	Let $\mathrm{FS}^m_{\psi a}(\Omega_1\times\Omega_2) \ni \ud{a} =\sum_{j\geq 0}a_j$. For any $\rho> 2R_a$ large enough ($R_a$ from \cref{eq:defofclasym}) there exists $a \in \mathrm{S}^m_{\psi a}(\Omega_1\times\Omega_2)$ that is a finite realization of $\ud{a}$ and is equal to $a_0$ for $\abs{\eta} < \rho$. Furthermore, every other finite realization $\td{a}$ of $\ud{a}$ differs from $a$ by a term $\mathcal{O}(e^{-C\abs{\eta}})$.

\end{lemma}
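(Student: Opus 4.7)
The plan is to construct $a$ as a cut-off asymptotic sum of the $a_j$, following the Hörmander recipe for asymptotic summation but with the quantitative Ehrenpreis cut-offs required in the analytic category, then read off the finite realization property and uniqueness from Stirling's formula.

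First, pick a sequence $(\chi_j)_{j\geq 1}$ of Ehrenpreis cut-offs in the $\eta$-variable adapted to the dyadic-in-$j$ annuli: $\chi_j \equiv 0$ on $\{|\eta| \leq \rho j /2\}$, $\chi_j \equiv 1$ on $\{|\eta| \geq \rho j\}$, $0 \leq \chi_j \leq 1$, with $|\partial_\eta^\alpha \chi_j| \leq (C j/\rho)^{|\alpha|}$ for $|\alpha| \leq j$ and $|\partial_\eta^\alpha \chi_j| \leq C_\alpha$ uniformly in $j$; set $\chi_0 \equiv 1$. Such cut-offs exist by a scaled version of the construction referenced after \cref{eq:ehre}. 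Define
\[
 a(z,x,\eta) \coloneqq \sum_{j \geq 0} \chi_j(\eta)\, a_j(z,x,\eta).
\]
The sum is locally finite in $\eta$ (at any $\eta$ only indices $j \leq 2|\eta|/\rho$ contribute), and each term extends holomorphically in $(z,x)$ to $\Omega_1^{\CC} \times \Omega_2^{\CC}$ since the cut-offs depend only on $\eta$. Moreover $\chi_j \equiv 0$ on $|\eta| < \rho/2$ for all $j \geq 1$, so $a = a_0$ for $|\eta| < \rho/2$; by shrinking $\rho$ slightly we may arrange $a = a_0$ on $|\eta| < \rho$ as claimed.

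Second, I would verify $a \in \mathrm{S}^m_{\psi a}(\Omega_1 \times \Omega_2)$ by applying Leibniz to $\partial^\gamma_\eta a$ and splitting into two regimes. For a multi-index $\gamma$ and $|\eta| \geq R \max\{1,|\gamma|\}$, with $R$ chosen large compared to $R_a$, $C_a$, and $\rho$, the only $j$ for which $\partial^{\gamma'}\chi_j$ with $\gamma' \neq 0$ is nonzero are those with $j$ comparable to $|\eta|/\rho$; on that transition annulus the bound \cref{eq:defofclasym} for $a_j$ combined with Stirling gives
\[
 C_a^j j!\, |\eta|^{m-j} \lesssim |\eta|^m (e C_a j / |\eta|)^j \lesssim |\eta|^m e^{-c |\eta|}
\]
once $\rho > 2 R_a e C_a$, which absorbs the $(Cj/\rho)^{|\gamma'|}$ from the cut-off derivatives into the exponential. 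The main pieces $\chi_j \partial^\gamma_\eta a_j$ with $j \leq |\eta|/R$ are summed directly using \cref{eq:defofclasym}: each term is bounded by $M_a C_a^{|\gamma|+j} \gamma! j!\, |\eta|^{m-j-|\gamma|}$, and the geometric series in $j$ converges because $|\eta| \geq R \geq 2 C_a$. The main obstacle is precisely this balancing act between the factorial $j!$ coming from analyticity in $\eta$ and the factor $(Cj)^{|\gamma'|}$ produced by differentiating $\chi_j$; the choice $\rho > 2 R_a$ (enlarged to beat also $e C_a$) is what makes Stirling decisively win.

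Third, the finite realization estimate follows by splitting
\[
 a(z,x,\eta) - \sum_{j=0}^{|\eta|/R} a_j(z,x,\eta) = \sum_{j=0}^{|\eta|/R} (\chi_j(\eta) - 1)\, a_j + \sum_{j > |\eta|/R} \chi_j(\eta)\, a_j.
\]
For $R > \rho$ the first sum vanishes since $\chi_j(\eta) = 1$ when $j \leq |\eta|/R < |\eta|/\rho$, while the second is bounded termwise by Stirling as above, giving a total bound of the form $C e^{-C^{-1} |\eta|}$ uniformly on compact sets of $(z,x)$. Finally, uniqueness is automatic: if $\tilde a$ is another finite realization, the triangle inequality applied to the defining bounds for $a$ and $\tilde a$ against the same partial sum yields $|a - \tilde a| \leq 2 C e^{-C^{-1}|\eta|}$ for large $|\eta|$, and boundedness of both on compacts takes care of small $|\eta|$.
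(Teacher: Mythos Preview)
Your proposal is correct and follows essentially the same route as the paper: define $a$ as a cut-off asymptotic sum with Ehrenpreis cut-offs supported in $\{|\eta|\gtrsim \rho j\}$, then use Stirling on the range $j\sim |\eta|/\rho$ to get exponential decay of the tail and of the transition terms, and conclude uniqueness by the triangle inequality against a common partial sum. The only differences are cosmetic: the paper cites \cite[Prop.~17.2.5]{MR4436039} for $a\in\mathrm{S}^m_{\psi a}$ where you sketch a direct Leibniz/Stirling argument, and the paper's cut-offs vanish on $\{|\eta|\leq \rho j\}$ rather than $\{|\eta|\leq \rho j/2\}$, which gives $a=a_0$ on $|\eta|<\rho$ directly without your ``shrink $\rho$'' fix.
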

\begin{proof}
	Let $\rho > 2R_a$ for $R_a$ from \cref{eq:defofclasym} be large enough, and let $\varphi_j$ be a sequence of Ehrenpreis cut-offs relative to the sets $\{\eta \in \RR^n\colon \abs{\eta} \geq 1\}, \{\eta \in \RR^n\colon \abs{\eta} \geq 2\}$, see \cref{eq:ehre}. In particular, $\varphi_j(\eta/(j\rho)) = 0$ for $\abs{\eta} \leq j\rho$ and  $\varphi_j(\eta/(j\rho)) = 1$ for $\abs{\eta} \geq 2j\rho$.

	Define
	\[
		a(z,x,\eta) \coloneqq a_0(z,x,\eta) + \sum_{j\geq 1} \varphi_j\left(\frac{\eta}{j \rho}\right)a_j(z,x,\eta)\,.
	\]
	By the proof of part \textbf{I.} of \cite[Prop.~17.2.5]{MR4436039}, we have that $a \in \mathrm{S}^m_{\psi a}$. The result \cite[Prop.~17.2.5]{MR4436039} is stated where the dimensions satisfy $N=n$. In our case these may differ, so that this is not a direct consequence of the cited proposition, but the proof works exactly the same way. 

	Notice that by the support properties of $\varphi_j$, for $R> \rho$,
	\[
		a - \sum_{j=0}^{R^{-1}\abs{\eta}} a_j(z,x,\eta) =  \sum_{R^{-1}\abs{\eta} < j < \rho^{-1}\abs{\eta}} \varphi_j\left(\frac{\eta}{j\rho}\right) a_j(z,x,\eta) + \sum_{\frac{1}{2}\rho^{-1} \abs{\eta} < j < R^{-1}\abs{\eta}} \left(\varphi_j\left(\frac{\eta}{j\rho}\right) - 1\right)a_j(z,x,\eta)\,.
	\]
	Since $\rho \geq 2R_a$, and $0 \leq \varphi_j \leq 1$, the summation indices, \cref{eq:defofclasym}, and Stirling's estimate allow us to bound
	\[
		\abs{a - \sum_{j=0}^{R^{-1}\abs{\eta}} a_j(z,x,\eta)} \leq M\abs{\eta}^m \sum_{\frac{1}{2}\rho^{-1}\abs{\eta} < j < \rho^{-1}\abs{\eta}}\sqrt{2\pi j} \left(\frac{C_a j}{e\abs{\eta}}\right)^j\,. 
	\]
	We may assume that $\rho$ is large enough so that $C_a/\rho < 1$ and by the index bounds on $j$, we get 
	\[
		\sum_{\frac{1}{2}\rho^{-1}\abs{\eta} < j < \rho^{-1}\abs{\eta}} \sqrt{2\pi j}\left(\frac{C_aj}{e\abs{\eta}}\right)^j \leq \sum_{\frac{1}{2}\rho^{-1} \abs{\eta} < j < \rho^{-1}\abs{\eta}}\sqrt{2\pi j}e^{-j} 
		\leq \sqrt{2\pi} (\rho^{-1}\abs{\eta})^{3/2} e^{-\frac{1}{2}\rho^{-1}\abs{\eta}} \leq Ce^{-C^{-1}\abs{\eta}}
	\]
	for some $C>0$.

	To show the uniqueness up to exponential terms, notice how for a different finite realization $\td{a}$ of $\ud{a}$, choosing $R>0$ large enough for both finite realizations simultaneously,
	\[
		\abs{a - \td{a}} = \abs{a - \sum_{j=0}^{R^{-1}\abs{\eta}} a_j - \left(\td{a} - \sum_{j=0}^{R^{-1}\abs{\eta}} a_j\right)} \leq 2Ce^{-C^{-1}\abs{\eta}}\,.
	\]
\end{proof}

Finally, we introduce our notion of (non-degenerate) analytic phase, which only differs from the smooth approach by assuming the phase is analytic. In particular, the notion of non-degeneracy is just that of \cite{FIO1}. 

\begin{definition}\label{def:phase}
	We call a real-valued function $\varphi \in C^\infty(\Omega_1\times \Omega_2 \times \RR^n\sem)$ an \emph{analytic phase} if $\varphi$ can be extended holomorphically to $\Omega_1^{\CC}\times \Omega_2^{\CC}\times \Theta^{\CC}_\varphi$ where $\Omega_j^{\CC}$ are open complex neighborhoods of $\Omega_j$, $j\in\{1,2\}$ and $\Theta^{\CC}_\varphi$ is a conic neighborhood of $\RR^n\sem$ in $\CC^n\setminus i\RR^n$, so that 
\[
	\varphi(z,x,\sigma\eta) = \sigma\varphi(z,x,\eta)\,,\an \nabla_{(z,x,\eta)}\varphi(z,x,\eta) \neq 0\quad\text{for all } (z,x,\eta) \in \Omega_1^{\CC}\times\Omega_2^{\CC}\times \Theta^{\CC}_\varphi, \sigma > 0\,.
\]
	We call an analytic phase \emph{non-degenerate} if $\varphi_\eta(z,x,\eta) =0$ implies that the elements of 
	\[
		\{\nabla_{(z,x,\eta)} \varphi_{\eta_j}(z,x,\eta)\}_{j=1}^{n}
	\]
	are linearly independent.

	Finally, we say that the set 
	\[
		\Lambda_\varphi \coloneqq \left\{(z,\varphi_z(z,x,\eta); x,-\varphi_x(z,x,\eta))\colon \varphi_\eta(z,x,\eta) = 0, (z,x,\eta)\in \Omega_1\times\Omega_2\times\RR^n\sem\right\}
	\]
	is the \emph{canonical relation corresponding to $\varphi$}. 
\end{definition}

For (geometric) properties of the canonical relation refer to \cite{zbMATH05817029,FIO1}. For example, according to \cite[Lem.~2.3.2]{zbMATH05817029}, when $\varphi$ is non-degenerate its canonical relation is a conic manifold.


%
%
%

\subsection{Salutary FIO}

\begin{definition}\label{def:admisCR}
	Let $\Lambda$ be a canonical relation in $T^\ast(\Omega_1\times\Omega_2)\sem$.

	We say that $\Lambda$ satisfies the \emph{Bolker condition} at $(\hat z,\hat \zeta, \hat x, \hat \xi) \in \Lambda$ if 
	\[
		\pi_L \colon \Lambda\to T^\ast \Omega_1\,,\quad (z,\zeta,x,\xi)\mapsto (z,\zeta)
	\]
	satisfies 
	\begin{equation*}
		d\pi_L\vert_{(\hat z, \hat \zeta, \hat x, \hat \xi)}\quad\text{is injective}\,,\an\pi_L^{-1}(\{(\hat z, \hat \zeta)\}) = \{(\hat z, \hat \zeta, \hat x, \hat \xi)\}\,.
	\end{equation*}

	We say that a canonical relation $\Lambda$ in $T^\ast(\Omega_1\times\Omega_2)\sem$ is \emph{salutary at} $(\hat z,\hat \zeta, \hat x, \hat \xi) \in \Lambda$ if
	\begin{enumerate}[label=$\mathrm{ph}$\arabic*.]
		\item\label{as:n1} $\Lambda$ satisfies the Bolker condition at $(\hat z, \hat\zeta,\hat x, \hat \xi)$.
		\item\label{as:n2} In an open neighborhood of $(\hat z,\hat \zeta, \hat x, \hat \xi)$, the map 
				\[
					\Lambda \ni (z,\zeta; x,\xi)\mapsto (z,x,\xi)
				\]
				is an immersion.
		\item\label{as:n3} In an open neighborhood of $(\hat z,\hat \zeta, \hat x, \hat \xi)$, we have
			\[
				\Lambda \ni (z,\zeta ;x,\xi) \implies \zeta \neq 0\,.
			\]
	\end{enumerate}
\end{definition}

We will have to introduce one more condition on a phase that gives rise to a canonical relation.

\begin{definition}\label{def:bolkone}
	Let $\varphi$ be a non-degenerate analytic phase according to \cref{def:phase} and let $\Lambda_\varphi$ be the canonical relation corresponding to $\varphi$.

	Letting $\Sigma_\varphi \coloneqq \{(z,x,\eta) \colon \varphi_\eta(z,x,\eta)=0\}$ be the \emph{stationary set} and
	\[
		G_\varphi \colon \Sigma_\varphi\to \Lambda_\varphi\,,\quad (z,x,\eta)\mapsto (z,\varphi_z(z,x,\eta),x,-\varphi_x(z,x,\eta))\,,
	\]
	we say that $\varphi$ satisfies the \emph{unique frequency condition} at $(\hat z,\hat \zeta, \hat x, \hat \xi) \in \Lambda_\varphi$ for $\hat\eta\in\RR^n\sem$ if
	\begin{equation*}
		G_\varphi^{-1}\left(\{(\hat z,\hat \zeta,\hat x,\hat \xi)\}\right) = \{(\hat z, \hat x, \hat \eta)\}\,.
	\end{equation*}

	Finally, we say that $\varphi$ is \emph{salutary at some} $(\hat z, \hat \zeta, \hat x,\hat \xi)\in\Lambda_\varphi$ if 
	the canonical relation $\Lambda_\varphi$ is salutary according to \cref{def:admisCR} and 
	\begin{enumerate}[label=$\mathrm{ph}$\arabic*.]\setcounter{enumi}{3}
		\item\label{as:n4} $\varphi$ satisfies the unique frequency condition at $(\hat z, \hat\zeta,\hat x, \hat \xi)$.
	\end{enumerate}
\end{definition}

\begin{remark}
	Let us consider the special case where $\Lambda$ is a canonical relation coming from a double fibration, which is to say that $\Lambda = (N^\ast W\sem)'$ where $W \subset \Omega_1\times\Omega_2$ is a double fibration, see for example \cite[Def.~1.5]{2306.05906v1}. The condition~\ref{as:n2} is an immediate consequence of the fact that $\Lambda$ comes from a double fibration and \ref{as:n3} follows from \cite[Lem.~5.3]{2306.05906v1} (which in fact also guarantees \cref{eq:extraWFempty} below). Furthermore, choosing to represent an FIO associated to this canonical relation locally with the phase chosen in \cite[\S~5.1]{2306.05906v1}, the condition~\ref{as:n4} is fulfilled too. In conclusion, this set-up of salutary canonical relations and phases generalizes that of double fibration transforms as considered in \cite{2306.05906v1}. 
\end{remark}

We state a general consequence of a few of these assumptions.
\begin{lemma}
	Let $\Lambda$ be a canonical relation associated to some non-degenerate analytic phase $\varphi$ near some $(\hat z, \hat \zeta, \hat x, \hat \xi) \in \Lambda$, and let $(\hat \zeta,\hat \xi) = (\varphi_z,-\varphi_x)(\hat z, \hat x, \hat \eta)$ for some $(\hat z, \hat x, \hat \eta) \in \Sigma_\varphi$.

	Let the Bolker condition~\ref{as:n1} be satisfied at $(\hat z,\hat \zeta;\hat x,\hat\xi)$. We have $\dd\pi_L\vert_{(\hat z, \hat \zeta, \hat x, \hat \xi)}$ is injective if and only if for all curves $(z,x,\eta)(t)$ on $\Sigma_\varphi$ with $(z,x,\eta)(0) = (\hat z, \hat x, \hat \eta)$ we have
\begin{equation}\label{eq:injmatrix}
(\dot z(0), \del_x\varphi_{z}(\hat z, \hat x, \hat \eta)\dot x(0) +  \del_\eta\varphi_{z}(\hat z, \hat x, \hat \eta)\dot\eta(0)) = 0 \implies (\dot x(0), \del_\eta\varphi_{x}(\hat z, \hat x, \hat \eta)\dot\eta(0)) = 0\,.
\end{equation}
Furthermore, if $\Lambda$ satisfies assumption~\ref{as:n2}, 
	the matrix
	\begin{equation}\label{eq:fullyinvmat}
		\begin{pmatrix}
			\del_x\varphi_z & \del_\eta\varphi_z \\
			\del_x\varphi_\eta & \del_\eta\varphi_\eta
		\end{pmatrix}(\hat z,\hat x,\hat\eta) \colon \RR^{N'+n} \to \RR^{N+n}
	\end{equation}
	is injective.
\end{lemma}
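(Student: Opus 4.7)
The plan is to parametrize $T_p\Lambda$ at $p=(\hat z,\hat\zeta,\hat x,\hat\xi)$ via the map $G_\varphi$ from \cref{def:bolkone}. Since $\varphi$ is non-degenerate, $\Sigma_\varphi=\{\varphi_\eta=0\}$ is a smooth manifold, and every tangent vector to $\Lambda$ at $p$ can be written as $\dd G_\varphi(v)$ for some $v=(\dot z,\dot x,\dot\eta)\in T_{(\hat z,\hat x,\hat\eta)}\Sigma_\varphi$; the tangency condition for $v$ reads $\varphi_{\eta z}\dot z+\varphi_{\eta x}\dot x+\varphi_{\eta\eta}\dot\eta=0$, and a direct computation gives
\[
\dd G_\varphi(v)=(\dot z,\ \varphi_{zz}\dot z+\varphi_{zx}\dot x+\varphi_{z\eta}\dot\eta,\ \dot x,\ -\varphi_{xz}\dot z-\varphi_{xx}\dot x-\varphi_{x\eta}\dot\eta),
\]
so that $\dd\pi_L\circ \dd G_\varphi$ projects onto the first two slots. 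After this translation the entire lemma reduces to linear algebra on $T\Sigma_\varphi$.

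For the equivalence I would specialize to $\dot z=0$: given $(\dot x,\dot\eta)$ with $(0,\dot x,\dot\eta)\in T\Sigma_\varphi$ and $\varphi_{zx}\dot x+\varphi_{z\eta}\dot\eta=0$, the vector $\dd\pi_L(\dd G_\varphi(0,\dot x,\dot\eta))$ vanishes; injectivity of $\dd\pi_L$ then forces $\dd G_\varphi(0,\dot x,\dot\eta)=0$, and reading off its third and fourth components immediately yields $\dot x=0$ and $\varphi_{x\eta}\dot\eta=0$. The converse reverses this: for any $w=\dd G_\varphi(v)\in T\Lambda$ with $\dd\pi_L w=0$, one has $\dot z=0$ and $\varphi_{zx}\dot x+\varphi_{z\eta}\dot\eta=0$, so \cref{eq:injmatrix} produces $\dot x=0$ and $\varphi_{x\eta}\dot\eta=0$; the remaining non-trivial component $\varphi_{z\eta}\dot\eta$ of $\dd G_\varphi(v)$ then vanishes by the same first equation with $\dot x=0$.

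For the matrix injectivity, I take $(\dot x,\dot\eta)$ in the kernel of the matrix in \cref{eq:fullyinvmat} and set $\dot z=0$: the second block of the kernel equation is precisely the tangency condition, so $v=(0,\dot x,\dot\eta)\in T\Sigma_\varphi$, and the first block matches the hypothesis of \cref{eq:injmatrix}. Since \ref{as:n1} is assumed, the first part of the lemma applies and produces $\dot x=0$ together with $\varphi_{x\eta}\dot\eta=0$. The tangency condition with $\dot z=\dot x=0$ forces $\varphi_{\eta\eta}\dot\eta=0$, and combined with $\varphi_{z\eta}\dot\eta=0$ (from the first block with $\dot x=0$) we obtain $J^T\dot\eta=0$, where $J=\nabla_{(z,x,\eta)}\varphi_\eta$. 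The main point, and only real obstacle, is recognizing that non-degeneracy of $\varphi$ is exactly the statement that the $n$ rows $\nabla\varphi_{\eta_j}$ of $J$ are linearly independent, equivalently that $\ker J^T=\{0\}$, which yields $\dot\eta=0$. Under non-degeneracy, assumption \ref{as:n2} could alternatively be invoked at the last step, as it amounts to triviality of the kernel of the smaller block with rows $\varphi_{x\eta}$ and $\varphi_{\eta\eta}$, and would close the argument without needing the relation $\varphi_{z\eta}\dot\eta=0$.
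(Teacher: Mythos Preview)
Your proof is correct. Part~1 is essentially the paper's argument: both unwind $\dd\pi_L$ via the parametrization $G_\varphi$ and observe that the second component $\varphi_{z\eta}\dot\eta$ comes for free from the first equation once $\dot x=0$.

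For part~2 your route is genuinely different from the paper's, and in fact more economical. The paper splits $\RR^{N'+n}=W\oplus W^\perp$ with $W=\ker(\partial_x\varphi_\eta,\partial_\eta\varphi_\eta)$, shows $(\partial_x\varphi_\eta,\partial_\eta\varphi_\eta)$ is injective on $W^\perp$ tautologically, and then uses \ref{as:n2} together with non-degeneracy to deduce that $(\partial_x\varphi_z,\partial_\eta\varphi_z)$ is injective on $W$. You instead take $(\dot x,\dot\eta)$ directly in the kernel of the full matrix, so the second block row already places you in $W$ (tangency), while the first block row gives both the hypothesis of \cref{eq:injmatrix} \emph{and}, once $\dot x=0$, the relation $\partial_\eta\varphi_z\,\dot\eta=0$. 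Combined with $\partial_\eta\varphi_x\,\dot\eta=0$ from \cref{eq:injmatrix} and $\partial_\eta\varphi_\eta\,\dot\eta=0$ from tangency, non-degeneracy finishes. The upshot is that your argument never invokes \ref{as:n2}: you have shown that the matrix in \cref{eq:fullyinvmat} is injective under \ref{as:n1} and non-degeneracy alone, which is slightly stronger than what the lemma claims. The paper's detour through \ref{as:n2} is used precisely to obtain $\partial_\eta\varphi_z\,\dot\eta=0$ from $\partial_\eta\varphi_x\,\dot\eta=0$, a step you render unnecessary by reading it off the first block row of the kernel equation.
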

\begin{proof}
	Letting $(z(t), x(t),\eta(t))$ be a curve on $\Sigma_\varphi$ with initial condition $(z(0),x(0),\eta(0)) = (\hat z, \hat x, \hat \eta)$, $d\pi_L$ being injective means that
	\begin{align*}
		&{}(\dot z(0), \del_z\varphi_{z}(\hat z, \hat x, \hat \eta)\dot z(0)+ \del_x\varphi_{z}(\hat z, \hat x, \hat \eta)\dot x(0) +  \del_\eta\varphi_{z}(\hat z, \hat x, \hat \eta)\dot\eta(0)) = 0 \\
		&\qquad\implies (\dot x(0), \del_z\varphi_{x}(\hat z, \hat x, \hat \eta)\dot z(0)+ \del_x\varphi_{x}(\hat z, \hat x, \hat \eta)\dot x(0) +  \del_\eta\varphi_{x}(\hat z, \hat x, \hat \eta)\dot\eta(0)) = 0\,,
	\end{align*}
	which is equivalent to 
	\[
(\dot z(0), \del_x\varphi_{z}(\hat z, \hat x, \hat \eta)\dot x(0) +  \del_\eta\varphi_{z}(\hat z, \hat x, \hat \eta)\dot\eta(0)) = 0 \implies (\dot x(0), \del_\eta\varphi_{x}(\hat z, \hat x, \hat \eta)\dot\eta(0)) = 0\,,
	\]
	which concludes the proof of \cref{eq:injmatrix}.

	We turn to proving the second part. Notice first that by the non-degeneracy of $\varphi$,
	\[
		\begin{pmatrix}\del_\eta\varphi_z \\ \del_\eta\varphi_x \\ \del_\eta\varphi_\eta \end{pmatrix}(\hat z,\hat x,\hat\eta)\cdot \dot \eta(0) = 0 \implies \dot\eta(0)=0\,.
	\]
	Second, since $(z,x,\eta)(t)$ is a curve on $\Sigma_\varphi$, we have
	\[
		0 = \del_t(\varphi_\eta((z,x,\eta)(t))) = \del_z \varphi_\eta(\hat z,\hat x, \hat \eta) \dot z(0) + \del_x\varphi_\eta(\hat z,\hat x, \hat \eta)\dot x(0) + \del_\eta \varphi_\eta(\hat z,\hat x, \hat \eta) \dot \eta(0)\,,
	\]
	so that
	\[
		\dot z(0)=0\,,\dot x(0) = 0 \implies \del_\eta \varphi_\eta(\hat z, \hat x, \hat \eta) \dot \eta(0) = 0\,.
	\]
	Therefore, $\varphi$ being non-degenerate implies 
	\begin{equation}\label{eq:nondegenjustcheckxz}
		\dot z(0)=0\,, \dot x(0)=0\,, \begin{pmatrix}\del_\eta\varphi_z \\ \del_\eta\varphi_x\end{pmatrix}(\hat z,\hat x, \hat\eta)\dot \eta(0) = 0 \implies \dot\eta(0)=0\,.
	\end{equation}

	Now by assumption~\ref{as:n2} we have
	\[
		\dot z(0)=0, \dot x(0)=0,\del_\eta \varphi_x(\hat z, \hat x, \hat \eta) \dot \eta(0) =0 \implies \del_\eta\varphi_z(\hat z,\hat x,\hat \eta)\dot \eta(0) = 0\,,
	\]
	which combined with \cref{eq:nondegenjustcheckxz} gives
	\[
		\dot z(0)=0, \dot x(0)=0,\del_\eta \varphi_x(\hat z, \hat x, \hat \eta) \dot \eta(0) =0 \implies \dot\eta(0)=0\,.
	\]
	Thus, using \cref{eq:injmatrix}, we get
	\[
		(\dot z(0), 
		\del_x\varphi_{z}(\hat z, \hat x, \hat \eta)\dot x(0) +  \del_\eta\varphi_{z}(\hat z, \hat x, \hat \eta)\dot\eta(0)) = 0 \implies (\dot x(0), \dot\eta(0)) =0\,,
	\]
	so that the matrix $(\del_x\varphi_z, \del_\eta\varphi_z)(\hat z, \hat x, \hat \eta)$ is injective on the vector space
	\[
		W\coloneqq \left\{\begin{pmatrix}\dot x(0)\\ \dot\eta(0)\end{pmatrix} \colon (z,x,\eta)(t) \text{ curve on } \Sigma_\varphi\text{ with } \dot z(0)=0, (z,x,\eta)(0)=(\hat z,\hat x,\hat \eta)\right\}\,.
	\]
	Rewriting $W$, 
	we know that
	\[
		W = T_{(\hat x,\hat \eta)}\left(\left\{(x,\eta)\colon (\hat z, x,\eta) \in \Sigma_\varphi\right\}\right) = \mathrm{ker}((\del_x\varphi_\eta,\del_\eta\varphi_\eta)(\hat z,\hat x,\hat\eta))\,. 
	\]
	Locally, of course, $\RR^{N'+n} = W \oplus W^\perp$, and we note that if $w_1,w_2\in W^\perp$ with $(\del_x\varphi_\eta,\del_\eta\varphi_\eta)(\hat z, \hat x,\hat \eta) w_1 = (\del_x\varphi_\eta,\del_\eta\varphi_\eta)(\hat z, \hat x,\hat \eta) w_2$, we have $w_1-w_2 \in W^\perp \cap W$ so that $w_1=w_2$. This is to say that the matrix
	\begin{equation}\label{eq:injmatonperp}
		(\del_x\varphi_\eta,\del_\eta\varphi_\eta)(\hat z, \hat x,\hat \eta)\colon W^\perp \to \RR^n
	\end{equation}
	is injective. 

	Now write any element in $\RR^{N'+n}$ as $w+ w^\perp$ with $w\in W,w^\perp\in W^\perp$. Then
	\[
		\begin{pmatrix}
			\del_x\varphi_z & \del_\eta\varphi_z \\ 
			\del_x\varphi_\eta & \del_\eta\varphi_\eta
			\end{pmatrix}(\hat z,\hat x,\hat\eta) (w+w^\perp) = \begin{pmatrix}(\del_x\varphi_z,\del_\eta\varphi_z)(\hat z,\hat x,\hat\eta)(w+w^\perp)\\ (\del_x\varphi_\eta,\del_\eta\varphi_\eta)(\hat z,\hat x,\hat\eta)w^\perp\end{pmatrix}
	\]
	where the RHS can only be zero if $w^\perp = 0$ by \cref{eq:injmatonperp}, and then $w=0$ because $(\del_x\varphi_z, \del_\eta\varphi_z)(\hat z, \hat x, \hat \eta)$ is injective on $W$. This concludes the proof of \cref{eq:fullyinvmat}.
\end{proof}

\subsection{A Partial Integration Operator}\label{sec:pio}

One of the methods by which we estimate the analytic wavefront set of an FIO is by using repeated partial integrations, sometimes called the method of non-stationary phase. This differs from its smooth analogue only in that we must keep careful control of the constants bounding the amplitude. This is done already in \cite[Thm.~7.7.1]{hoermander1}, which, however, cannot be adapted to our needs in some situations. We have also become aware of \cite[\S~1.5]{zbMATH04105482}, but let us nevertheless repeat the setup in this section briefly. 

Throughout this section let $\Omega_1\subset \RR^N, \Omega_3 \subset \RR^n$ be bounded and $\Gamma \subset \RR^n$ open and away from the origin and $\kappa = \kappa(z,\eta) \in C^\omega(\Omega_1 \times \RR^n\sem;\mathbb{C})$ be a function.

Defining $\rho_\kappa \coloneqq \abs{\kappa_z}^2 + \abs{\eta}^2\abs{\kappa_\eta}^2$, for $\kappa$ we consider two possible additional properties.
\begin{enumerate}[label=$\kappa$\arabic*.]
	\item\label{as:kappa1} 
		There are some $c,C > 0$ so that for $\eta\in\Gamma$ 
		\begin{equation}\label{eq:kappa1}
	c\abs{\eta}^2 \leq \rho_\kappa \leq C\abs{\eta}^2\,,\an\abs{\del^\alpha_z \del^\beta_\eta \kappa} \leq C^{\abs{\alpha}+\abs{\beta}+1} \alpha!\beta! \abs{\eta}^{1-\abs{\beta}}\,.
	\end{equation}
\item\label{as:kappa2} We have $\inf_{(z,\eta)\in \Omega_1\times\Omega_3}\abs{\nabla_z \kappa} > 0$.
\end{enumerate}

Notice that if assumptions~\ref{as:kappa1} and \ref{as:kappa2} are true on some sets $\Gamma$ and $\Omega_3$ respectively, then there we have (respectively)
\[
	-i\lambda^{-1} \frac{\bar\kappa_z \cdot \nabla_z+\abs{\eta}^2\bar\kappa_\eta \cdot\nabla_\eta}{\rho_\kappa}e^{i\lambda\kappa} = e^{i\lambda\kappa}\,,\an -i\lambda^{-1} \frac{\bar\kappa_z \cdot \nabla_z}{\abs{\kappa_z}^2} e^{i\lambda\kappa} = e^{i\lambda\kappa}\,.
\]
We will want to use the adjoints of the above for partial integration. Thus, let us define 
{\small
\begin{equation}\label{eq:defofL}
	\mathcal{L}_{\kappa} \colon p \mapsto\left(\frac{\Delta_z\bar\kappa+\abs{\eta}^2\Delta_\eta\bar\kappa+2\eta\cdot\nabla_\eta\bar\kappa}{\rho_\kappa} - \frac{\nabla_z\rho_\kappa \cdot \nabla_z\bar\kappa + \abs{\eta}^2\nabla_\eta\rho_\kappa \cdot\nabla_\eta\bar\kappa}{\rho_\kappa^2}\right)p+\frac{\nabla_z\bar\kappa \cdot \nabla_z p + \abs{\eta}^2\nabla_\eta\bar\kappa \cdot \nabla_\eta p}{\rho_\kappa}\,,
\end{equation}}
and
\begin{equation}\label{eq:defofLz}
	\mathcal{L}_{\kappa, 2} \colon p \mapsto\left(\frac{\Delta_z\bar\kappa}{\abs{\nabla_z \kappa}^2} - \frac{\nabla_z\abs{\nabla_z \kappa}^2 \cdot \nabla_z\bar\kappa}{\abs{\nabla_z \kappa}^4}\right)p+\frac{\nabla_z \bar\kappa \cdot \nabla_z p}{\abs{\nabla_z \kappa}^2}\,,
\end{equation}
where $\mathcal{L}_\kappa,\mathcal{L}_{\kappa,2}$ act on any suitable function space, for example $C^\infty(\Omega_1\times\Gamma)$ or $C^\infty(\Omega_1\times\Omega_3)$.

We now state that under the assumption~\ref{as:kappa1}, the coefficients of the differential operator $\mathcal{L}_\kappa$ behave well. The proof of the this statement can be found in \cref{sec:proofspio}. 

\begin{lemma}\label{cor:Lphifitskappa1}
	Assume $\kappa$ satisfies assumption~\ref{as:kappa1}. For some $C>0$, depending on the distance from $\Gamma$ to the origin, for all $\alpha,\beta$, uniformly in $(z,\eta)\in \Omega_1\times\Gamma$,
	\begin{align}
		\abs{\del^{\alpha}_z\del^{\beta}_\eta\left(\frac{\Delta_z\bar\kappa+\abs{\eta}^2\Delta_\eta\bar\kappa+2\eta\cdot\nabla_\eta\bar\kappa}{\rho_\kappa} - \frac{\nabla_z\rho_\kappa \cdot \nabla_z\bar\kappa + \abs{\eta}^2\nabla_\eta\rho_\kappa \cdot\nabla_\eta\bar\kappa}{\rho_\kappa^2}\right)} &\leq C^{\abs{\alpha}+\abs{\beta}+1}\alpha!\beta!\,.\label{eq:phifitskappaside}
	\end{align}
\end{lemma}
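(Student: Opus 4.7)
The plan is to introduce an auxiliary scale of analytic symbol classes on $\Omega_1\times\Gamma$ and show that the coefficient appearing in \cref{eq:phifitskappaside} is built from $\kappa$ through operations (linear combinations, products, differentiation, reciprocation) each of which preserves membership in these classes in a controlled way.

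Concretely, I would define for $m\in\RR$ the class $\mathcal{M}^m$ of those $f\in C^\omega(\Omega_1\times\Gamma)$ for which there exist $M_f,C_f>0$ with
\[
\abs{\del_z^\alpha\del_\eta^\beta f(z,\eta)} \leq M_f C_f^{\abs{\alpha}+\abs{\beta}}\alpha!\beta!\abs{\eta}^{m-\abs{\beta}}
\]
uniformly on $\Omega_1\times\Gamma$, for all multi-indices $\alpha,\beta$. A direct application of Leibniz's rule and standard combinatorial identities shows $\mathcal{M}^{m_1}\cdot\mathcal{M}^{m_2}\subset \mathcal{M}^{m_1+m_2}$, while $\del_{z_j}\colon \mathcal{M}^m\to \mathcal{M}^m$ and $\del_{\eta_j}\colon \mathcal{M}^m\to \mathcal{M}^{m-1}$. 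By assumption~\ref{as:kappa1}, $\kappa\in\mathcal{M}^1$, so $\bar\kappa,\nabla_z\bar\kappa\in\mathcal{M}^1$, $\nabla_\eta\bar\kappa\in\mathcal{M}^0$, and one readily finds $\rho_\kappa=\abs{\kappa_z}^2+\abs{\eta}^2\abs{\kappa_\eta}^2\in\mathcal{M}^2$.

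The key step is a reciprocation lemma: if $f\in\mathcal{M}^m$ with $\abs{f}\geq c\abs{\eta}^m$ on $\Omega_1\times\Gamma$, then $1/f\in\mathcal{M}^{-m}$. The cleanest route is to exploit the analyticity of $f$ directly: it extends holomorphically to a uniform tube about $\Omega_1\times\Gamma$, the lower bound together with the symbol-type estimates propagates the non-vanishing to this tube (after possibly shrinking it by a factor commensurate with $\abs{\eta}$), and Cauchy's formula on polydiscs of radius $\sim \abs{\eta}$ in $\eta$ and constant radius in $z$ then produces the required factorial bounds on derivatives of $1/f$. Alternatively, one applies Faà di Bruno's formula to $1/f = g\circ f$ with $g(t)=1/t$ (so $g^{(k)}(t)=(-1)^k k!/t^{k+1}$), and bounds the resulting sum over partitions using the factorial combinatorics of $\mathcal{M}^m$ together with $\abs{f}^{-(k+1)}\lesssim\abs{\eta}^{-m(k+1)}$.

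With these facts in hand, the remainder is routine accounting: the numerator of the first summand lies in $\mathcal{M}^1$ (each of $\Delta_z\bar\kappa$, $\abs{\eta}^2\Delta_\eta\bar\kappa$, and $\eta\cdot\nabla_\eta\bar\kappa$ is a sum of products of members of $\mathcal{M}^{1}$, $\mathcal{M}^{-1}$ and $\mathcal{M}^0$ with total order $1$), and dividing by $\rho_\kappa\in\mathcal{M}^2$ (whose reciprocal is in $\mathcal{M}^{-2}$) puts the first summand into $\mathcal{M}^{-1}$. The same bookkeeping, using that $\nabla_z\rho_\kappa, \abs{\eta}^2\nabla_\eta\rho_\kappa\in \mathcal{M}^2$ yields numerator in $\mathcal{M}^3$ and $1/\rho_\kappa^2\in\mathcal{M}^{-4}$, so the second summand is also in $\mathcal{M}^{-1}$. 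Since $\Gamma$ is bounded away from the origin, $\abs{\eta}^{-1-\abs{\beta}}$ on $\Gamma$ is bounded by $r^{-1-\abs{\beta}}$ for some $r>0$, which is absorbed into $C^{\abs{\alpha}+\abs{\beta}+1}$, yielding precisely the advertised bound. The main obstacle is the reciprocation step: keeping the combinatorial constants under the factorial bar without losing the $\abs{\eta}$-homogeneity; this is essentially the analytic-elliptic content of the lemma and is where the lower bound in assumption~\ref{as:kappa1} is decisively used.
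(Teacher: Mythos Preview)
Your proposal is correct and follows essentially the same route as the paper: the paper isolates the reciprocation step as a separate lemma showing $\rho_\kappa^{-1}$ satisfies $\abs{\del_z^\alpha\del_\eta^\beta\rho_\kappa^{-1}}\leq C^{\abs{\alpha}+\abs{\beta}+1}\alpha!\beta!\abs{\eta}^{-2-\abs{\beta}}$ via Fa\`a di Bruno (your second suggested method), and then deduces \cref{eq:phifitskappaside} by Leibniz's rule combined with the bounds in \cref{eq:kappa1} and the fact that $\abs{\eta}^{-1}$ is bounded on $\Gamma$. Your packaging in terms of the classes $\mathcal{M}^m$ is a clean way to organize exactly that bookkeeping; one small slip is that $\abs{\eta}^2\nabla_\eta\rho_\kappa\in\mathcal{M}^3$ rather than $\mathcal{M}^2$, but your stated conclusion (numerator in $\mathcal{M}^3$, second summand in $\mathcal{M}^{-1}$) is unaffected.
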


Having concluded the setup of the estimates about the coefficients of the differential operators $\mathcal{L}_\kappa$ and $\mathcal{L}_{\kappa,2}$, we state
\begin{proposition}\label{cor:inducL}
	Let $a\in \mathrm{S}^m_{\psi a}(\Omega_1\times\Omega_2)$ be a pseudoanalytic amplitude and let $\kappa$ satisfy condition~\ref{as:kappa1} for some $\Gamma$, and let $\inf_{\eta\in\Gamma}\left\{\abs{\eta}\right\} \eqqcolon r_\Gamma > 0$, $r_\Gamma' = \min\{1,r_\Gamma\}$, and let $\Omega_3\subset \RR^n$ be bounded. Use the notation $R_a,C_a,M_a$ from \cref{eq:defofsym} for $a$. There is a constant $C_{\mathcal{L}}$ independent of $a$ so that the following holds.

	Let $(\psi_j)_{j\in\mathbb{N}} \subset C^\infty(\Omega_1\times \RR^n)$ be a sequence of Ehrenpreis cut-offs, see \cref{eq:ehre}. For any $L \in \mathbb{N}$, and any $\lambda \geq 1$, we have that
	\begin{alignat}{2}
		\eta\in\Gamma, \lambda\abs{\eta}\geq L R_a&\implies& \abs{\mathcal{L}_\kappa^L (\psi_L(z,\eta)a(z,x,\lambda\eta))} &\leq\lambda^m M_a(C_ar_\Gamma'C_{\mathcal{L}_\kappa})^L L^L\abs{\eta}^{m}\,,\label{eq:inducLpsizeta} 
	\shortintertext{and if $\kappa$ satisfies assumption~\ref{as:kappa2} on $\Omega_3$}
		\eta \in \Omega_3 &\implies& \abs{\mathcal{L}_{\kappa,2}^L (\psi_L(z,\eta)a(z,x,\lambda\eta))} &\leq\lambda^m M_a(C_aC_{\mathcal{L}_\kappa})^L L^L\,.
		\label{eq:inducbdd} \\
		\shortintertext{Furthermore, for every real-analytic complex valued $b\in C^\omega(\Omega_1\times \Omega_3;\CC)$ there are $M_b, C_b>0$ independent of $L$ so that if $\kappa$ satisfies $\inf_{(z,\eta)\in\Omega_1\times\Omega_3}\abs{\rho_\kappa} > 0$ 
		then}
		\eta \in \Omega_3 &\implies& \abs{\mathcal{L}_{\kappa}^L (\psi_L(z,\eta)b(z,\eta))} &\leq M_b(C_bC_{\mathcal{L}_\kappa})^L L^L\,.
		\label{eq:inducbddana} 
	\end{alignat}
\end{proposition}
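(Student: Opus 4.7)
The plan is to induct on $L$, expanding $\mathcal{L}_\kappa^L$ into a differential operator of order at most $L$ with coefficients inheriting analytic-type bounds from those of $\mathcal{L}_\kappa$, and then pairing these coefficients with the standard bounds on derivatives of $\psi_L(z,\eta)a(z,x,\lambda\eta)$.

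First I would observe that $\mathcal{L}_\kappa = A_0 + \sum_j B_j\,\del_{z_j} + \sum_j D_j\,\del_{\eta_j}$, where $A_0$ is the coefficient estimated in \cref{cor:Lphifitskappa1}, $B_j = (\del_{z_j}\bar\kappa)/\rho_\kappa$, and $D_j = \abs{\eta}^2(\del_{\eta_j}\bar\kappa)/\rho_\kappa$. Using assumption~\ref{as:kappa1} and computations in the same spirit as \cref{cor:Lphifitskappa1}, all three coefficient types satisfy analytic-type bounds $\abs{\del^\alpha_z\del^\beta_\eta X}\leq C^{\abs{\alpha}+\abs{\beta}+1}\alpha!\beta!$ uniformly on $\Omega_1\times\Gamma$. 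An induction on $L$ then shows $\mathcal{L}_\kappa^L = \sum_{\abs{\alpha}+\abs{\beta}\leq L} c^L_{\alpha\beta}(z,\eta)\,\del^\alpha_z\del^\beta_\eta$, where the $c^L_{\alpha\beta}$ arise through iterated Leibniz from these coefficients and satisfy $\abs{c^L_{\alpha\beta}(z,\eta)}\leq M(C_\mathcal{L})^L L!$ uniformly on $\Omega_1\times\Gamma$, for $C_\mathcal{L}$ depending on $\kappa$, on the dimension, and on the constants in \ref{as:kappa1}, but not on $L$ or on the amplitude $a$.

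Next, I would estimate $\del^\alpha_z\del^\beta_\eta(\psi_L(z,\eta)a(z,x,\lambda\eta))$ for $\abs{\alpha}+\abs{\beta}\leq L$ by expanding via Leibniz into $\binom{\alpha}{\alpha'}\binom{\beta}{\beta'}$-weighted products of $\del^{\alpha'}_z\del^{\beta'}_\eta\psi_L$ and $\del^{\alpha-\alpha'}_z\del^{\beta-\beta'}_\eta[a(z,x,\lambda\eta)]$. The cutoff derivatives are bounded by $(CL)^{\abs{\alpha'}+\abs{\beta'}}$ via \cref{eq:ehre} (valid since $\abs{\alpha'}+\abs{\beta'}\leq L$); the $z$-derivatives of $a$ are controlled by Cauchy estimates on its holomorphic extension; and the $\eta$-derivatives by \cref{eq:defofsym}, applicable precisely because $\lambda\abs{\eta}\geq LR_a \geq R_a\max\{1,\abs{\beta-\beta'}\}$, together contributing $\lambda^m M_a C_a^{\abs{\alpha-\alpha'}+\abs{\beta-\beta'}}(\alpha-\alpha')!(\beta-\beta')!\,\abs{\eta}^{m-\abs{\beta-\beta'}}$. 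Multiplying by the coefficient bound, using $\abs{\eta}\geq r_\Gamma\geq r_\Gamma'$ to reconcile the surplus powers of $\abs{\eta}$ into a factor of $(r_\Gamma')^L$, and contracting the nested factorial sums via Stirling produces \cref{eq:inducLpsizeta}.

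The two remaining estimates follow from the same scheme with simplifications: for \cref{eq:inducbdd}, $\mathcal{L}_{\kappa,2}$ contains only $z$-derivatives whose coefficients are purely analytic on the bounded set $\Omega_1\times\Omega_3$ via assumption~\ref{as:kappa2}, and for \cref{eq:inducbddana} the real-analyticity of $\kappa$ together with $\inf\rho_\kappa>0$ makes every coefficient of $\mathcal{L}_\kappa$ real-analytic on $\Omega_1\times\Omega_3$ while the real-analyticity of $b$ itself supplies pure $M_b C_b^{\abs{\alpha}+\abs{\beta}}\alpha!\beta!$ bounds with no $\lambda$-scaling; otherwise the argument is identical. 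The chief obstacle throughout is the combinatorial bookkeeping: the several layers of factorials --- coefficient factorials from the induction, Leibniz binomials, cutoff growth $(CL)^{\abs{\alpha'}+\abs{\beta'}}$, and amplitude factorials $(\alpha-\alpha')!(\beta-\beta')!$ --- must be collapsed into the sharp $L^L$ rather than a looser $(L!)^2$, which requires grouping Leibniz sums by which multi-index absorbs each derivative and invoking Stirling once at the end, with all dimension- and $\kappa$-dependent losses absorbed into $C_\mathcal{L}$.
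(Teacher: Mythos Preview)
Your approach is essentially the paper's: the induction on $L$ expanding $\mathcal{L}_\kappa^L$ into a differential operator with controlled coefficients is exactly what the paper encapsulates in its appendix results \cref{lem:inducops} and \cref{cor:repeatL}, and the Leibniz collapse of the derivatives of $\psi_L a$ into the form $(CL)^{\abs{\alpha}+\abs{\beta}}$ is done in the paper via \cref{lem:incomplgamma}. For \cref{eq:inducbddana} the paper takes a slightly slicker route---it sets $\td z=(z,\eta)$ and reduces to \cref{eq:inducbdd}---but your direct argument is equally valid.

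There is one point where your write-up is too coarse to close. You state the coefficient bound as $\abs{c^L_{\alpha\beta}}\le M\,C_{\mathcal L}^L\,L!$, uniformly in $(\alpha,\beta)$. If you pair that with the derivative bound $\abs{\del^\alpha_z\del^\beta_\eta(\psi_L a)}\lesssim (CL)^{\abs{\alpha}+\abs{\beta}}$ and sum over $\abs{\alpha}+\abs{\beta}\le L$, the top terms give $L!\cdot(CL)^L\sim (L!)^2$, precisely the loose bound you warn against. What makes the argument go through is the graded bound the paper obtains in \cref{lem:inducops},
\[
\abs{c^L_{\alpha\beta}}\ \lesssim\ C^L\bigl(L-\abs{\alpha}-\abs{\beta}\bigr)^{\,L-\abs{\alpha}-\abs{\beta}},
\]
so that $\abs{c^L_{\alpha\beta}}\cdot(CL)^{\abs{\alpha}+\abs{\beta}}\le C^L L^L$ term by term, and the sum over multi-indices costs only a further $(en)^L$. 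You correctly flag the combinatorics as the chief obstacle, but the resolution is not ``invoking Stirling once at the end''---it is tracking the $(\alpha,\beta)$-dependence in the coefficient induction from the start.
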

\begin{proof}
	We first prove \cref{eq:inducLpsizeta}. 
	By definition of pseudodifferential amplitudes, see \cref{eq:defofsym}, 
	\[
		\eta\in\Gamma, \lambda\abs{\eta}\geq R_a\max\{\abs{\beta},1\} \implies \abs{\del^\alpha_z\del^\beta_\eta a(z,x,\lambda\eta)} \leq \lambda^m M_a C_a^{\abs{\alpha}+\abs{\beta}}\alpha!\beta! \abs{\eta}^{m-\abs{\beta}}\,.
	\]

	Notice that if $\psi_L$ are Ehrenpreis cut-offs satisfying $\abs{\del^\alpha_z\del^\beta_\eta\psi_L}\leq (C_\psi L)^{\abs{\alpha}+\abs{\beta}}$ for $\abs{\alpha}+\abs{\beta}\leq L$, then
	\begin{align*}
		&{}\eta\in\Gamma,\abs{\alpha}+\abs{\beta}\leq L\,,\lambda\abs{\eta}\geq R_a\max\{\abs{\beta},1\} \implies \\
		&{}\abs{\del^\alpha_z\del^\beta_\eta (\psi_L(z,\eta)a(z,x,\lambda\eta))} \leq M_a\lambda^m\abs{\eta}^{m} (r_\Gamma')^{-\abs{\beta}}
		\sum_{(\gamma,\mu)\leq (\alpha,\beta)} \frac{\alpha!\beta!}{\gamma!\mu!} (C_\psi L)^{\abs{\gamma}+\abs{\mu}} C_a^{\abs{\alpha}-\abs{\mu}+\abs{\beta}-\abs{\gamma}}\,,
	\end{align*}
	where we bounded $\abs{\eta}^{-\abs{\beta}+\abs{\gamma}} \leq r_\Gamma^{-\abs{\beta}+\abs{\gamma}} \leq (r_{\Gamma}')^{-\abs{\beta}}$. Now an application of \cref{lem:incomplgamma} (assuming without loss that $C_a \geq 4C_\psi$) shows that (bounding $1 \leq (r_\Gamma')^{-\abs{\alpha}}$),
	\[
		\eta\in\Gamma, \abs{\alpha}+\abs{\beta}\leq L\,,\lambda\abs{\eta}\geq R_a\max\{\abs{\beta},1\} \implies \abs{\del^\alpha_z\del^\beta_\eta (\psi_L(z,\eta) a(z,x,\lambda\eta))} \leq M_a\lambda^m \abs{\eta}^{m} 2^{N+n}\left(\frac{C_a L}{r_\Gamma'}\right)^{\abs{\alpha}+\abs{\beta}}\,.
	\]
	An application of \cref{cor:repeatL} using \cref{cor:Lphifitskappa1} concludes the proof of \cref{eq:inducLpsizeta}.

	The same argument works on $\Omega_3$ for $\kappa$ satisfying assumption~\ref{as:kappa2} to find \cref{eq:inducbdd}. This relies on the fact that $\Omega_3$ is a bounded set and $\kappa$ is analytic, we omit the repetition.

	The result \cref{eq:inducbddana} can be seen as a consequence of \cref{eq:inducbdd}. If we let $\td{z} = (z,\eta)$ be a new variable, then the analytic function $b=b(\td{z})$ can be seen as a pseudoanalytic amplitude with no frequency variable. Now the assumptions on $\kappa$ correspond to assumption~\ref{as:kappa2} in the variable $\td{z}$, and thus we can conclude \cref{eq:inducbddana}.
\end{proof}

We remark that the above result can also be deduced as a consequence of \cite[Lem.~8.6.3]{hoermander1} or \cite[Prop.~1.5.4]{zbMATH04105482}, see also \cite[Lem.~3.1]{zbMATH064747055}.

\subsection{Analytic Wavefront Set of Oscillatory Integral Distributions}

We begin this section with a result concerning the analytic wavefront set, based on its characterization from \cite{zbMATH03359011}, see also \cite[Thm.~3.5.13]{MR4436039}. 

\begin{proposition}\label{prop:laxwfa}
	For a distribution $u \in \mathcal{D}'(\RR^N)$ we have that $(x_0,\xi_0) \not\in \wf{u}$ if there exists a sequence of Ehrenpreis cutoffs $(\psi_L)_{L\in\mathbb{N}}$ identically $1$ near $x_0$, constants $C,r>0$ and a number $k \in \mathbb{N}$ so that for all $L\in\mathbb{N}$ and all $t \geq rL$, 
	\begin{equation}\label{eq:laxwfa}
		\abs{\mathcal{F}(\psi_L u)(t\abs{\xi}^{-1}\xi)} \leq t^k C(CL/t)^L\,,
	\end{equation}
	for all $\xi$ in a conic neighborhood of $\xi_0$.
\end{proposition}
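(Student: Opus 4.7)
The plan is to reduce the given hypothesis to the standard characterization of the analytic wavefront set from \cite{zbMATH03359011} (see also \cite[Thm.~3.5.13]{MR4436039}), namely that $(x_0,\xi_0)\notin \wf(u)$ precisely when there exist a sequence of Ehrenpreis cutoffs $\psi_L$ identically $1$ near $x_0$, a conic neighborhood $V$ of $\xi_0$, and constants $c, C' > 0$ such that
\[
\abs{\mathcal{F}(\psi_L u)(\eta)} \leq C'\left(\frac{C' L}{\abs{\eta}}\right)^{L}\,,\qquad \eta\in V,\ \abs{\eta}\geq cL\,.
\]
The hypothesis \cref{eq:laxwfa} differs from this only in two respects: the frequency variable is parametrized along rays rather than over a conic set, and there is an extra polynomial factor $t^k$. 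Both are removed by elementary manipulations.

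First I would unfold the ray parametrization. Given any $\eta$ in a conic neighborhood $V$ of $\xi_0$ with $\abs{\eta}\geq rL$, the choice $\xi=\eta$ and $t=\abs{\eta}$ makes $t\xi/\abs{\xi}=\eta$ and $t\geq rL$, so \cref{eq:laxwfa} directly yields
\[
\abs{\mathcal{F}(\psi_L u)(\eta)} \leq \abs{\eta}^{k}\,C\left(\frac{CL}{\abs{\eta}}\right)^{L}\,,\qquad \eta\in V,\ \abs{\eta}\geq rL\,.
\]

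Next I would absorb the factor $\abs{\eta}^{k}$ into the analytic decay. For $L\geq k$, set $L'=L-k$ and rewrite
\[
\abs{\eta}^{k}\left(\frac{CL}{\abs{\eta}}\right)^{L}=(CL)^{k}\left(\frac{CL}{\abs{\eta}}\right)^{L'}\,.
\]
Using $L=L'+k\leq 2L'$ for $L'\geq k$ together with the elementary estimate $(L')^{k}\leq k!\,e^{L'}$, the right-hand side is dominated by $C''(C''L'/\abs{\eta})^{L'}$ for a new constant $C''=C''(C,k)$, whenever $\abs{\eta}\geq rL \geq rL'$. The finitely many exceptional cases $L'<k$ can be absorbed into a larger $C''$ since, for fixed $L$, the original estimate already bounds $\mathcal{F}(\psi_L u)$ at worst polynomially on $\{\abs{\eta}\geq rL\}$. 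After renaming $L'$ back to $L$, this is precisely the bound demanded by the cited characterization, whence $(x_0,\xi_0)\notin\wf(u)$.

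The only substantive step is the absorption of the polynomial factor $\abs{\eta}^{k}$, which is routine once one commits to the re-indexing $L'=L-k$; the ray-to-cone translation and the final invocation of the standard characterization are bookkeeping. No serious obstacle is anticipated.
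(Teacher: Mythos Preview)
Your proposal is correct and follows essentially the same route as the paper: both absorb the polynomial factor $t^{k}$ by re-indexing the Ehrenpreis sequence $\psi_L\mapsto\psi_{L+k}$ (the paper does this in one line, citing \cref{lem:morefact}). The only difference is in which characterization of $\wf$ is invoked at the end. You cite a version that already restricts to $\abs{\eta}\geq cL$, so nothing further is needed; the paper instead appeals to \cite[Def.~8.4.3]{hoermander1}, which asks for the bound over the whole cone, and therefore spends an extra paragraph extending the estimate to small $t$ via Paley--Wiener--Schwartz plus one more index shift. That extra step is precisely the content hidden inside the characterization you quote, so the two arguments are the same in substance.
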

\begin{proof}
	By replacing $(\psi_L)_{L\in\mathbb{N}}$ with $(\psi_{L+k})_{L\in\mathbb{N}}$ and using \cref{lem:morefact}, we may assume that $k=0$, see also the remark after \cite[Prop.~2.4]{zbMATH03359011}. 

	According to \cite[Def.~8.4.3]{hoermander1} the proof will be complete if we show 
	\[
		\abs{\mathcal{F}(\psi_L u)(t\abs{\xi}^{-1}\xi)} \leq C(CL/t)^L
	\]
	for $t \leq rL$, since the above already holds for $t>rL$ by \cref{eq:laxwfa}.

	Note that $\psi_L u$ is a bounded sequence in $\mathcal{E}'$, so that by the Payley-Wiener-Schwarz theorem and the uniform boundedness principle, for some $M,C>0$ independent of $L$ we have for all $t>0$
	\[
			\abs{\mathcal{F}(\psi_L u)(t\abs{\xi}^{-1}\xi)} \leq C(1+t)^M\,,
	\]
	see also the proof of \cite[Prop.~8.4.2]{hoermander1}. For $t\leq rL$ we then find 
	\[
			\abs{\mathcal{F}(\psi_L u)(t\abs{\xi}^{-1}\xi)} \leq C(1+t)^M (rL/t)^L\,,
	\]
	and replacing $(\psi_L)_{L\in\mathbb{N}}$ with $(\psi_{L+M'})_{L\in\mathbb{N}}$ for some $M'\geq M$, $M' \in \mathbb{N}$ completes the proof.
%
%
%
%
\end{proof}
Following the proof of \cite[Lem~8.4.4]{hoermander1} one can turn \cref{prop:laxwfa} into an if and only if statement. We omit these details as here we will only use the sufficiency.

Let $\Omega_1\subset \RR^N$ be an open set and $m\in\RR$. Let $S^m_{\psi a}(\Omega_1)$ be the set of pseudoanalytic amplitudes of order $m$ that only depend on one spacial variable (the frequency $\eta$ still varies in $\RR^n\sem$). For any $a\in S^m_{\psi a}(\Omega_1)$ we shall first introduce, defined via its complement, the \emph{essential support}, denoted by $\es a$. Denoting still by $a$ the smooth extension of $a$ to $\Omega_1^\CC\times \RR^n\sem$ that is holomorphic in $z$, where $\Omega_1^\CC$ is some complex neighborhood of $\Omega_1$, for any $(z_0,\eta_0)\in \Omega_1\times\RR^n\sem$, we say $(z_0,\eta_0) \not \in \es a$ if for all $\abs{\alpha}\leq \max\{\lceil m\rceil + n + 1,0\}$ there are $C,c>0$ so that
\begin{gather*}
	\abs{\del^\alpha_\eta a(z,\eta)} \leq Ce^{-c\abs{\eta}}\,,\quad\text{for all}\quad (z,\eta)\in \Omega_1^\CC\times\RR^n\sem\colon \left(z,\frac{\eta}{\abs{\eta}}\right)\ \text{near}\ \left(z_0,\frac{\eta_0}{\abs{\eta}}\right)\,.
\end{gather*}
By definition $\es a$ is  a closed set. 

%

A version of the following may be found in \cite[Prop.~D.3]{zbMATH07465842}, but we shall prove it again for the convenience of the reader. 
\begin{proposition}\label{prop:wfaexp}
	Let $a\in \mathrm{S}^m_{\psi a}(\Omega_1)$ with $m\in \RR$ and $\varphi$ an analytic phase function. The distribution $A$ defined by
	\[
		C_c^\infty(\Omega_1) \ni u \mapsto \iint e^{i\varphi(z,\eta)}a(z,\eta)u(z)\dd\eta\dd z
	\]
	satisfies
	\[
		\wf{A} \subset \left\{(z, \varphi_z(z,\eta))\in T^\ast(\Omega_1)\sem \colon (z,\eta)\in \es a\,,\ \text{and}\ \ \varphi_\eta(z,\eta) = 0\right\}\,.
	\]
	\end{proposition}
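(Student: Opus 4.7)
The plan is to apply \cref{prop:laxwfa}. For $(z_0, \xi_0)$ outside the set on the right-hand side, I aim to produce Ehrenpreis cutoffs $\psi_L$ supported near $z_0$ and constants $C, r > 0$, $k\in\mathbb{N}$ so that $\abs{\mathcal{F}(\psi_L A)(t\xi)} \leq t^k C(CL/t)^L$ for all $t \geq rL$ and $\xi$ in a small neighborhood of $\xi_0$ (normalized $\abs{\xi_0}=1$). Substituting $\eta = t\theta$ and using the degree-one homogeneity of $\varphi$ in $\eta$ yields
\[
\mathcal{F}(\psi_L A)(t\xi) = t^n \iint e^{it\Phi(z,\theta;\xi)}\, a(z, t\theta)\, \psi_L(z)\, \dd z\, \dd\theta, \qquad \Phi(z,\theta;\xi) \coloneqq -z\cdot\xi + \varphi(z,\theta),
\]
an oscillatory integral with large parameter $t$ whose stationary $(z,\theta)$ solve $\varphi_z(z,\theta) = \xi$ and $\varphi_\theta(z,\theta) = 0$; the hypothesis on $(z_0,\xi_0)$ says precisely that no such point at $z=z_0$, $\xi=\xi_0$ lies in $\es a$.

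By compactness of $S^{n-1}$, I cover this sphere by finitely many small open sets $V_\alpha$ and shrink neighborhoods $U \ni z_0$, $U_\xi \ni \xi_0$ so that on each $U\times V_\alpha\times U_\xi$, at a representative $\omega_\alpha \in V_\alpha$, one of the following open conditions holds: (A) $(z_0,\omega_\alpha)\notin \es a$, yielding exponential decay $\abs{a(z,\eta)}\leq Ce^{-c\abs{\eta}}$ on the conic extension; (B1) $\varphi_\theta(z_0,\omega_\alpha)\neq 0$, yielding $\abs{\varphi_\theta(z,\omega)}\geq c>0$ uniformly; or (B2) $\varphi_z(z_0,\omega_\alpha)$ is not a positive scalar multiple of $\xi_0$, yielding $\abs{\Phi_z(z,\theta;\xi)}\geq c(1+\abs{\theta})$ uniformly. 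These three are exhaustive: if (A) and (B1) both fail at $\omega_\alpha$, then $(z_0,\omega_\alpha)\in\es a$ and $\varphi_\theta(z_0,\omega_\alpha)=0$; combined with conicity of the right-hand-side set in the $\xi$-fiber, the hypothesis then forbids $\varphi_z(z_0,\omega_\alpha)$ from being any positive multiple of $\xi_0$, placing us in (B2). I subordinate a conic Ehrenpreis-type partition of unity $\chi_{\alpha,L}(\theta)$ to $\{\td V_\alpha = \RR_+ V_\alpha\}$, split $\mathcal{F}(\psi_L A)(t\xi) = \sum_\alpha I_{\alpha,L}$, and within each $I_{\alpha,L}$ separate further a low-$\abs{\theta}$ (bounded inner) region from a high-$\abs{\theta}$ (outer conic) region.

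In cases (B1) and (B2) the relevant lower bounds give $\rho_\Phi \gtrsim \abs{\theta}^2$ on the outer region (condition~\cref{as:kappa1}) and $\abs{\nabla_z\Phi}\geq c$ on the inner region (condition~\cref{as:kappa2}); applying $\mathcal{L}_\Phi^L$ and $\mathcal{L}_{\Phi,2}^L$ respectively, together with \cref{cor:inducL}, extracts the factor $t^{-L}$ from each IBP along with the expected $L^L$ growth in the constants, yielding a bound of the target form after absorbing the $t^n$ prefactor and the polynomial growth $\abs{a(z,t\theta)}\leq C(1+t\abs{\theta})^m$. In case (A) the exponential decay of $a$ dominates: for the $\abs{\theta}\geq\epsilon$ part the factor $e^{-c\epsilon t}$ defeats any polynomial in $t$, while for $\abs{\theta}\leq\epsilon$ the bound $\abs{\nabla_z\Phi}\geq \abs{\xi}/2$ is automatic, allowing $\mathcal{L}_{\Phi,2}^L$ combined with \cref{eq:inducbdd} to extract $t^{-L}$ (with constants controlled via the real-analyticity of $a$ in $z$ from \cref{rem:homisan}). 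Summing the finitely many contributions gives $\abs{\mathcal{F}(\psi_L A)(t\xi)} \leq t^k C(CL/t)^L$, and \cref{prop:laxwfa} concludes.

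The main obstacle is verifying~\cref{as:kappa1} for $\Phi$ uniformly across the full outer cone in case (B1): for small $\abs{\theta}$ the summand $\abs{\Phi_z}^2$ dominates $\abs{\theta}^2 \abs{\varphi_\theta}^2$ in $\rho_\Phi$, while for large $\abs{\theta}$ the ratio reverses, so obtaining $\rho_\Phi \gtrsim \abs{\theta}^2$ uniformly demands a careful matching of IBP operators across the two regimes together with careful bookkeeping of the constants from \cref{cor:inducL} so that they combine into the single bound $C(CL/t)^L$ required by \cref{prop:laxwfa}.
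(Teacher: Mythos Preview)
Your approach is essentially the same as the paper's—both use the partial-integration operators $\mathcal{L}_\Phi$ and $\mathcal{L}_{\Phi,2}$ from \cref{sec:pio} together with the characterization in \cref{prop:laxwfa}—but the paper's decomposition is simpler. Instead of a full angular partition of unity on $S^{n-1}$, it uses just three pieces: a cutoff $\gamma_L$ isolates $|\eta|\leq 2\delta$ (your inner region, handled by $\mathcal{L}_{\Phi,2}$ since $|\Phi_z|\geq c$ there), and on $|\eta|>\delta$ a single conic Ehrenpreis cutoff $\chi_L$ separates a neighborhood of $\es a$ from its complement. Outside $\es a$ the exponential decay of $a$ suffices (your case~(A)); inside, the hypothesis on $(z_0,\zeta_0)$ gives $\nabla_{z,\eta}\Phi\neq 0$ in one stroke, so condition~\ref{as:kappa1} holds and $\mathcal{L}_\Phi$ applies—your cases (B1) and (B2) merged, with no need to distinguish which component of $\nabla\Phi$ is nonzero. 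This also shows that the ``main obstacle'' you flag is not one: on your outer cone $|\theta|\geq\epsilon$, the lower bound $\rho_\Phi\gtrsim|\theta|^2$ in (B1) follows directly from $|\theta|^2|\varphi_\theta|^2\geq c^2|\theta|^2$, and the upper bound from $\Phi$ being a sum of degree-$1$ and degree-$0$ homogeneous pieces in $\theta$.

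Two genuine gaps. First, you never justify why the formula $\mathcal{F}(\psi_LA)(t\xi)=t^n\iint e^{it\Phi}a(z,t\theta)\psi_L(z)\,\dd z\,\dd\theta$ is valid or why the $\dd\theta$-integrals over the outer cones converge after IBP. The paper first reduces to $m<-n-1$ by absorbing the oscillatory-integral regularization into the amplitude; this is precisely why the definition of $\es a$ involves $\eta$-derivatives up to order $\lceil m\rceil+n+1$, so that $\es a$ survives the reduction. Without it, the integral $\int_{|\theta|\geq\epsilon}|\theta|^m\,\dd\theta$ arising from \cref{eq:inducLpsizeta} diverges. Second, your claimed bound $|\Phi_z|\geq c(1+|\theta|)$ in (B2) fails when $\varphi_z(z_0,\omega_\alpha)=0$: then $|\Phi_z|=|\xi|$ for all $\theta\in\RR_+\omega_\alpha$. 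The fix is immediate—since $\nabla_{z,\eta}\varphi\neq 0$ by \cref{def:phase}, vanishing of $\varphi_z$ forces $\varphi_\eta\neq 0$, placing you in (B1)—but you must say so, or else work with the full $\rho_\Phi$ as the paper does and avoid the case split altogether. A minor point: the appeal to \cref{rem:homisan} for analyticity of $a$ in $z$ is misplaced; that remark concerns homogeneous amplitudes, whereas holomorphy in $z$ is already part of the definition of $\mathrm{S}^m_{\psi a}$.
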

\begin{proof}
	Let $(z_0, \zeta_0) \not\in \{ (z, \varphi_z(z,\eta)) \colon \varphi_\eta(z,\eta) = 0, (z,\eta) \in \es a\}$. 

	According to \cref{prop:laxwfa} the result is proved if we can find a sequence $\psi_L$ of Ehrenpreis cutoffs, identically equal to $1$ near $z_0$, and a conic neighborhood $\Gamma$ of $\zeta_0$, and a $k\in\mathbb{N}$ so that for all $\sigma\in\Gamma$, $\abs{\sigma} = 1$ and $t\geq 1$ large enough, 
	\[
		\abs{\mathcal{F}(\psi_L A)(t\sigma)} \leq t^k C(CL/t)^L\,.
	\]
	
	Let $\psi_L$ be a sequence of Ehrenpreis cut-offs, identically equal to $1$ near $z_0$, the support of which we denote by $Z=\supp \psi_L$, which will be chosen small enough throughout this proof. 

	Additionally, let $\sigma \in \Gamma \cap \mathbb{S}^{n-1}$, where $\Gamma$ is an open conic neighborhood of $\zeta_0$ we will choose slim enough throughout this proof.

	By \cite[Thm.~8.4.1]{MR1721032} we have
	\begin{align}
		\mathcal{F}(\psi_L A)(t\sigma) &= \langle \psi_L A, e^{-it z\cdot \sigma} \rangle_z = \langle A(z), \psi_L(z) e^{-it z\cdot \sigma}\rangle_z \notag \\
		&= \iint e^{i\varphi(z,\eta)-itz\cdot \sigma}a(z,\eta)\psi_L(z)\dd\eta\dd z \notag \\ 
		&= t^n \iint e^{it(\varphi(z,\eta)-z\cdot \sigma)}a(z,t\eta)\psi_L(z)\dd\eta\dd z \label{eq:subs}
	\end{align}
	where we used substitution in the last line.

	Let us remark at this point that we may assume without loss of generality that $m<-n-1$, where $m$ is the order of $a$. If this were not so, the distribution $A$ were defined via an oscillatory integral which would have acted on $\psi_L(z) e^{-itz\cdot \sigma}$ via an analytic differential operator of order $\leq \lceil m\rceil + n + 1$, introducing some finite power of $t$ and acting on $\psi_L$. As seen in the proof of \cref{prop:laxwfa}, one can shift the indices of the Ehrenpreis cut-offs, so that this is no obstacle. Furthermore, the amplitude $a$ would have been differentiated and multiplied by symbols at most $\lceil m\rceil +n+1$ many times, so that the resulting symbol is still exponentially decaying on $\es a$.

	Notice that since $\varphi$ is homogeneous with respect to $\eta$ of degree $1$, if $\abs{\eta} \leq 2\delta$ is small enough and $\sigma$ varies in a small enough conic set $\Gamma$, then $\abs{\nabla_z(\varphi(z,\eta)-z\cdot \sigma)} \geq c >0$ for some $c>0$ because $\abs{\sigma} = 1$. We will use this fact to introduce cut-offs with respect to $\eta$: 
	Let $(\gamma_j(\eta))_{j\in\mathbb{N}}\subset C_c^\infty(\RR^n)$ a sequence of Ehrenpreis cut-offs so that $\gamma_j \equiv 1$ on $\{\abs{\eta} \leq \delta\}$ and $\supp \gamma_j \subset \{\abs{\eta} \leq 2\delta\}$. 

	\begin{figure}
	\RawFloats
	\centering
	\begin{tikzpicture}
		\fill[pattern={Lines[angle=-45,distance={10pt},line width={0.8pt}]},pattern color={darkgray}] (0,0) --  (65:4) arc(65:120:4) -- cycle;
		\fill[pattern={Lines[angle=-45,distance={10pt},line width={0.8pt}]},pattern color={darkgray}] (0,0) --  (10:4) arc(10:0:4) -- cycle;
		\fill[pattern={Lines[angle=-45,distance={10pt},line width={0.8pt}]},pattern color={darkgray}](-2,-2)rectangle(90:4);
		\fill[pattern={Lines[angle=-45,distance={10pt},line width={0.8pt}]},pattern color={darkgray}](-2,-2)rectangle(0:4);
		\fill[pattern={Dots[angle=45,distance={6pt},radius={0.8pt}]}] (0,0) --  (20:4) arc(20:50:4) -- cycle;
		\node[circle,draw,thick,minimum size=100pt,inner sep=0pt, outer sep=0pt] at (0,0) {};
		\node[circle,draw,thick,fill=white,minimum size=50pt,inner sep=0pt, outer sep=0pt] at (0,0) {};
		\draw[line width={0.9pt},dashed] (0,0) -- (0:4);
		\draw[line width={0.9pt},dashed]  (0,0) -- (75:4);
		\draw[line width={0.9pt}]  (0,0) -- (65:4);
		\draw[line width={0.9pt}]  (0,0) -- (50:4);
		\draw[line width={0.9pt}]  (0,0) -- (20:4);
		\draw[line width={0.9pt}]  (0,0) -- (10:4);
		\node at (0:5) {$(\es a)^\complement$};
		\node at (200:0.7) {$\delta$};
		\node[fill=white] at (270:2.05) {$2\delta$};
		\node at (30:5.6) {$\supp (1-\gamma_L)(1-\chi_L)$};
		\node at (4.5,-1.8) {$\chi_L \equiv 1$};
	\end{tikzpicture}
	\caption{The space represents $\eta\in \RR^n$. The two circles marked $\delta$ and $2\delta$ are the spheres of radius $\delta, 2\delta$ respectively around $\eta=0$. The conic region in between the two dashed lines is $(\es a)^\complement$, the region of space marked by dashed diagonal lines is where $\chi_L \equiv 1$, and $(1-\gamma_L)(1-\chi_L)$ is supported in the region marked by dots.}
	\label{fig:twocutoffs}
\end{figure}
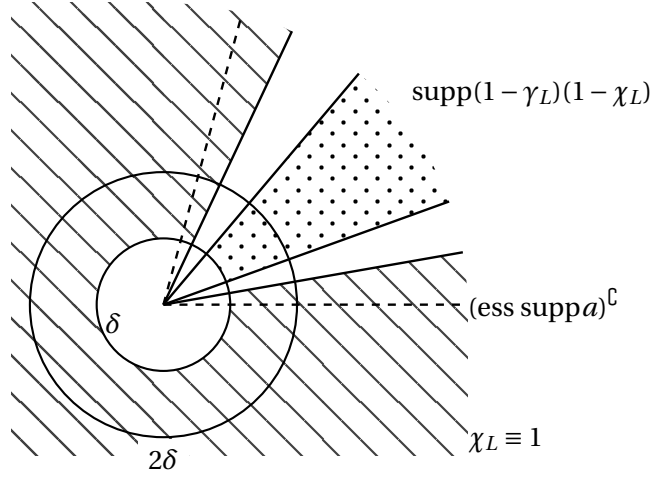

	Notice also that $(z_0, \zeta_0) \not\in \{ (z, \varphi_z(z,\eta)) \colon \varphi_\eta(z,\eta) = 0, (z,\eta) \in \es a\}$ implies that there exists an open neighborhood $Z'\times\Gamma'$ of $\es a$ so that
	\begin{equation}\label{eq:choosinggammaprime}
		(z_0, \zeta_0) \not\in \{ (z, \varphi_z(z,\eta)) \colon \varphi_\eta(z,\eta) = 0, (z,\eta) \in Z'\times \Gamma'\}\,.
	\end{equation}
	Now, let $(\chi_j)_{j\in\mathbb{N}} \in C^\infty(\RR^n)$ be a sequence of Ehrenpreis cut-offs that are identically $1$ in an open neighborhood of the set $\left(\{\eta \colon (z_0,\eta)\in \es a\} \cap \{\abs{\eta} \geq \delta\}\right)$ and supported in $\Gamma' \cap \{\abs{\eta} \geq \frac{1}{2}\delta\}$. Refer to \cref{fig:twocutoffs} for a sketch of the support properties of $\gamma_L,\chi_L$.


	According to \cref{eq:subs} we can write
	\begin{equation}\label{eq:estimateFA}
		\mathcal{F}(\psi_L A)(t\sigma) = t^n(I_1+I_2+I_3)\,, 
	\end{equation}
	where 
	\begin{align*}
		I_1 &= \iint e^{it(\varphi(z,\eta)-z\cdot \sigma)} a(z,t\eta)\gamma_L(\eta)\psi_L(z)\dd\eta\dd z \\
		I_2 &= \iint e^{it(\varphi(z,\eta)-z\cdot \sigma)}a(z,t\eta)(1-\gamma_L(\eta))(1-\chi_L(\eta))\psi_L(z)\dd\eta\dd z \\
		I_3 &= \iint e^{it(\varphi(z,\eta)-z\cdot \sigma)} a(z,t\eta)(1-\gamma_L(\eta))\chi_L(\eta)\psi_L(z)\dd\eta\dd z\,.
	\end{align*}

	\bsc{Estimating $I_1$:}

	For $\kappa(z,\eta) = \varphi(z,\eta)-z\cdot \sigma$ we have argued above that $\nabla_{z} \kappa \neq 0$ on the support of $\gamma_L$, so that $\kappa$ satisfies assumption~\ref{as:kappa2} there. Thus, after $L$ applications of partial integration with respect to $\mathcal{L}_{\kappa,2}$ defined in \cref{eq:defofLz}, we apply \cref{eq:inducbdd} to see that for $t\geq R_aL$,
	\begin{align*}
		\abs{I_1} &= \abs{\iint e^{it(\varphi(z,\eta)-z\cdot \sigma)} a(z,t\eta)\gamma_L(\eta)\psi_L(z)\dd\eta\dd z} \leq t^{m} M_a (C_a C_{\mathcal{L}_\kappa}L/t)^L \int_Z \int_{\abs{\eta}< 2\delta} 1 \dd\eta\dd z \\
		&\leq  t^{m} M' (C'L/t)^L
	\end{align*}
	for some $M',C'>0$. 

	\bsc{Estimating $I_2$:}
	
	Because $1-\chi_L$ is supported in a subset of $(\complement\{\eta \colon (z_0,\eta)\in \es a\}) \cup \{\abs{\eta} < \delta\}$, and since $1-\gamma_L$ is supported in $\abs{\eta}\geq \delta$, if we let $Z = \supp \psi_L$ be a small enough neighborhood around $z_0$ we conclude that on the support of $(1-\gamma_L(\eta))(1-\chi_L(\eta))\psi_L(z)$ we know that $a$ is exponentially decaying. Thus, 
	there exist $c,C >0$ so that
	\[
		\abs{I_2} = \abs{\iint e^{it(\varphi(z,\eta)-z\cdot \sigma)}a(z,t\eta)(1-\gamma_L(\eta))(1-\chi_L(\eta))\psi_L(z)\dd\eta\dd z} \leq C\int_Z \int_{\abs{\eta}\geq \delta} e^{-ct\abs{\eta}}\dd\eta\dd z 
	\]
	Because for $\abs{\eta} \geq \delta$, 
	\[
		e^{-\frac{c}{2} t\abs{\eta}} \leq e^{-\frac{c\delta}{2} t} = \left(e^{\frac{c\delta t}{2L}}\right)^{-L} \leq \left(\frac{c\delta t}{2L} +1\right)^{-L} \leq \left(\frac{c\delta t}{2L}\right)^{-L}\,,
	\]
	we conclude
	\[
		\abs{I_2} \leq C\int_{\abs{\eta}\geq \delta} e^{-ct\abs{\eta}}\dd\eta = C\int_{\abs{\eta}\geq \delta} e^{-\frac{c}{2} t\abs{\eta}}e^{-\frac{c}{2} t\abs{\eta}}\dd\eta \leq C\left(\frac{2L}{c\delta t}\right)^L \int_{\abs{\eta}\geq \delta} e^{-\frac{c}{2} t\abs{\eta}}\dd\eta \leq C \left(\frac{2L}{c\delta t}\right)^L\,,
	\]
	where the constant $C>0$ may have changed from one inequality to the next.

	\bsc{Estimating $I_3$:}

	Recall again the choice $\kappa(z,\eta;\sigma) = \varphi(z,\eta)-z\cdot\sigma$. We have established in \cref{eq:choosinggammaprime} and in the definition of $\chi_L$ that
	\[
		(z_0,\eta) \in \supp (\psi_L(z)\chi_L(\eta)) \wedge \varphi_\eta(z_0,\eta) = 0 \implies \varphi_z(z_0,\eta) \neq \zeta_0,
	\]
	which implies that choosing $Z = \supp \psi_L \subset Z'$ small enough (containing $z_0$), and $\Gamma$ slim enough (around $\zeta_0$),
	\[
		\nabla_{z,\eta}\kappa(z,\eta;\sigma) \neq 0 \quad \forall (z,\eta,\sigma) \in \supp \psi_L\chi_L \times \Gamma\,.
	\]
	Due to the fact that $\kappa(z,\eta;\sigma)$ is the sum of real-analytic homogeneous functions of degree $1$ and $0$ in $\eta$, we see that $\kappa$ satisfies assumption~\ref{as:kappa1} on $\supp \psi_L\chi_L$. 

	Now we may apply partial integration $L$ times using $\mathcal{L}_\kappa$ from \cref{eq:defofL} to get
	\begin{align*}
		I_3 &= \iint e^{it(\varphi(z,\eta)-z\cdot \sigma)} a(z,t\eta)(1-\gamma_L(\eta))\chi_L(\eta)\psi_L(z)\dd\eta\dd z \\
		&= (it)^{-L}\iint e^{it(\varphi(z,\eta)-z\cdot \sigma)} \mathcal{L}_{\kappa}^L\left(a(z,t\eta)(1-\gamma_L(\eta))\chi_L(\eta)\psi_L(z)\right)\dd\eta\dd z\,.
	\end{align*}
%
%
	Finally, according to \cref{eq:inducLpsizeta} we get for $t \geq R_aL/\delta$ that
	\[
		\abs{I_{3}} \leq M_a C (C_a\max\{1,\delta\}^{-1}C_{\mathcal{L}_\kappa}L/t)^L\int_Z\int_{\abs{\eta}\geq \delta} \abs{\eta}^{m} \dd\eta \dd z \leq C'(C'L/t)^L\,,
	\]
	for some $C'>0$, as $m<-n-1$.


	The estimates for $I_1,I_2,I_3$ together with \cref{eq:estimateFA} and application of \cref{prop:laxwfa} complete the proof.
%
%
\end{proof}



We can thus state a version of \cite[Thm.~18.7.2]{MR4436039} as an immediate corollary of \cite[Thm.~8.5.5]{hoermander1} and \cref{prop:wfaexp}:
\begin{corollary}\label{cor:simpleWfdirec}
	Let $m\in\RR$ and $a\in S^m_{\psi a}(\Omega_1\times\Omega_2)$ and $\varphi$ an analytic phase with $\varphi_x \neq 0, \varphi_z \neq 0$ when $\varphi_\eta = 0,\eta\neq 0$. The operator $S \colon C_c^\infty(\Omega_2) \to \mathcal{D}'(\Omega_1)$ defined for all $u\in C_c^\infty(\Omega_2)$ by 
	\[
		Su(z) = \iint e^{i\varphi(z,x,\eta)}a(z,x,\eta)\dd\eta u(x) \dd x
	\]
	satisfies, when extended to distributions $u\in \mathcal{E}'(\Omega_2)$,
	\[
		\wf(Su) \subset \Lambda_\varphi \circ \wf(u)\,,
	\]
	where $\Lambda_\varphi$ is the canonical relation associated to $\varphi$.
\end{corollary}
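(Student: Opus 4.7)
The plan is to identify $S$ with its Schwartz kernel, compute the analytic wavefront set of that kernel using \cref{prop:wfaexp}, and then pass from the kernel's wavefront set to $\wf(Su)$ via the composition theorem cited in the statement.

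First I would observe that $S$ has Schwartz kernel
\[
K(z,x) = \int e^{i\varphi(z,x,\eta)}a(z,x,\eta)\dd\eta,
\]
which is exactly an oscillatory integral distribution on $\Omega_1\times\Omega_2$ of the form treated in \cref{prop:wfaexp}, with spatial variable $(z,x)\in\RR^{N+N'}$ and frequency variable $\eta\in\RR^n\sem$. Applying that proposition directly gives
\[
\wf(K)\subset\bigl\{\bigl((z,x);\varphi_z(z,x,\eta),\varphi_x(z,x,\eta)\bigr)\colon \varphi_\eta(z,x,\eta)=0,\ (z,x,\eta)\in\es a\bigr\}.
\]
Using the standard twist $\wf'(K)=\{(z,\zeta;x,\xi):(z,x;\zeta,-\xi)\in\wf(K)\}$ and comparing with \cref{def:phase}, this reads $\wf'(K)\subset\Lambda_\varphi$.

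Next I would verify the hypotheses needed for the composition theorem \cite[Thm.~8.5.5]{hoermander1} to apply to the pair $(K,u)$ with $u\in\mathcal{E}'(\Omega_2)$. The assumption $\varphi_z\neq 0$ on the stationary set $\{\varphi_\eta=0,\eta\neq 0\}$ rules out elements of $\Lambda_\varphi$ with vanishing first covector, so the projection of $\wf'(K)$ onto $\Omega_1\times\{0\}$ is empty; analogously, $\varphi_x\neq 0$ rules out elements with vanishing second covector. These are precisely the non-intersection conditions with the zero sections that allow one to form the composition $\wf'(K)\circ\wf(u)$ and to deduce $\wf(Su)\subset\wf'(K)\circ\wf(u)\subset\Lambda_\varphi\circ\wf(u)$.

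The only mild obstacle is bookkeeping: one must match the sign/twist conventions so that the kernel description from \cref{prop:wfaexp} lines up with the definition of $\Lambda_\varphi$ in \cref{def:phase}, and one must invoke the analytic rather than smooth version of the composition theorem (which is precisely the content of the cited \cite[Thm.~8.5.5]{hoermander1} applied to $\wf_a$). Once these are in place, no further computation is required, and the corollary follows.
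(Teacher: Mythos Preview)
Your proposal is correct and is exactly the argument the paper has in mind: the paper states the corollary without proof, declaring it ``an immediate corollary of \cite[Thm.~8.5.5]{hoermander1} and \cref{prop:wfaexp}'', and you have simply spelled out how those two ingredients combine. The only cosmetic point is that your phrase ``projection of $\wf'(K)$ onto $\Omega_1\times\{0\}$'' is a bit imprecise; more accurately, $\varphi_z\neq 0$ forces $\wf'(K)_{\Omega_2}=\emptyset$ (so the composition is defined for every $u\in\mathcal{E}'(\Omega_2)$) and $\varphi_x\neq 0$ forces $\wf(K)_{\Omega_1}=\emptyset$ (so the extra term in the conclusion of \cite[Thm.~8.5.5]{hoermander1} drops out), but you clearly have the right idea.
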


\begin{remark}
If $a$ in the statement of \cref{prop:wfaexp} is exponentially decaying for all large $\eta$, the assumption that $a$ is a pseudoanalytic amplitude can be weakened, see \cite[Prop.~17.1.19]{MR4436039}. One will find then that the analytic wavefront set of $A$ is empty. 
\end{remark}

\section{Analytic Wavefront Transformed by Elliptic Analytic FIO}\label{sec:2}

%
%

We will fix some notation that will remain throught throughout the entire section. 

Let $\mathbb{N} \ni N \geq N' \in \mathbb{N}$ and $\hat\Lambda \subset (T^\ast \RR^N \times T^\ast \RR^{N'})\sem$ be a conic Lagrangian manifold. 
Let $(\hat z, \hat \zeta, \hat x, \hat \xi) \in \hat\Lambda$ be fixed and let there be open neighborhoods $\hat V \ni (\hat z, \hat \zeta)$ and $\hat U \ni (\hat x, \hat \xi)$ so that $\hat\Lambda$ is an analytic manifold in $\hat\Lambda \cap (\hat V\times \hat U)$.  

We introduce the projections
\[
	\pi_L \colon \hat\Lambda \cap (\hat V\times \hat U) \ni (z,\zeta;x,\xi) \mapsto (z,\zeta) \in \hat V\,,\quad \pi_R \colon \hat\Lambda \cap (\hat V\times \hat U) \ni (z,\zeta;x,\xi) \mapsto (x,\xi) \in \hat U\,,
\]
and put $\hat Z = \pi_z(\hat V)$ and $\hat X = \pi_x(\hat U)$. 

Finally, we will denote by $X\subset \hat X$ and $V \subset \hat V$ open neighborhoods of $\hat x$ and $(\hat z,\hat \zeta)$ respectively that will be chosen small enough throughout this section.

Note here that the assumption $N\geq N'$ corresponds to the notion that measurements vary in a space of dimension at least equal to that of the space the input varies in. This formally guarantees (over-)determinedness of the problem.

\begin{theorem}\label{thm:main}
	On $\hat\Lambda \cap (\hat V\times \hat U)$ let $\hat\Lambda$ be associated to a non-degenerate analytic phase $\varphi$
	, and let $\varphi$ be salutary. Let $(\hat z, \hat \zeta, \hat x, \hat \xi) \in \hat\Lambda$ be fixed as above, and $\hat \eta$ so that $G_\varphi^{-1}(\{\hat z, \hat \zeta, \hat x, \hat \xi\}) = \{(\hat z, \hat x,\hat\eta)\}$ (defined in \cref{def:bolkone}). 
	Let $m\in\RR$ and $\mathrm{FS}^{m}_{\psi a}(\hat Z\times \hat X) \ni \ud{a} = \sum_{j\geq 0} a_j$ be a formal pseudoanalytic amplitude 
	that is elliptic and classical at $(\hat z,\hat x, \hat\eta)$, 
	and let $a$ be any finite realization of $\ud{a}$.

	Let $T \colon C_c^\infty(\hat X) \to \mathcal{D}'(\hat Z)$ be an FIO with kernel 
	\begin{equation}\label{eq:giveFIO}
		T(z,x) = \int_{\RR^n\sem} e^{i\varphi(z,x,\eta)}a(z,x,\eta)\dd\eta\,.
	\end{equation}

	In this situation, for any $f\in \mathcal{E}'(X)$, where $X \subset \hat X$ is sufficiently small, we have
	\[
		(\hat z, \hat \zeta) \not\in \wf(Tf) \implies (\hat x, \hat \xi) \not\in \wf(f)\,.
	\]
\end{theorem}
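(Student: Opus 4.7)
The plan is to prove the statement via a direct FBI-type characterization of the analytic wavefront set, reducing exponential decay of an FBI transform of $Tf$ at $(\hat z, \hat\zeta)$ to exponential decay of an FBI transform of $f$ at $(\hat x, \hat\xi)$. First, I would fix an FBI-style phase $\Phi$ (in the spirit of \cite{MR1872698}) whose linear part at a base point $(\hat z, \hat\zeta)$ gives the covector $\hat\zeta$ and whose quadratic part concentrates at $\hat z$, and analogously at $(\hat x, \hat\xi)$. Non-membership $(\hat z, \hat\zeta) \notin \wf(Tf)$ translates into an exponential bound $|F_\lambda(Tf)(\hat z, \hat\zeta)| \leq Ce^{-c\lambda}$ for the associated wave-packet transform, and the goal is to transfer this into the analogous bound at $(\hat x, \hat\xi)$ for $f$.

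The second step is to substitute the kernel \eqref{eq:giveFIO} into this FBI transform, producing a triple oscillatory integral in $(z,x,\eta)$ with amplitude $a(z,x,\eta)$ multiplied by the FBI Gaussian window and by $f(x)$. After rescaling $\eta \mapsto \lambda\eta$, the full phase acquires the large parameter $\lambda$; localizing via Ehrenpreis cut-offs (as in \eqref{eq:ehre}) concentrates $(z,x,\eta)$ in a small neighborhood of $(\hat z,\hat x, \hat\eta)$. In the region where $\varphi_\eta \neq 0$, or where $\varphi_z \neq \hat\zeta$, I invoke the partial-integration operators $\LL_\kappa$, $\LL_{\kappa,2}$ of \cref{sec:pio} to produce a contribution of size $\mathcal{O}(e^{-c\lambda})$ by \cref{cor:inducL}. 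The unique frequency condition \ref{as:n4} and the Bolker condition \ref{as:n1} together guarantee that the remaining critical locus, under the map $G_\varphi$, consists of the single point $(\hat z, \hat x, \hat\eta)$, so that only a single bubble of critical behavior must be handled.

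The third step is an analytic stationary-phase reduction in the $z$ variable about $z = \hat z$. The Hessian at the critical point is controlled by the matrix of \eqref{eq:fullyinvmat}, which by the salutary hypotheses is injective; combined with the quadratic FBI contribution this yields a non-degenerate complex Hessian, allowing a steepest-descent-type argument (with contour deformation into $\CC^N$ via almost-analytic extensions of $\varphi$ and $a$) to reduce the triple integral to a double integral in $(x,\eta)$ with phase equal to $\varphi(z_c(x,\eta),x,\eta) + \Phi(z_c(x,\eta)-\hat z, \hat\zeta)$ and amplitude $a_0$ times a Hessian determinant, all modulo an exponentially small remainder. By the salutary geometry, the critical phase in $(x,\eta)$ can be rewritten, after solving for $\eta$ from $\varphi_\eta = 0$ or by a further stationary phase in $\eta$, as the phase of an FBI transform of $f$ at precisely $(\hat x, \hat\xi)$; ellipticity of $a_0$ at $(\hat z, \hat x, \hat\eta)$ ensures the prefactor is non-vanishing and real-analytic, so the exponential bound on $|F_\lambda(Tf)(\hat z, \hat\zeta)|$ is equivalent, modulo harmless factors, to an exponential bound on the FBI transform of $f$ at $(\hat x, \hat\xi)$, which delivers the conclusion.

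The main obstacle I expect is the quantitative analytic stationary phase with carefully tracked constants. The amplitude $a$, being a finite realization of a formal pseudoanalytic series, satisfies only factorial derivative bounds \eqref{eq:defofsym} rather than uniform analyticity, so the number of partial integrations and the truncation depth of the series must be tuned to $\lambda$ in the style of \cref{lem:finrea} and \cref{cor:inducL} in order for the error terms to be $\mathcal{O}(e^{-c\lambda})$ rather than merely rapidly decreasing. A secondary technical point is verifying that the $\eta$-rescaling, the Ehrenpreis cut-offs, and the contour deformation all cooperate so that the assumption~\ref{as:n3} (non-vanishing of $\zeta$) and the injectivity in \eqref{eq:fullyinvmat} actually furnish a uniformly non-degenerate complex Hessian on the localized region. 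Once these quantitative estimates are in hand, the identification of the resulting reduced oscillatory integral with the FBI transform of $f$ at $(\hat x, \hat\xi)$ is a direct consequence of the geometric bijection $G_\varphi$ at the critical point.
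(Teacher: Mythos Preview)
Your broad strategy---compose $T$ with an FBI/wave-packet transform, rescale $\eta\mapsto\lambda\eta$, kill the non-stationary region by the operators $\mathcal{L}_\kappa$ of \cref{sec:pio}, and apply analytic stationary phase near the unique critical point---matches the paper's architecture. The paper does the stationary phase in $(z,\eta)$ jointly rather than sequentially (your two-step version works in principle via the Schur complement of the Hessian, but the joint argument is cleaner because the $(z,\eta)$-Hessian is exactly what \eqref{eq:fullyinvmat} plus the $i\,\mathrm{id}$ from the Gaussian makes non-degenerate; see \cref{lem:cones}).

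Where your plan has a genuine gap is the final identification. After stationary phase you obtain
\[
\int e^{i\lambda\psi(x,v)}\,b(x,v;\lambda)\,f(x)\,\mathrm{d}x=\mathcal{O}(e^{-\varepsilon\lambda})
\]
with $v$ ranging in an open set of $\RR^{2N}$. This is \emph{not} an FBI transform of $f$: the parameter space has the wrong dimension (you need $2N'$ real parameters, and $N\ge N'$), and the phase $\psi$ is not the standard quadratic FBI phase. The claim that this ``can be rewritten as the phase of an FBI transform of $f$ at $(\hat x,\hat\xi)$'' is precisely the hard part. The paper handles it in two steps you do not mention. First, the Bolker condition is used a second time---not just for uniqueness of the critical point, but to show that $\pi_L(\Lambda)$ is an analytic $(N{+}N')$-dimensional submanifold and that $\chi=\pi_R\circ\pi_L^{-1}$ is a well-defined analytic submersion (\cref{lem:definechi}); one then restricts $v$ to $\pi_L(\Lambda)$ and reparameterizes by $u=\chi(v)$ via a right inverse $\chi^+$. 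Second, one must verify that the resulting phase $\tilde\psi(x,u)=\psi(x,\chi^+(u))+\chi^+(u)_1\cdot\chi^+(u)_2$ satisfies Sj\"ostrand's conditions (value, $x$-gradient, and the lower bound $\mathrm{Im}\,\psi\ge C|x-\pi(\chi(v))|^2$); this is the content of \cref{prop:psi} and its supporting lemmas, and it is not automatic from $G_\varphi$ alone. Only then does \cite[D\'ef.~6.1]{AST_1982__95__R3_0} give $(\hat x,\hat\xi)\notin\wf(f)$.

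A smaller point: the kernel \eqref{eq:giveFIO} is an oscillatory integral, so pairing it against the FBI packet produces a second piece $K_2^\lambda$ coming from the regularizing partial integrations (\cref{prop:oscint}, \cref{cor:calcKernel}); showing $K_2^\lambda=\mathcal{O}(e^{-\varepsilon\lambda})$ requires its own non-stationary-phase argument with a $\lambda$-dependent choice of Ehrenpreis cut-off (\cref{sec:choosechi}, \cref{prop:K2decay}). Your sketch treats the $\eta$-integral as if it were already absolutely convergent.
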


In particular we note that the analytic wavefront set does not depend on the choice of finite realization of the formal amplitude $\ud{a}$. 
The proof of \cref{thm:main} will be performed in \cref{sec:FBI}.

\begin{corollary}\label{cor:globmain}
	In the same situation as \cref{thm:main} under the additional assumption that for any $x\in \hat X$,
	\begin{equation}\label{eq:extraWFempty}
		\{(\hat z, \hat \zeta, x, \xi)\} \in \hat\Lambda\implies \xi \neq 0
	\end{equation}
	we have that for any $f\in \mathcal{E}'(\hat X)$, 
	\begin{equation}\label{eq:wfbidi}
		(\hat z, \hat \zeta) \not\in \wf(Tf) \iff (\hat x, \hat \xi) \not\in \wf(f)\,.
	\end{equation}
\end{corollary}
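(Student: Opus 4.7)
The forward implication ($\Rightarrow$) is precisely \cref{thm:main}, so I only need to establish the reverse: $(\hat x, \hat \xi) \not\in \wf(f)$ implies $(\hat z, \hat \zeta) \not\in \wf(Tf)$. My plan is to invoke \cref{cor:simpleWfdirec} to obtain the inclusion $\wf(Tf) \subset \Lambda_\varphi \circ \wf(f)$, and then read off the conclusion from the Bolker condition.

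First I would verify that \cref{cor:simpleWfdirec} applies, i.e.\ that $\varphi_z \neq 0$ and $\varphi_x \neq 0$ hold whenever $\varphi_\eta = 0$, $\eta \neq 0$, at least in the region that matters. The condition $\varphi_z \neq 0$ is exactly $\mathrm{ph}3$ of the salutary hypothesis, which gives $\zeta = \varphi_z \neq 0$ near $(\hat z, \hat\zeta, \hat x, \hat\xi)$ on $\Lambda_\varphi$. The condition $\varphi_x \neq 0$ is where \eqref{eq:extraWFempty} is used: combined with the unique frequency condition $\mathrm{ph}4$ it guarantees that every $(z,x,\eta) \in \Sigma_\varphi$ whose image under $G_\varphi$ lies over $(\hat z, \hat\zeta)$ satisfies $-\varphi_x = \xi \neq 0$. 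If these non-vanishing statements fail outside a microlocal neighborhood of interest, I would localize the amplitude $a$ by an analytic cutoff supported near $(\hat x, \hat \eta)$; by \cref{lem:finrea} the discarded piece is exponentially small in $\eta$ and hence contributes nothing to $\wf(Tf)$ near $(\hat z, \hat\zeta)$.

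With $\wf(Tf) \subset \Lambda_\varphi \circ \wf(f)$ in place, I would argue by contradiction: assume $(\hat z, \hat \zeta) \in \wf(Tf)$, so there exists $(x,\xi) \in \wf(f) \subset T^\ast \hat X \sem$ with $(\hat z, \hat\zeta, x, \xi) \in \Lambda_\varphi$, and in particular $\xi \neq 0$. The Bolker condition $\mathrm{ph}1$ of \cref{def:admisCR} asserts $\pi_L^{-1}(\{(\hat z,\hat\zeta)\}) = \{(\hat z,\hat\zeta,\hat x,\hat\xi)\}$ in $\Lambda_\varphi$, forcing $(x, \xi) = (\hat x, \hat \xi)$, which contradicts $(\hat x, \hat \xi) \not\in \wf(f)$.

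The only real technical subtlety — and the reason the extra hypothesis \eqref{eq:extraWFempty} must be imposed on top of the salutary assumptions used in \cref{thm:main} — is that without it the fiber of $\pi_L$ over $(\hat z, \hat\zeta)$ in $\Lambda_\varphi$ could contain degenerate points with $\xi = 0$, causing the non-vanishing hypothesis $\varphi_x \neq 0$ of \cref{cor:simpleWfdirec} to fail there and potentially introducing spurious wavefront contributions that the Bolker uniqueness cannot rule out.
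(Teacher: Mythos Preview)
Your argument for the direction $(\hat x,\hat\xi)\notin\wf(f)\Rightarrow(\hat z,\hat\zeta)\notin\wf(Tf)$ via \cref{cor:simpleWfdirec} and the global injectivity in the Bolker condition is correct and is exactly what the paper does for that implication, in one line.

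The genuine gap is the other direction. You assert that $(\hat z,\hat\zeta)\notin\wf(Tf)\Rightarrow(\hat x,\hat\xi)\notin\wf(f)$ ``is precisely \cref{thm:main}'', but \cref{thm:main} only applies to $f\in\mathcal{E}'(X)$ with $X\subset\hat X$ \emph{sufficiently small}, whereas the corollary is stated for arbitrary $f\in\mathcal{E}'(\hat X)$. Bridging that gap is the entire content of the paper's proof of this implication: one introduces a smooth cutoff $\gamma\in C_c^\infty(\hat X)$ with $\gamma\equiv 1$ near $\hat x$ and $\supp\gamma\subset X$, and must show $(\hat z,\hat\zeta)\notin\wf\bigl(T(1-\gamma)f\bigr)$ before \cref{thm:main} can be applied to $\gamma f$. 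That step uses \cite[Thm.~8.5.5]{hoermander1} together with the Bolker condition, and it is precisely here that \eqref{eq:extraWFempty} enters: it guarantees $(\hat z,\hat\zeta)\notin\wf(T)_Z$, killing the possible contribution from points of the form $(\hat z,\hat\zeta,x,0)$ in the wavefront set of the Schwartz kernel. You have therefore misidentified which implication carries the work, and correspondingly misplaced the role of \eqref{eq:extraWFempty}.

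A secondary issue: your suggestion to ``localize the amplitude $a$ by an analytic cutoff supported near $(\hat x,\hat\eta)$'' is not available in the analytic category, and the Ehrenpreis-cutoff sequences used elsewhere in the paper do not produce a single pseudoanalytic amplitude with the support property you want.
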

\begin{proof}
	Notice that \cref{cor:simpleWfdirec} together with the global part of the Bolker condition~\ref{as:n1} gives the ``$\impliedby$'' direction of \cref{eq:wfbidi}, so we must only show the other direction.

	Let $X\subset \hat X$ be sufficiently small for the result of \cref{thm:main} to hold. Let $\gamma \in C_c^\infty(\hat X)$ with $\supp\gamma \subset X$ and $\gamma \equiv 1$ near $\hat x\in X$. Let $(\hat z, \hat \zeta) \not\in \wf(Tf)$.

	According to \cref{prop:wfaexp} and the fact that the phase $\varphi$ is associated to $\hat\Lambda$, we know that $\wf'(T) \subset \hat\Lambda$. Notice that assumption~\ref{as:n3} means we may apply \cite[Thm.~8.5.5]{hoermander1}, which, using the notation therein, implies
	\begin{equation}\label{eq:calcwf}
		\wf(T(1-\gamma)f) \subset \wf(T)_Z \cup (\wf'(T)\circ \wf((1-\gamma)f))\,.
	\end{equation}
	Assumption \cref{eq:extraWFempty} implies $(\hat z, \hat \zeta) \not\in \wf(T)_Z$. Now because $(1-\gamma)f$ vanishes identically near $\hat x$, we have $(\hat x, \hat \xi) \not \in \wf((1-\gamma)f)$. Furthermore, the Bolker condition~\ref{as:n1} says $\pi_L^{-1}((\hat z,\hat \zeta)) = \{(\hat z,\hat \zeta, \hat x, \hat \xi)\}$ from which we conclude $(\hat z, \hat \zeta) \not \in \wf'(T)\circ \wf((1-\gamma)f)$, and thus \cref{eq:calcwf} gives $(\hat z, \hat \zeta) \not\in \wf(T(1-\gamma)f)$. This in turn implies $(\hat z, \hat \zeta)\not\in \wf(T\gamma f)$ because $(\hat z, \hat \zeta) \not\in \wf(Tf)$. 

	Now $\supp \gamma f \subset X$ so that we may conclude from \cref{thm:main} that $(\hat x, \hat \xi) \not\in \wf(\gamma f)$, and since $\gamma$ is identically $1$ near $\hat x$, we thus see that $(\hat x,\hat \xi) \not\in \wf(f)$.
\end{proof}

\begin{remark}
	Using the arguments in the proof of \cref{cor:globmain}, following also the proof of \cite[Thm.~5.1]{2306.05906v1} allows the reformulation of \cref{thm:main} into a geometric context as long as the canonical relation $\hat\Lambda$ is represented with salutary phases in each of its coordinate charts.
\end{remark}

Recalling that $(\hat z, \hat \zeta, \hat x, \hat \xi) \in \hat\Lambda$ is a fixed point, using \cite[Thm.~8.5.6']{hoermander1} (and \cite[Prop.~8.5.8]{hoermander1} to allow $C^1$ regularity), \cref{cor:globmain} allows us to conclude
\begin{theorem}\label{thm:hypside}
	Let $\hat H$ be a $C^1$ hypersurface in $\hat X$ with normal $\hat\xi$ at $\hat x$, and let $f\in\mathcal{E}'(\hat X)$ vanish on one side of $\hat H$ near $\hat x$. If $T$ is an FIO as in \cref{thm:main} being in the same situation as in \cref{cor:globmain}, then 
	\[
		Tf(z) \in C^\omega\ \ \text{for}\ z\ \text{near}\ \hat z \implies f(x)=0\ \ \text{for}\ x\ \text{near}\ \hat x\,.
	\]
\end{theorem}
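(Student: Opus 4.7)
The plan is to chain together three ingredients: analyticity of $Tf$ forces a specific covector out of $\wf(Tf)$; Corollary~\ref{cor:globmain} transfers this to $\wf(f)$; and Hörmander's microlocal Holmgren-type uniqueness theorem, extended to $C^1$ hypersurfaces, then forces $f$ to vanish near $\hat x$.

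First I would observe that if $Tf$ is real-analytic in some open neighborhood of $\hat z$, then by the definition of the analytic wavefront set no covector based at any point of that neighborhood lies in $\wf(Tf)$; in particular $(\hat z, \hat \zeta) \notin \wf(Tf)$. This step is immediate from the characterization of $\wf$ as the obstruction to analyticity.

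Next, the hypotheses of \cref{cor:globmain} are exactly those under which the bidirectional statement
\[
    (\hat z, \hat \zeta) \notin \wf(Tf) \iff (\hat x, \hat \xi) \notin \wf(f)
\]
holds for $f \in \mathcal{E}'(\hat X)$. Applying this equivalence to $f$ and using the conclusion of the previous paragraph yields $(\hat x, \hat \xi) \notin \wf(f)$. This is the microlocal content of the theorem; everything specific to FIOs has been packaged into \cref{cor:globmain}.

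Finally, I would invoke \cite[Thm.~8.5.6']{hoermander1} together with \cite[Prop.~8.5.8]{hoermander1}. The former asserts that if $f$ vanishes on one side of a smooth hypersurface $\hat H$ near $\hat x$ with conormal direction $\hat \xi$ at $\hat x$, and if $(\hat x, \hat \xi) \notin \wf(f)$, then $f$ vanishes on a full neighborhood of $\hat x$; the latter extends the regularity of $\hat H$ from smooth to $C^1$, which is the setting assumed here. Combining these with the previous step completes the argument.

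The entire proof is essentially a citation chain, so there is no genuine obstacle once the preceding results are in hand; the only subtlety to keep straight is that the normal $\hat \xi$ to $\hat H$ at $\hat x$ is precisely the second-component covector appearing in the fixed point $(\hat z, \hat \zeta, \hat x, \hat \xi) \in \hat \Lambda$ that underlies \cref{cor:globmain}, so the hypotheses of the cited theorem match verbatim.
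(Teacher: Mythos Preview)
Your proof is correct and follows exactly the same approach as the paper: the paper's justification is nothing more than the citation chain \cref{cor:globmain} $\Rightarrow$ \cite[Thm.~8.5.6']{hoermander1} (with \cite[Prop.~8.5.8]{hoermander1} for the $C^1$ relaxation), which is precisely what you spell out. There is nothing to add.
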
 

\subsection{FBI Transform and Proof of \cref{thm:main}}\label{sec:FBI}

From this point onward, in all of \cref{sec:2}, $T$ and $\varphi$ will refer to the FIO and phase respectively defined in \cref{eq:giveFIO} enjoying the properties stated in \cref{thm:main}. In order to study the analytic wavefront set of the analytic FIO $T$ given in \cref{eq:giveFIO}, we use Gaussian wave packets and the FBI transform.

Let $M \colon \RR^{N} \to \RR, m\colon \RR^{N'}\to \RR$ be $M(z) = c_{N} e^{-\frac{1}{2}\abs{z}^2}$ and $m(x) = c_{n} e^{-\frac{1}{2}\abs{x}^2}$, with $c_{k} = 2^{-{k}/2}\pi^{-\frac{3k}{4}}, k\in\mathbb{N}$ and for $\lambda >0$ and $u = (u_1,u_2) \in \RR^{2N'}$ and $v =(v_1,v_2) \in \RR^{2N}$ set 
\[
	M_{v}^\lambda(z) \coloneqq \lambda^{\frac{3N}{4}}e^{i\lambda z\cdot v_2}M(\sqrt{\lambda}(z-v_1))\,,\an m_{u}^\lambda(x) \coloneqq \lambda^{\frac{3N'}{4}}e^{i\lambda x\cdot u_2}m(\sqrt{\lambda}(x-u_1))\,.
\]

Following \cite[\S~3.1]{MR1872698}, for $f\in \mathcal{E}'(\hat X)$, the FBI transform $L_m^\lambda f$ of $f$ is given by
\[
	(L_m^\lambda f)(u) \coloneqq \int f(x)\overline{m_{u}^\lambda}(x)\dd x\,,
\]
which, by \cite[Prop.~3.1.6]{MR1872698}, admits the inversion
\[
	f(x) = \int (L_m^\lambda f)(u)m_u^\lambda(x) \dd u\,.
\]

In order to deal with the technicality of applying $T$ to non-compactly supported functions, in \cref{sec:oscint} we show 
\begin{lemma}\label{cor:bounded}
	If $\hat Z$ and $\hat X$ are small enough open neighborhoods of $\hat z, \hat x$ respectively and $\overline{\hat X}, \overline{\hat Z}$ denote the closure of $\hat X, \hat Z$ respectfully, the operator $T \colon C_c^\infty(\hat X) \to \mathcal{D}'(\hat Z)$ 
	can be extended to $T \colon C^\infty(\overline{\hat X}) \to \mathcal{E}'(\overline{\hat Z})$ 
	, where $\mathcal{E}'(\overline{\hat Z})$ is the continuous dual space of $C^\infty(\overline{\hat Z})$.
\end{lemma}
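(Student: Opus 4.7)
The plan is to define $Tf\in\mathcal{E}'(\overline{\hat Z})$, for $f\in C^\infty(\overline{\hat X})$, via its pairing
\[
\langle Tf, g\rangle \coloneqq \iiint e^{i\varphi(z,x,\eta)}\,a(z,x,\eta)\,f(x)\,g(z)\,\dd\eta\,\dd x\,\dd z
\]
against $g\in C^\infty(\overline{\hat Z})$, reading the integral in $\eta$ as an oscillatory integral. The key observation is that the $(z,x)$-integration already runs over the compact set $\overline{\hat Z}\times\overline{\hat X}$, so the only genuine convergence issue is the behavior in $\eta$ as $\abs{\eta}\to\infty$.

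First, shrink $\hat Z, \hat X$ so that $\overline{\hat Z}\times\overline{\hat X}$ lies in a common chart where $\varphi$ and $a$ admit holomorphic extensions with uniform constants. Setting $\tilde z=(z,x)$ and $\kappa(\tilde z,\eta)\coloneqq\varphi(z,x,\eta)$, the non-degeneracy $\nabla_{(z,x,\eta)}\varphi\neq 0$ combined with the homogeneity of $\varphi$ of degree $1$ in $\eta$ yields, by compactness on the unit sphere in $\eta$ and rescaling, $\rho_\kappa\geq c\abs{\eta}^2$ uniformly for $\abs{\eta}\geq 1$; analyticity of $\varphi$ on a complex neighborhood of $\overline{\hat Z}\times\overline{\hat X}$ delivers the derivative estimates in \cref{eq:kappa1}. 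Hence assumption~\ref{as:kappa1} holds on $\overline{\hat Z}\times\overline{\hat X}\times\{\abs{\eta}\geq 1\}$, so the apparatus of \cref{sec:pio}, in particular \cref{cor:inducL}, is available.

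Regularize by inserting $\chi(\epsilon\eta)$ with $\chi\in\Cc{\RR^n}, \chi(0)=1$, and split the $\eta$-integral at $\abs{\eta}=1$: on $\abs{\eta}\leq 1$ the integrand is bounded by $\sup\abs{a}\cdot\norm{f}_{L^\infty(\overline{\hat X})}\norm{g}_{L^\infty(\overline{\hat Z})}$ times a volume factor; on $\abs{\eta}\geq 1$, iterate the partial-integration operator $\LL_\kappa$ from \cref{eq:defofL} (with $\tilde z=(z,x)$) exactly $L$ times, transferring derivatives onto the amplitude $a(z,x,\eta)f(x)g(z)\chi(\epsilon\eta)$. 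Treating the smooth factor $f(x)g(z)$ as part of the base amplitude (its $C^L$ norms on the compact set $\overline{\hat Z}\times\overline{\hat X}$ are finite), \cref{cor:inducL} yields a bound on the iterated integrand of the form $C^{L+1}(1+\abs{\eta})^{m-L}\norm{f}_{C^L(\overline{\hat X})}\norm{g}_{C^L(\overline{\hat Z})}$, uniformly in $\epsilon$. For $L>m+n$ this is absolutely integrable in $\eta$, so by dominated convergence the $\epsilon\to 0$ limit exists and defines $\langle Tf,g\rangle$, satisfying
\[
\abs{\langle Tf,g\rangle}\leq C\norm{f}_{C^L(\overline{\hat X})}\norm{g}_{C^L(\overline{\hat Z})},
\]
which is precisely the continuity required for $Tf\in\mathcal{E}'(\overline{\hat Z})$.

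Agreement with the original definition on $f\in\Cc{\hat X}$ is then automatic, since both constructions reduce to the same oscillatory integral with the same partial-integration scheme; restricting the extended $Tf$ to the subspace $\Cc{\hat Z}\subset C^\infty(\overline{\hat Z})$ recovers the original $Tf\in\mathcal{D}'(\hat Z)$. The main technical obstacle is tracking the effect of the derivatives of $\chi(\epsilon\eta)$ under the iterated $\LL_\kappa$ and ensuring they do not spoil the $\abs{\eta}^{m-L}$ decay; this is handled by the fact that $\chi(\epsilon\cdot)$ is a bounded family in $C^\infty(\RR^n)$ with derivatives carrying factors of $\epsilon$, so the extra contributions are in fact $\epsilon$-small, and the combinatorial bookkeeping is of the same flavor as in the proof of \cref{lem:finrea}.
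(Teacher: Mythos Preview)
Your overall strategy---regularize, split at $\abs{\eta}=1$, and iterate a partial-integration operator a fixed number $L>m+n$ times to obtain absolute convergence and continuity estimates---is sound and is essentially the same mechanism the paper uses. The paper packages this as \cref{prop:oscint}: it performs the partial integration once and for all to write $T=T_{1,\chi}+T_{2,\chi}$ with $T_{1,\chi}$ compactly supported in $\eta$ and $T_{2,\chi}$ given by the finite sum \eqref{eq:S2action} with amplitudes already in $S^{-n-1}_{\psi a}$; the proof of \cref{cor:bounded} is then the one-line observation that these integrals are absolutely convergent for $f\in C^\infty(\overline{\hat X})$, $g\in C^\infty(\overline{\hat Z})$.

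Two points of comparison. First, you partial-integrate in $\tilde z=(z,x)$, invoking only the phase condition $\nabla_{(z,x,\eta)}\varphi\neq 0$ from \cref{def:phase}, whereas the paper's $\mathcal{M}$ in the proof of \cref{prop:oscint} uses only $(z,\eta)$-derivatives and relies on the stronger $\nabla_{(z,\eta)}\varphi\neq 0$, which comes from the salutary assumption~\ref{as:n3}. Your choice is more economical for the bare extension result, but the paper's $(z,\eta)$-only operator is deliberate: the specific structure of \eqref{eq:S2action} (derivatives landing only on $g$) is reused verbatim in the kernel computation of \cref{cor:calcKernel}.

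Second, your citation of \cref{cor:inducL} is misplaced. That proposition bounds $\mathcal{L}_\kappa^L(\psi_L a)$ by $M_a(C_aC_{\mathcal{L}_\kappa})^LL^L\abs{\eta}^m$---an $L^L$ estimate designed for the analytic wavefront arguments with $L\sim\lambda$, carrying no $\abs{\eta}^{-L}$ gain. The $(1+\abs{\eta})^{m-L}$ decay you need is not what \cref{cor:inducL} delivers; it comes instead from the elementary symbol-calculus fact (used implicitly in the proof of \cref{prop:oscint}) that the adjoint PI operator has coefficients of homogeneity $-1$ in $\eta$ and hence maps $S^m$ to $S^{m-1}$. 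For fixed $L$ this is just the standard smooth oscillatory-integral argument, and no appeal to the analytic machinery of \cref{sec:pio} is required.
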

In the proof of \cref{thm:main} we will show that we may always reduce to this case of $\hat Z, \hat X$ being as small as we desire, but for now let us assume that it is the case.

Letting $L_M$ denote the FBI transform over $\mathcal{E}'(\hat Z)$, using the inversion of the FBI transform, for all $v\in\RR^{2N}$ we see 
\begin{align}
	(L_M^\lambda T f)(v) &= \int f(x)K^\lambda(x,v)\dd x\,, \notag\\
	\shortintertext{where, recalling $u$ ranges over $\RR^{2N'}$,} 
	K^\lambda(x,v) &= \int \langle Tm_{u}^\lambda,M_{v}^\lambda\rangle \overline{m_{u}^\lambda(x)}\dd u\,. \label{eq:defofK}
\end{align}

In \cref{sec:kernel} we will give a precise description of the kernel $K^\lambda$, and in \cref{sec:K1,sec:K2,sec:proveK} the kernel will be estimated, culminating in 
\begin{proposition}\label{prop:Kgood}
	Let the open neighborhoods $X\times V \ni (\hat x,\hat v)$ be small enough (where $\hat v = (\hat z, \hat \zeta)$). We can write $K^\lambda = K_1^\lambda + K_2^\lambda$ so that the following holds for all $(x,v)\in X\times V$.

	First,
	\begin{equation}
		\label{eq:howKsact1} K_1^\lambda(x,v) = \lambda^{m-(N+n)/2}e^{i\lambda\psi(x,v)}b(x,v;\lambda) + \mathcal{O}(e^{-\eps\lambda})\,,
	\end{equation}
	where $\psi\in C^\omega(X\times V;\CC)$ is real-analytic and $b$ is an elliptic formal classical analytic amplitude according to \cite[\S~1]{AST_1982__95__R3_0}, which is independent of the choice of finite realization $a$ of $\ud{a}$ modulo $\mathcal{O}(e^{-\eps\lambda})$. Recall, $m\in \RR$ is the order of $\ud{a}$ from \cref{thm:main}.
	
	Furthermore, 
	\begin{align}
		K_{2}^\lambda(x,v) &= \mathcal{O}(e^{-\eps\lambda})\,.\label{eq:howKsact2}
	\end{align}
\end{proposition}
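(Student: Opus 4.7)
The plan is to reduce $K^\lambda(x,v)$ to an explicit complex-phase oscillatory integral and to analyze it by a combination of non-stationary phase (for a ``far'' piece $K_2^\lambda$) and complex stationary phase (for a ``near'' piece $K_1^\lambda$), working throughout in the pseudoanalytic category. First, using the FBI inversion formula together with the resulting reproducing-kernel identity $\int \overline{m_u^\lambda(x)}\,m_u^\lambda(x')\,\dd u = \delta(x-x')$, the $u$-integral in \eqref{eq:defofK} can be collapsed (this is precisely what \cref{sec:kernel} would do), giving, justified by \cref{cor:bounded},
\[
K^\lambda(x,v) = \iint e^{i\varphi(z,x,\eta)}a(z,x,\eta)\overline{M_v^\lambda(z)}\,\dd z\,\dd\eta,
\]
which is simply the kernel of $L_M^\lambda T$ in mixed coordinates. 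After the homogeneity rescaling $\eta = \lambda\tilde\eta$, this becomes a $\lambda$-oscillatory integral in $(z,\tilde\eta)$ with complex phase
\[
\Phi(z,\tilde\eta;x,v) = \varphi(z,x,\tilde\eta) - z\cdot v_2 + \tfrac{i}{2}\abs{z-v_1}^2
\]
and amplitude proportional to $a(z,x,\lambda\tilde\eta)$. The critical equations $\varphi_{\tilde\eta}=0$ and $\varphi_z + i(z-v_1) = v_2$ are solved near the base point by a unique analytic family $(z^\ast(x,v),\tilde\eta^\ast(x,v))$: the unique frequency condition \ref{as:n4} and the injectivity of \eqref{eq:fullyinvmat} guaranteed by \ref{as:n1}, \ref{as:n2} are exactly what is needed for the analytic implicit function theorem.

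The splitting $K^\lambda = K_1^\lambda + K_2^\lambda$ would be produced by a sequence of Ehrenpreis cut-offs $\chi_L(z,\tilde\eta)$ localized in a small neighborhood of $(\hat z,\hat\eta)$ and inside $\{\abs{\tilde\eta}\ge c_0\}$, in the same spirit as the splitting used in the proof of \cref{prop:wfaexp}. For $K_2^\lambda$, handled in \cref{sec:K2}, the phase $\Phi$ has no stationary point on the support of $1-\chi_L$: on the piece where $\abs{\tilde\eta}$ is bounded, assumption \ref{as:kappa2} holds for $\kappa = \Phi$ and one applies $\mathcal{L}_{\kappa,2}$ from \eqref{eq:defofLz} together with \cref{eq:inducbdd}; on the piece $\abs{\tilde\eta}\ge c_0$ away from the conic direction of $\hat\eta$, assumption \ref{as:kappa1} holds and \cref{eq:inducLpsizeta} applies to $\mathcal{L}_\kappa$. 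In either regime, $L$ repeated partial integrations yield a bound $C(CL/\lambda)^L$, and optimizing $L\sim\eps\lambda$ gives the claimed $\mathcal{O}(e^{-\eps\lambda})$.

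For $K_1^\lambda$, carried out in \cref{sec:K1}, I would perform a complex stationary phase expansion in the pseudoanalytic category of Sj\"ostrand/Martinez, writing the contribution of a neighborhood of the critical point as $\lambda^{m-(N+n)/2} e^{i\lambda\psi(x,v)} b(x,v;\lambda)$ modulo an exponentially small remainder. The phase $\psi(x,v) = \Phi(z^\ast,\tilde\eta^\ast;x,v)$ is real-analytic in $(x,v)$ (complex-valued, with non-negative imaginary part coming from $\abs{z^\ast-v_1}^2/2$, vanishing exactly when $(v_1,v_2)\in\pi_L(\Lambda_\varphi)$), and $b = \sum_{j}b_j\lambda^{-j}$ is a formal classical analytic amplitude whose leading term $b_0$ is a non-zero multiple of $a_0(z^\ast,x,\tilde\eta^\ast)$ times the Hessian-determinant factor from stationary phase; ellipticity of $\ud{a}$ at $(\hat z,\hat x,\hat\eta)$ therefore yields ellipticity of $b$ at the base point. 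Independence of $b$ from the choice of finite realization $a$ of $\ud{a}$, modulo $\mathcal{O}(e^{-\eps\lambda})$, follows from \cref{lem:finrea}: two realizations differ by $\mathcal{O}(e^{-C\abs{\eta}}) = \mathcal{O}(e^{-C\lambda\abs{\tilde\eta}})$, and the cut-off $\abs{\tilde\eta}\ge c_0$ converts this, after integration, into $\mathcal{O}(e^{-\eps\lambda})$.

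The main obstacle is obtaining an exponential (rather than merely $\lambda^{-\infty}$) bound on the remainder of the stationary phase expansion defining $b$. Concretely, one must sum the formal series $\sum_j b_j\lambda^{-j}$ up to a $\lambda$-dependent order by means of adaptively chosen Ehrenpreis cut-offs, entirely in analogy with the finite realization of formal amplitudes constructed in \cref{lem:finrea}, and then control the tail via Stirling as in the proof of \cref{prop:wfaexp}. A related subtlety is justifying the complex deformation of the $(z,\tilde\eta)$-contour to put the integrand in Laplace (steepest descent) form near the critical point: the positivity of $\mathrm{Im}\,\Phi$ off the real critical locus, guaranteed by the Gaussian damping factor, is precisely what makes the deformation admissible and controls the off-critical contributions exponentially, feeding back into the $\mathcal{O}(e^{-\eps\lambda})$ remainder.
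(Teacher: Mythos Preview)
Your outline is essentially correct and matches the paper's strategy: collapse the $u$-integral to get the complex phase $\Phi(z,\tilde\eta;x,v)$, locate the unique critical point via \ref{as:n1}, \ref{as:n2}, \ref{as:n4} and the injectivity of \eqref{eq:fullyinvmat}, treat the far piece by iterated $\mathcal{L}_\kappa$ integration, and treat the near piece by analytic stationary phase with an exponentially small remainder.

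The one point where the paper is organized differently, and where your sketch is a bit loose, is the \emph{origin} of the splitting $K^\lambda=K_1^\lambda+K_2^\lambda$. You propose to write the single formal integral $\iint e^{i\varphi}a\,\overline{M_v^\lambda}\,\dd z\,\dd\eta$ and then split by an Ehrenpreis cutoff in $(z,\tilde\eta)$. But that integral is divergent for $m\ge -n$, and the regularization has to be done \emph{before} one can rescale and localize. In the paper the splitting is inherited from the oscillatory-integral decomposition $T=T_{1,\chi}+T_{2,\chi}$ of \cref{prop:oscint}: $K_2^\lambda$ is the piece coming from $T_{2,\chi}$, whose amplitude has already been beaten down to order $-n-1$ by the partial integrations defining the oscillatory integral (see \cref{eq:K21}), so that integrability at infinity in $\tilde\eta$ is automatic. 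The cutoff $\chi$ itself is then chosen $\lambda$-dependently as $\chi_\lambda(\eta)=\chi_{\tau(\lambda)}(\lambda^{-1}\eta)+\chi'_{\tau(\lambda)}(\lambda^{-1}\eta)$ with $\tau(\lambda)\sim\vartheta\lambda$ (\cref{sec:choosechi}); the index $\tau(\lambda)$ of the Ehrenpreis sequence is precisely the ``optimize $L\sim\eps\lambda$'' step you mention, but here it is built into the very definition of the decomposition rather than chosen after the fact. Consequently the paper's $K_1^\lambda$ already contains \emph{two} localized pieces (near $\hat\eta$ and near the origin in $\tilde\eta$), handled respectively by \cref{prop:statapplied} and \cref{lem:lessR1}, while $K_2^\lambda$ is the regularized tail handled by \cref{prop:K2decay}. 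Your version would also work once you spell out how the large-$\tilde\eta$ divergence is tamed, but the paper's route avoids having to redo that analysis inside the $K_2$ estimate.
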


Using the Bolker condition, see \cref{def:bolkone}, the proof being modified only by citing \cite[Lem.~4.3]{zbMATH01860565}, we can state without proof  
\begin{lemma}[{\cite[Lem.~5.5]{2306.05906v1}}]\label{lem:definechi}
	There exists a neighborhood $\Lambda$ of $(\hat z, \hat\zeta,\hat x,\hat \xi)$ in $\hat\Lambda$ so that $\pi_L(\Lambda) \ni (\hat z, \hat \zeta)$ is an analytic $(N+N')$-dimensional submanifold of $T^\ast\RR^N$ and the map
	\[
		\chi \coloneqq \pi_R \circ \pi_L^{-1} \colon \pi_L(\Lambda) \to \pi_R(\Lambda)
	\]
	is an analytic surjective submersion.
\end{lemma}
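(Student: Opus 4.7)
The plan is to unpack the lemma as two local claims about the projections restricted to $\hat\Lambda$: first that $\pi_L$ realizes $\hat\Lambda$ locally as an analytic $(N+N')$-dimensional submanifold of $T^\ast\RR^N$, and second that $\pi_R$ is submersive on that neighborhood. Both will follow from the Bolker condition~\ref{as:n1} together with the Lagrangian nature of $\hat\Lambda$, so the only real inputs are an analytic rank theorem and a classical symplectic dimension count.

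For the submanifold structure, recall that $\hat\Lambda\cap(\hat V\times\hat U)$ is by hypothesis an analytic manifold of dimension $N+N'$ (being a canonical relation). The Bolker condition gives $d\pi_L$ injective at $(\hat z,\hat\zeta,\hat x,\hat\xi)$, hence $\pi_L|_{\hat\Lambda}$ is an analytic immersion there. The analytic rank theorem then provides local coordinates in which $\pi_L$ takes the normal form $(y_1,\dots,y_{N+N'})\mapsto(y_1,\dots,y_{N+N'},0,\dots,0)\in\RR^{2N}$ on some open $\Lambda\ni(\hat z,\hat\zeta,\hat x,\hat\xi)$. So $\pi_L|_\Lambda$ is an analytic embedding, $\pi_L(\Lambda)$ is an analytic $(N+N')$-dimensional submanifold of $T^\ast\RR^N$ containing $(\hat z,\hat\zeta)$, and $\pi_L^{-1}\colon\pi_L(\Lambda)\to\Lambda$ is analytic (being projection onto the first $N+N'$ coordinates in these charts). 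The fiber-singleton half of Bolker is used to shrink $\Lambda$ if needed so that no other branch of $\hat\Lambda\cap(\hat V\times\hat U)$ maps to a point of $\pi_L(\Lambda)$.

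For the submersion, I would exploit that $T_p\hat\Lambda$ is Lagrangian inside $V\coloneqq T_{p_1}(T^\ast\RR^N)\oplus T_{p_2}(T^\ast\RR^{N'})$ equipped with the twisted form $\omega_N\oplus(-\omega_{N'})$. For $v\in\ker d\pi_R(p)$, writing $v=(v_N,0)$, the Lagrangian condition against any $w=(w_N,w_{N'})\in T_p\hat\Lambda$ reduces to $\omega_N(v_N,w_N)=0$, so $d\pi_L(\ker d\pi_R)$ sits inside the symplectic perpendicular of $d\pi_L(T_p\hat\Lambda)$ in $T_{p_1}(T^\ast\RR^N)$. Since $d\pi_L$ is injective, $d\pi_L(T_p\hat\Lambda)$ is $(N+N')$-dimensional inside the $2N$-dimensional symplectic space, so its symplectic perp has dimension $N-N'$. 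Injectivity of $d\pi_L$ on $T_p\hat\Lambda$ then gives $\dim\ker d\pi_R\leq N-N'$, and rank-nullity forces $\mathrm{rank}\,d\pi_R=2N'$, the full target dimension. Hence $d\chi=d\pi_R\circ(d\pi_L|_{T\Lambda})^{-1}$ is surjective and $\chi$ is an analytic submersion; surjectivity onto $\pi_R(\Lambda)$ is tautological from $\pi_R(\Lambda)=\chi(\pi_L(\Lambda))$.

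I do not expect any serious obstacle: the lemma is essentially the standard local normal-form consequence of the Bolker condition, and the dimension count for the submersion step is classical symplectic linear algebra for Lagrangians in a product symplectic space. The only pitfall to be mindful of is to shrink $\Lambda$ small enough at the outset so that the rank-theorem normal form, the analyticity of $\pi_L^{-1}$, and the global injectivity of $\pi_L|_\Lambda$ coming from the fiber condition all hold simultaneously on the same neighborhood.
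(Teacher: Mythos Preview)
Your proposal is correct and follows essentially the approach the paper points to: the paper states this lemma without proof, citing \cite[Lem.~5.5]{2306.05906v1} and noting that the only modification is to invoke \cite[Lem.~4.3]{zbMATH01860565}, which is precisely the symplectic duality you spell out (for a Lagrangian in a product symplectic space, $d\pi_L$ injective forces $d\pi_R$ surjective via the dimension count on symplectic orthogonals). Your write-up of the immersion/embedding step from the Bolker condition and the rank theorem is standard and matches what the cited reference does.
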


Thus, \cref{lem:definechi} allows us to uniquely determine the canonical relation near $(\hat z,\hat\zeta,\hat x,\hat\xi)$ knowing only the measurements (the interpretation of $(\hat z,\hat\zeta)$). This will also be used in \cref{sec:psi} where we show that the phase $\psi$ appearing in \cref{eq:howKsact1} satisfies
\begin{proposition}
\label{prop:psi}
	If the open neighborhoods $X\times V \ni (\hat x, \hat v)$ are small enough, then for $\Lambda$ from \cref{lem:definechi} and all $v$ in a complex neighborhood of $\pi_L(\Lambda) \cap V$, 
	\begin{enumerate}
		\item $\psi(\pi_z(\chi(v)),v) = -v_1\cdot v_2$, where $v = (v_1,v_2)$,
		\item $\psi_x(\pi_z(\chi(v)),v) = -u_2$, where $\chi(v) = (u_1,u_2)$,
		\item For real $x,v$, we have $\mathrm{Im}\psi(x,v) \geq C\abs{x-\pi_z(\chi(v))}^2$.
	\end{enumerate}
\end{proposition}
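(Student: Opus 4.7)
The three properties of $\psi$ follow from the stationary-phase construction of $K_1^\lambda$ in \cref{sec:kernel}. After rescaling $\eta=\lambda\tilde\eta$ and rewriting the Gaussian factors of $m_u^\lambda,\overline{M_v^\lambda},\overline{m_u^\lambda(x)}$ as purely imaginary quadratic contributions to the phase, the leading contribution to $K^\lambda(x,v)$ is an oscillatory integral of $e^{i\lambda F}$ times an amplitude, with complex phase
\[
F(z,y,\tilde\eta,u_1,u_2;x,v)=\varphi(z,y,\tilde\eta)+(y-x)\cdot u_2-z\cdot v_2+\tfrac{i}{2}\bigl(|y-u_1|^2+|z-v_1|^2+|x-u_1|^2\bigr),
\]
integrated in $(z,y,\tilde\eta,u_1,u_2)$. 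The stationary equations $F_z=F_y=F_{\tilde\eta}=F_{u_1}=F_{u_2}=0$ force $y=x$, $u_1=x$, $\varphi_{\tilde\eta}=0$, $\varphi_z+i(z-v_1)=v_2$, and $\varphi_y+i(y-u_1)=-u_2$. Under the salutary hypotheses \ref{as:n1}--\ref{as:n4} the implicit function theorem furnishes a unique analytic critical point $(z_c,y_c,\tilde\eta_c,u_{1,c},u_{2,c})(x,v)$ on a neighborhood of $(\hat x,\hat v)$, and $\psi(x,v)$ is defined as the critical value $F|_{\mathrm{crit}}$.

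Claim 1 follows by identifying the critical point at $x=u_1^\ast\coloneqq\pi_z(\chi(v))$. Writing $\chi(v)=(u_1^\ast,u_2^\ast)$ and using \ref{as:n4}, pick the unique $\tilde\eta^\ast$ with $\varphi_{\tilde\eta}(v_1,u_1^\ast,\tilde\eta^\ast)=0$ so that $(v_1,v_2,u_1^\ast,u_2^\ast)\in\Lambda_\varphi$. Then $(v_1,u_1^\ast,\tilde\eta^\ast,u_1^\ast,u_2^\ast)$ is a \emph{real} solution of the stationary equations at $x=u_1^\ast$; the Gaussian contributions vanish and $(y_c-x)\cdot u_{2,c}=0$. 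Homogeneity of degree one of $\varphi$ in $\tilde\eta$ together with $\varphi_{\tilde\eta}=0$ yields $\varphi(v_1,u_1^\ast,\tilde\eta^\ast)=\tilde\eta^\ast\cdot\varphi_{\tilde\eta}=0$ by Euler's identity, so $\psi(u_1^\ast,v)=-v_1\cdot v_2$. Claim 2 follows from the envelope theorem:
\[
\psi_x(x,v)=F_x|_{\mathrm{crit}}=-u_{2,c}(x,v)+i(x-u_{1,c}(x,v)),
\]
and specializing $x=u_1^\ast$ recovers $-u_2^\ast$ from the critical point above.

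The main obstacle is Claim 3. Claims 1 and 2 imply $\mathrm{Im}\,\psi(u_1^\ast,v)=0$ and $\mathrm{Im}\,\psi_x(u_1^\ast,v)=0$, so it is enough to show $\mathrm{Im}\,\psi_{xx}(u_1^\ast,v)\geq cI$ for some $c>0$ locally uniformly in $v$; a Taylor expansion then yields Claim 3 after shrinking $X\times V$. Implicit differentiation of the stationary equations produces the Schur-complement formula $\psi_{xx}=F_{xx}-F_{xw}^T(F_{ww})^{-1}F_{wx}$ with $w=(z,y,\tilde\eta,u_1,u_2)$, whose direct term is $F_{xx}=iI$ coming from the Gaussian in $x$. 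The block $F_{ww}|_{\mathrm{crit}}=A+iB$ decomposes into a real part $A$ equal to the Hessian of $\varphi+(y-x)\cdot u_2-z\cdot v_2$ at the critical point -- invertible by the non-degeneracy of $\varphi$ together with \ref{as:n1} and the injectivity of \cref{eq:fullyinvmat} -- and a positive semidefinite imaginary part $B$ from the quadratic forms in $(y-u_1)$ and $(z-v_1)$. An explicit block computation, in spirit identical to the Sjöstrand/Martinez analysis of FBI-transformed kernels (see e.g.\ \cite[Chp.~3]{MR1872698}), shows that $\mathrm{Im}\bigl(F_{xw}^T(A+iB)^{-1}F_{wx}\bigr)$ is strictly dominated by the identity, so that $\mathrm{Im}\,\psi_{xx}(u_1^\ast,v)\geq cI$. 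The bookkeeping in this positivity step -- extracting a bona fide positive definite quadratic form out of the Schur complement while only real-analytically inverting $A+iB$ -- is where the salutary assumptions on $\varphi$ and $\Lambda_\varphi$ do the real work.
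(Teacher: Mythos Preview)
Your arguments for Claims 1 and 2 are correct and essentially equivalent to the paper's, though the paper first integrates out $(u_2,y,u_1)$ \emph{exactly} in \cref{cor:calcKernel} (the $u_2$-integral is a delta function forcing $y=x$, the $u_1$-integral a Gaussian), reducing to the two-variable phase $\Phi(z,\eta;x,v)=\varphi(z,x,\eta)-z\cdot v_2+\tfrac{i}{2}\abs{z-v_1}^2$ before doing stationary phase in $(z,\eta)$ alone. Your enlarged phase $F$ yields the same critical value $\psi$, so this difference is immaterial for Claims 1--2.

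For Claim 3 there is a concrete error and a genuine gap. The error: you assert that the real part $A$ of $F_{ww}$ is invertible, but the variable $u_1$ appears in $F$ only through the purely imaginary Gaussians $\tfrac{i}{2}(\abs{y-u_1}^2+\abs{x-u_1}^2)$, so the $u_1$-row and $u_1$-column of $A$ vanish identically and $A$ is singular. The gap: even after repairing this (say by passing to the paper's reduced $\Phi$, where the real part of $\Phi_{(z,\eta)(z,\eta)}$ is the Hessian of $\varphi$ in $(z,\eta)$ and need not be invertible either), the step ``$\mathrm{Im}\bigl(F_{xw}^T(A+iB)^{-1}F_{wx}\bigr)$ is strictly dominated by the identity'' is precisely where the content lies and cannot be dismissed as bookkeeping. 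What one actually has to show is that $\xi^T(\mathrm{Im}\,\psi_{xx})\xi=0$ for a real $\xi$ forces a vector of the form $(\xi,-w_\eta)$ into the kernel of the matrix in \cref{eq:fullyinvmat}; the injectivity of that matrix (a consequence of \ref{as:n1}--\ref{as:n2}) then gives $\xi=0$. This line can be made rigorous, but your proposal does not contain it.

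The paper does not compute $\psi_{xx}$ at all. It proves $\mathrm{Im}\,\psi(x,v)\geq C\abs{x-\pi(\chi(v))}^2$ directly via a chain of estimates in \cref{sec:psi}: \cref{lem:impsilarge} (using \cite[Lem.~7.7.9]{hoermander1}) gives $\abs{\mathbf{z}^{i\RR}}^2+\abs{\bm{\eta}^{i\RR}}^2\lesssim\mathrm{Im}\,\psi$; \cref{lem:xpichi} (using the injectivity of \cref{eq:fullyinvmat}) gives $\abs{x-\pi(\chi(v))}\lesssim\abs{\mathbf{z}(x,v)-v_1}$; a further Taylor-expansion lemma controls the cross term $-\mathrm{Im}(\mathbf{z}(x,v)\cdot v_2)$; and \cref{cor:bigpsi} assembles these. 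Your Hessian route, once completed, would be shorter; the paper's route is longer but each step is elementary.
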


Combining the above results we can then prove \cref{thm:main}, the proof following that of \cite[Thm.~5.2]{2306.05906v1}.

\begin{proof}[Proof of \cref{thm:main}]
	Notice that $(\hat z, \hat \zeta) \not\in \wf(Tf)$ if and only if this is true for the restriction of $Tf$ to any open set containing $\hat z$, so that we may assume without loss that $\hat Z$ is as small as we like. Similarly, by taking $\supp f$ small enough (and thus $\hat X$ too) we may assume that \cref{cor:bounded} may be applied. 

	By assumption $\hat v = (\hat z, \hat \zeta) \not\in \wf(Tf)$, so that by \cite[Def.~6.1]{AST_1982__95__R3_0}, there exists $\eps >0$ so that for all $v = (v_1,v_2)$ in a neighborhood of $\hat v$,
	\[
		\overline{(L_M^\lambda Tf)(v)}e^{-i\lambda v_1\cdot v_2} = c_N \lambda^{\frac{3N}{4}} \int \overline{Tf(z)} e^{i\lambda(z-v_1)\cdot v_2} e^{-\frac{\lambda}{2} (z-v_1)^2} \dd z = \mathcal{O}(e^{-\lambda\eps})\,.
	\]
	Taking the complex conjugate, using \cref{eq:defofK} together with \cref{prop:Kgood}, sucking the polynomial term in $\lambda$ coming from $K_1^\lambda$ into the right-hand side, we conclude
	\begin{equation}\label{eq:setupforwfa}
		\int e^{i\lambda(\psi(x,v)+v_1\cdot v_2)} f(x) b(x,v;\lambda) \dd x = \mathcal{O}(e^{-\eps\lambda})\,.
	\end{equation}
	for some elliptic formal classical analytic amplitude according to \cite[\S~1]{AST_1982__95__R3_0}.

	Choosing $\Lambda$ in \cref{lem:definechi} small enough, using the constant rank theorem (see the proof of \cite[Thm.~4.26]{zbMATH06034615})
	, we find an analytic $\chi^+ \colon \pi_R(\Lambda) \to \pi_L(\Lambda)$ so that $\chi \circ \chi^+ = \id$. Taking $X\supset \supp f$ with $X \subset \pi(\pi_R(\Lambda))$ small enough for \cref{prop:Kgood,prop:psi} to hold, we define
	\[
		\td{\psi}(x, u) \coloneqq \psi(x,\chi^+(u)) + \chi^+(u)_1\cdot \chi^+(u)_2\,,\quad \td{\psi} \colon X \times \pi_R(\Lambda) \to \CC\,.
	\]
	
	Since \cref{eq:setupforwfa} is true for all $v$ near $\hat v$ and thus for all $v\in \pi_L(\Lambda)$, it is true for all $v \in \chi^+(\pi_R(\Lambda))$. Thus, putting $\td{b}(x,u;\lambda) \coloneqq b(x,\chi^+(u);\lambda)$, as a consequence of \cref{eq:setupforwfa}, for all $u$ near $\hat u = (\hat x,\hat \xi)$ we have 
	\[
		\int f(x) e^{i\lambda\td{\psi}(x,u)}\td{b}(x,u;\lambda) \dd x = \mathcal{O}(e^{-\eps\lambda})\,.
	\]

	\cref{prop:psi} together with $\chi\circ\chi^+=\id$ ensure that $\td{\psi}$ admits the necessary properties to apply \cite[Def.~6.1]{AST_1982__95__R3_0}, from which we may conclude that $(\hat x, -\hat \xi) \not\in \wf(\overline{f})$, which implies that $(\hat x, \hat \xi)\not\in\wf(f)$.

	Since the analytic symbol $b$ is independent of the choice of finite realization $a$ of $\ud{a}$ modulo an exponentially decaying term (see \cref{prop:Kgood}), this completes the proof.
\end{proof}
Loosely, one can interpret the proof above as follows: the FIO $T$ when composed from the left by an FBI transform becomes an FBI transform, which allows one to read off the analytic wavefront set of $T$.

\subsection{A Good Representation of the FIO}\label{sec:oscint}

The FIO $T$ defined in \cref{eq:giveFIO} is defined as an oscillatory integral as usual, see \cite[Thm.~2.2.1]{zbMATH05817029} or \cite[\S~7.8]{hoermander1}. However, we will want to maintain properties (especially the ellipticity \cref{eq:anonvanishdef}) of the amplitude $a$ on a bounded set of frequencies for which we will choose to define $T$ by a specific choice of oscillatory integral (which aligns with usual one).

\begin{lemma}\label{prop:oscint}
%
	If $\hat Z, \hat X$ are small enough open neighborhoods of $\hat z, \hat x$ respectively the following holds. For any $\chi \in C_c^\infty(\RR^n)$, identically $1$ near the origin, 
%
	the action of $T\colon C_c^\infty(\hat X) \to \mathcal{D}'(\hat Z)$ is given by
	\begin{equation*}
		\langle T f, g \rangle = \langle T_{1,\chi}f,g \rangle + \langle T_{2,\chi} f,g\rangle\,,
	\end{equation*}
	where $T_{1,\chi}$ is the distribution kernel
	\begin{alignat}{4}\label{eq:splitS}
		T_{1,\chi}(z,x) &= \int e^{i\varphi(z,x,\eta)} \chi(\eta)a(z,x,\eta) \dd\eta\,,
	\end{alignat}
	and there exist $a_{\alpha,\chi} \in S^{-n-1}_{\psi a}(\hat Z\times\hat X)$ so that
	\begin{equation}
	\begin{alignedat}{2} \label{eq:S2action} 
		\langle T_{2,\chi} f, g \rangle &=& &\sum_{\abs{\alpha} \leq \lceil m\rceil +n+1}\int_{\hat Z}\int_{\hat X}\int e^{i\varphi(z,x,\eta)} a_{\alpha,\chi}(z,x,\eta) f(x) \del^{\alpha}_z g(z) \dd \eta \dd x \dd z\,, 
\end{alignedat}
	\end{equation}
	and each $a_{\alpha,\chi}(z,x,\eta) = \mathcal{K}_\alpha((1-\chi(\eta))a(z,x,\eta))$ where $\mathcal{K}_\alpha$ is an analytic differential operator in $(z,\eta)$ of order $\lceil m\rceil+n+1$.

	Finally, this extension $T$ is independent of the choice of $\chi$.
\end{lemma}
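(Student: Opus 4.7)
The plan is to run the standard oscillatory integral/partial integration argument, but with book-keeping of constants tailored to the pseudoanalytic amplitude class. First I would exploit assumption~\ref{as:n3} together with the homogeneity of $\varphi$ in $\eta$: at $(\hat z,\hat x,\hat\eta)$ we have $\varphi_z(\hat z,\hat x,\hat\eta)=\hat\zeta\neq 0$, hence by continuity and homogeneity of $\varphi_z$ in $\eta$ of degree $1$, shrinking $\hat Z,\hat X$ (and enlarging the base frequency scale) gives $c|\eta|\leq |\varphi_z(z,x,\eta)|\leq C|\eta|$ for all $(z,x,\eta)\in \hat Z\times\hat X\times\RR^n\sem$ with $|\eta|\geq R_0$, for some $c,C,R_0>0$. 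On this set I would introduce the first-order operator
\[
  P \;=\; -i\sum_l \frac{\varphi_{z_l}}{|\varphi_z|^2}\,\partial_{z_l},\qquad\text{so that } Pe^{i\varphi}=e^{i\varphi}.
\]

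Next, $T_{1,\chi}$ is an honest convergent integral since $\chi$ has compact support, so all the work is in making sense of the would-be ``tail'' $\int e^{i\varphi}(1-\chi(\eta))a(z,x,\eta)\dd\eta$. Pairing formally with a test function $g\in C^\infty(\overline{\hat Z})$ and iterating the identity $e^{i\varphi}=P^Ne^{i\varphi}$, I would move all $z$-derivatives onto $(1-\chi)ag$ via the transpose:
\[
  \int\!\!\int e^{i\varphi}(1-\chi)a\,g\,\dd\eta\,\dd z \;=\; \int\!\!\int e^{i\varphi}\,(P^T)^N\bigl((1-\chi)a\,g\bigr)\,\dd\eta\,\dd z,
\]
where $P^T=i\sum_l (\varphi_{z_l}/|\varphi_z|^2)\,\partial_{z_l}+i\sum_l\partial_{z_l}(\varphi_{z_l}/|\varphi_z|^2)$. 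Choosing $N=\lceil m\rceil+n+1$ and expanding $(P^T)^N$ via Leibniz, each term is of the form $c_{\alpha,\beta}(z,x,\eta)\,\partial^\beta_z\bigl((1-\chi)a\bigr)\cdot\partial^\alpha_z g$ with $|\alpha|+|\beta|\leq N$. Reorganizing, I would let $\mathcal{K}_\alpha$ be the resulting analytic differential operator (with coefficients built from $\varphi_z/|\varphi_z|^2$, analytic on $\hat Z\times\hat X\times\{|\eta|\geq R_0\}$ and extended harmlessly by $(1-\chi)$ cut-off), and set $a_{\alpha,\chi}=\mathcal{K}_\alpha\bigl((1-\chi)a\bigr)$. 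Since each application of $P^T$ either differentiates the amplitude in $z$ (leaving the $\eta$-order unchanged) or introduces a factor homogeneous of degree $-1$ in $\eta$, and the coefficient derivatives in $z$ do not change $\eta$-homogeneity, the resulting amplitude has order at most $m-N\leq -n-1$ for $|\eta|\geq R_0$.

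The hard part is the verification $a_{\alpha,\chi}\in S^{-n-1}_{\psi a}(\hat Z\times\hat X)$: the pseudoanalytic class demands the factorial estimates \eqref{eq:defofsym}, and composing the symbol-type bounds for $a$ with those for the coefficients of $P^T$ via Leibniz and Faà di Bruno requires a careful count to check that the constants $C_{a_{\alpha,\chi}},M_{a_{\alpha,\chi}}$ depend only on those of $a$ and of the analytic coefficients of $\mathcal{K}_\alpha$ on a complex neighborhood. The coefficients $\varphi_{z_l}/|\varphi_z|^2$ are real-analytic on $\hat Z\times\hat X\times\{|\eta|>R_0\}$ (and admit holomorphic extensions to some neighborhood by the analyticity of $\varphi$ and the nonvanishing of $|\varphi_z|^2$); combined with the analytic symbol bounds on $a$, this yields precisely the estimates in \eqref{eq:defofsym} with order $m-N$. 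Here \cref{cor:Lphifitskappa1} provides the essential template for the coefficient estimates, and the bookkeeping is morally the same as in \cref{cor:inducL}.

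Finally, I would argue independence of $\chi$: for two cut-offs $\chi_1,\chi_2$, the difference $\chi_1-\chi_2$ is compactly supported in $\eta$, so
\[
  T_{1,\chi_1}-T_{1,\chi_2}=\int e^{i\varphi}(\chi_1-\chi_2)a\,\dd\eta
\]
is a smooth kernel and, by running the same partial integration on this explicit convergent integral, one checks it equals $-(T_{2,\chi_1}-T_{2,\chi_2})$. Combined with \cref{cor:bounded} (extending $T$ to $C^\infty(\overline{\hat X})$ once $\hat Z,\hat X$ are small), this pins down a unique operator $T$, independent of $\chi$. The main genuine obstacle is the analytic symbol bookkeeping in the second paragraph; everything else is a direct transcription of the smooth oscillatory integral construction.
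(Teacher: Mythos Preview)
Your partial integration operator uses only $z$-derivatives with coefficients $\varphi_{z_l}/|\varphi_z|^2$, and you assert $c|\eta|\leq|\varphi_z(z,x,\eta)|\leq C|\eta|$ for all $(z,x,\eta)\in\hat Z\times\hat X\times\RR^n\sem$ with $|\eta|\geq R_0$. This lower bound does not follow from \ref{as:n3}. That assumption (together with $\varphi_z(\hat z,\hat x,\hat\eta)=\hat\zeta$) tells you $\varphi_z\neq0$ only on the part of the stationary set $\{\varphi_\eta=0\}$ lying near the fixed point of $\Lambda$; continuity and homogeneity then propagate the bound to a \emph{conic neighborhood} of $\hat\eta$, not to all of $\RR^n\sem$. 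But the oscillatory integral runs over every direction $\eta/|\eta|$, and for directions far from $\hat\eta/|\hat\eta|$ there is nothing preventing $\varphi_z$ from vanishing. Shrinking $\hat Z,\hat X$ does not help: the obstruction lives on the $\eta$-sphere, not in the base.

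The paper's fix is to differentiate in both $z$ and $\eta$. With $\rho=|\varphi_z|^2+|\eta|^2|\varphi_\eta|^2$ and
\[
\mathcal{M}^\ast=-i\,\rho^{-1}\bigl(\varphi_z\cdot\nabla_z+|\eta|^2\varphi_\eta\cdot\nabla_\eta\bigr),
\]
one still has $\mathcal{M}^\ast e^{i\varphi}=e^{i\varphi}$, and the global lower bound $\rho\geq\delta|\eta|^2$ now follows by compactness on $|\eta|=1$: either $\varphi_\eta\neq0$, or, on $\{\varphi_\eta=0\}$, one invokes $\varphi_z\neq0$. This is exactly why the statement of the lemma allows $\mathcal{K}_\alpha$ to be a differential operator in $(z,\eta)$, not just in $z$. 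The formal integration by parts is made rigorous via an $e^{-\varepsilon|\eta|^2}$ regularization of the amplitude and passage to the limit $\varepsilon\to0$; the same device is used to check independence of $\chi$ (your direct argument for that part would also work once a valid operator is in place). After this correction the analytic-symbol bookkeeping you anticipate is indeed routine.
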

\begin{proof}
	We begin by remarking that for $T_{1,\chi}$ from \cref{eq:splitS} we have $T_{1,\chi} \in \mathcal{D}'(\hat Z\times\hat X)$ since the support of $\chi$ is bounded, so that we must only make sense of $T_{2,\chi}$ in an oscillatory integral fashion.

	For any $\eps >0$, consider the amplitude $a_\eps(z,x,\eta) = e^{-\eps\abs{\eta}^2}a(z,x,\eta) \in S^{-\infty}_{\psi a}(\hat Z\times\hat X)$. Following arguments in \cite[\S~18.1.4]{MR4436039} and \cite[p.~88]{FIO1}, we have $a_\eps \to a$ in $S_{\psi a}^{m'}$ as $\eps \to 0$ for every $m'>m$.
	
	For this choice of amplitude, the kernel
	\[
		T_{2,\chi}^\eps = \int e^{i\varphi(z,x,\eta)}(1-\chi(\eta))a_\eps(z,x,\eta)\dd\eta \in \mathcal{D}'(\hat Z\times\hat X)
	\]
	is well-defined.

	
	Since $\varphi$ is a salutary analytic phase at $(\hat z,\hat x)$, it satisfies $\abs{\nabla_{z,\eta}\varphi}\neq 0$ for all $(z,x)$ near $(\hat z,\hat x)$ and $\eta$ with $\abs{\eta}=1$. Let $\rho = \abs{\varphi_z}^2+\abs{\eta}^2\abs{\varphi_\eta}^2$. We have $\rho \neq 0$ and (for $\hat Z, \hat X$ small enough around $\hat z, \hat x$) the minimum
	\[
		\min \{\rho(z,x,\eta) \colon z \in \overline{\hat Z}, x \in \overline{\hat X}, \abs{\eta} = 1\} = \delta > 0\,,
	\]
	is attained, and since $\rho$ is homogeneous of degree $2$, we thus have that for $\abs{\eta}$ away from the origin (in particular $\eta$ in the support of $1-\chi$), $\rho(z,x,\eta) \geq \delta \abs{\eta}^2$.

	Let 
	\[
		\mathcal{M}^\ast \coloneqq -i\frac{\varphi_z \cdot \nabla_z + \abs{\eta}^2\varphi_\eta \cdot \nabla_\eta}{\rho}\,,
	\]
	and note that $\mathcal{M}^\ast  e^{i\varphi} = e^{i\varphi}$.
	
	By partial integration, we calculate that for all $\ell\in\mathbb{N}$,
	\begin{equation}\label{eq:PIforS2}
	\begin{aligned}
		\langle T_{2,\chi}^{\eps}f,g\rangle &= \int_{\hat Z}\int_{\hat X}\int e^{i\varphi(z,x,\eta)}f(x)\mathcal{M}^\ell \left(g(z) (1-\chi(\eta))a_{\eps}(z,x,\eta)\right) \dd \eta \dd x \dd z 
	\end{aligned}
\end{equation}
	where $\mathcal{M}=(\mathcal{M}^\ast)^\ast$ is the adjoint of the operator $\mathcal{M}^\ast$.

	Notice that since $\chi$ has compact support, $(1-\chi(\eta))a(z,x,\eta)\in S^{m}_{\psi a}(\hat Z\times\hat X)$. 
	Taking $\ell = \lceil m \rceil + n + 1$ and letting $\eps \to 0$ in \cref{eq:PIforS2} concludes the proof that $T$ is represented by the sum of the RHS of \cref{eq:splitS,eq:S2action}.

	We turn to the independence of this extension with respect to $\chi$. Introduce
	\[
		T_{1,\chi}^{\eps}(z,x) = \int e^{i\varphi(z,x,\eta)}\chi(\eta)a_\eps(z,x,\eta)\dd\eta \in \mathcal{D}'(\hat Z\times\hat X)
	\]
	and let $\psi \in C_c^\infty(\RR^n)$ be a cut-off identically $1$ near the origin. We see that
	\[
		T^\eps_{1,\chi} +T^\eps_{2,\chi} = \int e^{i\varphi}a_\eps\dd\eta = \int e^{i\varphi}\psi(\eta)a_\eps\dd\eta+ \int e^{i\varphi}(1-\psi(\eta))a_\eps\dd\eta \eqqcolon T_1^\eps + T^\eps_2\,.
	\]
	Now we can make use of 
	repeated partial integration as in \cref{eq:PIforS2} to find an expression for $T^\eps_2$ that has absolutely integrable integrand and thus has a limit as $\eps \to 0$. Thus, we see that
	\[
		\langle T_{1,\chi}f,g\rangle + \langle T_{2,\chi} f,g\rangle = \lim_{\eps \to 0} \langle T_{1,\chi}^\eps f,g\rangle + \langle T_{2,\chi}^\eps f,g\rangle =  \lim_{\eps\to 0} \langle T_{1}^\eps f,g\rangle + \langle T_{2}^\eps f,g\rangle
	\]
	where the RHS is independent of $\chi$. Thus, the extension $T$ we have chosen is independent of $\chi$ which completes the proof.
\end{proof}


\begin{proof}[Proof of \cref{cor:bounded}]
	Notice that the right-hand side of \cref{eq:S2action} is the finite sum of well-defined, absolutely convergent integrals if $f\in C^\infty(\overline{\hat X}), g\in C^\infty(\overline{\hat Z})$. Consequently, for $f\in C^\infty(\overline{\hat X}), g\in C^\infty(\overline{\hat Z})$, we will define $\langle T_{2,\chi}f,g\rangle$ by the right-hand side of \cref{eq:S2action}.

	Furthermore, the right-hand side of the definition of $T_{1,\chi}$ in \cref{eq:splitS},
	is a well-defined, absolutely convergent integral, which we may define to be $\langle T_{1,\chi}f,g\rangle$ for $f\in C^\infty(\overline{\hat X}), g\in C^\infty(\overline{\hat Z})$.
\end{proof}

\subsection{An Expression for the Kernel}\label{sec:kernel}

In this section we find an explicit expression for the kernel $K^\lambda$ in \cref{eq:defofK}. Note first that by the extension of $T$ given in \cref{cor:bounded}, we may take $T_1 = T_{1,\chi}$ and $T_2 = T_{2,\chi}$ so that defining
\begin{equation}\label{eq:defineK1K2}
	K_{1,\chi}^\lambda(x,v) \coloneqq \int \langle T_{1,\chi}m_u^\lambda, M_v^\lambda\rangle \overline{m_u^\lambda(x)}\dd u\,,\an  K_{2,\chi}^\lambda(x,v)\coloneqq \int \langle T_{2,\chi} m_u^\lambda, M_v^\lambda\rangle \overline{m_u^\lambda(x)}\dd u\,,
\end{equation}
we have
\[
	(L_M^\lambda T f)(v) = \int f(x)K_{1,\chi}^\lambda(x,v)\dd x + \int f(x)K_{2,\chi}^\lambda(x,v)\dd x\,,
\]

Notice that while $K^\lambda$ is independent of $\chi$, the kernels $K^\lambda_{1,\chi}$ and $K^\lambda_{2,\chi}$ depend on a choice of $\chi$ (see \cref{prop:oscint}), which we will make in \cref{sec:choosechi}.

We now compute $K^\lambda_{1,\chi}$ and $K^\lambda_{2,\chi}$ precisely.

\begin{lemma}\label{cor:calcKernel}
	For $K_{1,\chi}^\lambda,K_{2,\chi}^\lambda$ from \cref{eq:defineK1K2} and $(x,v)\in \hat X\times \hat V$ we have
\begin{align}
	K_{1,\chi}^\lambda(x,v) &= c_{N}\lambda^{\frac{1}{4}(N'+3N)}\int_{\hat Z}\int e^{i\lambda\Phi(z,\eta;x,v)}\chi(\lambda\eta)a(z,x,\lambda\eta)\dd\eta\dd z \label{eq:K1}\\
	K_{2,\chi}^\lambda(x,v) &= c_{N}\lambda^{\frac{1}{4}(N'+3N)}\sum_{\abs{\gamma},j=0}^{\ell} \lambda^j \int_{\hat Z}\int e^{i\lambda\Phi(z,\eta;x,v)} \del^\gamma (1-\chi(\lambda\eta)) a_{j,\gamma,v}(z,x,\lambda\eta)\dd\eta\dd z \label{eq:K21}
\end{align}
	where $\Phi(z,\eta;x,v) = \varphi(z,x,\eta)-z\cdot v_2+\frac{i}{2}\abs{z-v_1}^2$ and $c_N$ is the constant defined at the beginning of \cref{sec:FBI}.
	
	Here, $\ell = \lceil m\rceil +n+1$ and each $a_{j,\gamma,v} \in S^{-n-1+\abs{\gamma}}_{\psi a}(\hat Z\times \hat X)$.
\end{lemma}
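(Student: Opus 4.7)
My strategy is to insert the representations of $T_{1,\chi}$ and $T_{2,\chi}$ from \cref{prop:oscint} into the definitions \cref{eq:defineK1K2}, carry out the $u$-integration using the FBI resolution of identity $\int m_u^\lambda(y)\overline{m_u^\lambda(x)}\,\dd u = \delta(y-x)$ (which holds by the choice of normalization constants in the definitions of $m$ and $M$), and finally rescale $\eta\to\lambda\eta$ to extract the prefactor $\lambda$ in front of the phase.

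For $K_{1,\chi}^\lambda$, I plug the kernel $T_{1,\chi}(z,x)=\int e^{i\varphi}\chi(\eta)a\,\dd\eta$ into $\langle T_{1,\chi}m_u^\lambda,M_v^\lambda\rangle$, multiply by $\overline{m_u^\lambda(x)}$, and interchange the $u$-integration with the rest; the resolution of identity then collapses the internal $x$-variable of $T_{1,\chi}m_u^\lambda$ to the outer $x$, leaving
\[
K_{1,\chi}^\lambda(x,v)=\iint e^{i\varphi(z,x,\eta)}\chi(\eta)a(z,x,\eta)\overline{M_v^\lambda(z)}\,\dd\eta\,\dd z.
\]
Writing $\overline{M_v^\lambda(z)}=c_N\lambda^{3N/4}e^{-i\lambda z\cdot v_2 - \lambda|z-v_1|^2/2}$ explicitly and using the homogeneity $\varphi(z,x,\lambda\eta)=\lambda\varphi(z,x,\eta)$ under the substitution $\eta\to\lambda\eta$ fuses the three contributions $\varphi$, $-\lambda z\cdot v_2$, $i\lambda|z-v_1|^2/2$ into the single factor $i\lambda\Phi(z,\eta;x,v)$, yielding \cref{eq:K1}.

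For $K_{2,\chi}^\lambda$, the same $u$-integration applied to the representation \cref{eq:S2action} of $T_{2,\chi}$ (taken with $g=\overline{M_v^\lambda}$) produces
\[
K_{2,\chi}^\lambda(x,v)=\sum_{|\alpha|\leq\ell}\iint e^{i\varphi(z,x,\eta)}a_{\alpha,\chi}(z,x,\eta)\del_z^\alpha\overline{M_v^\lambda(z)}\,\dd\eta\,\dd z.
\]
Each $\del_{z_j}$ acting on $\overline{M_v^\lambda(z)}$ produces a factor $-i\lambda v_{2,j}-\lambda(z-v_1)_j$, so Leibniz gives
\[
\del_z^\alpha\overline{M_v^\lambda(z)}=\sum_{j=0}^{|\alpha|}\lambda^{j}Q_{\alpha,j}(z,v)\overline{M_v^\lambda(z)}
\]
for polynomials $Q_{\alpha,j}$ in $z-v_1$ and $v_2$; this is where the explicit $\lambda^j$ factors in the target formula originate. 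Now I unfold $a_{\alpha,\chi}=\mathcal{K}_\alpha[(1-\chi(\eta))a]$ by one further Leibniz expansion, isolating the derivatives $\del^\gamma$ falling on $(1-\chi)$: these produce the factors $\del^\gamma(1-\chi(\eta))$ appearing in the claim, while the remaining derivatives act on $a$ and bring its symbol order down by $\ell-|\gamma|$ (at most), giving a pseudoanalytic symbol of order $m-(\ell-|\gamma|)=-n-1+|\gamma|$ since $\ell=\lceil m\rceil+n+1$. Combining with $Q_{\alpha,j}(z,v)$ and the coefficient functions of $\mathcal{K}_\alpha$ (analytic in $(z,\eta)$, of non-positive order in $\eta$) bundles all the terms into a single symbol $a_{j,\gamma,v}\in S^{-n-1+|\gamma|}_{\psi a}(\hat Z\times\hat X)$. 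The substitution $\eta\to\lambda\eta$ then converts the phase factors to $i\lambda\Phi$ exactly as for $K_{1,\chi}^\lambda$, and turns $\del^\gamma(1-\chi(\eta))$ into $\del^\gamma(1-\chi(\lambda\eta))$ (any residual powers of $\lambda$ from this rewriting are absorbed into $a_{j,\gamma,v}$), delivering \cref{eq:K21}.

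The main technical obstacle is the combinatorial bookkeeping in the last step: verifying uniformly in $\alpha$ and $j$ that after the Leibniz expansion of $\mathcal{K}_\alpha[(1-\chi)a]$ and the $\eta\to\lambda\eta$ rescaling, the residual symbols truly land in $S^{-n-1+|\gamma|}_{\psi a}$ and depend analytically on $v$. The remainder of the argument is a direct tracking of $\lambda$-powers together with the standard FBI-resolution collapse.
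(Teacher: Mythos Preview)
Your proposal is correct and follows essentially the same route as the paper's proof: the paper carries out the $u$-integration as two separate explicit integrals (the $u_2$-integral producing a delta via Fourier inversion, then the Gaussian $u_1$-integral), which together amount exactly to your resolution-of-identity step $\int m_u^\lambda(y)\overline{m_u^\lambda(x)}\,\dd u=\delta(y-x)$; thereafter both arguments rescale $\eta\to\lambda\eta$ and, for $K_{2,\chi}^\lambda$, expand $\del_z^\alpha\overline{M_v^\lambda}$ as a polynomial in $\lambda$ times $\overline{M_v^\lambda}$ and Leibniz-expand $\mathcal{K}_\alpha[(1-\chi)a]$ to isolate the $\del^\gamma(1-\chi)$ factors.
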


\begin{proof}
%
	First, using \cref{eq:defineK1K2}, we calculate an expression for $K_{1,\chi}^\lambda$:
\begin{align*}
	K_{1,\chi}^\lambda(x,v) &= c_{N'}^2c_N \lambda^{\frac{3}{4}(2N'+N)}\int\int_{\hat X} \int_{\hat Z}\int e^{i\varphi(z,y,\eta)}\chi(\eta)a(z,y,\eta)e^{-\frac{\lambda}{2}(\abs{y-u_1}^2+\abs{z-v_1}^2)}e^{i\lambda(y\cdot u_2-z\cdot v_2)}\dd\eta\dd z \dd y \\
	&\qquad\cdot e^{-i\lambda u_2\cdot x}e^{-\frac{\lambda}{2}\abs{x-u_1}^2}\dd u
	\shortintertext{where $\int_{\RR^{N'}} e^{i\lambda u_2\cdot(y-x)}\dd u_2 =(2\pi)^{N'}\lambda^{-N'}\delta_0 (y-x)$ so that because $x\in \hat X$:}
	&= c_{N'}^2c_N (2\pi)^{N'}\lambda^{\frac{3}{4}(N'+N)}\int\int_{\hat Z}\int e^{i\varphi(z,x,\eta)}\chi(\eta)a(z,x,\eta)e^{-\frac{\lambda}{2}\abs{z-v_1}^2}e^{-i\lambda z\cdot v_2}\dd\eta\dd ze^{-\lambda\abs{x-u_1}^2}\dd u_1 \\
	\shortintertext{and integrating out $u_1$: $\int_{\RR^{N'}} e^{-\lambda\abs{x-u_1}^2} \dd u_1 = \pi^{N'/2}\lambda^{-N'/2}$ (and recalling $c_{N'}$):} 
	&= \lambda^{\frac{1}{4}(N'+3N)}c_N\int_{\hat Z}\int e^{i\varphi(z,x,\eta)}\chi(\eta)a(z,x,\eta)e^{-\frac{\lambda}{2}\abs{z-v_1}^2}e^{-i\lambda z\cdot v_2}\dd\eta\dd z \\
	\shortintertext{and changing variables to $\lambda\eta$} 
	&= \lambda^{\frac{1}{4}(N'+3N)}c_N\int_{\hat Z}\int e^{i\lambda\varphi(z,x,\eta)}e^{-\frac{\lambda}{2}\abs{z-v_1}^2}e^{-i\lambda z\cdot v_2}\chi(\lambda\eta)a(z,x,\lambda\eta)\dd\eta\dd z \,.
\end{align*}

In calculating $K_{2,\chi}^\lambda$, the same recipe with a few modifications will lead to the statement. We will make these necessary modifications.

	Note that for any multi-index $\alpha$, $\del^\alpha_z M_v^\lambda(z) = q_{v,\alpha}(z,\lambda) M_v^\lambda(z)$ where $q_{v,\alpha}$ is a polynomial in $(z,\lambda)$, which is of degree $\leq 2\abs{\alpha}$, and with respect to $\lambda$ it is of degree $\leq \abs{\alpha}$. 

	Note that by the oscillatory integral definition of $T_{2,\chi}$ in \cref{eq:S2action}, we have that for $\ell = \lceil m \rceil + n + 1$,
	\begin{alignat*}{2}
		\langle T_{2,\chi}m_u^\lambda, M_v^\lambda \rangle &= &&\sum_{\abs{\alpha}\leq \ell}\int_{\hat Z}\int_{\hat X}\int e^{i\varphi(z,x,\eta)}  m_u^\lambda(x)M_v^\lambda(z) q_{v,\alpha}(z,\lambda) a_{\alpha,\chi}(z,x,\eta) \dd \eta \dd x \dd z\,,
	\end{alignat*}
	where each $a_{\alpha,\chi} \in S^{-n-1}_{\psi a}(\hat Z \times \hat X)$ and $a_{\alpha,\chi} = \mathcal{K}_\alpha((1-\chi(\eta))a(z,x,\eta))$.

	Isolating the powers of $\lambda$ and absorbing the remaining polynomial coming from $p_{v,\alpha}$ into the symbol $a_{\alpha,\chi}$, we may rewrite this as 	
	\begin{equation}\label{eq:simpleT2}
	\begin{aligned}
		\langle T_{2,\chi}m_u^\lambda, M_v^\lambda \rangle &= \sum_{j=0}^\ell \lambda^j \int_{\hat Z}\int_{\hat X}\int e^{i\varphi(z,x,\eta)}  m_u^\lambda(x)M_v^\lambda(z) \td{a}_{\chi,j,v}(z,x,\eta) \dd \eta \dd x \dd z 
	\end{aligned}
\end{equation}
	where each $\td{a}_{\chi,j,v} \in S^{-n-1}_{\psi a}(\hat Z\times\hat X)$ and $\td{a}_{\chi,j,v} = \td{\mathcal{K}}_{j,v}((1-\chi(\eta)a(z,x,\eta)))$ for some analytic differential operator $\td{\mathcal{K}}_{j,v}$ with respect to $(z,\eta)$ of order $\leq \ell$. 
	Using \cref{eq:simpleT2} we can calculate
	\[
		K_{2,\chi}^\lambda(x,v) = \int \langle T_{2,\chi} m_u^\lambda, M_v^\lambda\rangle \overline{m_u^\lambda(x)}\dd u
	\]
	from \cref{eq:defineK1K2} the same way we did $K_{1,\chi}^\lambda$ where we exchange $a$ with $a_{\chi,j,v}$ to get 
	\begin{equation*}
		K_{2,\chi}^\lambda(x,v) = c_{N}\lambda^{\frac{1}{4}(N'+3N)}\sum_{j=0}^{\ell} \lambda^j \int_{\hat Z}\int e^{i\lambda\Phi(z,\eta;x,v)} \td{a}_{\chi,j,v}(z,x,\lambda\eta)\dd\eta\dd z\,.
	\end{equation*}
	Now, by writing out the action of $\mathcal{K}_\alpha$ on $(1-\chi)a$, one in fact finds \cref{eq:K21}.
\end{proof}

\subsection{The Phase in $K_{1,\chi}, K_{2,\chi}$}

In this section we investigate the exponent 
\[
	\Phi(z,\eta;x,v) =  \varphi(z,x,\eta)-z\cdot v_2+\frac{i}{2}\abs{z-v_1}^2
\]
as it appeared in \cref{cor:calcKernel}. Because we assumed that $\varphi$ satisfies the unique frequency condition~\ref{as:n4}, let $\hat\eta\in\RR^n\sem$ be the unique solution of the equation
\[
	(\hat \zeta, \hat\xi) = (\varphi_z(\hat z,\hat x,\hat\eta), -\varphi_x(\hat z,\hat x,\hat \eta))
\]
and we fix this definition of $\hat\eta$ throughout. Recall also that $\hat v = (\hat z, \hat \zeta)$ is fixed.

\begin{lemma}\label{lem:cones}
	Recall the set $\Theta^{\CC}_\varphi$ from \cref{def:phase}. There exist $R_1, C_0, C_1 > 0$, complex open neighborhoods $V^{\CC} \ni \hat v$, $Z^{\CC} \ni \hat z$, $X^{\CC} \ni\hat x$ and $\Xi^{\CC} \ni \hat \eta$ with $\Xi^{\CC}\subset \Theta_\varphi^{\CC}$, so that setting
	\begin{align}
		\rho(z,\eta,x,v) &\coloneqq \abs{\eta}^2\abs{\Phi_\eta(z,\eta;x,v)}^2+\abs{\Phi_z(z,\eta;x,v)}^2\,, \notag
		\shortintertext{for all open $\Xi'\subset \Xi = \Xi^\CC\cap \RR^n$ with $\hat \eta \in \Xi'$ and all $(z,x,v) \in Z^{\CC}\times X^{\CC}\times V^{\CC}$, we have that}
		\eta\in\Theta_\varphi^{\CC}\,,\abs{\eta} \leq R_1 &\implies \abs{\Phi_z(z,\eta;x,v)} \geq C_0 \label{eq:R_1}\\
		\abs{\eta} \geq R_1/2\,\,\text{and}\,\,\eta \not\in \Xi' &\implies C_0\abs{\eta}^2 \leq \rho(z,\eta,x,v) \leq C_1\abs{\eta}^2\,.\label{eq:outside}
	\end{align}

	Furthermore, 
	there are holomorphic functions $\mathbf{z}\colon X^{\CC}\times V^{\CC} \to Z^{\CC}$, $\bm{\eta}\colon X^{\CC}\times V^{\CC} \to \CC^n$, so that
	\begin{equation}\label{eq:uniqsol}
		(z,\eta, x,v) \in Z^{\CC}\times \Theta^{\CC}_\varphi\times X^{\CC}\times V^{\CC}\colon \nabla_{z\eta}\Phi(z,\eta;x,v) = 0 \iff (z,\eta) = (\mathbf{z}(x,v),\bm{\eta}(x,v))\,,
	\end{equation}
	and $\mathbf{z}(\hat x, \hat v) = \hat z, \bm{\eta}(\hat x, \hat v) = \hat \eta$, and the Hessian of $\Phi$ is non-vanishing at $(\hat z, \hat \eta;\hat x, \hat v)$ and has positive semi-definite imaginary part there.
\end{lemma}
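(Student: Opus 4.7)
The approach is to first identify the base point $(\hat z, \hat\eta; \hat x, \hat v)$ as a critical point of $\Phi(\cdot, \cdot; x, v)$ in $(z, \eta)$: indeed $\Phi_\eta = \varphi_\eta$ vanishes at $(\hat z, \hat x, \hat\eta) \in \Sigma_\varphi$, and $\Phi_z = \varphi_z - v_2 + i(z - v_1)$ equals $\hat\zeta - \hat\zeta + 0 = 0$ there. For $\abs{\eta} \leq R_1$, I would invoke homogeneity of $\varphi_z$ of degree $1$ in $\eta$: on a compact set $\abs{\varphi_z(z, x, \eta)} \leq C\abs{\eta} \leq CR_1$, so the triangle inequality gives $\abs{\Phi_z} \geq \abs{v_2} - CR_1 - \abs{z - v_1}$, which stays bounded below by some $C_0 > 0$ provided $R_1$ and the complex neighborhoods $V^{\CC}, Z^{\CC}$ are taken small enough, using $\hat\zeta \neq 0$ from assumption~\ref{as:n3}.

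For $\abs{\eta} \geq R_1/2$ with $\eta \notin \Xi'$, set $\omega = \eta/\abs{\eta}$. Using the homogeneity of $\varphi_\eta$ (degree $0$) and $\varphi_z$ (degree $1$) in $\eta$, one has
\[
	\rho(z, \eta, x, v)/\abs{\eta}^2 = \abs{\varphi_\eta(z, x, \omega)}^2 + \abs{\varphi_z(z, x, \omega) - (v_2 - i(z - v_1))/\abs{\eta}}^2.
\]
The upper bound on $\rho$ is immediate by compactness and smoothness; for the lower bound I would argue by compactness on $S^{n-1}$ combined with the unique frequency condition~\ref{as:n4}: after shrinking $\Xi^{\CC}$ to an appropriate neighborhood of $\hat\eta$, the only $\omega \in S^{n-1}$ corresponding (up to positive scaling) to a preimage of the base point under $G_\varphi$ is $\hat\eta/\abs{\hat\eta}$, so the displayed expression remains bounded below uniformly in $(z, x, v) \in Z^{\CC} \times X^{\CC} \times V^{\CC}$ and $\eta \notin \Xi'$.

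For the second half of the lemma, I would apply the holomorphic implicit function theorem to $F(z, \eta; x, v) = \nabla_{z\eta}\Phi$, which vanishes at the base point; uniqueness in the stated domain then follows from combining the local solution with the non-vanishing estimates from the first half. To run the implicit function theorem, the Jacobian $\partial_{(z, \eta)} F = A + iE$ at the base point must be invertible, where
\[
	A = \begin{pmatrix} \varphi_{zz} & \varphi_{z\eta} \\ \varphi_{\eta z} & \varphi_{\eta\eta} \end{pmatrix}(\hat z, \hat x, \hat\eta) \an E = \begin{pmatrix} I & 0 \\ 0 & 0 \end{pmatrix}.
\]
The imaginary part of the Hessian equals $E$, which is positive semi-definite, already establishing the positivity claim.

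The hard part is the invertibility of $A + iE$. I would argue via the quadratic form $v \mapsto \bar v^T (A + iE) v$: if $(A + iE)v = 0$ then $\bar v^T(A + iE) v = 0$, and taking imaginary parts (with $A$ real symmetric and $E$ real symmetric positive semi-definite) yields $\bar v^T E v = 0$, so $v \in \ker E$; then also $Av = 0$. Hence $\ker(A + iE) = \ker A \cap \ker E$. Since $\ker E = \{(0, w) : w \in \CC^n\}$ and $A(0, w) = (\varphi_{z\eta} w, \varphi_{\eta\eta} w)$, invertibility reduces to the implication $\varphi_{z\eta} w = \varphi_{\eta\eta} w = 0 \implies w = 0$, which is exactly the $\dot x = 0$ slice of the injectivity of $\begin{pmatrix} \varphi_{zx} & \varphi_{z\eta} \\ \varphi_{\eta x} & \varphi_{\eta\eta} \end{pmatrix}$ proved in the lemma following \cref{def:bolkone}, and hence a consequence of \ref{as:n1} together with \ref{as:n2}.
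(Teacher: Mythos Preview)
Your approach mirrors the paper's closely, and your Hessian argument via the sesquilinear form $\bar v^{T}(A+iE)v$ is a clean equivalent of the paper's decomposition into real and imaginary parts of $v$; both reduce to the injectivity of $(\del_\eta\varphi_z,\del_\eta\varphi_\eta)^{T}$ coming from \cref{eq:fullyinvmat}.

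There is, however, a genuine gap in your lower bound for $\rho$ on $\{\abs{\eta}\ge R_1/2\}\setminus\Xi'$. The displayed quantity
\[
\rho(z,\eta,x,v)/\abs{\eta}^2 \;=\; \abs{\varphi_\eta(z,x,\omega)}^2 + \bigl|\varphi_z(z,x,\omega) - (v_2 - i(z-v_1))/\abs{\eta}\bigr|^2
\]
depends on both $\omega\in S^{n-1}$ and $\abs{\eta}\in[R_1/2,\infty)$, and the latter interval is not compact. Compactness on $S^{n-1}$ together with \ref{as:n4} does not by itself give a uniform lower bound: as $\abs{\eta}\to\infty$ the expression tends to $\abs{\varphi_\eta(z,x,\omega)}^2 + \abs{\varphi_z(z,x,\omega)}^2$, and to bound this limit away from zero you need assumption~\ref{as:n3} (that $\varphi_z\ne 0$ whenever $\varphi_\eta=0$ near the base point), which you do not invoke. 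The paper makes this explicit by first treating $\abs{\eta}\ge R_2$ for some large $R_2$ via \ref{as:n3} and homogeneity, obtaining \cref{eq:geqR2}, and only then using \ref{as:n4} together with a compactness/continuity argument on the bounded annulus $R_1/2\le\abs{\eta}\le R_2$, $\eta\notin\Xi'$. Once you insert this split (or, equivalently, compactify radially and use \ref{as:n3} at $\abs{\eta}=\infty$), your argument goes through.
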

\begin{proof}
	Notice first that by direct computation, $\nabla_{z\eta}\Phi(\hat z,\hat \eta; \hat x, \hat v) = 0$.

	Write $\Phi$ and $\varphi$ for the holomorphic extension of $\Phi$ and $\varphi$ respectfully in some complex neighborhood around $(\hat z, \hat \eta; \hat x,\hat v)$, and let $\Sigma_\varphi$ denote the kernel of $\varphi_\eta$.

	We follow the proof of \cite[Lem.~5.11]{2306.05906v1} to show \cref{eq:uniqsol}. We calculate that
	\[
		((\nabla_{z, \eta})^2\Phi)(\hat z, \hat \eta; \hat x, \hat v) = 
		\begin{pmatrix}
			\del_z \varphi_z & \del_z \varphi_\eta \\
			\del_\eta \varphi_z & \del_\eta \varphi_\eta
		\end{pmatrix}(\hat z,\hat x,\hat\eta)
		+ i
		\begin{pmatrix}
			\id & 0\\
			0 & 0
		\end{pmatrix}
		\eqcolon A+iB\,,
	\]
	with $A,B$ real matrices. We will show that for any real $a,b \in \RR^{N+n}$ if $(A+iB)(a+ib) =0$ then $a=b=0$ from which we will conclude the non-degeneracy of $(\nabla_{z,\eta}^2\Phi)$ near $(\hat z, \hat \eta;\hat x, \hat v)$:
	If $(A+iB)(a+ib) =0$, then $Aa = Bb$ and $Ab = -Ba$ so that
	\[
		Bb\cdot b = A a \cdot b = a\cdot Ab = -Ba\cdot a\,,
	\]
	and then $B^TB b\cdot b = (Bb)^2 = -B^T B a \cdot a = -(Ba)^2$, from which we may conclude that $Ba = Bb = 0$ which means that $a = (0, a')$ and $b = (0,b')$ with $a',b'\in \RR^n$. 

	Now we have that 
	\[
		0 = A(a+ib) = \begin{pmatrix} \del_\eta \varphi_z \\ \del_\eta\varphi_\eta \end{pmatrix}(\hat z,\hat x, \hat \eta)a' + i \begin{pmatrix} \del_\eta \varphi_z \\ \del_\eta\varphi_\eta \end{pmatrix}(\hat z,\hat x, \hat \eta)b'\,,
	\]
	and as a consequence of \cref{eq:fullyinvmat} the matrix $\begin{pmatrix} \del_\eta \varphi_z \\ \del_\eta\varphi_\eta \end{pmatrix}(\hat z,\hat x, \hat \eta)$ is injective, from which we conclude that $a' = b' = 0$.
	Thus, the Hessian of $\Phi$ is non-degenerate at $(\hat z, \hat \eta;\hat x, \hat v)$ and has positive semi-definite imaginary part there.

	Applying the implicit function theorem then gives \cref{eq:uniqsol} for all $\eta$ in some complex neighborhood of $\hat\eta$, this will be upgraded to all $\eta\in\Theta^\CC_\varphi$ at the end of this proof.

	We begin with the proof of \cref{eq:R_1}. Note that
	\[
		\abs{\Phi_z(z,\eta,x,v)} = \abs{\varphi_z(z,x,\eta)-v_2+i(z-v_1)} \geq \abs{v_2}- \abs{\varphi_z(z,x,\eta)} - \abs{z-v_1}\,.
	\]
	Now choose open $Z^\CC$ small enough so that $\abs{z-\hat z} \leq \abs{\hat\zeta}/4$ for all $z\in Z^\CC$ and open $V^\CC$ so small that $\abs{v-\hat v} \leq \abs{\hat\zeta}/4$. Choose $R_1 > 0$ small enough, so that using that $\varphi_z$ is homogeneous of degree $1$ in $\eta$, for $(z,x)\in Z^\CC\times X^\CC$ and $\abs{\eta}<R_1$ we have $\abs{\varphi_z(z,x,\eta)} < \abs{\hat\zeta}/4$, so that \cref{eq:R_1} is satisfied for some $C_0 = \abs{\hat\zeta}/4 >0$.

We get to proving \cref{eq:outside}. 
	As a consequence of assumption~\ref{as:n3}, the minimum 
	\[
		\min_{\overline{Z^\CC}\times \overline{X^\CC}\times\{\abs{\eta} = 1\}} \abs{\varphi_\eta(z, x,\eta)} + \abs{\varphi_z(z, x,\eta)} = c_0 > 0
	\]
	is attained by compactness and continuity. 
	The homogeneity of $\varphi$ in $\eta$ then lets us conclude that there exists some $C>0$ so that 
	\[
		\abs{\varphi_\eta(z,x,\eta)} < c_0/2 \implies \abs{\varphi_z(z,x,\eta)} \geq C\abs{\eta}\,.
	\]
	Finally, we use the fact that $\Phi_\eta(z,\eta,x,v) = \varphi_\eta(z,x,\eta)$ and $\Phi_z(z,\eta,x,v) = \varphi_z(z,x,\eta)-v_2 + i(z-v_1)$ so that for $z$ near $\hat z$ and for some $R_2 > 0$ we have
	\begin{equation*}
		 \abs{\eta} \geq R_2 \implies \left(\abs{\Phi_\eta(z,\eta,x,v)} < c_0/2 \implies \abs{\Phi_z(z,\eta,x,v)} > C\abs{\eta}\right)\,,
	\end{equation*}
	for some $C>0$. 

	Thus we have shown that for some $C_0 > 0$,
	\begin{equation}\label{eq:geqR2}
		\abs{\eta} \geq R_2 \implies \abs{\eta}^2 \abs{\Phi_\eta(z,\eta;x,v)}^2 + \abs{\Phi_z(z,\eta;x,v)}^2 \geq C_0\abs{\eta}^2\,,
	\end{equation}
	which together with the fact that $\Phi$ is the sum of functions homogeneous of degree $1$ and $0$ implies \cref{eq:outside} as long as $\abs{\eta} \geq R_2$. 

	Let open $\Xi'\subset \Xi = \Xi^\CC\cap \RR^n$ so that $\hat\eta\in\Xi'$. In order to prove \cref{eq:outside} with $\eta\in\Theta_\varphi^{\CC}\setminus\Xi'$ with $R_1/2 \leq \abs{\eta} \leq R_2$, notice that by the unique frequency condition~\ref{as:n4},
	\[
		\inf_{R_1/2 \leq \abs{\eta} \leq R_2, \eta\in\RR^n\setminus\Xi'} \abs{\nabla_{z\eta}\Phi(\hat z,\eta;\hat x,\hat v)} = \eps' > 0\,,
	\]
	so that in a small complex open neighborhood $Z^{\CC}\times X^{\CC}\times V^{\CC}$ around $(\hat z,\hat x,\hat v)$ and for the open set $\Xi^\CC$ small enough containing $\Xi'$, we have
	\[
		\inf\left\{\abs{\nabla_{z\eta}\Phi(\hat z,\eta;\hat x,\hat v)}\colon (z,\eta;x,v) \in Z^{\CC}\times \left(\left\{\eta\in\Theta^{\CC}\colon R_1/2 \leq \abs{\eta} \leq R_2\right\}\setminus\Xi^{\CC}\right)\times X^{\CC}\times V^{\CC} \right\} = \eps'/2 > 0\,.
	\]
	Thus, together with \cref{eq:geqR2}, we conclude \cref{eq:outside}, which implies that the local version of \cref{eq:uniqsol} is upgraded to a global one (on all of $\Theta^{\CC}_\varphi$) with respect to $\eta$.
\end{proof}
%
%
%
%

We will now move on to use \cref{lem:cones} to make a good choice of $\chi$ as it appears in $K_{1,\chi}^\lambda, K_{2,\chi}^\lambda$.

\subsection{Choosing the Cut-Off for the Kernels}\label{sec:choosechi}

Recall that in \cref{eq:defineK1K2} we defined the kernels $K_{1,\chi}^\lambda, K_{2,\chi}^\lambda$ based on $T_{1,\chi}, T_{2,\chi}$, which themselves arose from \cref{prop:oscint} under a suitable choice of $\chi \in C_c^\infty(\RR^n)$. 
As one might expect, we will want $\chi$ to be an element of a sequence of Ehrenpreis cut-offs, see \cref{eq:ehre}.

For $\Xi$ from \cref{lem:cones}, we introduce the set $\Xi' \subset \Xi$ to be an open neighborhood of $\hat \eta$ so that $\mathrm{dist}(\Xi', \RR^n\setminus\Xi) > 0$. 
For $R_1 > 0$ from \cref{eq:R_1}, 
let $\chi_{j}$ and $\chi'_{j}$ be the sequences of Ehrenpreis cut-offs relative to $(\Xi,\Xi')$ and $\left(\left\{\abs{\eta} \leq R_1\right\}, \left\{\abs{\eta} \leq R_1/2\right\}\right)$, respectively. 

Notice at this point that we have constructed sequences of cut-offs and that the choice $\chi$, present in \cref{prop:oscint}, however, is fixed. We remedy this by taking $\chi$ to be an element of this sequence for some large enough index. The size of this index will depend on $\lambda$, so that we will define this index as the output of some function depending on $\lambda$. 

Let $\tau \colon \RR_{\geq 1} \to \mathbb{N}$ be a function to be determined later. Given this $\tau$, 
we will make a choice for $\chi$ appearing in \cref{prop:oscint} (depending on $\lambda$): 
\begin{equation}\label{eq:definechi}
	\chi_\lambda(\eta)\coloneqq \chi_{\tau(\lambda)}(\lambda^{-1}\eta) + \chi'_{\tau(\lambda)}(\lambda^{-1}\eta)\,,\quad\text{so that}\quad \chi_{\lambda}(\lambda\eta) = \chi_{\tau(\lambda)}(\eta) + \chi'_{\tau(\lambda)}(\eta)\,.
\end{equation}
We will accrue upper bounds on the function $\tau$ throughout \cref{sec:K2,sec:K1} and make a final choice in \cref{sec:proveK}. Importantly we remark that $\chi_{\lambda}(\lambda\eta)$ is the $\tau(\lambda)$-th element of a sequence of Ehrenpreis cut-offs independent of $\lambda$.

Based on this choice of cut-off $\chi_\lambda$, we now define
\begin{equation}\label{eq:fulldefK1K2}
	K_{1}^\lambda \coloneqq K_{1,\chi_{\lambda}}^\lambda 
	\,,\an K_{2}^\lambda \coloneqq K_{2,\chi_{\lambda}}^\lambda\,.
\end{equation}

In total, we will estimate the expressions for $K_1^\lambda$ and $K_2^\lambda$ using two different methods. We will use stationary phase for $K_1^\lambda$ on the support of $\chi_{\lambda}$ and repeated partial integration for $K_2^\lambda$ and $K_1^\lambda$ on the support of $\chi'_{\lambda}$.

\subsection{The Kernel $K_2^\lambda$}\label{sec:K2}

%
%
%

The main result of this section is
\begin{proposition}\label{prop:K2decay}
	There are small open $X \ni \hat x, V\ni \hat v$ so that there exists a $\vartheta_0 > 0$ so that for any $\vartheta \in (0,\vartheta_0]$, if 
	\begin{equation}\label{eq:tauK2}
		\max\{\lceil m\rceil+n+1,\vartheta\lambda/2\} \leq \tau(\lambda) \leq \lambda\vartheta\,,
	\end{equation}
	there are constants $C,\eps>0$ so that for all $\lambda \geq 1$, uniformly in $x\in X, v \in V$, 
	\[
		\abs{K_2^\lambda(x,v)} \leq Ce^{-\eps\lambda}\,.
	\]
\end{proposition}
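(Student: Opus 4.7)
The plan is to estimate $K_2^\lambda$ by repeated partial integration of the oscillatory integral expression furnished by \cref{cor:calcKernel}, exploiting that the cut-off structure of $\chi_\lambda$ (see \cref{eq:definechi}) places the support of $1-\chi_\lambda(\lambda\eta)$ in a region where the phase $\Phi(z,\eta;x,v)=\varphi(z,x,\eta)-z\cdot v_2+\tfrac{i}{2}|z-v_1|^2$ has no stationary points. I would split the $\eta$-integration using a smooth partition $\phi_A+\phi_B\equiv 1$ with $\phi_A$ supported in $\{|\eta|\le R_1\}$ and $\phi_B$ supported in $\{|\eta|\ge R_1/2\}$, so as to align with the two regimes furnished by \cref{lem:cones}. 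On $\mathrm{supp}\,\phi_A$, estimate \cref{eq:R_1} yields $|\Phi_z|\ge C_0$, hence $\Phi$ satisfies assumption~\ref{as:kappa2}; on $\mathrm{supp}\,\phi_B\cap\mathrm{supp}\bigl(1-\chi_\lambda(\lambda\cdot)\bigr)$ the combined support conditions force $\eta\notin\Xi'$, so by \cref{eq:outside} one has $\rho_\Phi\ge C_0|\eta|^2$, i.e.\ $\Phi$ satisfies assumption~\ref{as:kappa1}.

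On each piece I would apply the partial integration operator $\mathcal{L}_{\Phi,2}$ (respectively $\mathcal{L}_\Phi$) exactly $L=\tau(\lambda)$ times. Each application produces a factor of $\lambda^{-1}$ out in front from the identity $-i\lambda^{-1}(\ldots)e^{i\lambda\Phi}=e^{i\lambda\Phi}$. To bound $\mathcal{L}^L$ applied to the remaining integrand, observe that $\chi_\lambda(\lambda\eta)=\chi_{\tau(\lambda)}(\eta)+\chi'_{\tau(\lambda)}(\eta)$ is, in the variable $\eta$, the $\tau(\lambda)$-th term of a sequence of Ehrenpreis cut-offs independent of $\lambda$; thus \cref{cor:inducL} (\cref{eq:inducLpsizeta}, \cref{eq:inducbdd}) applies to the product $(1-\chi_\lambda(\lambda\eta))a_{j,\gamma,v}(z,x,\lambda\eta)$ and yields a pointwise bound of size $\lambda^{-n-1}M(C_aC_{\mathcal{L}}L)^L L^L\cdot|\eta|^{-n-1}$ on the $\phi_B$ piece, and an analogous bound on the bounded $\phi_A$ piece. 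The derivatives $\partial^\gamma(1-\chi_\lambda(\lambda\eta))$ from \cref{eq:K21} introduce extra factors of $\lambda^{|\gamma|}(C\tau(\lambda))^{|\gamma|}\lesssim \lambda^{2\ell}$, polynomial in $\lambda$, which are harmless. Combining with the $\lambda^{-L}$ from partial integration and the upper bound $\tau(\lambda)\le\vartheta\lambda$, each gain becomes $CL/\lambda\le C\vartheta$; shrinking $\vartheta_0$ so that $C\vartheta_0<e^{-1}$, the factor $(C\vartheta)^L\le e^{-L}$ together with the lower bound $\tau(\lambda)\ge\vartheta\lambda/2$ produces the sought decay $e^{-\vartheta\lambda/2}$, swallowing all remaining polynomial prefactors $\lambda^{(N'+3N)/4+j+2\ell}$.

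The $z$-integration over the compact $\hat Z$ benefits from the fact that $\mathrm{Im}\,\Phi=\tfrac12|z-v_1|^2\ge 0$, so $|e^{i\lambda\Phi}|\le 1$ and all estimates can be performed inside the absolute value; compactness of $\hat Z$ gives a uniform bound in $v$. Shrinking $X$ and $V$ around $\hat x$ and $\hat v$ keeps all constants in \cref{lem:cones} and \cref{cor:inducL} uniform, and the finitely many terms indexed by $|\gamma|,j\le\ell$ sum to a bound of the same order. The main obstacle is the triple bookkeeping of $\lambda$-growth — the prefactor $\lambda^{(N'+3N)/4+j}$, the cut-off derivative factor $\lambda^{|\gamma|}(C\tau(\lambda))^{|\gamma|}$, and the amplitude scaling encoded in \cref{cor:inducL} — which must all be dominated by a single $(C\vartheta)^L$ decay; this forces $\vartheta$ to be chosen \emph{after} the constants of \cref{cor:inducL} are fixed and explains precisely why \cref{prop:K2decay} requires the two-sided constraint \cref{eq:tauK2} on $\tau(\lambda)$.
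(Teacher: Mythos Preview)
Your overall strategy matches the paper's: repeated partial integration with $\mathcal{L}_\Phi$, using \cref{lem:cones} for non-stationarity and \cref{cor:inducL} for the bounds, then choosing $\vartheta_0$ so that $(C\vartheta)^{\tau(\lambda)}\le e^{-\tau(\lambda)}$. There is, however, one genuine gap.

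The operators $\mathcal{L}_\Phi$ and $\mathcal{L}_{\Phi,2}$ contain $\nabla_z$, so integration by parts over the \emph{bounded} domain $\hat Z$ produces boundary terms on $\partial\hat Z$; the observation $|e^{i\lambda\Phi}|\le 1$ does nothing about them. The paper handles this by first noting that $\Phi^{i\RR}=\tfrac12|z-v_1|^2\ge\eps>0$ on $\hat Z\setminus Z$ for any small $Z\ni\hat z$ (after shrinking $V$), so that the contribution from $\hat Z\setminus Z$ is already $\mathcal{O}(e^{-\eps\lambda})$, and then inserting an Ehrenpreis cut-off $\psi_{\tau(\lambda)}(z)$, compactly supported in $\hat Z$ and $\equiv 1$ on $Z$, before integrating by parts. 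This $z$-localization is the missing ingredient; without it the repeated integration by parts is not justified.

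Two smaller remarks. First, your $\phi_A+\phi_B$ split is unnecessary: by the construction in \cref{eq:definechi}, $\chi_\lambda(\lambda\eta)=\chi_{\tau(\lambda)}(\eta)+\chi'_{\tau(\lambda)}(\eta)$ already equals $1$ on $\Xi'\cup\{|\eta|\le R_1/2\}$, so $\operatorname{supp}\bigl(1-\chi_\lambda(\lambda\cdot)\bigr)\subset\{|\eta|\ge R_1/2\}\setminus\Xi'$, exactly where \cref{eq:outside} applies; only $\mathcal{L}_\Phi$ is needed. Moreover, if $\phi_A,\phi_B$ are \emph{fixed} smooth cut-offs rather than Ehrenpreis sequences, their $L$-th derivatives need not satisfy the growth $(CL)^{|\alpha|}$ that \cref{cor:inducL} requires. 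Second, $\partial_\eta^\gamma\bigl(1-\chi_\lambda(\lambda\eta)\bigr)$ produces \emph{no} net positive power of $\lambda$: the $\lambda$-rescaling in the definition of $\chi_\lambda$ cancels the chain-rule factor, leaving the $\gamma$-th derivative of the $\tau(\lambda)$-th member of a $\lambda$-independent Ehrenpreis sequence. This only simplifies your bookkeeping.
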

and will be proven at the end of this section.

%

To make the proof of \cref{prop:K2decay} more palatable, we show the intermediary result
\begin{lemma}\label{prop:K2decay2}
	Let $l\in\mathbb{N}$. There are small open $X \ni \hat x, V\ni \hat v$ so that there exists a $\vartheta_0 > 0$ so that for any $\vartheta \in (0,\vartheta_0]$, if 
	\begin{equation}\label{eq:tauK22}
		\max\{l,\vartheta\lambda/2\} \leq \tau(\lambda) \leq \lambda\vartheta\,,
	\end{equation}
	then the following is true:

	For all $\abs{\gamma} \leq l$ and all $a_\gamma \in S_{\psi a}^{-n-1+\abs{\gamma}}(\hat Z\times\hat X)$, if
	\[
		b(z,x,\lambda\eta) = \del^\gamma(1-\chi_\lambda(\lambda\eta))a_\gamma(z,x,\lambda\eta)\,,
	\]
	and $\chi_\lambda$ is from \cref{eq:definechi},	there are constants $C,\eps>0$ so that for all $\lambda \geq 1$, uniformly in $x\in X, v \in V$, 
	\begin{equation}\label{eq:K2here22}
		\abs{\int_{\hat Z}\int e^{i\lambda\Phi(z,\eta;x,v)}b(z,x,\lambda\eta)\dd\eta\dd z} \leq Ce^{-\eps\lambda}\,.
	\end{equation}
\end{lemma}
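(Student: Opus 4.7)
The plan is to perform $L=L(\lambda)$ integrations by parts using $\mathcal{L}_\Phi$ from \cref{eq:defofL} (with $\kappa=\Phi(\cdot,\cdot;x,v)$), gaining a factor $\lambda^{-L}$ per integration, and then to control the resulting amplitude via \cref{cor:inducL} for $L$ proportional to $\lambda$.

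First, I would do a support analysis. Choosing $\Xi$ as a slim conic neighborhood of $\hat\eta$ with $\Xi\cap\{\abs{\eta}\leq R_1\}=\emptyset$ (possible since $\hat\eta\neq 0$) and recalling \cref{eq:definechi}, one has for every multi-index $\gamma$
\[
\supp\del^\gamma(1-\chi_\lambda(\lambda\eta))\subset\{\eta\notin\Xi'\}\cap\{\abs{\eta}\geq R_1/2\}.
\]
On this set \cref{eq:outside} yields $\rho_\Phi\geq C_0\abs{\eta}^2$. Combined with the analytic homogeneity of $\varphi$, which gives $\abs{\del^\alpha_z\del^\beta_\eta\varphi}\leq C^{\abs{\alpha}+\abs{\beta}+1}\alpha!\beta!\abs{\eta}^{1-\abs{\beta}}$ on a bounded complex neighborhood of $(\hat z,\hat x)$ (the added terms $-z\cdot v_2+\tfrac{i}{2}\abs{z-v_1}^2$ contribute only polynomial corrections obeying the same bound), $\Phi$ satisfies assumption~\ref{as:kappa1} throughout this region once $V\ni\hat v$ and $X\ni\hat x$ are taken small enough.

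Second, using $-i\lambda^{-1}\rho_\Phi^{-1}\bigl(\bar\Phi_z\cdot\nabla_z+\abs{\eta}^2\bar\Phi_\eta\cdot\nabla_\eta\bigr)e^{i\lambda\Phi}=e^{i\lambda\Phi}$ and taking the $(z,\eta)$-formal adjoint, one obtains
\[
\int_{\hat Z}\int e^{i\lambda\Phi}b\,\dd\eta\dd z=(i\lambda)^{-L}\int_{\hat Z}\int e^{i\lambda\Phi}\mathcal{L}_\Phi^L b\,\dd\eta\dd z.
\]
Writing $\del^\gamma(1-\chi_\lambda(\lambda\eta))=(C\tau(\lambda))^{\abs{\gamma}}\psi_L(\eta)$, where $\psi_L$ is an Ehrenpreis-type cut-off of index $\tau(\lambda)-\abs{\gamma}\geq L$ supported on the same set, and invoking \cref{cor:inducL}, specifically \cref{eq:inducLpsizeta}, applied to $\psi_L(\eta)a_\gamma(z,x,\lambda\eta)$ with $a_\gamma\in\mathrm{S}^{-n-1+\abs{\gamma}}_{\psi a}$, one gets
\[
\abs{\mathcal{L}_\Phi^L b(z,x,\lambda\eta)}\leq M\lambda^{-n-1+\abs{\gamma}}(C\tau(\lambda))^{\abs{\gamma}}(CL)^L\abs{\eta}^{-n-1+\abs{\gamma}}
\]
provided $\lambda\abs{\eta}\geq LR_{a_\gamma}$, a condition which holds on $\supp b$ once $L\leq\vartheta\lambda$ with $\vartheta$ small.

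Finally, I would choose $L=\lfloor\vartheta\lambda/2\rfloor$; then $L+l\leq\tau(\lambda)$ by \cref{eq:tauK22}, the decisive gain $(CL/\lambda)^L\leq(C\vartheta/2)^{\vartheta\lambda/2}$ is exponentially small for $\vartheta<\vartheta_0$ sufficiently small, and it dominates the polynomial loss $\lambda^{-n-1+\abs{\gamma}}(C\tau(\lambda))^{\abs{\gamma}}\leq C'\lambda^{3l}$. Integrating in $z\in\hat Z$ (bounded) and in $\eta$ then gives $Ce^{-\eps\lambda}$ uniformly in $(x,v)\in X\times V$. The main technical obstacle is the convergence of the $\eta$-integration at infinity, since the pointwise bound $\abs{\eta}^{-n-1+\abs{\gamma}}$ fails to be integrable when $\abs{\gamma}\geq 1$; this is resolved by observing that $\mathcal{L}_\Phi$ actually lowers the order in $\abs{\eta}$ by one at each application (the crude $\abs{\eta}^m$ bound in \cref{eq:inducLpsizeta} conceals the sharper $\abs{\eta}^{m-L}$ visible in its proof), which secures convergence once $L$ is large.
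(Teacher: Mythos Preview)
Your overall strategy---partial integration with $\mathcal{L}_\Phi$ a number of times proportional to $\lambda$---matches the paper's, but there is one genuine gap and one unnecessary detour.

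\textbf{The gap: boundary terms in $z$.} Your identity
\[
\int_{\hat Z}\int e^{i\lambda\Phi}b\,\dd\eta\dd z=(i\lambda)^{-L}\int_{\hat Z}\int e^{i\lambda\Phi}\mathcal{L}_\Phi^L b\,\dd\eta\dd z
\]
ignores the boundary terms at $\partial\hat Z$ produced by the $\nabla_z$-part of $\mathcal{L}_\Phi$. Neither $b$ nor $\Phi_z$ vanishes there, so these terms are present and must be controlled. The paper handles this first, before any integration by parts: since $\Phi^{i\RR}(z,\eta;x,v)=\tfrac12\abs{z-v_1}^2$, for $V$ small and $z\in\hat Z\setminus Z$ (with $Z\ni\hat z$ a small neighborhood) one has $\Phi^{i\RR}\geq\eps>0$ uniformly. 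Inserting Ehrenpreis cut-offs $\psi_{\tau(\lambda)}(z)$ relative to $(\hat Z,Z)$ therefore costs only $\mathcal{O}(e^{-\eps\lambda})$, after which the $z$-integrations by parts produce no boundary terms. Your Ehrenpreis factor lives only in $\eta$, which does not address this. Note also that \cref{cor:inducL} is stated for cut-offs $\psi_j\in C^\infty(\Omega_1\times\RR^n)$, i.e.\ in $(z,\eta)$ jointly, precisely so that the $z$-part is available.

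\textbf{The detour: $\eta$-integrability.} Your concern that $\abs{\eta}^{-n-1+\abs\gamma}$ is not integrable for $\abs\gamma\geq 1$ is misplaced: when $\gamma\neq 0$, the factor $\del^\gamma(1-\chi_\lambda(\lambda\eta))=-\del^\gamma\bigl(\chi_{\tau(\lambda)}(\eta)+\chi'_{\tau(\lambda)}(\eta)\bigr)$ has \emph{compact} support in $\eta$ (contained in $\Xi\cup\{\abs\eta\leq R_1\}$), so the $\eta$-integral ranges over a bounded set and convergence is automatic. The paper makes exactly this case split: for $\gamma\neq 0$ it uses the compact $\eta$-support to trade $\abs{\eta}^m$ for $\abs{\eta}^{-n-1}$ at the cost of a constant, and for $\gamma=0$ the amplitude $a_0$ already has order $-n-1$. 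Your proposed fix via a sharper $\abs{\eta}^{m-L}$ bound from $\mathcal{L}_\Phi$ is morally correct (each application of $\mathcal{L}_\Phi$ does lower the order by one), but it is not actually ``visible in the proof'' of \cref{eq:inducLpsizeta}: that proof deliberately absorbs the $\abs{\eta}^{-\abs\beta}$ gains into the constant $(r_\Gamma')^{-\abs\beta}$ before invoking \cref{cor:repeatL}, so extracting the improvement would require reworking the induction.
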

\begin{proof}
	Notice that the outer integral in \cref{eq:K2here22} runs over $z\in\hat Z$, which we want to reduce to any small open $Z \ni \hat z$: the integral over $\eta$ on the LHS of \cref{eq:K2here22} is absolutely convergent because either $\gamma =0$, making $a_\gamma$ of order $<-n$, or $\del^\gamma(1-\chi_\lambda(\lambda\eta))$ is compactly supported, independent of $\lambda$. Now for any open $Z\ni \hat z$ with $Z \subset \hat Z$ we have, allowing $v_1$ to vary over $V_1$ small enough, 
	\[
		\eta\in\RR^n\sem, z\in \hat Z\setminus Z \implies \Phi^{i\RR}(z,\eta;x,v) \geq \frac{1}{2}\abs{z-v_1}^2 > \eps
	\]
	for some $\eps > 0$. Thus, introducing a sequence of Ehrenpreis cut-offs $(\psi_j)_{j\in\mathbb{N}}$ with respect to $(\hat Z,Z)$, modulo an exponentially decaying term, 
	the LHS of \cref{eq:K2here22} is equal to 
	\begin{equation}
		\begin{aligned}\label{eq:K2here3}
		\int_{\hat Z}\int e^{i\lambda\Phi(z,\eta;x,v)}\psi_j(z)b(z,x,\lambda\eta)\dd\eta\dd z	
	\end{aligned}
\end{equation}
	for all $j\in\mathbb{N}$. 

	Because $\Phi$ is the sum of real-analytic functions that are homogeneous of degree $1$ and $0$ in $\eta$ respectively, we see that for $\eta$ away from the origin we have
	\begin{equation}\label{eq:Phigood}
		\abs{\del^\alpha_z\del^\beta_\eta \Phi(z,\eta;x,v)} \leq C^{\abs{\alpha}+\abs{\beta}+1}\alpha!\beta!\abs{\eta}^{1-\abs{\beta}}\,.
	\end{equation}

	Due to \cref{eq:Phigood} and \cref{eq:outside}, we see that on the set $\left\{\abs{\eta}\geq R_1/2\right\}\cap \complement(\Xi')\supset \supp b$ the function $\kappa = \Phi$ satisfies assumption~\ref{as:kappa1}. 

	Thus, using partial integration with the operator $\mathcal{L}_\Phi$ (defined in \cref{eq:defofL}) on the integral in \cref{eq:K2here3}, choosing $j=\tau(\lambda)$,
	\[
		\int_{\hat Z}\int e^{i\lambda\Phi} \psi_{\tau(\lambda)}(z)b(z,x,\lambda\eta)\dd\eta\dd z = (-i\lambda)^{-\tau(\lambda)+l} \int_{\hat Z}\int e^{i\lambda\Phi} \mathcal{L}_\Phi^{\tau(\lambda)-l} \left(\psi_{\tau(\lambda)}(z)b(z,x,\lambda\eta)\right)\dd\eta \dd z\,.
	\]
	Let us first consider the case $\gamma \neq 0$, so that $b(z,x,\lambda\eta)$ can be written as $b_1(\eta) b_2(z,x,\lambda\eta)$ where for some $m\geq -n-1$, $b_2 \in S^{m}_{\psi a}(\hat Z\times \hat X)$ is independent of $\lambda$ and $b_1$ is the $(\leq l)$-th derivative of the $\tau(\lambda)$-th element of a sequence of Ehrenpreis cut-offs with compact support $\subset \{\abs{\eta}\geq R_1/2\}$ where $R_1>0$ is from \cref{eq:R_1}. This follows from the definition of $\chi_\lambda$ from \cref{eq:definechi}, and since $\chi_\lambda$ is the $\tau(\lambda)$-th element of a sequence of Ehrenpreis cut-offs.

	One may then repeat the proof of \cref{cor:inducL} to find that for some $R_b, M_b, C_b>0$,
%
\begin{equation*}
	\lambda\abs{\eta} \geq R_b\tau(\lambda)\implies\abs{\mathcal{L}_\Phi^{\tau(\lambda)-l}\left(\psi_{\tau(\lambda)}(z)b(z,x,\lambda\eta)\right)} \leq \lambda^{m} M_b (C_bC_{\mathcal{L}_\Phi})^{-l} (C_b C_{\mathcal{L}_\Phi}\tau(\lambda))^{\tau(\lambda)} \abs{\eta}^{m}\,,
	\end{equation*}
	and because $\supp b \subset \{\abs{\eta} \leq C\}$ for some $C>0$ independent of $\lambda$, we can in fact estimate
	\begin{equation}\label{eq:k2induc}
	\lambda\abs{\eta} \geq R_b\tau(\lambda)\implies\abs{\mathcal{L}_\Phi^{\tau(\lambda)-l}\left(\psi_{\tau(\lambda)}(z)b(z,x,\lambda\eta)\right)} \leq \lambda^{m} M_b C^{m+n+1}(C_bC_{\mathcal{L}_\Phi})^{-l} (C_b C_{\mathcal{L}_\Phi}\tau(\lambda))^{\tau(\lambda)} \abs{\eta}^{-n-1}\,.
	\end{equation}
	In the case $\gamma = 0$, $b$ is not compactly supported with respect to $\eta$, but the amplitude $a_\gamma$ has order $-n-1$, so that we find \cref{eq:k2induc} by an application of \cref{cor:inducL} instead. Thus let $\abs{\gamma} \leq l$ be arbitrary again. 

	Now because $\supp b \subset \{\abs{\eta} \geq R_1/2\}$ we use \cref{eq:k2induc} to see that for $\lambda \geq 2R_1^{-1}\tau(\lambda)R_b$ (which is to say that $\tau(\lambda) \leq \lambda R_1 R_b^{-1}/2$) and $\tau(\lambda)\geq l$ we have for some $C',C''>0$,
\begin{align*}
	&{}\abs{ (-i\lambda)^{-\tau(\lambda)+l} \int_{\hat Z}\int e^{i\lambda\Phi(z,\eta;x,v)} \mathcal{L}_\Phi^{\tau(\lambda)-l}\left(\psi_{\tau(\lambda)}(z)b(z,x,\lambda\eta)\right)\dd\eta \dd z} \\
	&\leq \lambda^{m-\tau(\lambda)+l} C' (C_b C_{\mathcal{L}_\Phi})^{-l} (C_b C_{\mathcal{L}_\Phi}\tau(\lambda))^{\tau(\lambda)-l}\int_{\abs{\eta} \geq R_1/2} \abs{\eta}^{-n-1} \dd\eta \leq C'' \lambda^{m+l} \left(\frac{C_b C_{\mathcal{L}_\Phi}\tau(\lambda)}{\lambda}\right)^{\tau(\lambda)}\,.
\end{align*}

	To get the exponential decay promised in the statement of this lemma, we enforce $\tau(\lambda) \leq \frac{\lambda}{C_bC_{\mathcal{L}_\Phi}e}$, so that 
\[
	\left(\frac{C_b C_{\mathcal{L}_\Phi}\tau(\lambda)}{\lambda}\right)^{\tau(\lambda)} \leq e^{-\tau(\lambda)}\,.
\]
	and we may take any $\vartheta_0 \leq \min\left\{R_1R_b^{-1}/2, (eC_bC_{\mathcal{L}_\Phi})^{-1}\right\}$ to make the statement of this lemma true. 
\end{proof}

\begin{proof}[Proof of \cref{prop:K2decay}]
	First recall that $K_2^\lambda = K_{2,\chi_\lambda}^\lambda$ from \cref{eq:fulldefK1K2}, where $\chi_\lambda$ is the cut-off defined in \cref{eq:definechi}. Now according to \cref{eq:K21}, we have 
	\begin{equation*}
		\begin{aligned}
			K_2^\lambda(x,v) = c_{N}\lambda^{\frac{1}{4}(N'+3N)}\sum_{\abs{\gamma},j=0}^{\ell} \lambda^j \int_{\hat Z}\int e^{i\lambda\Phi(z,\eta;x,v)} \del^\gamma (1-\chi_{\lambda}(\lambda\eta)) a_{j,\gamma,v}(z,x,\lambda\eta)\dd\eta\dd z
	\end{aligned}
\end{equation*}
	where $a_{j,\gamma,v} \in S^{-n-1+\abs{\gamma}}_{\psi a}(\hat Z\times \hat X)$ for $\ell =\lceil m\rceil+n+1$. 

	This is to say that $K_2^\lambda$ is a linear combination of terms of the form on the LHS of \cref{eq:K2here22}, so that applying \cref{prop:K2decay2} concludes the proof and \cref{eq:tauK2} is a consequence of \cref{eq:tauK22}.
\end{proof}

\subsection{The Kernel $K_1^\lambda$}\label{sec:K1}

Recall the following notation: for any complex-valued function $f$ let $f^{\RR}$ and $f^{i\RR}$ denote its real and imaginary parts respectively, and similarly for every $w \in \CC^k$ let $w^{\RR}$ and $w^{i\RR}$ be the real and imaginary parts of $w$.

The kernel $K_1^\lambda$ is defined in \cref{eq:fulldefK1K2} as $K_1^\lambda = K_{1,\chi}^\lambda$, where we recall the expression \cref{eq:K1} here:
\begin{align}
	K_1^\lambda(x,v) &= c_{N}\lambda^{\frac{1}{4}(N'+3N)}\int_{\hat Z}\int e^{i\lambda\Phi(z,\eta;x,v)}\chi_\lambda(\lambda\eta)a(z,x,\lambda\eta)\dd\eta\dd z\notag \\
	&= c_{N}\lambda^{\frac{1}{4}(N'+3N)}\int_{\hat Z}\int e^{i\lambda\Phi(z,\eta;x,v)}(\chi_{\tau(\lambda)}(\eta) + \chi'_{\tau(\lambda)}(\eta))a(z,x,\lambda\eta)\dd\eta\dd z\,,\label{eq:k1again}
\end{align}
where individually $\chi_{\tau(\lambda)}(\eta), \chi'_{\tau(\lambda)}(\eta)$ are elements of a sequence of Ehrenpreis cut-offs relative to $(\Xi,\Xi')$ and $\left(\left\{\abs{\eta} \leq R_1\right\}, \left\{\abs{\eta} \leq R_1/2\right\}\right)$ respectively, of which each element in the sequence is independent of $\lambda$. 

In order to deal with the part of $K_{1}^\lambda$ arising from $\chi_{\tau(\lambda)}$ (essentially by the method of stationary phase), we show  
\begin{proposition}\label{prop:statapplied}
	For any small enough open $Z\ni \hat z$ there exist small open $V \ni \hat v$, $X\ni \hat x$ and $\Xi \ni \hat \eta$, and there is a $\vartheta_0 > 0$ so that for all $0<\vartheta\leq \vartheta_0$, if the function $\tau$ satisfies 
	\begin{equation}\label{eq:tauK1}
		\vartheta\lambda/2 \leq \tau(\lambda) \leq \vartheta\lambda,
	\end{equation}
	the following holds.

	For every $\lambda \geq 1$ large enough, and for $\sum_{k\geq 0} a_k = \ud{a} \in \mathrm{FS}_{\psi a}^m(\hat Z\times\hat X)$ elliptic and classical in $Z\times X\times\Xi$ there is 
	an elliptic classical analytic amplitude $\ud{a^\sharp_v}(x)$ in the sense of \cite[\S~1]{AST_1982__95__R3_0} (or see \cref{def:sjoamp}) so that if $a^\sharp_v(x;\lambda)$ is any finite realization of $\ud{a^\sharp_v}(x)$ in the sense of \cite[\S~1]{AST_1982__95__R3_0} (or \cref{def:sjoamp}) 
	and $a\in \mathrm{S}^m_{\psi a}(\hat Z\times\hat X)$ is any finite realization of $\ud{a}$ (in the sense of \cref{def:finitereaz}), then for some $\eps>0$,
	\begin{equation}\label{eq:defreala}
		 \lambda^{m-\frac{N+n}{2}} e^{i\lambda \Phi(\mathbf{z}(x,v), \bm{\eta}(x,v);x,v)}a^\sharp_v(x;\lambda) = \int_Z \int e^{i\lambda\Phi(z,\eta;x,v)}\chi_{\tau(\lambda)}(\eta)a(z,x,\lambda\eta)\dd\eta\dd z + \mathcal{O}(e^{-\eps\lambda})\,.
	\end{equation}
%
%
\end{proposition}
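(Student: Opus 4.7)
The plan is to apply the analytic method of complex stationary phase to the $(z,\eta)$-integral and repackage the resulting asymptotic expansion as a classical analytic symbol in the Sjöstrand sense. By \cref{lem:cones} the phase $\Phi$ has a unique, nondegenerate complex stationary point $(\mathbf{z}(x,v),\bm{\eta}(x,v))$ depending holomorphically on $(x,v)$, with Hessian of positive semi-definite imaginary part. Localization, normal form, Gaussian evaluation, and symbol bookkeeping form the four successive steps.

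First I would localize the integral to a small neighborhood of the stationary point. Away from $(\mathbf{z},\bm{\eta})$, the estimate \cref{eq:outside} shows that $\Phi$ satisfies assumption~\ref{as:kappa1}, while the bound $\mathrm{Im}\,\Phi \geq \frac{1}{2}|z-v_1|^2$ kills the complement of a small $z$-ball. Combining a sequence of Ehrenpreis cut-offs in $(z,\eta)$ with repeated application of the operator $\mathcal{L}_\Phi$ from \cref{eq:defofL}, and invoking \cref{cor:inducL} with index $L\approx \tau(\lambda)\approx\vartheta\lambda$, one reduces the RHS of \cref{eq:defreala} modulo $\mathcal{O}(e^{-\eps\lambda})$ to an integral over a fixed complex ball around $(\mathbf{z}(x,v),\bm{\eta}(x,v))$; this mirrors the argument in \cref{prop:K2decay2} and fixes the upper constraint in \cref{eq:tauK1}.

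Second, on that neighborhood I would invoke the holomorphic Morse lemma to write $\Phi(z,\eta;x,v)-\Phi(\mathbf{z},\bm{\eta};x,v) = \tfrac{1}{2}\langle Q(x,v) w, w\rangle$ for a holomorphic change of coordinates $w = w(z,\eta;x,v)$, where $Q(x,v)$ is a nondegenerate symmetric matrix with $\mathrm{Im}\,Q \geq 0$. Deforming the contour of integration in $w$ onto the real Lagrangian subspace on which $\mathrm{Re}(iQ)$ is positive definite (the steepest descent contour) converts the remaining integral into a genuine Gaussian integral against an analytic amplitude $A(w,x,v;\lambda) = a(z(w),x,\lambda\eta(w)) J(w,x,v)$, where $J$ is the Jacobian of the change of variables. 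This is the standard analytic stationary phase setup; see \cite[Thm.~7.7.5 and Thm.~7.7.12]{hoermander1} for the underlying estimates in the complex case, and the discussion in \cite{AST_1982__95__R3_0} for the symbol-valued version.

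Third, I would extract the classical analytic symbol structure. Using the classical, elliptic, homogeneous decomposition $\ud{a}=\sum_j a_j$ on the cone $\Xi$ where $\chi_{\tau(\lambda)}\equiv 1$, each $a_j(z,x,\lambda\eta)$ contributes a factor $\lambda^{m-j}$ times a function analytic in $(z,\eta)$. Taylor expanding each term in $w$ around the stationary point and integrating termwise against $e^{i\lambda\langle Qw,w\rangle/2}$ gives Gaussian moments producing additional factors $\lambda^{-(N+n)/2-|\alpha|/2}$, yielding a formal series $\ud{a^\sharp_v}(x) = \sum_k a^\sharp_{v,k}(x)$ whose coefficients $a^\sharp_{v,k}$ are analytic in $(x,v)$ and satisfy the Sjöstrand-type bound $|a^\sharp_{v,k}| \leq M C^k k!$ from the combined combinatorial factors of $a_j$, the Gaussian moments, and the analyticity of $J$. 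The leading term comes from $a_0$ evaluated at the critical point, giving ellipticity. A finite realization of $\ud{a^\sharp_v}$ at index $\approx \lambda/C$ then satisfies \cref{eq:defreala}.

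The main obstacle will be the simultaneous balancing of three truncation thresholds: the index $\tau(\lambda)$ of the Ehrenpreis cut-off, the stopping point $j\leq R^{-1}\lambda|\eta|$ in the finite realization of $\ud{a}$ from \cref{def:finitereaz}, and the Taylor truncation order in $w$ required for the remainder to beat the Gaussian decay. Each contributes a factor of the form $(C k/\lambda)^k$, and one must choose all three proportional to $\lambda$ with sufficiently small constants so that the combined error is exponentially small; this is where the lower bound $\tau(\lambda)\geq \vartheta\lambda/2$ in \cref{eq:tauK1} is used. Independence of $a^\sharp_v$ from the choice of finite realization of $\ud{a}$, modulo $\mathcal{O}(e^{-\eps\lambda})$, then follows from \cref{lem:finrea}: two finite realizations differ by a term $\mathcal{O}(e^{-C|\eta|})$, which on the support of $\chi_{\tau(\lambda)}$ contributes at most $\mathcal{O}(e^{-\eps\lambda})$ after using $\mathrm{Im}\,\Phi\geq 0$ and the bounded range of integration.
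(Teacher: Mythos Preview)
Your proposal is essentially correct and follows the same underlying idea as the paper: apply analytic complex stationary phase to $\Phi$ using the critical-point data from \cref{lem:cones}. The packaging, however, is different, and two of your steps would need adjustment.

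In the paper, the proof of \cref{prop:statapplied} itself is short: the main work is to check that on $\supp\chi_{\tau(\lambda)}\subset\Xi$ the pseudoanalytic finite realization $a(z,x,\lambda\eta)$ of $\underline{a}$ (\cref{def:finitereaz}) agrees, modulo $\mathcal{O}(e^{-\eps\lambda})$, with $\lambda^m$ times a Sj\"ostrand-type finite realization of the formal classical analytic amplitude $(a_k(z,x,\eta))_k$. This conversion uses the homogeneity of the $a_k$ on $\Xi$ (and \cref{rem:homisan} to get holomorphy in $\eta$). The $z$-localization is done by inserting an Ehrenpreis cut-off in $z$ and absorbing its complement via $\mathrm{Im}\,\Phi\geq\tfrac12|z-v_1|^2$, with no partial integration. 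Everything is then fed into a separate black-box analytic stationary phase theorem, \cref{thm:statphase}, proved in the appendix; that theorem contains the Morse lemma, the contour deformations, and the Gaussian evaluation that you describe in your steps 2--3.

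Your version inlines the content of \cref{thm:statphase} directly. Two points are worth flagging. First, your step~1 (repeated $\mathcal{L}_\Phi$-integration citing \cref{eq:outside}) does not apply here: \cref{eq:outside} gives the lower bound on $\rho$ only for $\eta\notin\Xi'$, whereas the integral already lives on $\supp\chi_{\tau(\lambda)}\subset\Xi$, and near the stationary point $\rho$ vanishes. The paper avoids this issue entirely by keeping the cut-off throughout the stationary phase argument and handling the off-critical contributions as Stokes remainders of successive contour deformations. Second, the holomorphic Morse lemma alone does not yield a quadratic form with $\mathrm{Re}(iQ)>0$: by \cref{lem:cones} the Hessian of $\Phi$ at the critical point has only positive \emph{semi}-definite imaginary part. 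The paper first performs an auxiliary contour deformation (the contour $\Gamma_s^\delta$ in the proof of \cref{thm:statphase}) to upgrade this to strict positivity before invoking the Morse lemma; your steepest-descent contour step implicitly assumes this has already been arranged.
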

\begin{remark}
	The above proposition states that the action of taking finite realizations and applying stationary phase commute. Formally this means that
	\[
		\lambda^{m-\frac{N+n}{2}} e^{i\lambda \Phi(\mathbf{z}(x,v), \bm{\eta}(x,v);x,v)}\ud{a_v^\sharp}(x) = \int_Z \int e^{i\lambda\Phi(z,\eta;x,v)}\chi_{\tau(\lambda)}(\eta)\sum_{k\geq 0}a_k(z,x,\lambda\eta)\dd\eta\dd z\,.
	\]

\end{remark}
\begin{proof}[Proof of \cref{prop:statapplied}]
	Notice first that by assumption of classicality, \cref{eq:ahomdef}, we know that there exists $\lambda_0 \geq 1$ so that for all $\lambda \geq \lambda_0$ and all $k\in\mathbb{N}_0$ the amplitude $a_{k}(z,x,\lambda\eta)$ is homogeneous of degree $m-k$ in $\eta$ near $(\hat z, \hat x, \hat \eta)$. On the other hand, for all $\lambda\geq 1$ we can write 
	\[
		a_k(z,x,\lambda\eta) = a_k(z,x,\lambda\lambda_0^{-1}\lambda_0\eta) = \lambda^{m-k} \lambda_0^{k-m}a_k(z,x,\lambda_0\eta)\,,
	\]
	where the RHS is homogeneous of degree $m-k$ in $\eta$ near $(\hat z, \hat x, \hat \eta)$. We may thus assume (by replacing $a_k(z,x,\eta)$ by $\lambda_0^{k-m}a_k(z,x,\lambda_0\eta)$) that $a_k(z,x,\eta)$ is homogeneous of degree $m-k$ in $\eta$ near $(\hat z,\hat x, \hat\eta)$ for $\lambda\geq 1$. By a similar argument, we may assume that $\lambda_0=1$ in \cref{eq:anonvanishdef}: that is, $a_0$ is non-vanishing in $Z\times X\times \Xi$.

	By \cref{rem:homisan}, if necessary contracting $Z,X,\Xi$ about $\hat z,\hat x, \hat\eta$ respectively, we conclude that each $a_{k}(z,x,\lambda\eta)$ is real-analytic in $Z\times X\times\Xi$ for $\lambda\geq 1$ and has a holomorphic extension to a complex neighborhood thereof (independent of $k$) which we also denote by $a_{k}$. 



	In order to set up an application of classical analytic stationary phase (\cref{thm:statphase}), we introduce $\td{a}_k((z,\eta),(x,v)) \coloneqq a_k(z,x,\eta)$ (which does not depend on $v$) and $\td{\Phi}((z,\eta),(x,v))\coloneqq \Phi(z,\eta;x,v)$. It is routine to check that $\ud{\td{a}} \coloneqq (\td{a}_k)_{k\in\mathbb{N}}$ is a classical formal analytic amplitude according to \cref{def:sjoamp}. 

	Since the statement of \cref{thm:statphase} requires an Ehrenpreis cut-off in $(z,\eta)$ and so far we only have the cut-off $\chi_{\tau(\lambda)}(\eta)$, we proceed as follows. Taking $Z' \subset Z$ to be an open set with $\hat z \in Z'$ and $\mathrm{dist}(Z',\RR^N\setminus Z) > 0$, we introduce a sequence of Ehrenpreis cut-offs $\chi_{j,z}(z)$ relative to $(Z,Z')$ and put
	\begin{equation*}
		\chi_{\tau(\lambda)}(z,\eta) \coloneqq \chi_{\tau(\lambda),z}(z)\chi_{\tau(\lambda)}(\eta)\,,
	\end{equation*}
	which is an Ehrenpreis cut-off with respect to $(Z\times \Xi, Z'\times\Xi')$.

	Let $\td{a}$ be any finite realization of $\ud{\td{a}}$ on a compact neighborhood of the support of $\chi_{\tau(\lambda)}(z,\eta)$ and let $a(z,x,\lambda\eta)$ be a finite realization of $\sum_{k\geq 0} a_k(z,x,\lambda\eta)$ in the sense of \cref{def:finitereaz}. This means that for some $\eps>0$,
	\[
		\td{a}(z,x,\eta) = \sum_{k\leq \lambda R^{-1}}\lambda^{-k} \td{a}_k(z,x,\eta)+\mathcal{O}(e^{-\eps\lambda})\,,\an a(z,x,\lambda\eta) = \sum_{k\leq \lambda (R')^{-1}\abs{\eta}}a_k(z,x,\lambda\eta) + \mathcal{O}(e^{-\eps\lambda\abs{\eta}})\,.
	\]
	Letting $r = \min\left\{\abs{\eta}\colon \eta\in\supp \chi_{\tau(\lambda)}\right\} > 0$, and taking $R',R>0$ large enough and choosing $(R')^{-1}r = R^{-1}$, we have
	\[
		\sum_{k \leq \lambda (R')^{-1}\abs{\eta}} a_k(z,x,\lambda\eta) = \sum_{k \leq \lambda R^{-1}} a_k(z,x,\lambda\eta) +  \sum_{\lambda R^{-1} <k \leq \lambda (R')^{-1}\abs{\eta}} a_k(z,x,\lambda\eta)\,,
	\]
	and due to $\chi_{\tau(\lambda)}$ being supported compactly and away from $0$ and the indices $k$ runs over, assuming $C_a(R')^{-1} \leq e^{-1}$ by taking $R'$ large,
	\begin{align*}
		\abs{\chi_{\tau(\lambda)}(\eta)\sum_{\lambda R^{-1} <k \leq \lambda (R')^{-1}\abs{\eta}} a_k(z,x,\lambda\eta)}&\leq M_a	\abs{\chi_{\tau(\lambda)}(\eta)} \sum_{\lambda R^{-1} <k \leq \lambda (R')^{-1}\abs{\eta}}C_a^k k! (\lambda\abs{\eta})^{m-k} \\
		&\leq C	\abs{\chi_{\tau(\lambda)}(\eta)}\lambda^m \sum_{\lambda R^{-1} <k \leq \lambda (R')^{-1}\abs{\eta}}\left(\frac{C_a k}{\lambda\abs{\eta}}\right)^k \leq C\lambda^{m+1} (R')^{-1} e^{-\lambda R^{-1}}\,, 
	\end{align*}
	so that using the homogeneity of $a_k$ for $\lambda\geq 1$ guaranteed above,
	\begin{align*}
		\chi_{\tau(\lambda)}(\eta)a(z,x,\lambda\eta) &= \chi_{\tau(\lambda)}(\eta)\left(\sum_{k \leq \lambda R^{-1}} a_k(z,x,\lambda\eta) + \mathcal{O}(e^{-\eps\lambda})\right) = \chi_{\tau(\lambda)}(\eta)\left(\sum_{k \leq \lambda R^{-1}} \lambda^{m-k}a_k(z,x,\eta) + \mathcal{O}(e^{-\eps\lambda})\right)\\
		&= \chi_{\tau(\lambda)}(\eta) \lambda^m \td{a}((z,\eta),(x,v)) + \mathcal{O}(e^{-\eps\lambda})\,.
	\end{align*}
	Of course, the above statement merely says that on the support of $\chi_{\tau(\lambda)}$, the two different notions of finite realization agree modulo an exponentially decaying term. 

	In particular,
	\begin{align}\label{eq:sameints}
		\lambda^m\int &e^{i\lambda\td{\Phi}((z,\eta),(x,v))}\chi_{\tau(\lambda)}((z,\eta))\td{a}((z,\eta),(x,v))\dd(z,\eta) \\
		&= \int_Z \int e^{i\lambda\Phi(z,\eta;x,v)}\chi_{\tau(\lambda)}(\eta)a(z,x,\lambda\eta)\dd\eta\dd z+ \td{R}(x,v;\lambda)+\mathcal{O}(e^{-\eps\lambda})\,,\notag
	\end{align}
	where
%
	\[
		\td{R}(x,v;\lambda) =  \int_Z \int e^{i\lambda\Phi(z,\eta;x,v)}(1-\chi_{\tau(\lambda),z}(z))\chi_{\tau(\lambda)}(\eta)a(z,x,\lambda\eta)\dd\eta\dd z\,.
	\]
	Since $1-\chi_{\tau(\lambda),z}(z)$ is identically $0$ in a neighborhood of $z = \hat z$, we have that for $(z,\eta) \in \supp (1-\chi_{\tau(\lambda),z}(z))\chi_{\tau(\lambda)}(\eta)$, for some $\eps' > 0$,
	\[
		\Phi^{i\RR}(z,\eta;x,v) \geq \frac{1}{2}\abs{z-v_1}^2 \geq \eps' > 0
	\]
	if $V$ is chosen small enough (depending on $Z$), since $\hat v_2 = \hat z$. Thus, because $\abs{a(z,x,\lambda\eta)}\leq C\lambda^m$ on $Z\times X\times \Xi$, 
	\begin{equation*}
	\td{R}(x,v;\lambda) = \mathcal{O}(e^{-\eps'\lambda})\,.
	\end{equation*}
	Applying the classical analytic method of stationary phase, \cref{thm:statphase}, to the LHS of \cref{eq:sameints} completes the proof.
\end{proof}

In order to finalize the estimation of $K_1^\lambda$, ie.\ the part corresponding to $\chi'_{\tau(\lambda)}$, we prove
\begin{proposition}\label{lem:lessR1}
	For any small enough open $V\ni \hat v, X\ni \hat x$ there are $\eps, \vartheta_0 > 0$ so that for all $0<\vartheta\leq \vartheta_0$ 
	if the function $\tau$ satisfies
	\begin{equation}\label{eq:tauK1td}
		\vartheta\lambda/2\leq \tau(\lambda) \leq \vartheta\lambda\,,
	\end{equation}
	then, uniformly in $X\times V$,
	\[
		\int_{\hat Z}\int e^{i\lambda\Phi(z,\eta;x,v)}\chi'_{\tau(\lambda)}(\eta)a(z,x,\lambda\eta)\dd\eta\dd z = \mathcal{O}(e^{-\frac{1}{2}\vartheta\lambda}+e^{-\eps\lambda})\,.
	\]
\end{proposition}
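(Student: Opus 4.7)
The plan is to adapt the two-step strategy used for $K_2^\lambda$ in \cref{prop:K2decay2}. First I localize the $z$-integration to a small neighborhood of $\hat z$, where the imaginary part of $\Phi$ yields exponential decay on the complement; then I perform $\tau(\lambda)$ integrations by parts in $z$ on the localized region. The crucial difference from the $K_2^\lambda$ proof is that here the $\eta$-support of $\chi'_{\tau(\lambda)}$ lies in $\{\abs{\eta}\leq R_1\}$, a bounded region where $\eta$ may be close to zero, so the phase $\Phi$ no longer satisfies assumption~\ref{as:kappa1} in $\eta$. Instead, by \cref{eq:R_1}, we have $\abs{\Phi_z}\geq C_0>0$ on a complex neighborhood of $Z\times\{\abs{\eta}\leq R_1\}\times X\times V$, which is precisely assumption~\ref{as:kappa2}, so I use the purely-$z$ partial integration operator $\mathcal{L}_{\Phi,2}$ from \cref{eq:defofLz} rather than $\mathcal{L}_\Phi$.

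To set up the localization, I introduce a sequence of Ehrenpreis cut-offs $\psi_j(z)$ relative to a pair $Z'\Subset Z\subset \hat Z\cap Z^\CC$ with $\hat z\in Z'$, and split the integrand via $1 = \psi_{\tau(\lambda)}(z)+(1-\psi_{\tau(\lambda)}(z))$. On $\supp(1-\psi_{\tau(\lambda)})\subset \hat Z\setminus Z'$, shrinking $V\ni \hat v$ so that $v_1$ stays sufficiently close to $\hat z$ forces $\abs{z-v_1}\geq \eps>0$, whence
\[
\Phi^{i\RR}(z,\eta;x,v) = \tfrac{1}{2}\abs{z-v_1}^2 \geq \eps^2/2\,.
\]
Combined with the trivial bound $\abs{a(z,x,\lambda\eta)}\leq C(1+\lambda)^{\max\{m,0\}}$ on the bounded $\eta$-support of $\chi'_{\tau(\lambda)}$, the contribution of the $(1-\psi_{\tau(\lambda)})$-part is $\mathcal{O}(e^{-\eps'\lambda})$ for some $\eps'>0$ after absorbing the polynomial factor.

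For the main contribution carrying $\psi_{\tau(\lambda)}(z)\chi'_{\tau(\lambda)}(\eta)$, I apply $\tau(\lambda)$ integrations by parts using the adjoint of $\mathcal{L}_{\Phi,2}$. Since $\mathcal{L}_{\Phi,2}$ differentiates only in $z$, the factor $\chi'_{\tau(\lambda)}(\eta)$ passes through as a bounded multiplier, and the combined cut-off $\psi_{\tau(\lambda)}(z)\chi'_{\tau(\lambda)}(\eta)$ qualifies as the $\tau(\lambda)$-th element of a sequence of Ehrenpreis cut-offs in $(z,\eta)$, so \cref{eq:inducbdd} applies with $L=\tau(\lambda)$ and $\Omega_3 = \{\abs{\eta}\leq R_1\}$. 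This yields a pointwise bound of the form $\lambda^m M_a(C_aC_{\mathcal{L}_\Phi}\tau(\lambda))^{\tau(\lambda)}$, which after dividing by $\lambda^{\tau(\lambda)}$ from the partial integration and integrating over the bounded region gives $C\lambda^m(C_aC_{\mathcal{L}_\Phi}\tau(\lambda)/\lambda)^{\tau(\lambda)}$. Choosing $\vartheta_0\leq (eC_aC_{\mathcal{L}_\Phi})^{-1}$ and using $\tau(\lambda)\leq\vartheta\lambda$ converts this to $C\lambda^m e^{-\tau(\lambda)}$, which by the lower bound $\tau(\lambda)\geq\vartheta\lambda/2$ is $\mathcal{O}(e^{-\vartheta\lambda/2})$ once the polynomial $\lambda^m$ is absorbed.

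The main obstacle is really just bookkeeping: verifying that the product cut-off fits the framework of \cref{cor:inducL} and that all constants remain independent of $\lambda$. Both points are resolved by the observation that $\mathcal{L}_{\Phi,2}$ involves only $z$-derivatives, so the $\eta$-cut-off factors out of every iteration, while the analyticity bounds on the coefficients of $\mathcal{L}_{\Phi,2}$ depend only on $\Phi$ on a fixed compact neighborhood and not on $\lambda$ or $\tau(\lambda)$.
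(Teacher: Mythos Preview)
Your proposal is correct and follows essentially the same argument as the paper: localize in $z$ via Ehrenpreis cut-offs $\psi_{\tau(\lambda)}$, use $\Phi^{i\RR}=\tfrac12\abs{z-v_1}^2\geq\eps$ on the complement, and on the localized piece invoke assumption~\ref{as:kappa2} (lower bound on $\abs{\Phi_z}$) to partially integrate $\tau(\lambda)$ times with $\mathcal{L}_{\Phi,2}$ and apply \cref{eq:inducbdd}, then choose $\vartheta_0=(eC_aC_{\mathcal{L}_\Phi})^{-1}$. One small remark: \cref{eq:R_1} is stated for $\eta\in\Theta_\varphi^\CC$, which excludes $\eta=0$ where $\varphi$ is undefined; the paper handles this by observing that $\lim_{\abs{\eta}\to 0}\Phi_z=-v_2+i(z-v_1)\neq 0$ for $V$ small, so $\Phi$ (extended continuously) still satisfies~\ref{as:kappa2} on the full ball $\{\abs{\eta}\leq R_1\}$---you may want to add this one sentence for completeness.
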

\begin{proof}
	Let open $Z \subset \hat Z$ with $\hat z\in Z$ which will be chosen small throughout this proof, and let $(\psi_j)_{j\in \mathbb{N}}$ be a sequence of Ehrenpreis cut-offs relative to $(\hat Z, Z)$. 

	Note now that for all $j\in\mathbb{N}$, on the support of $1-\psi_j$, $\Phi$ satisfies $\Phi^{i\RR} \geq \eps>0$ for some $\eps>0$, so that 
	\[
		\int_{\hat Z}\int e^{i\lambda\Phi(z,\eta;x,v)}\chi'_{\tau(\lambda)}(\eta)a(z,x,\lambda\eta)\dd\eta\dd z = \int_{\hat Z}\int e^{i\lambda\Phi(z,\eta;x,v)}\chi'_{\tau(\lambda)}(\eta)\psi_{\tau(\lambda)}(z)a(z,x,\lambda\eta)\dd\eta\dd z + \mathcal{O}(e^{-\eps\lambda})\,.
	\]
	Furthermore, on the support of $\chi'_{\tau(\lambda)}$, we have $\abs{\eta} \leq R_1$ so that we may apply the result of \cref{eq:R_1}. While $\varphi$ is not defined at $\eta=0$, notice that $\lim_{\abs{\eta}\to 0} \Phi_z(z,\eta;x,v) = -v_2+i(z-v_1)$, which is non-zero for small enough $Z \ni \hat z,V \ni \hat v$. Thus, $\kappa = \Phi$ satisfies assumption~\ref{as:kappa2} on $\hat Z\times\{\abs{\eta} \leq R_1\}$ and we may apply partial integration $\tau(\lambda)$ times with respect to $\mathcal{L}_{\Phi,2}$ from \cref{eq:defofLz} to the first term on the RHS above,
	so \cref{eq:inducbdd} allows us to conclude
	\[
		\abs{(-i\lambda)^{-\tau(\lambda)}\int_{\hat Z}\int e^{i\lambda\Phi(z,\eta;x,v)}\mathcal{L}_{\Phi,2}^{\tau(\lambda)}\left(\chi'_{\tau(\lambda)}(\eta)\psi_{\tau(\lambda)}(z)a(z,x,\lambda\eta)\right)\dd\eta\dd z} \leq \lambda^m M_a(C_a \mathcal{L}_\Phi\tau(\lambda)/\lambda)^{\tau(\lambda)}\,,
	\]
	so that if we take $\vartheta_0 = (e C_a \mathcal{L}_\Phi)^{-1}$ and $\tau(\lambda)$ according to the statement, the proof is complete.
\end{proof}

It is evident from \cref{prop:statapplied} that the function
\[
	\Phi(\mathbf{z}(x,v),\bm{\eta}(x,v);x,v)
\]
with $\mathbf{z},\bm{\eta}$ from \cref{lem:cones}, plays a significant role, so that we devote a section to its study. 

\subsection{The Phase $\psi$}\label{sec:psi} 

We adopt the notation of the previous two sections, in particular from \cref{lem:cones}. Define the holomorphic function $\psi \colon X^{\CC}\times V^{\CC}\to \CC$ via
\begin{equation}\label{eq:defpsi}
	\psi(x,v) \coloneqq \Phi(\mathbf{z}(x,v), \bm{\eta}(x,v),x,v) = -\mathbf{z}(x,v)\cdot v_2 + \frac{i}{2}(\mathbf{z}(x,v)-v_1)^2\,,
\end{equation}
where we used the fact that $\varphi(\mathbf{z}(x,v),x,\bm{\eta}(x,v)) = \varphi_\eta(\mathbf{z}(x,v),x,\bm{\eta}(x,v))\cdot \bm{\eta}(x,v) = 0$ since $\varphi$ is homogeneous of degree $1$.

In this section let $\Lambda \subset \hat\Lambda$ be the set defined in \cref{lem:definechi}. We freely use the notation $\chi$ from \cref{lem:definechi} and $\pi$(=$\pi_z$) for the canonical projection. We reiterate an important consequence of the definition of $\chi$ and \cref{lem:cones}: let open $V\ni \hat v$ be small enough, for any $v\in\pi_L(\Lambda) \cap V$,
\begin{equation}\label{eq:chiprops}
	(u_1,u_2) = \chi(v) \iff (v_1,v_2; u_1,u_2) \in \Lambda \iff (v_2,u_2;0) = (\varphi_z,-\varphi_x;\varphi_\eta)(v_1,u_1,\bm{\eta}(u_1,v))\,.
\end{equation}

\begin{lemma}\label{lem:v1}
	For all $v\in \pi_L(\Lambda)\cap V$, provided open $V\ni \hat v$ small enough, we have $\mathbf{z}(\pi(\chi(v)),v) = v_1$ and $\bm{\eta}(\pi(\chi(v)),v)$ is real.
\end{lemma}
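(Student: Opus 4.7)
The plan is to observe that, by its construction, $(\mathbf{z}(x,v), \bm{\eta}(x,v))$ is the unique solution to $\nabla_{z,\eta}\Phi(z,\eta;x,v)=0$ in the complex neighborhood provided by \cref{lem:cones}. We will exhibit an explicit real solution when $x = \pi(\chi(v))$ and invoke uniqueness.

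Concretely, fix $v \in \pi_L(\Lambda) \cap V$ and write $\chi(v) = (u_1,u_2)$. Since $(v_1,v_2;u_1,u_2) \in \Lambda \subset \hat\Lambda$ and $\varphi$ satisfies the unique frequency condition~\ref{as:n4}, there is a unique real $\eta^\ast \in \RR^n \sem$ with
\[
	\varphi_\eta(v_1,u_1,\eta^\ast) = 0\,,\quad \varphi_z(v_1,u_1,\eta^\ast) = v_2\,,\quad \varphi_x(v_1,u_1,\eta^\ast) = -u_2\,;
\]
this is what the third identity in \cref{eq:chiprops} is saying, with $\eta^\ast$ near $\hat\eta$ by continuity of the frequency map at $(\hat z,\hat\zeta,\hat x,\hat\xi)$. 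Now I would plug $z = v_1$ and $\eta = \eta^\ast$ into the derivatives of $\Phi$ computed from its definition:
\begin{align*}
	\Phi_\eta(v_1,\eta^\ast;u_1,v) &= \varphi_\eta(v_1,u_1,\eta^\ast) = 0\,,\\
	\Phi_z(v_1,\eta^\ast;u_1,v) &= \varphi_z(v_1,u_1,\eta^\ast) - v_2 + i(v_1-v_1) = 0\,.
\end{align*}

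Thus $(v_1,\eta^\ast)$ is a critical point of $\Phi(\cdot,\cdot;u_1,v)$. By taking $V$ sufficiently small and invoking continuity, $v_1 \in Z^{\CC}$, $u_1 \in X^{\CC}$, and $\eta^\ast \in \Xi^{\CC} \subset \Theta^{\CC}_\varphi$ (these ambient neighborhoods are the ones fixed in \cref{lem:cones}). The uniqueness statement \cref{eq:uniqsol} then forces $\mathbf{z}(u_1,v) = v_1$ and $\bm{\eta}(u_1,v) = \eta^\ast$, and in particular $\bm{\eta}(\pi(\chi(v)),v) = \eta^\ast$ is real. The only step requiring any care is ensuring that everything stays inside the complex neighborhoods from \cref{lem:cones}; this is handled by shrinking $V$ so that $\chi(V \cap \pi_L(\Lambda))$ and the associated frequencies remain close enough to $(\hat x,\hat\xi)$ and $\hat\eta$, respectively.
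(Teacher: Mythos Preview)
Your proof is correct and follows essentially the same approach as the paper: exhibit the real pair $(v_1,\eta^\ast)$ as a critical point of $\Phi(\cdot,\cdot;\pi(\chi(v)),v)$ and invoke the uniqueness statement \cref{eq:uniqsol}. One small remark: your appeal to ``the third identity in \cref{eq:chiprops}'' is slightly delicate, since that identity already writes the frequency as $\bm{\eta}(u_1,v)$, which is what you are trying to identify; it is cleaner to obtain the real $\eta^\ast$ directly from the definition of the canonical relation $\Lambda_\varphi$ (as the paper does), but your subsequent computation and uniqueness argument stand on their own regardless.
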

\begin{proof}
	For all $v \in V \cap \pi_L(\Lambda)$, we have $(v,\chi(v)) \in \Lambda$, and so, denoting $x^\ast\coloneqq \pi(\chi(v))$, there exists $\eta^\ast \in \RR^n\sem$ so that
	\[
		(v,\chi(v)) = (v_1, \varphi_z(v_1,x^\ast,\eta^\ast);x^\ast,-\varphi_x(v_1,x^\ast,\eta^\ast))\,,
	\]
	which implies that $\nabla_{z,\eta}\Phi(v_1,\eta^\ast; x^\ast,v) = 0$. Furthermore, as a consequence of \cref{lem:cones} we then know that $\nabla_{z,\eta}\Phi(\mathbf{z}(x^\ast,v),\bm{\eta}(x^\ast,v);x^\ast,v) = 0$ and this critical point is unique for $v$ ranging in $V\ni \hat v$ small enough, and thus
	\[
		\mathbf{z}(\pi(\chi(v)),v) = v_1\,,\an \bm{\eta}(\pi(\chi(v)),v) = \eta^\ast \in \RR^n\sem\,,
	\]
	which concludes the proof.
\end{proof}

The proofs in this section are based on those given in \cite[\S~5.6]{2306.05906v1}.

\begin{lemma}[{\cite[Lem.~5.13]{2306.05906v1}}]\label{lem:xpichi}
	For open $X\ni \hat x, V\ni \hat v$ small enough, there is $C_0>0$ so that
	\[
		\abs{x-\pi(\chi(v))} + \abs{\bm{\eta}(x,v)-\bm{\eta}(\pi(\chi(v)),v)}
\leq C_0\abs{\mathbf{z}(x,v)-v_1}	
	\]
	for all $x\in X$ and $v \in V\cap \pi_L(\Lambda)$.
\end{lemma}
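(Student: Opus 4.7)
The plan is to Taylor-expand the defining equations for $\mathbf{z}$ and $\bm{\eta}$ around the base point $x = \pi(\chi(v))$ and invoke the injectivity of the matrix in \cref{eq:fullyinvmat}.

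Put $x^\ast \coloneqq \pi(\chi(v))$, $\eta^\ast \coloneqq \bm{\eta}(x^\ast, v)$, $\Delta\mathbf{z} \coloneqq \mathbf{z}(x,v) - v_1$, $\Delta x \coloneqq x - x^\ast$ and $\Delta\eta \coloneqq \bm{\eta}(x,v) - \eta^\ast$. By \cref{lem:v1}, $\mathbf{z}(x^\ast, v) = v_1$ and $\eta^\ast \in \RR^n\sem$, so all three $\Delta$-quantities vanish at $x = x^\ast$. The critical point equations $\nabla_{z\eta}\Phi(\mathbf{z}, \bm{\eta}; x, v) = 0$ unpack as
\[
\varphi_z(\mathbf{z}(x,v), x, \bm{\eta}(x,v)) + i(\mathbf{z}(x,v) - v_1) = v_2, \qquad \varphi_\eta(\mathbf{z}(x,v), x, \bm{\eta}(x,v)) = 0.
\]
Subtracting the values at $x^\ast$ from those at $x$ and Taylor-expanding $\varphi_z, \varphi_\eta$ at $(v_1, x^\ast, \eta^\ast)$ (using holomorphy of $\varphi$ to allow complex $\Delta\mathbf{z}, \Delta\eta$) yields the system
\[
\begin{pmatrix} \del_x\varphi_z & \del_\eta\varphi_z \\ \del_x\varphi_\eta & \del_\eta\varphi_\eta \end{pmatrix}(v_1, x^\ast, \eta^\ast)\begin{pmatrix} \Delta x \\ \Delta\eta \end{pmatrix} = -\begin{pmatrix} \del_z\varphi_z + i\,\mathrm{Id} \\ \del_z\varphi_\eta \end{pmatrix}(v_1, x^\ast, \eta^\ast)\,\Delta\mathbf{z} + R,
\]
with remainder $|R| \leq C(|\Delta\mathbf{z}| + |\Delta x| + |\Delta\eta|)^2$ uniformly on a compact neighborhood of $(\hat z, \hat x, \hat\eta)$.

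By \cref{eq:fullyinvmat} the matrix on the LHS is $\RR$-injective at $(\hat z, \hat x, \hat\eta)$ and hence also $\CC$-injective after complexification (rank is preserved). Its smallest singular value depends continuously on its entries, and since $v \mapsto (v_1, x^\ast(v), \eta^\ast(v))$ is continuous (analytic, via \cref{lem:definechi}) with value $(\hat z, \hat x, \hat\eta)$ at $v = \hat v$, by shrinking $V \ni \hat v$ we obtain a uniform coercivity bound $|(\Delta x, \Delta\eta)| \leq C'|M(\Delta x, \Delta\eta)|$ with $C'$ independent of $v \in V \cap \pi_L(\Lambda)$. Feeding this into the displayed system gives
\[
|\Delta x| + |\Delta\eta| \leq C''|\Delta\mathbf{z}| + C''\bigl(|\Delta\mathbf{z}| + |\Delta x| + |\Delta\eta|\bigr)^2.
\]

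Finally, by shrinking $X$ and $V$ further we make $|\Delta\mathbf{z}|, |\Delta x|, |\Delta\eta|$ uniformly small (using continuity of $\mathbf{z}, \bm{\eta}$), so the quadratic term on the right is dominated by $\tfrac{1}{2}(|\Delta\mathbf{z}| + |\Delta x| + |\Delta\eta|)$. This lets us absorb the $|\Delta x| + |\Delta\eta|$ contribution into the LHS, yielding the desired $|\Delta x| + |\Delta\eta| \leq C_0|\Delta\mathbf{z}|$. I expect the main technical obstacle to be justifying the uniform coercivity as $v$ varies, rather than just at the base point; this reduces to continuity of the smallest singular value of $M$ combined with continuity of $v \mapsto x^\ast(v) = \pi(\chi(v))$ provided by \cref{lem:definechi}, but requires careful quantification of how small $X, V$ must be chosen so that both the matrix coercivity and the quadratic absorption hold simultaneously.
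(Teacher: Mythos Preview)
Your proof is correct and follows essentially the same route as the paper: unpack the critical-point equations, Taylor-expand around $(v_1,x^\ast,\eta^\ast)$, invoke the injectivity of the matrix \cref{eq:fullyinvmat}, and absorb the quadratic remainder by shrinking $X,V$. The only cosmetic difference is that the paper splits the expansion into two steps (first eliminating the $z$-variable via a one-variable Taylor expansion, so the quadratic remainder involves only $|\Delta x|^2+|\Delta\eta|^2$), whereas you expand in all three increments at once; both variants close in the same way.
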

\begin{proof}
	Introduce the shorthand $\eta^\ast \coloneqq \bm{\eta}(\pi(\chi(v)),v), x^\ast \coloneqq \pi(\chi(v))$.

	Notice that by the defining properties of $\mathbf{z},\bm{\eta}$ (see \cref{eq:uniqsol}), we have
	\begin{equation}
		\begin{pmatrix}\Phi_{z}\\\Phi_\eta\end{pmatrix}(\mathbf{z}(x,v),\bm{\eta}(x,v),x,v) = \begin{pmatrix}0\\0\end{pmatrix}\,,\quad\text{so that}\quad\begin{pmatrix}\varphi_z\\\varphi_\eta\end{pmatrix}(\mathbf{z}(x,v),x,\bm{\eta}(x,v))-\begin{pmatrix}v_2\\0\end{pmatrix} = \begin{pmatrix}-i(\mathbf{z}(x,v)-v_1)\\0\end{pmatrix}\,.\label{eq:v2addzero}
	\end{equation}
	By the properties of $\chi$ (see \cref{eq:chiprops}) we have $\begin{pmatrix}\varphi_z\\\varphi_\eta\end{pmatrix}(v_1,x^\ast,\eta^\ast) = \begin{pmatrix}v_2\\0\end{pmatrix}$ and inserting this 
		into the equation on the right side of \cref{eq:v2addzero}, we see
	\begin{align}\label{eq:longform}
		\begin{pmatrix}\varphi_z\\\varphi_\eta\end{pmatrix}(\mathbf{z}(x,v),x,\bm{\eta}(x,v))-\begin{pmatrix}\varphi_z\\\varphi_\eta\end{pmatrix}(v_1,x^\ast,\eta^\ast)  = \begin{pmatrix}-i(\mathbf{z}(x,v)-v_1)\\0\end{pmatrix}\,.
	\end{align}
	By Taylor's expansion we may also write for some matrix $R$,
	\[
		\begin{pmatrix}\varphi_z\\\varphi_\eta\end{pmatrix}(\mathbf{z}(x,v),x,\bm{\eta}(x,v)) = \begin{pmatrix}\varphi_z\\\varphi_\eta\end{pmatrix}(v_1,x,\bm{\eta}(x,v)) + R(\mathbf{z}(x,v)-v_1)
	\]
	so that we can conclude from \cref{eq:longform} that
	\[
		\begin{pmatrix}\varphi_z\\\varphi_\eta\end{pmatrix}(v_1,x,\bm{\eta}(x,v))-\begin{pmatrix}\varphi_z\\\varphi_\eta\end{pmatrix}(v_1,x^\ast,\eta^\ast) = R'(\mathbf{z}(x,v)-v_1)\,,
	\]
	for some matrix $R'$.
	
	Focusing on the LHS of the equation abvove, by Taylor's expansion,
	\begin{equation}\label{eq:matxz}
		\begin{aligned}
			\begin{pmatrix}\varphi_z\\\varphi_\eta\end{pmatrix}(v_1,x,\bm{\eta}(x,v)) - \begin{pmatrix}\varphi_z\\\varphi_\eta\end{pmatrix}(v_1,x^\ast,\eta^\ast) &= \begin{pmatrix}\del_x\varphi_z &\del_\eta\varphi_z\\\del_x\varphi_\eta & \del_\eta\varphi_\eta\end{pmatrix}(v_1,x^\ast,\eta^\ast) \begin{pmatrix} x-x^\ast \\ \bm{\eta}(x,v)-\eta^\ast \end{pmatrix}  \\
			&\quad+ \mathcal{O}(\abs{x-x^\ast}^2+\abs{\bm{\eta}(x,v)-\eta^\ast}^2)\,.
		\end{aligned}
	\end{equation}

	According to \cref{eq:fullyinvmat}, we know that the matrix on the RHS of \cref{eq:matxz} has a left-inverse we will call $B$. We may therefore conclude that 
	\[
		\begin{pmatrix} x-x^\ast \\ \bm{\eta}(x,v)-\eta^\ast \end{pmatrix} = BR'(\mathbf{z}(x,v)-v_1)+ \mathcal{O}(\abs{x-x^\ast}^2+\abs{\bm{\eta}(x,v)-\eta^\ast}^2)\,.
	\]
	Choosing $(X,V)\ni (\hat x, \hat v)$ small enough the proof is complete.
\end{proof}

For any complex-valued function $f$ let $f^{\RR}$ and $f^{i\RR}$ denote its real and imaginary parts respectively, and similarly for every $w \in \CC^k$ let $w^{\RR}$ and $w^{i\RR}$ be the real and imaginary parts of $w$.

\begin{lemma}[{\cite[Lem.~5.14]{2306.05906v1}}]\label{lem:impsilarge}
	There is $C_1>0$ so that
	\begin{equation}\label{eq:impsilarge}
		\abs{\mathbf{z}^{i\RR}(x,v)}^2 + \abs{\bm{\eta}^{i\RR}(x,v)}^2 \leq C_1\psi^{i\RR}(x,v)
	\end{equation}
	for all $(x,v) \in X\times V$ for $X\times V$ sufficiently small about $(\hat x,\hat v)$.
\end{lemma}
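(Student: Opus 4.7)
The plan is to Taylor expand $\Phi(\cdot;x,v)$ around the real part of its complex critical point, combine this expansion with the critical point equations to pinpoint the leading behavior of $\psi^{i\RR}$, and then invoke the non-degeneracy of the Hessian of $\Phi$ from \cref{lem:cones} to bound $|\bm{\eta}^{i\RR}|$ in terms of $|\mathbf{z}^{i\RR}|$ and $|\mathbf{z}^\RR - v_1|$. Throughout I write $w^\ast := (\mathbf{z}(x,v),\bm{\eta}(x,v)) \in \CC^{N+n}$.

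First I would observe that $\psi^{i\RR}$ vanishes to second order at $(\hat x,\hat v)$: differentiating $\psi = \Phi(w^\ast;x,v)$ via the chain rule and using $\nabla_{w}\Phi(w^\ast;x,v) = 0$ gives $\nabla_{x,v}\psi = (\varphi_x(\mathbf{z},x,\bm{\eta}),\, -i(\mathbf{z}-v_1),\, -\mathbf{z})$, which at $(\hat x,\hat v)$ equals $(-\hat\xi,\,0,\,-\hat z)$, all real, while $\psi(\hat x,\hat v) = -\hat z\cdot\hat v_2 \in \RR$.

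Next I would Taylor expand $\Phi$ to second order in $w=(z,\eta)$ about the real point $w_0 := (\mathbf{z}^\RR,\bm{\eta}^\RR)$, evaluating at $w = w^\ast = w_0 + i(w^\ast)^{i\RR}$. Writing the Hessian as $H_0 = A(w_0) + iB$, where $B = \mathrm{diag}(\id_N,0_n)$ comes from the $\tfrac{i}{2}(z-v_1)^2$ term in $\Phi$ and $A(w_0)$ is the real Hessian of $\varphi$ in $(z,\eta)$ at $(\mathbf{z}^\RR, x, \bm{\eta}^\RR)$, this yields
\[
	\psi = \Phi(w_0;x,v) + i\nabla_{w}\Phi(w_0;x,v)\cdot(w^\ast)^{i\RR} - \tfrac12((w^\ast)^{i\RR})^T H_0 (w^\ast)^{i\RR} + O(|(w^\ast)^{i\RR}|^3).
\]
Taking imaginary parts, using $\Phi^{i\RR}(w_0;x,v) = \tfrac12|\mathbf{z}^\RR-v_1|^2$ and $H_0^{i\RR}=B$, and substituting the real parts of $\Phi_z(w^\ast)=0$ and $\Phi_\eta(w^\ast)=0$ (which give $\varphi_z(w_0,x) - v_2 = \mathbf{z}^{i\RR} + O((\cdot)^2)$ and $\varphi_\eta(w_0,x) = O((\cdot)^2)$, with $(\cdot) := |\mathbf{z}^{i\RR}| + |\bm{\eta}^{i\RR}|$), the linear contribution in $(w^\ast)^{i\RR}$ reduces to $|\mathbf{z}^{i\RR}|^2$ modulo $O((\cdot)^3)$ and I obtain
\[
	\psi^{i\RR} = \tfrac12|\mathbf{z}^\RR - v_1|^2 + \tfrac12|\mathbf{z}^{i\RR}|^2 + O((\cdot)^3).
\]

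Next, the imaginary parts of the critical point equations yield the real linear system $A(w_0)(w^\ast)^{i\RR} = (v_1 - \mathbf{z}^\RR,\, 0)^T + O((\cdot)^2)$. By \cref{lem:cones}, $H_0$ is non-degenerate at $(\hat z,\hat\eta;\hat x,\hat v)$, so by continuity $\|H_0^{-1}\|$ is uniformly bounded near $(\hat x,\hat v)$; applying the identity $|H_0\xi|^2 = |A(w_0)\xi|^2 + |B\xi|^2$ (valid for real $\xi$) to $\xi = (w^\ast)^{i\RR}$, and noting $B(w^\ast)^{i\RR} = (\mathbf{z}^{i\RR},0)$, produces
\[
	|(w^\ast)^{i\RR}| \leq \|H_0^{-1}\|\bigl(|v_1 - \mathbf{z}^\RR| + |\mathbf{z}^{i\RR}|\bigr) + O((\cdot)^2).
\]
Squaring and combining with the previous expansion of $\psi^{i\RR}$ gives $|\mathbf{z}^{i\RR}|^2 + |\bm{\eta}^{i\RR}|^2 \leq C'\psi^{i\RR} + O((\cdot)^3)$, and for $X\times V$ sufficiently small the cubic remainder is dominated by a small multiple of the left-hand side and absorbed by a standard bootstrap, producing the desired constant $C_1$.

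The main obstacle is the last step's control of $|\bm{\eta}^{i\RR}|$: the real Hessian $A(w_0)$ of $\varphi$ alone need not be invertible (Euler's identity forces $\varphi_{\eta\eta}\bm{\eta}=0$, so one cannot in general solve for $\bm{\eta}^{i\RR}$ from the real linear system using $A$ alone). It is precisely the positive semi-definite contribution $B$ coming from the FBI Gaussian that combines with $A$ to make $H_0$ non-degenerate and thereby furnishes the needed estimate.
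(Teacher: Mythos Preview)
Your argument is correct and complete. It differs from the paper's route in an instructive way.

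The paper expands $\Phi(\cdot;x,v)$ about the \emph{complex} critical point $w^\ast=(\mathbf{z},\bm{\eta})$, evaluates at nearby \emph{real} points (where $\Phi^{i\RR}\ge 0$), and then invokes \cite[Lem.~7.7.9]{hoermander1}: for a symmetric non-degenerate matrix $H$ with positive semi-definite imaginary part, $\sup_{|s|\le 1}\bigl(-\mathrm{Im}\,(s+i\omega)^T H(s+i\omega)\bigr)$ is uniformly bounded below for $|\omega|=1$. That lemma packages exactly the mechanism you isolate by hand.

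You instead expand about the \emph{real} point $w_0=(\mathbf{z}^\RR,\bm{\eta}^\RR)$, evaluate at $w^\ast$, and exploit the concrete decomposition $H_0=A+iB$ with $B=\mathrm{diag}(\mathrm{id}_N,0)$: the identity $|H_0\xi|^2=|A\xi|^2+|B\xi|^2$ for real $\xi$ lets you turn the non-degeneracy of $H_0$ directly into the bound $|(w^\ast)^{i\RR}|\lesssim |\mathbf{z}^\RR-v_1|+|\mathbf{z}^{i\RR}|$, which matches your explicit formula $\psi^{i\RR}=\tfrac12|\mathbf{z}^\RR-v_1|^2+\tfrac12|\mathbf{z}^{i\RR}|^2+O((\cdot)^3)$. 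Your closing remark about $A$ alone being degenerate (Euler's relation kills a direction in $\varphi_{\eta\eta}$) is precisely why the paper needs the semi-definiteness hypothesis in H\"ormander's lemma, and why you need the $|B\xi|^2$ term. The two arguments are thus the same idea unpacked at different levels of abstraction; yours is more self-contained, the paper's is shorter once the cited lemma is granted.
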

\begin{proof}
	Looking closely at the proof of \cite[Lem.~5.14]{2306.05906v1}, one sees that that \cref{eq:impsilarge} is proven there already. For completeness, however, we reiterate some steps of this proof. It is shown in the proof of \cite[Lem.~5.14]{2306.05906v1} (via a Taylor expansion of $\Phi$, which has non-negative imaginary part on its real domain) that
	\begin{align*}
		&{}\psi^{i\RR}(x,v)\geq \abs{\mathrm{Im}(\mathbf{z},\bm{\eta})(x,v)}^2 \\
		&\cdot\sup_{\abs{s}\leq 1} - \mathrm{Im} \left(\sum_{\abs{\alpha}=2}\frac{\left(\del_{(z,\eta)}^\alpha \Phi\right)((\mathbf{z},\bm{\eta})(x,v);x,v)}{\alpha!}\left(s+i \frac{\mathrm{Im}(\mathbf{z},\bm{\eta})(x,v)}{\abs{\mathrm{Im}(\mathbf{z},\bm{\eta})(x,v)}}\right)^\alpha + \mathcal{O}(\abs{\mathrm{Im}(\mathbf{z},\bm{\eta})(x,v)})\right)\,.
	\end{align*}
	Applying \cite[Lem.~7.7.9]{hoermander1} with the fact that $\del_{(z,\eta)}^2\Phi(\hat z,\hat \eta;\hat x,\hat v)$ is a symmetric non-degenerate matrix with positive semi-definite imaginary part due to \cref{lem:cones},
	\[
		\sup_{\abs{s}\leq 1} - \mathrm{Im}\left(\left(s+i\omega\right)\cdot\left(\del_{(z,\eta)}^2 \Phi\right)(\hat z, \hat\eta;\hat x,\hat v)\left(s+i\omega\right)\right) \geq C>0
	\]
	uniformly for all $\omega\in \RR^{N+n}$, $\abs{\omega}=1$.

	Using continuity and compactness we may then conclude that
	\[
		\psi^{i\RR}(x,v)\geq \abs{\mathrm{Im}(\mathbf{z},\bm{\eta})(x,v)}^2 (c + \mathcal{O}(\abs{\mathrm{Im}(\mathbf{z},\bm{\eta})(x,v)})) 
	\]
	for some $c>0$. 

	Now $\mathrm{Im}(\mathbf{z},\bm{\eta})(\hat x,\hat v) = \mathrm{Im}(\hat z,\hat \eta) = 0$, so that choosing $X\times V \ni (\hat x,\hat v)$ small enough, the proof is complete.
\end{proof}

\begin{lemma}[{\cite[Lem.~5.16]{2306.05906v1}}]
	Let open $(X,V)\ni (\hat x,\hat v)$ be small enough. There exists $C_2>0$ so that for all $\eps >0$, all $x\in X$ and all $v\in \pi_L(\Lambda) \cap V$,
	\begin{equation}\label{eq:zdotv2estimate}
		-\mathrm{Im}(\mathbf{z}(x,v)\cdot v_2)\geq -C_2\varepsilon^{-1}\mathrm{Im}(\psi(x,v))-\varepsilon\abs{x-\pi(\chi(v))}^2 - C_2\varepsilon^{-1}\abs{x-\pi(\chi(v))}^3\,.	
	\end{equation}
\end{lemma}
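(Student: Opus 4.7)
The plan is to expand the LHS using the definition \cref{eq:defpsi}. Since $v_1, v_2 \in \RR^N$ are real, a direct computation gives
\[
-\mathrm{Im}(\mathbf{z}(x,v)\cdot v_2) = \mathrm{Im}(\psi(x,v)) - \tfrac{1}{2}\abs{\mathbf{z}^{\RR}(x,v) - v_1}^2 + \tfrac{1}{2}\abs{\mathbf{z}^{i\RR}(x,v)}^2\,.
\]
Writing $y := x - \pi(\chi(v))$ and noting that $\mathrm{Im}(\psi(x,v)), \abs{\mathbf{z}^{i\RR}(x,v)}^2 \geq 0$, the task reduces to upper-bounding $\abs{\mathbf{z}^{\RR}(x,v) - v_1}^2$ in a form that splits into the four ingredients on the RHS of the claim.

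\textbf{Key identity.} By \cref{lem:v1}, $\mathbf{z}(\pi(\chi(v)), v) = v_1 \in \RR^N$ and $\bm{\eta}(\pi(\chi(v)), v) \in \RR^n$ for $v \in V\cap \pi_L(\Lambda)$; Taylor's expansion about $\pi(\chi(v))$ in $x$ then yields $\mathbf{z}^{\RR}(x,v) - v_1 = \mathbf{z}^{\RR}_x(\pi(\chi(v)), v)\cdot y + O(\abs{y}^2)$, together with analogous expansions for $\mathbf{z}^{i\RR}$ and $\bm{\eta}^{i\RR}$ (both of which vanish at $x = \pi(\chi(v))$). Now differentiating the defining equation $\varphi_z(\mathbf{z}, x, \bm{\eta}) - v_2 + i(\mathbf{z} - v_1) = 0$ from \cref{eq:v2addzero} in $x$, taking imaginary parts, and evaluating at $(\pi(\chi(v)), v)$ (where the arguments of $\varphi$ are real, so its second derivatives there are real), one uses $\mathrm{Im}(i\mathbf{z}_x) = \mathbf{z}^{\RR}_x$ to obtain the crucial identity
\[
\mathbf{z}^{\RR}_x(\pi(\chi(v)), v) = -\varphi_{zz}\mathbf{z}^{i\RR}_x(\pi(\chi(v)), v) - \varphi_{z\eta}\bm{\eta}^{i\RR}_x(\pi(\chi(v)), v)\,,
\]
with the $\varphi$-derivatives evaluated at $(v_1,\pi(\chi(v)),\bm{\eta}(\pi(\chi(v)),v))$. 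Inserting the Taylor expansions for $\mathbf{z}^{i\RR}$ and $\bm{\eta}^{i\RR}$ converts this into the estimate
\[
\abs{\mathbf{z}^{\RR}(x,v) - v_1} \leq C\bigl(\abs{\mathbf{z}^{i\RR}(x,v)} + \abs{\bm{\eta}^{i\RR}(x,v)}\bigr) + C\abs{y}^2\,.
\]

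\textbf{Conclusion via Young's inequality and casework in $\varepsilon$.} Multiplying the last display by the crude smoothness bound $\abs{\mathbf{z}^{\RR}(x,v) - v_1} \leq C'\abs{y}$ (coming from analyticity of $\mathbf{z}$ combined with \cref{lem:v1}) produces
\[
\abs{\mathbf{z}^{\RR}(x,v) - v_1}^2 \leq CC'\bigl(\abs{\mathbf{z}^{i\RR}(x,v)} + \abs{\bm{\eta}^{i\RR}(x,v)}\bigr)\abs{y} + CC'\abs{y}^3\,.
\]
Applying Young's inequality $ab \leq (4\delta)^{-1}a^2 + \delta b^2$ with $\delta = \varepsilon/(CC')$ to the cross term, then bounding $(\abs{\mathbf{z}^{i\RR}} + \abs{\bm{\eta}^{i\RR}})^2$ by $2C_1\mathrm{Im}(\psi(x,v))$ via \cref{lem:impsilarge}, yields
\[
\abs{\mathbf{z}^{\RR}(x,v) - v_1}^2 \leq C''\varepsilon^{-1}\mathrm{Im}(\psi(x,v)) + \varepsilon\abs{y}^2 + CC'\abs{y}^3\,.
\]
Feeding this back into the identity of the first paragraph establishes the claim for all $\varepsilon$ in a bounded range $(0,\varepsilon_\ast]$, once $C_2 \geq \max(C''/2,\, CC'\varepsilon_\ast/2)$. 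For the complementary range $\varepsilon > \varepsilon_\ast$, the crude bound $-\tfrac{1}{2}\abs{\mathbf{z}^{\RR}-v_1}^2 \geq -\tfrac{(C')^2}{2}\abs{y}^2 \geq -\varepsilon\abs{y}^2$ (which is automatic once $\varepsilon_\ast \geq (C')^2/2$) directly gives the claim, since $\mathrm{Im}(\psi), \abs{\mathbf{z}^{i\RR}}^2 \geq 0$. Choosing $\varepsilon_\ast = (C')^2/2$ and $C_2$ accordingly finishes the proof. The main obstacle is establishing the key identity in the second paragraph: without the coupling between $\mathbf{z}^{\RR}_x$ and the imaginary parts $\mathbf{z}^{i\RR}_x, \bm{\eta}^{i\RR}_x$, one would only have the generic bound $\abs{\mathbf{z}^{\RR}-v_1}^2 = O(\abs{y}^2)$ with a fixed constant, which cannot be split into $\varepsilon\abs{y}^2$ together with $\varepsilon^{-1}$-controlled contributions from $\mathrm{Im}(\psi)$.
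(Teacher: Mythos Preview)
Your proof is correct and takes a genuinely different route from the paper's.

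The paper works directly with the quantity $\mathbf{z}^{i\RR}(x,v)\cdot v_2$ and Taylor expands it to second order about $x=\pi(\chi(v))$. To see that the zeroth and first order terms vanish and to express the Hessian, it differentiates the homogeneity identity $\varphi^{i\RR}(\mathbf{z},x,\bm{\eta})=0$ twice in $x$ and repeatedly invokes the Cauchy--Riemann equations for $\varphi$; this produces a bilinear form in $(x-\pi(\chi(v)))$ with coefficients linear in $\mathbf{z}^{i\RR}_x,\bm{\eta}^{i\RR}_x$, to which Young's inequality and \cref{lem:impsilarge} are then applied. Your argument instead starts from the explicit formula for $\psi$ to rewrite $-\mathrm{Im}(\mathbf{z}\cdot v_2)=\mathrm{Im}(\psi)-\tfrac12\abs{\mathbf{z}^{\RR}-v_1}^2+\tfrac12\abs{\mathbf{z}^{i\RR}}^2$, reducing the problem to bounding $\abs{\mathbf{z}^{\RR}-v_1}^2$. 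The crucial step is then a \emph{single} $x$-differentiation of the critical-point equation $\Phi_z=0$, whose imaginary part at the base point yields $\mathbf{z}^{\RR}_x=-\varphi_{zz}\mathbf{z}^{i\RR}_x-\varphi_{z\eta}\bm{\eta}^{i\RR}_x$. Combined with the crude linear bound $\abs{\mathbf{z}^{\RR}-v_1}\leq C'\abs{y}$ and first-order Taylor for $\mathbf{z}^{i\RR},\bm{\eta}^{i\RR}$, this gives the product estimate you then split via Young and \cref{lem:impsilarge}.

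Your approach is somewhat more economical (one derivative instead of two, and no explicit appeal to Cauchy--Riemann for $\varphi$), and your observation that the real part of $\mathbf{z}_x$ is governed by the imaginary parts of $\mathbf{z}_x,\bm{\eta}_x$ at the base point is a clean way to encode the same coupling the paper extracts at second order. Your casework in $\varepsilon$ is also a tidy way to make the claim literally valid for \emph{all} $\varepsilon>0$; the paper's proof, as written, absorbs the fixed Taylor remainder $C\abs{y}^3$ into $C\varepsilon^{-1}\abs{y}^3$, which is only automatic for bounded $\varepsilon$ (harmless in the sequel, where only small $\varepsilon$ is used, but your treatment is cleaner).
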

\begin{proof}
	Notice that $\pi(\chi(v)),\mathbf{z}(\pi(\chi(v)),v)$ and $\bm{\eta}(\pi(\chi(v)),v)$ are real since they originate from the canonical relation.

%
	Now, by the defining property of $\mathbf{z},\bm{\eta}$ (see \cref{eq:uniqsol}), we have
	\[
		\varphi(\mathbf{z}(x,v),x,\bm{\eta}(x,v)) = \varphi_\eta(\mathbf{z}(x,v),x,\bm{\eta}(x,v)) \cdot \bm{\eta}(x,v) = 0\,,
	\]
	and in particular 
	\begin{equation}\label{eq:first0}
		\varphi^{i\RR}(\mathbf{z}(x,v),x,\bm{\eta}(x,v)) = 0\,.
	\end{equation}
	
	Let us remark here that at real-valued inputs $\varphi$ is real-valued and so are all of its real derivates.

	We introduce the shorthand
	\[
		w(x,v) \coloneqq (\mathbf{z}(x,v),x,\bm{\eta}(x,v))\,,
	\]
	so that 
	\[
		w(\pi(\chi(v)),v) = (\mathbf{z}^{\RR}(\pi(\chi(v)),v),\pi(\chi(v)),\bm{\eta}^{\RR}(\pi(\chi(v)),v))\,.
	\]
	Let us differentiate \cref{eq:first0} with respect to $x$ to get
	\begin{equation}
	\begin{aligned}\label{eq:firstder}
		0 &= \varphi^{i\RR}_x(w(x,v)) + (\mathbf{z}^{\RR}(x,v))_x^T\varphi^{i\RR}_{z^{\RR}}(w(x,v)) \\ 
		  &+ (\mathbf{z}^{i\RR}(x,v))_x^T\varphi^{i\RR}_{z^{i\RR}}(w(x,v)) + (\bm{\eta}^{\RR}(x,v))_x^T\varphi^{i\RR}_{\eta^{\RR}}(w(x,v))\\
		  &+(\bm{\eta}^{i\RR}(x,v))^T_x\varphi^{i\RR}_{\eta^{i\RR}}(w(x,v))\,.
	\end{aligned}
\end{equation}
	Note that if we evaluate \cref{eq:firstder} at $x=\pi(\chi(v))$ then all terms where $\varphi^{i\RR}$ is not differentiated with respect to an imaginary variable evaluate to zero. That is to say that 
	\[
		0 = (\mathbf{z}^{i\RR}_x(\pi(\chi(v)),v))^T\varphi^{i\RR}_{z^{i\RR}}(w(\pi(\chi(v)),v)) 
		+ (\bm{\eta}^{i\RR}_x(\pi(\chi(v)),v))^T\varphi^{i\RR}_{\eta^{i\RR}}(w(\pi(\chi(v)),v))\,.
	\] 
	Due to the Cauchy-Riemann equations, $\varphi^{i\RR}_{\eta^{i\RR}} = \varphi^{\RR}_{\eta^{\RR}}$, which is equal to $0$ at $(z,x,\eta) = (\mathbf{z}(x,v),x,\bm{\eta}(x,v)) = w(x,v)$ for $x=\pi(\chi(v))$ by the defining property of $\mathbf{z},\bm{\eta}$.

	Additionally, $\varphi^{i\RR}_{z^{i\RR}}(w(\pi(\chi(v)),v)) = \varphi^{\RR}_{z^{\RR}}(w(\pi(\chi(v)),v)) = v_2$, so that we may conclude that 
	\begin{equation}\label{eq:Zir0}
		0 = (\mathbf{z}^{i\RR}_x(\pi(\chi(v)),v))^T\cdot v_2 = \del_x\left(\mathbf{z}^{i\RR}(x,v)^T\cdot v_2\right)\vert_{x=\pi(\chi(v))}\,.
	\end{equation}

	Furthermore, if we differentiate \cref{eq:firstder} with respect to $x$ and remember that the only terms that do not vanish at $x=\pi(\chi(v))$ are those with at least one imaginary derivative in $\varphi^{i\RR}$, we see that 
	\begin{align*}
		0 &= (\mathbf{z}^{i\RR}_x(\pi(\chi(v)),v))^T A_1 + B_1(\mathbf{z}^{i\RR}_x(\pi(\chi(v)),v)) + (\bm{\eta}_x^{i\RR}(\pi(\chi(v)),v))^T A_2 + B_2 (\bm{\eta}_x^{i\RR}(\pi(\chi(v)),v)) \\
		  &+ (\mathbf{z}^{i\RR}_{xx}(\pi(\chi(v)),v))\varphi^{i\RR}_{z^{i\RR}}(w(\pi(\chi(v)),v))+(\bm{\eta}_{xx}^{i\RR})\varphi^{i\RR}_{\eta^{i\RR}}(w(\pi(\chi(v)),v))\,.
	\end{align*}
	For some matrices $A_1,A_2,B_1,B_2$ depending on $v$. Again due to the Cauchy-Riemann equations, we have $\varphi^{i\RR}_{\eta^{i\RR}}(w(\pi(\chi(v)),v)) = \varphi^{\RR}_{\eta^{\RR}}(w(\pi(\chi(v)),v)) = 0$ and $v_2=\varphi^{i\RR}_{z^{i\RR}}(w(\pi(\chi(v)),v))$, so that we may conclude
	\begin{equation}
	\begin{aligned}
		&{}-\del_{xx}^2(\mathbf{z}^{i\RR}(x,v)\cdot v_2)\vert_{x=\pi(\chi(v))} \\
		&= (\mathbf{z}_x^{i\RR}(\pi(\chi(v)),v))^T A_1 + B_1(\mathbf{z}_x^{i\RR}(\pi(\chi(v)),v)) + (\bm{\eta}_x^{i\RR}(\pi(\chi(v)),v))^T A_2 + B_2 (\bm{\eta}_x^{i\RR}(\pi(\chi(v)),v))\,. \label{eq:zxx}
	\end{aligned}
	\end{equation}
	
	Now we Taylor expand $\mathbf{z}^{i\RR}(x,v)\cdot v_2$ in $x$ around $\pi(\chi(v))$ (of which the zero-th and first order terms at $x=\pi(\chi(v))$ vanish due to \cref{eq:Zir0}) using \cref{eq:zxx} to get for any $\eps>0$
	\begin{align}
		\abs{\mathbf{z}^{i\RR}(x,v)\cdot v_2} &\leq \abs{\langle (x-\pi(\chi(v))),\left((\mathbf{z}_x^{i\RR}(\pi(\chi(v)),v))^T A_1 + B_1(\mathbf{z}_x^{i\RR}(\pi(\chi(v)),v))\right)(x-\pi(\chi(v)))\rangle} \notag \\
&+ \abs{\langle (x-\pi(\chi(v))), \left((\bm{\eta}_x^{i\RR}(\pi(\chi(v)),v))^T A_2 + B_2 (\bm{\eta}_x^{i\RR}(\pi(\chi(v)),v))\right) (x-\pi(\chi(v)))\rangle} \notag \\ 
&+ C\abs{x-\pi(\chi(v))}^3 \notag \\
	&\leq C\varepsilon^{-1}\abs{(\mathbf{z}_x^{i\RR}(\pi(\chi(v)),v))(x-\pi(\chi(v)))}^2 + C\varepsilon^{-1}\abs{(\bm{\eta}_x^{i\RR}(\pi(\chi(v)),v))(x-\pi(\chi(v)))}^2\notag\\ 
	&+ \varepsilon\abs{x-\pi(\chi(v))}^2 + C\abs{x-\pi(\chi(v))}^3\,.\label{eq:cus}
	\end{align}
	In this calculation we also used the Cauchy-Schwarz inequality and Young's inequality for products. 
	Note again that $\mathbf{z}^{i\RR}(\pi(\chi(v)),v) = \bm{\eta}^{i{\RR}}(\pi(\chi(v)),v) = 0$ so that by Taylor's expansion
	\begin{equation*}
	\begin{aligned}
		\mathbf{z}^{i\RR}(x,v) &= (\mathbf{z}_x^{i\RR})^T(\pi(\chi(v)),v) (x-\pi(\chi(v))) + \mathcal{O}((x-\pi(\chi(v)))^2)\\
		\bm{\eta}^{i\RR}(x,v) &= (\bm{\eta}_x^{i\RR})^T(\pi(\chi(v)),v) (x-\pi(\chi(v))) + \mathcal{O}((x-\pi(\chi(v)))^2)\,,
	\end{aligned}
	\end{equation*}
	so that we may conclude from \cref{eq:cus} that
	\begin{equation}\label{eq:endcus}
		\abs{\mathbf{z}^{i\RR}(x,v)\cdot v_2} \leq C\varepsilon^{-1}\left(\abs{\mathbf{z}^{i\RR}(x,v)}^2+\abs{\bm{\eta}^{i\RR}(x,v)}^2\right)+ \varepsilon\abs{x-\pi(\chi(v))}^2 + C\varepsilon^{-1}\abs{x-\pi(\chi(v))}^3\,.
	\end{equation}
	Now, using \cref{lem:impsilarge}, we have
	\[
		\abs{\mathbf{z}^{i\RR}(x,v)}^2+\abs{\bm{\eta}^{i\RR}(x,v)}^2 \leq C\mathrm{Im}(\psi(x,v))
	\]
	so that from \cref{eq:endcus} we may conclude that 
	\[
		\abs{\mathbf{z}^{i\RR}(x,v)\cdot v_2} \leq C\varepsilon^{-1}\mathrm{Im}(\psi(x,v))+\varepsilon\abs{x-\pi(\chi(v))}^2 + C\varepsilon^{-1}\abs{x-\pi(\chi(v))}^3\,.	
	\]
\end{proof}

\begin{corollary}[{\cite[Cor.~5.17]{2306.05906v1}}]\label{cor:bigpsi}
	Let open $(X,V)\ni (\hat x,\hat v)$ be small enough. For some $C>0$, and all $x \in X$ and all $v\in \pi_L(\Lambda)\cap V$,
	\[
		\psi^{i\RR}(x,v) \geq C\abs{x-\pi(\chi(v))}^2
	\]
\end{corollary}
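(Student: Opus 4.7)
The plan is to combine the explicit formula for $\psi$ given in \eqref{eq:defpsi} with the three previous lemmas in this subsection.

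First I would unpack the imaginary part of $\psi$ directly from its definition. Since $v_1$ is real, splitting $\mathbf{z} = \mathbf{z}^{\RR} + i\mathbf{z}^{i\RR}$ gives
\[
\psi^{i\RR}(x,v) = -\mathrm{Im}(\mathbf{z}(x,v)\cdot v_2) + \tfrac{1}{2}\abs{\mathbf{z}^{\RR}(x,v)-v_1}^2 - \tfrac{1}{2}\abs{\mathbf{z}^{i\RR}(x,v)}^2.
\]
The first term on the right is handled by \eqref{eq:zdotv2estimate}, the third term by \cref{lem:impsilarge}. So substituting these in and moving all $\psi^{i\RR}$ terms to the left produces, for any $\varepsilon>0$,
\[
(1+C_2\varepsilon^{-1}+\tfrac{1}{2}C_1)\psi^{i\RR}(x,v) \geq \tfrac{1}{2}\abs{\mathbf{z}^{\RR}(x,v)-v_1}^2 - \varepsilon\abs{x-\pi(\chi(v))}^2 - C_2\varepsilon^{-1}\abs{x-\pi(\chi(v))}^3.
\]

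Next I would convert the $\abs{\mathbf{z}^{\RR}-v_1}^2$ term into a bound in terms of $\abs{x-\pi(\chi(v))}^2$. \cref{lem:xpichi} yields $\abs{x-\pi(\chi(v))}^2 \leq C_0^2 \abs{\mathbf{z}(x,v)-v_1}^2 = C_0^2(\abs{\mathbf{z}^{\RR}-v_1}^2+\abs{\mathbf{z}^{i\RR}}^2)$, and invoking \cref{lem:impsilarge} once more this rearranges to
\[
\abs{\mathbf{z}^{\RR}(x,v)-v_1}^2 \geq C_0^{-2}\abs{x-\pi(\chi(v))}^2 - C_1\psi^{i\RR}(x,v).
\]
Substituting this back and again absorbing the $\psi^{i\RR}$ term to the left gives
\[
C'\psi^{i\RR}(x,v) \geq \bigl(\tfrac{1}{2}C_0^{-2}-\varepsilon\bigr)\abs{x-\pi(\chi(v))}^2 - C_2\varepsilon^{-1}\abs{x-\pi(\chi(v))}^3,
\]
for a constant $C'=C'(\varepsilon)$.

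Finally, I would pin down the free parameters. Choose $\varepsilon = \tfrac{1}{4}C_0^{-2}$, so that the coefficient of the quadratic term is bounded below by $\tfrac{1}{4}C_0^{-2}>0$; then shrink $X$ around $\hat x$ (and $V$ around $\hat v$, recalling that $\pi(\chi(\hat v))=\hat x$) so that $\abs{x-\pi(\chi(v))}$ is small enough to absorb the cubic error into the quadratic term, giving
\[
C'\psi^{i\RR}(x,v)\geq \tfrac{1}{8}C_0^{-2}\abs{x-\pi(\chi(v))}^2,
\]
from which the claim follows. No single step is delicate: the only subtle point is making sure the $\abs{\mathbf{z}^{i\RR}}^2$ appearing with a negative sign from the definition of $\psi$ and the one appearing from the lower bound for $\abs{\mathbf{z}^{\RR}-v_1}^2$ can both be absorbed into $\psi^{i\RR}$ via \cref{lem:impsilarge}, which is immediate since these are additive rearrangements.
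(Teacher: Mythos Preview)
Your proof is correct and follows essentially the same approach as the paper's own argument: both combine the explicit formula for $\psi^{i\RR}$ with \cref{lem:xpichi}, \cref{lem:impsilarge}, and \eqref{eq:zdotv2estimate}, then fix $\varepsilon$ and shrink $X,V$ to absorb the cubic error. The only cosmetic difference is that the paper writes the quadratic term as $\tfrac{1}{2}\mathrm{Re}((\mathbf{z}-v_1)^2)$ rather than splitting it into $\tfrac{1}{2}\abs{\mathbf{z}^{\RR}-v_1}^2-\tfrac{1}{2}\abs{\mathbf{z}^{i\RR}}^2$ as you do, but these are the same expression.
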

\begin{proof}
	Recall that $\pi(\chi(v))$ and $\bm{\eta}(\pi(\chi(v)),v)$ are real from \cref{lem:v1}.

	By \cref{lem:xpichi} we have
	\[
		C_0^{-1}\abs{x-\pi(\chi(v))}^2 \leq \abs{\mathbf{z}(x,v)-v_1}^2
		= \abs{\mathbf{z}^{\RR}(x,v)-v_1}^2 + \abs{\mathbf{z}^{i\RR}(x,v)}^2 = \mathrm{Re}((\mathbf{z}(x,v)-v_1)^2) + 2\abs{\mathbf{z}^{i\RR}(x,v)}^2
	\]
	where we used that $v_1$ is real. Applying \cref{lem:impsilarge} we get  
	\begin{equation*}
		\mathrm{Re}((\mathbf{z}(x,v)-v_1)^2) \geq C_0^{-1}\abs{x-\pi(\chi(v))}^2 - 2C_1\psi^{i\RR}(x,v)\,.
	\end{equation*}
	Together with the definition of $\psi$, and using \cref{eq:zdotv2estimate}, we thus conclude
	\begin{align*}
		\psi^{i\RR}(x,v) &= -\mathbf{z}^{i\RR}(x,v)\cdot v_2 + \frac{1}{2} \mathrm{Re}((\mathbf{z}(x,v)-v_1)^2)\\ 
		&\geq (C_0^{-1}/2 -C_2\eps)\abs{x-\pi(\chi(v))}^2 - (C_2\eps^{-1}+C_1)\psi^{i\RR}(x,v) - C_2\eps^{-1}\abs{x-\pi(\chi(v))}^3\,.
	\end{align*}
	For some $\delta>0$, take first $\eps >0$ so that $C_0^{-1}/2-C_2\eps > 2\delta$ and $X,V$ small so that $C_2\eps^{-1}\abs{x-\pi(\chi(v))} \leq \delta$, so that we can conclude
	\begin{equation*}
		(1+C_2\eps^{-1}+C_1)\psi^{i\RR}(x,v) \geq \delta\abs{x-\pi(\chi(v))}^2\,,
	\end{equation*}
	which completes the proof.
\end{proof}


\subsection{Proofs of Propositions~\ref{prop:Kgood} and~\ref{prop:psi}}\label{sec:proveK}


We now have all necessary results for the
\begin{proof}[Proof of \cref{prop:Kgood}]
	Let open $Z \subset \hat Z$ with $\hat z \in Z$ be small enough and choose $X \ni \hat x,V \ni \hat v$ open, small enough depending on $Z$ for the results of \cref{prop:statapplied} to hold.

	Writing out $K_1^\lambda$ again from \cref{eq:k1again}, 
	\begin{equation}\label{eq:K1again2}
		K_1^\lambda(x,v) = c_{N}\lambda^{\frac{1}{4}(N'+3N)}\int_{\hat Z}\int e^{i\lambda\Phi(z,\eta;x,v)}(\chi_{\tau(\lambda)}(\eta) + \chi'_{\tau(\lambda)}(\eta))a(z,x,\lambda\eta)\dd\eta\dd z\,,
	\end{equation}
	and we know that
	\[
		\eta\in\RR^n\sem, z\in \hat Z\setminus Z\implies \Phi^{i\RR}(z,\eta;x,v) \geq \eps
	\]
	for some $\eps > 0$, so that modulo an exponentially decaying term we can replace the domain of integration $\hat Z$ in \cref{eq:K1again2} by any open $Z\ni \hat z$. 

	Notice that \cref{prop:statapplied,lem:lessR1,prop:K2decay} depend on a choice of definition of $\tau(\lambda)$. However, each of these results only enforces an upper bound on $\tau(\lambda)$: see \cref{eq:tauK2,eq:tauK1,eq:tauK1td} (the constant lower bound on $\tau(\lambda)$ from \cref{prop:K2decay} can be ignored by taking $\lambda$ large enough), so that we can choose to define $\tau$ satisfying these three conditions simultaneously, which we do.

	Replacing the domain of integration $\hat Z$ with $Z$ in the RHS of \cref{eq:K1again2}, \cref{lem:lessR1} shows exponential decay of the term corresponding to $\chi'_{\tau(\lambda)}$ in the RHS of \cref{eq:K1again2}, so that \cref{prop:statapplied} then applies to the term on the RHS of \cref{eq:K1again2} corresponding to $\chi_{\tau(\lambda)}$ to give \cref{eq:howKsact1} where we take $b(x,v;\lambda) = \ud{a^\sharp_v}(x)$, defined prior to \cref{eq:defreala} and $\psi(x,v) = \Phi(\mathbf{z}(x,v),\bm{\eta}(x,v);x,v)$. 


	Finally, \cref{eq:howKsact2} is just \cref{prop:K2decay}. 

	Notice that the proofs of \cref{prop:K2decay,lem:lessR1} do not rely on a specific choice of finite realization $a$ of $\ud{a}$, and the statement of \cref{prop:statapplied} is inherently finite-realization-agnostic. This completes the proof. 
\end{proof}

As we have already studied the function $\psi(x,v) = \Phi(\mathbf{z}(x,v),\bm{\eta}(x,v);x,v)$ in \cref{sec:psi}, 
we give the
\begin{proof}[Proof of \cref{prop:psi}]
	According to the proof of \cref{prop:Kgood}, we know that the function $\psi$ in the statement of \cref{prop:psi} is exactly the function $\psi$ defined in \cref{eq:defpsi}, so that we may use the results of \cref{sec:psi} to prove this statement. 

	For $v\in \pi_L(\Lambda)\cap V$ real, by \cref{lem:v1} and the definition of $\psi$ from \cref{eq:defpsi} we see property $1$. By analyticity, it must be true for nearby complex $v$ too.

	Extending to a complex neighborhood by analyticity, it suffices to show property 2. for real $v$: take $(x,v) \in X\times V$ and recall that by homogeneity, $\varphi(\mathbf{z}(x,v),x,\bm{\eta}(x,v))=0$, and differentiating this expression with respect to $x$, we see that
	\begin{equation}\label{eq:phizerohere}
		\varphi_z(\mathbf{z}(x,v),x,\bm{\eta}(x,v)) \mathbf{z}_x(x,v) + \varphi_x(\mathbf{z}(x,v),x,\bm{\eta}(x,v)) + \varphi_\eta(\mathbf{z}(x,v),x,\bm{\eta}(x,v))\bm{\eta}_x(x,v) = 0\,,
	\end{equation}
	of which the third term on the LHS vanishes. 

	By the definition of $\chi$ and the fact that $\varphi$ is associated to the canonical relation (see \cref{eq:chiprops}),
	\[
		(v_1,v_2;u_1,u_2) = (v,\chi(v)) = \left(v_1, \varphi_z\left(v_1,\pi(\chi(v)),\bm{\eta}(\pi(\chi(v)),v)\right); \pi(\chi(v)),-\varphi_x\left(v_1,\pi(\chi(v)),\bm{\eta}(\pi(\chi(v)),v)\right)\right),
	\] 
	so that \cref{eq:phizerohere} evaluated at $x=\pi(\chi(v))$ becomes 
	\[
		\mathbf{z}_x(x,v) \cdot v_2\vert_{x=\pi(\chi(v))} = u_2\,,
	\] 
	which, together with the definition of $\psi$, see \cref{eq:defpsi}, and the fact $\mathbf{z}(\pi(\chi(v)),v)=v_1$ implies
	\[
		\psi_x(\pi(\chi(v)),v) = -u_2\,.
	\]

	Finally, property $3$ is a consequence of \cref{cor:bigpsi}.
\end{proof}

\section{The Analytic Hadamard-H\"ormander Parametrix}\label{sec:para}

Throughout this section we fix an open set $\Omega \subset \RR^N$, a real positive definite matrix of analytic functions $(g^{jk})_{j,k=1}^N$ in $\Omega$, and analytic functions $b^j \in C^\omega(\Omega), j\in\{1,\dots,N\}, c\in C^\omega(\Omega)$. We fix the differential operator
\[
	P(x,D) = -\sum \del_j(g^{jk} \del_k)+ \sum b^j\del_j + c\,,
\]
and study the equation
	\begin{equation}\label{eq:solvehyp}
		(\del_t^2 + P(x,D)) u(t,x) = f(t,x)\,,\qquad \supp u \subset \{t\geq 0\}\,,
	\end{equation}
	where we remark that $\del_t^2 + P(x,D)$ is a strictly hyperbolic differential operator. We would like to describe the solution operator of \cref{eq:solvehyp}, ie. the operator mapping $f$ to $u$ in \cref{eq:solvehyp}. 

	The first construction of this solution operator goes back to \cite{zbMATH02550576} and later \cite{zbMATH03051403}. For the special case we investigate in \cref{sec:seis}, the linear wave equation with non-constant coefficients, one may find a construction of the solution operator in modern language in \cite{zbMATH03531851,zbMATH03495276,MR0162045,zbMATH05136052}. 

The solution operator to this (or similar) differential equations have already been constructed in the analytic setting. In fact, \cite{zbMATH02550576} states a result in the analytic setting, and we refer also to \cite{zbMATH06469358,zbMATH07060663,zbMATH01014656,zbMATH06294222,zbMATH06828045} in more contemporary language. Nevertheless, we shall need our solution operator to be of a specific form so that we must revisit this construction.

In the smooth setting, one can use energy methods to show existence of solutions without an explicit representation of the solution operator, see for example \cite[\S~23]{zbMATH05129478}.

In order to obtain an explicit solution operator, one can also use the method of geometric optics, called the WKB method in the semiclassical setting, see \cite{MR1269107,zbMATH05817029,zbMATH03709065,Duistermaat1994,AST_1982__95__R3_0,MR2918544,MR4436039,MR0516965,zbMATH03136866}, and in particular \cite[Prop.~23.2.7]{MR4436039} for an adapatation to the analytic setting. This method constructs local solutions in the form of FIOs which can, in the smooth setting, be patched together into global FIOs using the composition of smooth FIO (see \cite[\S~VIII.5]{zbMATH03709065} or the proof of \cite[Thm.~23.2.4]{zbMATH05129478} for arguments as to this `patching together').

As described at the beginning of \cref{sec:analyticFIO}, we are restricted by looking for an explicit global solution which would, in general, disqualify the constructions mentioned above. In the application in \cref{sec:seis} we will assume that the manifold in question is simple, thus guaranteeing the existence of global normal coordinates. 
We will rely on a citation that guarantees the existence of solutions and subsequently make use of the Hadamard parametrix solution to obtain an as-explicit-as-possible representation of the solution operator in the case of analytic coefficients.

We remark that we may consider $g$ as giving rise to a metric on $\Omega$, so that we may consider $(\Omega,g)$ a Riemannian manifold. For any $y\in \Omega$ we may then define the injectivity radius $\mathrm{inj}_g(y)$ of normal geodesic coordinates centered at $y$ in that geometry. See \cite{zbMATH06897812} for the definition and properties of normal coordinates and the injectivity radius.

Let us introduce the concept of the domain of dependence. Let $T>0$ and $X\subset \Omega$ open. We shall consider bicharacteristic strips of the Hamiltonian vector field associated to the principal symbol of $\del_t^2+P(x,D)$, see for example \cite[\S~6.4]{hoermander1} for a defintion (or \cite[\S~7]{zbMATH03359011}). We call the projection onto the base of a bicharacteristic strip a \emph{bicharacteristic curve}. We let $D(T,X) \subset (-T,T)\times X$ be the set of points so that all bicharacteristic curves (remaining inside $\RR\times X$) with initial point in $D(T,X)$ intersect $(-T,0)\times X$ (flowing backwards or forwards in time), and $D_+(T,X)\coloneqq D(T,X)\cap (0,T)\times X$. See also the comments before \cite[Thm.~5.1.6]{zbMATH05817029} or \cite[Def.~4.2.1]{zbMATH03495276}. The domain $D_+(T,X)$ can be thought of as the set of points that uniqueness of the solution $u$ to $(\del_t^2+P(x,D))u=f$ can be propagated to if $u$ vanishes for $t<0$. 

Presume for the moment that the solution operator $S$ (ie. the operator mapping $f$ to $u$ in \cref{eq:solvehyp}) is well-defined (which will be shown in \cref{lem:ex}). Writing $\RR_+ = (0,\infty)$, the main result of this section, in essence a reformulation of \cite[Prop.~17.4.3]{zbMATH05129478}, is
\begin{theorem}\label{thm:paraana} 
	Let $y\in \Omega$ and let $X \Subset \Omega$ be an open normal neighborhood of $y$. Let $T>0$ with $T < \mathrm{inj}_g(y)$ and write $X\times X\setminus\mathrm{d} \coloneqq \{(x,y)\in X\times X\colon x\neq y\}$.  
	\begin{enumerate}
	\item There is $\mathrm{FS}_{\psi a}^{\frac{n-3}{2}}(X\times X) \ni \ud{\td{a}} = \sum_{j\geq 0} \td{a}_j$ that is classical and elliptic with the following property. For every finite realization $\td{a} \in S_{\psi a}^{\frac{n-3}{2}}(X\times X)$ of $\ud{\td{a}}$ there is $\td{u}\in C^\omega(D(T,X)\times X)$ so that 
	\begin{equation*}
	(t,x,y)\in D_+(T,X)\times X\implies \int e^{i\theta(t^2-d_g^2(x,y))}\td{a}(x,y,\theta)\dd\theta - S(\delta_{(0,y)})(t,x) = \td{u}(t,x,y)\,.
	\end{equation*}
	\item There is $\mathrm{FS}_{\psi a}^{\frac{n-3}{2}}(X\times X\setminus\mathrm{d}) \ni \ud{a} = \sum_{j\geq 0} a_j$ that is classical and elliptic with the following property. For every finite realization $a \in S_{\psi a}^{\frac{n-3}{2}}(X\times X\setminus\mathrm{d})$ of $\ud{a}$, and any $\td{a}$ finite realization of $\ud{\td{a}}$ from 1., there is $u\in C^\omega(\RR_+\times X\times X\setminus \mathrm{d})$ so that 
	\[
		(t,x,y)\in \RR_+\times X\times X\setminus\mathrm{d}\implies \int e^{i\theta(t^2-d_g^2(x,y))}\td{a}(x,y,\theta)\dd\theta - \int e^{i\theta(t-d_g(x,y))}a(x,y,\theta)\dd\theta = u(t,x,y)\,.
	\]
	\end{enumerate}
\end{theorem}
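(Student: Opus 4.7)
The plan is to carry out the Hadamard--H\"ormander parametrix construction with explicit analytic symbol estimates. The key algebraic identity is that $\Gamma(t,x,y) \coloneqq t^2 - d_g(x,y)^2$ satisfies $\sigma_2(\del_t^2 + P)(\dd\Gamma) = c\Gamma$ on normal coordinates about $y$, with $c$ an analytic non-vanishing function (concretely $c=4$, since $|\nabla_x d_g^2|_g^2 = 4 d_g^2$ by the Gauss lemma). Applying $\del_t^2 + P(x,D)$ to $e^{i\theta\Gamma}\td{a}_j$ and using $\Gamma e^{i\theta\Gamma} = -i\del_\theta e^{i\theta\Gamma}$ followed by one integration by parts in $\theta$ produces a cascade of linear transport equations for the $\td{a}_j$: these are ODEs along geodesics emanating from $y$ with analytic coefficients, uniquely solvable by analytic functions on $X$ because $X$ lies within the injectivity radius of $y$. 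The leading $\td{a}_0$ is, up to a constant, the reciprocal square root of the Van Vleck--Morette determinant, hence non-vanishing, so $\ud{\td a}$ is elliptic. The symbol bounds $|\del^\alpha_{(x,y)}\td{a}_j| \leq M C^{j+|\alpha|}j!\alpha!|\theta|^{(n-3)/2-j}$ required for $\ud{\td a}\in \mathrm{FS}^{(n-3)/2}_{\psi a}(X\times X)$ follow by iterating Cauchy estimates on a complex strip about $X\times X$ through the recursion.

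Once $\td{a}$ is a finite realization of $\ud{\td a}$ via \cref{lem:finrea}, a direct computation gives
\[
	(\del_t^2 + P(x,D)) \int e^{i\theta\Gamma}\td{a}(x,y,\theta)\dd\theta = \delta_{(0,y)} + r(t,x,y)
\]
with $r\in C^\omega(D_+(T,X)\times X)$, the finite-realization tail decaying exponentially in $\theta$. Subtracting $S(\delta_{(0,y)})$, the difference solves $(\del_t^2 + P)u = r$ with vanishing past data, and propagation of analytic regularity for the strictly hyperbolic Cauchy problem inside the domain of dependence $D_+(T,X)$, classical since \cite{zbMATH02550576}, yields $u\in C^\omega(D_+(T,X)\times X)$, proving part 1.

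For part 2, substitute $\theta' = \theta(t + d_g(x,y))$ in the integral with phase $\theta(t^2 - d_g^2) = \theta(t-d_g)(t+d_g)$; on $\RR_+\times X\times X\setminus\mathrm{d}$ one has $t + d_g > 0$, so this is an orientation-preserving diffeomorphism of $\RR$. The transformed integral has phase $\theta'(t - d_g)$ but carries a $t$-dependent amplitude $\td{a}(x,y,\theta'/(t+d_g))/(t+d_g)$. Taylor expand this amplitude in $(t-d_g)$ about $t = d_g$, and use $(t-d_g)^k e^{i\theta'(t-d_g)} = (-i)^k\del_{\theta'}^k e^{i\theta'(t-d_g)}$ followed by integration by parts in $\theta'$ to trade each factor of $(t - d_g)$ for a $\theta'$-derivative on the amplitude, producing contributions depending only on $(x,y,\theta')$. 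Collecting them as a formal series defines $\ud{a}\in\mathrm{FS}^{(n-3)/2}_{\psi a}(X\times X\setminus\mathrm{d})$, classical and elliptic with leading term proportional to $(2d_g)^{-(n-1)/2}\td{a}_0(x,y,\theta')$. The Taylor remainder, truncated at the index prescribed by the finite-realization step and combined with \cref{lem:finrea}, contributes an analytic function on $\RR_+\times X\times X\setminus\mathrm{d}$, which is the required $u$.

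The main obstacle I anticipate is the inductive Cauchy-estimate bookkeeping for the transport recursion: solving the $j$-th linear ODE along geodesics, with source built from derivatives of $\td{a}_{j-1}$, must not inflate the bound beyond $C^j j!$. This needs a weighted estimate in the spirit of the majorant method, tracking the loss of analytic radius at each step uniformly in $j$. A secondary technical concern is propagation of analytic regularity for the strictly hyperbolic Cauchy problem: although classical, it requires either an analytic energy estimate on a complex neighborhood or iterated Holmgren-type uniqueness across time-slices foliating $D_+(T,X)$.
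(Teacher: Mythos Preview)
Your outline is architecturally the same as the paper's: Hadamard transport recursion with analytic coefficient estimates, finite realization, then propagation of analytic singularities for part~1; a change of phase followed by a Taylor expansion in $t$ about $d_g$ and integration by parts in $\theta$ for part~2. The two concerns you flag at the end are the right ones, and the paper handles them essentially as you suggest (the $C^jj!$ bounds on the Hadamard coefficients are read off from the explicit recursion in normal coordinates; propagation is via H\"ormander's/Sj\"ostrand's propagation of analytic singularities).

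There is, however, a genuine gap in your claim that ``a direct computation gives $(\del_t^2+P)\int e^{i\theta\Gamma}\td a\,\dd\theta=\delta_{(0,y)}+r$ with $r\in C^\omega$, the finite-realization tail decaying exponentially in $\theta$.'' The amplitude of $r$ is \emph{not} a single exponentially decaying function. Applying $\del_t^2+P$ to the finite realization $\td a=\sum_j\varphi_j\td a_j$ and running the transport cancellation produces, besides the telescoped term $(P\td a_N)\theta^{(n-3)/2-N}$, extra contributions from $\del_\theta\varphi_j$ supported on shells $|\theta|\sim j\rho$, together with low-frequency errors coming from replacing the distributions $(\theta-i0)^{-j+(n-3)/2}$ (needed so that the leading term reproduces $\delta_{(0,y)}$ exactly) by smooth amplitudes cut off near $\theta=0$. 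What one actually obtains is a distribution admitting, \emph{for each} $N$, an amplitude representation with $|r_N|\le C(CN/|\theta|)^N|\theta|^m$; showing that such a distribution is analytic is the content of a separate lemma (the paper's \cref{lem:manyamplitudes}), and you should isolate it. Your phrase ``exponentially decaying tail'' does not yet capture this: the bound is polynomial in $N$, not uniform exponential decay, and analyticity follows only after the Ehrenpreis-cutoff non-stationary-phase argument is run with the index $N$ tied to $\lambda$.

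A smaller issue in part~2: your substitution $\theta'=\theta(t+d_g)$ applied to the \emph{finite realization} $\td a$ gives an amplitude $\td a(x,y,\theta'/(t+d_g))$ that is only $C^\infty$ in $t$, since $\td a$ is not analytic in $\theta$; the Taylor expansion in $t-d_g$ then has no analytic control. You must instead substitute term by term in the \emph{formal} amplitude, where homogeneity of each $\td a_j$ converts the substitution into multiplication by $(t+d_g)^{j-(n-1)/2}$, which is analytic in $t$. This is exactly the paper's route, phrased there via the identity $\chi_+^a(t^2-r^2)=(t+r)^a\chi_+^a(t-r)$ rather than substitution; the two are equivalent once restricted to individual homogeneous terms. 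After that the Taylor/integration-by-parts step goes through, but again the remainder estimate feeds into the same \cref{lem:manyamplitudes}-type lemma.
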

Perhaps the first statement is not new, see for instance \cite[\S~6.1]{zbMATH06828045}, but we shall perform its proof nonetheless in order to facilitate the proof of the second statement. In the application in \cref{sec:seis}, it will be beneficial to have an FIO representation of this solution (for $t>0$) with a phase that is linear with respect to $t$. This can be achieved with the substitution $\theta \mapsto \theta(t+d_g(x,y))^{-1}$ upon the FIO from statement 1. (see \cite[Eq.~(5.14)]{zbMATH06469358}), but as we will want to remove the time dependence from our amplitude as well, we shall use a different method. The fact that the amplitude $a(x,y,\theta)$ in statement 2. is independent of $t$ will be exploited crucially in \cref{thm:waveFIO}.

Let us first use \cite[Thm.~23.2.2]{zbMATH05129478} to guarantee the existence of the solution operator. 
\begin{lemma}\label{lem:ex}
	Let $T>0, s\in\RR$ and $X\Subset \Omega$.

	For every $f\in H^{s-1}_{\mathrm{loc}}((-T,T)\times \Omega)$ with $\supp f\subset \{t\geq 0\}$, there exists a solution $v\in H^s_{\mathrm{loc}}((-T,T)\times \Omega)$ to 
	\begin{equation}\label{eq:roughsol2}
		(\del_t^2+P(x,D))v = f\,\ \text{in}\ (-T,T)\times X, \quad \supp v \subset \{t\geq 0\}\,.
	\end{equation}
	Any distribution $\in \mathcal{D}'((-T,T)\times \Omega)$ satisfying \cref{eq:roughsol2} agrees with $v$ in $D(T,X)$.

	
	From now onward, for appropriately chosen $f$, we let ${S}(f)$ denote the solution to \cref{eq:roughsol2} in $H^s_{\mathrm{loc}}(D(T,X)\times X)$.
\end{lemma}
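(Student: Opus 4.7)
The plan is to apply \cite[Thm.~23.2.2]{zbMATH05129478}, which provides existence and uniqueness for the Cauchy problem of a strictly hyperbolic operator in the $H^s_{\mathrm{loc}}$ scale. First I would fix an open $X'\Subset\Omega$ with $X\Subset X'$ and set up the Cauchy problem for $\del_t^2+P(x,D)$ on $(-T,T)\times X'$ with vanishing initial data at some time $t_0\in(-T,0)$ and with source $f$. The cited theorem then yields a solution $v\in H^s_{\mathrm{loc}}((t_0,T)\times X')$; extending by zero across $t=t_0$ and smoothly cutting off outside $X'$ gives the desired representative $v\in H^s_{\mathrm{loc}}((-T,T)\times\Omega)$.

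Next I would verify the support condition. Since $f$ vanishes identically for $t<0$ and the Cauchy data at $t_0$ are zero, the uniqueness clause of \cite[Thm.~23.2.2]{zbMATH05129478}, applied on the strip $(t_0,0)\times X'$ with the trivial function as a comparison solution, forces $v\equiv 0$ there. This is essentially finite speed of propagation, and gives $\supp v\subset\{t\geq 0\}$.

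For the uniqueness claim inside $D(T,X)$, I would let $w=v_1-v_2$ be the difference of two distributional solutions, so that $w$ satisfies the homogeneous equation in $(-T,T)\times X$ with $\supp w\subset\{t\geq 0\}$. By definition, every bicharacteristic through a point $(t,x)\in D(T,X)$ meets $\{t<0\}$, where $w\equiv 0$, and the goal is to propagate this vanishing back to $(t,x)$. This is a standard consequence of propagation of uniqueness for strictly hyperbolic operators: one may either invoke Holmgren's uniqueness theorem (available because the coefficients are analytic), or perform energy estimates after mollifying $w$ to reduce to the $H^s$ case.

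The main obstacle is bridging the gap between existence in $H^s_{\mathrm{loc}}$ and uniqueness in $\mathcal{D}'$. The cited theorem directly furnishes an $H^s$-solution from an $H^{s-1}$-source, but the uniqueness statement ranges over arbitrary distributional solutions. I would handle this either by a regularity argument upgrading a distributional solution of a strictly hyperbolic equation with $H^{s-1}$ source to an $H^s_{\mathrm{loc}}$ function within the domain of dependence (reducing to the $H^s$ case), or, more simply, by appealing to the distributional version of Holmgren's theorem, which applies cleanly here thanks to analyticity of the coefficients.
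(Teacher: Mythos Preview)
Your proposal is correct and close to the paper's argument. For existence the paper likewise cites H\"ormander's strictly hyperbolic existence theorem (it invokes \cite[Thm.~23.2.4]{zbMATH05129478} rather than 23.2.2, after extending the coefficients of $P$ smoothly to all of $\RR^n$ so that they are unchanged on $X$), and obtains the support condition in the same way. For uniqueness in $D(T,X)$ the paper takes the microlocal route rather than invoking Holmgren directly: from $(\del_t^2+P)(v-\tilde v)=0$ it first gets $\wf(v-\tilde v)\subset\mathrm{Char}(\del_t^2+P)$ via analytic elliptic regularity, then applies propagation of analytic singularities along bicharacteristics (which by definition of $D(T,X)$ all reach $\{t<0\}$) to conclude that $v-\tilde v$ is real-analytic in $D(T,X)$, and finishes by analytic continuation from $\{t<0\}$. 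Your Holmgren suggestion is essentially the same argument packaged differently; your alternative via energy estimates would also work and has the advantage of not needing analytic coefficients, though the paper's choice fits naturally with the analytic-microlocal toolkit used throughout.
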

\begin{proof}
	Due to our assumptions on $P$, the operator $\del_t^2+P$ satisfies the conditions necessary to apply \cite[Thm.~23.2.4]{zbMATH05129478}, which gives the existence of 
	the solution $v$ to 
	\cref{eq:roughsol2}. (We extend the coefficients of $P$ to $\RR^n$ smoothly so that they remain unchanged in $X$).

	In order to prove the uniqueness in $D(T,X)$ we shall use the propagation of analytic singularities. Assume there exists some $\td{v} \in \mathcal{D}'((-T,T)\times\Omega)$ solution to \cref{eq:roughsol2}. We find that in $(-T,T)\times X$,
	\[
		(\del_t^2+P(x,D))(v-\td{v}) = 0\,,\quad \text{and in particular}\quad \wf(v-\td{v}) \subset \mathrm{Char}(\del_t^2+P(x,D))\,,
	\]
	where the second part is a consequence of \cite[Thm.~8.6.1]{hoermander1}. We are allowed to apply the propagation of analytic singularities \cite[Thm.~7.3]{hoermander1} (or \cite[Thm.~9.1]{AST_1982__95__R3_0}) to conclude that because $v-\td{v}$ vanishes in $t<0$, it must be analytic in $D(T,X)$ and thus, in fact, vanish there (by analytic continuation).
\end{proof}

We now consider the Hadamard parametrix for the wave equation \'a la \cite[\S~17]{zbMATH05129478}, which is given with respect to homogeneous distributions, which we will reformulate in the setting of analytic FIO. 

We will need a few properties of homogeneous distributions for which we refer to \cite[\S~3.2]{hoermander1} and \cite{MR0166596}. We also refer to \cite{zbMATH05622635}, which has a different notation convention. In particular, using H\"ormander's notation, we will consider the homogeneous distributions $\chi_+^a \in \DD'(\RR\setminus 0)$ defined for all $a\in \mathbb{C}$ as $\chi_+^a(s) = \frac{s^a}{\Gamma(a+1)}H(s)$ with $H$ the Heaviside distribution and $\Gamma$ is Euler's Gamma function. We collect a few facts about these distributions.
\begin{lemma}\label{lem:homdist}
	Let $a\in \CC$.
	\begin{enumerate}
		\item $\chi^{-1}_+ = \delta_0$ and $\chi_+^0 = H$ the Heaviside distribution.
		\item Let $f,g \colon\RR\times \RR^{n}\sem\to \RR$ be defined as $f(t,x) = t^2-\abs{x}^2$ and $g(t,x)=t-\abs{x}$. For $x\neq 0$ we have
			\[
				f^\ast \chi_+^a = (t+\abs{x})^a g^\ast \chi_+^a\,.
			\]
		\item We have
		\[
			\chi^a_+(t^2-\abs{x}^2) = e^{-i\frac{(a+1)\pi}{2}}\int e^{i\theta(t^2-\abs{x}^2)} (\theta-i0)^{-a-1}\dd\theta\,,
		\]
		where $(\theta-i0)^{-a-1}$ is a distribution defined in \cite[\S~3.2]{hoermander1}, which satisfies $(\theta-i0)^{-a-1}= \theta^{-a-1}$ for $\abs{\theta}>0$.
		\item For $x\neq 0$,
			\[
				\chi^a_+(t^2-\abs{x}^2) = (t+\abs{x})^a e^{-i\frac{(a+1)\pi}{2}}\int e^{i\theta(t-\abs{x})} (\theta-i0)^{-a-1}\dd\theta\,.
			\]
	\end{enumerate}
\end{lemma}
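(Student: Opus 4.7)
The plan is to address each of the four parts in sequence. Part~1 is the classical fact that $a\mapsto\chi_+^a$ is an entire $\mathcal{D}'(\RR)$-valued family satisfying $(d/ds)\chi_+^a = \chi_+^{a-1}$, see \cite[\S~3.2]{hoermander1}; from $s_+^0 = H(s)$ and $\Gamma(1)=1$ one reads off $\chi_+^0 = H$, and differentiating once yields $\chi_+^{-1} = \delta_0$.

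For Part~2, the key observation is positive homogeneity: $\chi_+^a(\lambda s) = \lambda^a\chi_+^a(s)$ for $\lambda > 0$, inherited from the homogeneity of $s_+^a$. For $x\neq 0$ we factor $t^2-\abs{x}^2 = (t+\abs{x})(t-\abs{x})$, and on the support $\{t\geq\abs{x}\}$ of $g^\ast\chi_+^a$ the factor $t+\abs{x} \geq 2\abs{x} > 0$ is strictly positive, so setting $\lambda = t+\abs{x}$ and $s = t-\abs{x}$ in the homogeneity relation gives the claimed identity. Both $f^\ast\chi_+^a$ and $g^\ast\chi_+^a$ are well-defined as distributions on $\RR\times(\RR^n\sem)$ because $f$ and $g$ are submersions there (their gradients are non-zero for $x\neq 0$).

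For Parts~3 and~4, I would invoke the standard Fourier representation $\Fou{\chi_+^a}(\theta) = e^{-i(a+1)\pi/2}(\theta - i0)^{-a-1}$ of $\chi_+^a$ as a tempered distribution, see \cite[Ex.~7.1.17]{hoermander1}. Fourier inversion then yields the one-dimensional oscillatory integral formula $\chi_+^a(s) = e^{-i(a+1)\pi/2}\int e^{i\theta s}(\theta - i0)^{-a-1}\,\dd\theta$, with normalization constants absorbed into the measure. Part~3 is obtained by pulling this back along the submersion $f(t,x) = t^2-\abs{x}^2$ (whose gradient $(2t,-2x)$ is non-zero for $x\neq 0$), which commutes with the $\theta$-integration. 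Part~4 follows either by combining Parts~2 and~3 and extracting the $\theta$-independent prefactor $(t+\abs{x})^a$ from the integral, or equivalently by pulling the 1D formula back along the submersion $g(t,x) = t-\abs{x}$ for $x\neq 0$ and then applying Part~2. The main technical point is justifying the interchange of pullback and $\theta$-integration; this is routine from the distributional pullback calculus of \cite[\S~8.2]{hoermander1}, which applies directly along submersions.
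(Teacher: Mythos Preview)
Your proposal is correct and follows essentially the same route as the paper: Part~1 is by definition, Part~3 via the Fourier transform formula $\Fou{\chi_+^a}(\theta)=e^{-i(a+1)\pi/2}(\theta-i0)^{-a-1}$ from \cite[Ex.~7.1.17]{hoermander1}, and Part~4 by combining Parts~2 and~3. The only cosmetic difference is in Part~2, where the paper first verifies the identity for $a=0$ (using $f^\ast H=g^\ast H$ via the explicit integral formula for the pullback of the Heaviside distribution) and then extends to general $a$ through the pointwise relation $\chi_+^a(s)=\tfrac{s^a}{\Gamma(a+1)}H(s)$, whereas you appeal directly to the homogeneity $\chi_+^a(\lambda s)=\lambda^a\chi_+^a(s)$; both arguments are at the same level of rigor and implicitly rely on the function case (large $\operatorname{Re} a$) together with analytic continuation.
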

\begin{proof}
	Item 1. is by definition, see \cite[\S~3.2]{hoermander1}. 


	We turn to showing item 2. For the Heaviside distribution $H\in\DD'(\RR)$ a calculation (or see \cite[Exc.~10.22(ii)]{zbMATH05622635}) gives that for $h \in \{f,g\}$ we have 
	\[
		\langle h^\ast H, \psi\rangle = \int_{h^{-1}((0,\infty))} \psi(s) \dd s\,.
	\]
	This directly implies that $f^\ast H = g^\ast H$, which shows item 2. for $a=0$. 

	To show item 2. for all $a\in \CC$, we note that by definition, (see also -- with different notation -- \cite[\S~13]{zbMATH05622635}), $\chi_+^a(s) = \frac{s^a}{\Gamma(a+1)}H(s)$, where $H$ is the Heaviside distribution. Thus, using that we have shown item 2. for $H=\chi_+^0$ already,
	\[
		\chi_+^a(t^2-\abs{x}^2) = (t+\abs{x})^a\frac{(t-\abs{x})^a}{\Gamma(a+1)}H(t^2-\abs{x}^2) = (t+\abs{x})^a\frac{(t-\abs{x})^a}{\Gamma(a+1)}H(t-\abs{x}) = (t+\abs{x})^a\chi^a_+(t-\abs{x})\,.
	\]


%

 	According to \cite[Exa.~7.1.17]{hoermander1} (or \cite[p.~172]{MR0166596}), we have
	\[
		\mathcal{F}(\chi_+^a)(\theta) = e^{-i(a+1)\pi/2}(\theta-i0)^{-a-1}\,,
	\]
	which completes the proof of item 3. Item 4. is a consequence of items 2. and 3.
\end{proof}

We now recall the Hadamard parametrix as it is described in \cite[\S~17.4]{zbMATH05129478}. This construction will facilitate the proof of \cref{thm:paraana}. The notation will be as in \cref{thm:paraana}: $y\in \Omega$ and $X$ is a normal neighborhood of $y$, and $T>0$ with $T < \mathrm{inj}_g(y)$ and we write $X\times X\setminus\mathrm{d} \coloneqq \{(x,y)\in X\times X\colon x\neq y\}$.  

According to \cite[Prop.~17.4.3]{zbMATH05129478}, there are smooth functions $\td{U}_j \in C^\infty(X\times X)$ and distributions $E_j \in \mathcal{D}'(\RR\times X\times X), j\in\mathbb{N}$ so that for all $N\in\mathbb{N}$ and all $(t,x,y)\in (-\infty,T)\times X\times X$,
	\begin{equation}\label{eq:paraprop}
		(\del_t^2+P(x,D))\abs{g(y)}^{-1}\sum_{j=0}^N \td{U}_j(x,y)E_j(t,d_g(x,y)) = \delta_{(0,y)}(t,x) + \abs{g(y)}^{-1}(P(x,D)\td{U}_N(x,y))E_N(t,d_g(x,y))\,,
	\end{equation}
	where $\abs{g} = (\det g^{jk})^{1/2}$. 
	Furthermore, the distributions $E_j$ are supported in $t\geq 0$ and satisfy $E_j(0,\cdot) = 0$.
	
	Following the construction in \cite[\S~17.4]{zbMATH05129478} where we use the fact that the coefficients of $P$ are analytic, it is easy to see that each $\td{U}_j(x,y), j\in\mathbb{N}$ is analytic for all $(x,y) \in X\times X$. In fact, the construction of $\td{U}_j$ is entirely explicit in geodesic normal coordinates, and one finds that there exists $C>0$ so that for all $\alpha,\beta,j$
	\begin{equation}\label{eq:ujanaold}
		\sup_{X\times X}\abs{\del^\alpha_x\del^\beta_y \td{U}_j(x,y)}\leq C^{\abs{\alpha}+\abs{\beta}+j+1}j!\alpha!\beta!\,,\quad\text{and}\quad \inf_{X\times X}\abs{\td{U}_0} > 0\,.
	\end{equation}


	Because for $j\geq 0$ the distribution $E_j$ is supported in $t\geq 0$, from \cref{eq:paraprop} and the uniqueness statement in \cref{lem:ex} we conclude that
	\begin{equation}\label{eq:ref1}
		\abs{g(y)}^{-1}\sum_{j=0}^N \td{U}_j(x,y)E_j(t,d_g(x,y)) - {S}\left[\abs{g(y)}^{-1}(P(x,D) \td{U}_N(\cdot_x,y))E_N(\cdot_t,d_g(\cdot_x,y))\right](t,x) = {S}(\delta_{(0,y)})(t,x)\,,
	\end{equation}
	for $(t,x,y)\in D(T,X)\times X$.

	Furthermore, we may use the explicit definition, given in \cite[Lem.~17.4.2]{hoermander1}, for the distributions $E_j, j\in\mathbb{N}$ when $t>0$, 
	and using \cref{lem:homdist}, for $t>0$
	\begin{equation}\label{eq:rewriteEj}
	\begin{aligned}
		E_j(t,d_g(x,y)) &= 2^{-2j-1}\pi^{\frac{1-n}{2}}\chi_+^{j+\frac{1-n}{2}}(t^2-d_g^2(x,y)) \\
		&= 2^{-2j-1}\pi^{\frac{1-n}{2}} e^{-i\left({j+\frac{3-n}{2}}\right)\frac{\pi}{2}}\int e^{i\theta(t^2-d_g(x,y)^2)} (\theta-i0)^{-j+\frac{n-3}{2}}\dd\theta \\
		&= 2^{-2j-1}\pi^{\frac{1-n}{2}}(t+d_g(x,y))^{j+\frac{1-n}{2}} e^{-i\left({j+\frac{3-n}{2}}\right)\frac{\pi}{2}}\int e^{i\theta(t-d_g(x,y))} (\theta-i0)^{-j+\frac{n-3}{2}}\dd\theta\,,\\
	\end{aligned}
	\end{equation}
	where the last equality is only true for $x\neq y$. We will use the bottom two representations of $E_j(t,d_g(x,y))$, and since we want to rewrite our solution operator as an FIO, we will extract the `amplitude part' of $E_j$. To lighten notation we first introduce for $j\geq 0$, 
	\begin{equation}\label{eq:defofnewuj}
		U_j(x,y) \coloneqq 2^{-2j-1}\pi^{\frac{1-n}{2}}e^{-i({j+\frac{3-n}{2}})\pi/2}\td{U}_j(x,y)\,,
	\end{equation}
	which, due to \cref{eq:ujanaold} satisfy, for some $C>0$ and all $j\geq 0$,
	\begin{equation}\label{eq:ujana}
		\sup_{X\times X}\abs{\del^\alpha_x\del^\beta_y U_j(x,y)}\leq C^{\abs{\alpha}+\abs{\beta}+j+1}j!\alpha!\beta!\,,\quad\text{and}\quad \inf_{X\times X}\abs{U_0} > 0\,.
	\end{equation}
We have the following properties for amplitudes defined by $U_j$:
	\begin{lemma}\label{lem:amplitudes}
	Let $(\td{\varphi}_j)_{j\in\mathbb{N}} \subset C_c^\infty(\RR)$ be a sequence of Ehrenpreis cut-offs vanishing for $\abs{\theta}\leq 1$ and identically $1$ for $\abs{\theta}\geq 2$, and let $\rho > 0$ be large enough. Define $\varphi_j(\theta) \coloneqq \td{\varphi}_j(\theta (j\rho)^{-1}), j\geq 1$, and $\varphi_0 \coloneqq \varphi_1$. 

	Define the expressions
	\begin{align*}
		\begin{aligned}
		a'_j(t,x,y,\theta) &\coloneqq \abs{g(y)}^{-1}(t+d_g(x,y))^{j+\frac{1-n}{2}} U_j(x,y)\varphi_0(4\theta)\theta^{-j+\frac{n-3}{2}}\\
		a''_j(t,x,y,\theta) &\coloneqq \abs{g(y)}^{-1}(t+d_g(x,y))^{j+\frac{1-n}{2}} U_j(x,y) (1-\varphi_j(\theta))(\theta-i0)^{-j+\frac{n-3}{2}}\\
		\td{a}'_j(x,y,\theta) &\coloneqq \abs{g(y)}^{-1} U_j(x,y) \varphi_0(4\theta)\theta^{-j+\frac{n-3}{2}}\\
		\td{a}''_j(x,y,\theta) &\coloneqq \abs{g(y)}^{-1} U_j(x,y)(1-\varphi_j(\theta))(\theta-i0)^{-j+\frac{n-3}{2}}\\
		r_j'(x,y,\theta) &\coloneq -\abs{g(y)}^{-1} (P(x,D)U_j(x,y)) \varphi_j(\theta) \theta^{-j+\frac{n-3}{2}}\\
		r_j''(x,y,\theta) &\coloneq -\abs{g(y)}^{-1} (P(x,D)U_j(x,y))(1-\varphi_j(\theta))(\theta-i0)^{-j+\frac{n-3}{2}}\,.
		\end{aligned}
	\end{align*}
	We have that $a_0'$ and $\td{a}_0'$ are non-vanishing for $\abs{\theta}$ away from $0$ and $\ud{a'} \coloneqq \sum_{j\geq 0} a'_j \in \mathrm{FS}_{\psi a}^{\frac{n-3}{2}}(\RR_+\times X\times X\setminus\mathrm{d})$ and $\ud{\td{a}'} \coloneqq \sum_{j\geq 0} \td{a}'_j \in \mathrm{FS}_{\psi a}^{\frac{n-3}{2}}(X\times X)$. In addition, each $a_j'$ and each $\td{a}_j'$ is homogeneous away from the origin of degree $-j+\frac{n-3}{2}$.

	Furthermore, for all $j\in\mathbb{N}$ and $(t,x,y) \in \RR_+\times X\times X\setminus \dd$, 
	\begin{equation}\label{eq:twoampssame}
		\int e^{i\theta(t^2-d_g(x,y)^2)} (\varphi_j\td{a}'_j+\td{a}''_j)(x,y,\theta)\dd\theta = \int e^{i\theta(t-d_g(x,y))} (\varphi_j a'_j+a''_j)(t,x,y,\theta)\dd\theta\,,
	\end{equation}
	and for all $N\in\mathbb{N}$ and $(t,x,y) \in D_+(T,X)\times X$,
	\begin{gather}
			\int e^{i\theta(t^2-d_g(x,y)^2)}\sum_{j=0}^N (\varphi_j\td{a}'_j+\td{a}''_j)(x,y,\theta)\dd\theta + S\left[\int  e^{i\theta(\cdot_t^2-d_g(\cdot_x,y)^2)}(r_N'(\cdot_x,y,\theta)+r_N''(\cdot_x,y,\theta))\dd \theta\right](t,x) \notag\\
			= S(\delta_{(0,y)})(t,x)\,.\label{eq:paranew}
	\end{gather}

\end{lemma}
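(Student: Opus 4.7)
The plan is to verify each assertion by direct computation, leveraging the explicit forms of the amplitudes and the identities of \cref{lem:homdist}. I treat the qualitative statements (homogeneity, ellipticity, formal amplitude class) first, then the two integral identities.

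Since $\varphi_0 = \varphi_1$ vanishes for $\abs{\theta} \leq \rho$ and is identically $1$ for $\abs{\theta} \geq 2\rho$, the factor $\varphi_0(4\theta)$ equals $1$ on $\abs{\theta} \geq \rho/2$; there
\[
	a'_j(t,x,y,\theta) = \abs{g(y)}^{-1}(t+d_g(x,y))^{j+\frac{1-n}{2}} U_j(x,y)\theta^{-j+\frac{n-3}{2}}\,,
\]
is homogeneous in $\theta$ of degree $-j+\frac{n-3}{2}$, with the same analysis applying to $\td{a}'_j$. Ellipticity of $a_0'$ and $\td{a}_0'$ on this region follows from the lower bound on $\abs{U_0}$ in \cref{eq:ujana}, the positivity of $\abs{g(y)}^{-1}$, and (for $a_0'$) that of $t+d_g(x,y)$ on $\RR_+ \times X \times X \setminus \dd$. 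For the formal pseudoanalytic class membership, each $U_j$ inherits the holomorphic extension and bound $C^{\abs{\alpha}+\abs{\beta}+j+1}j!\alpha!\beta!$ from \cref{eq:ujana} (being a constant multiple of $\td{U}_j$). On compact subsets of $\RR_+\times X\times X\setminus \dd$, $t+d_g$ stays bounded away from zero, so $(t+d_g)^{j+\frac{1-n}{2}}$ extends holomorphically and its derivatives satisfy an estimate of the form $C^{\abs{\alpha}+j+1}\alpha!j!$ via Fa\`a di Bruno applied to an analytic function. Finally, direct differentiation of $\theta^{-j+\frac{n-3}{2}}$ gives $C^{j+\abs{\gamma}+1}j!\gamma!\abs{\theta}^{-j+\frac{n-3}{2}-\abs{\gamma}}$ for $\abs{\theta} \geq \rho/2$. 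Applying the Leibniz rule and absorbing combinatorial factors yields \cref{eq:defofclasym}, and the $\varphi_0(4\theta)$ cutoff ensures smoothness across $\theta = 0$.

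For \cref{eq:twoampssame}, since $\varphi_j(\theta)\varphi_0(4\theta) = \varphi_j(\theta)$ for every $j$ (because $\supp\varphi_j \subset \{\abs{\theta} \geq \rho/2\}$, where $\varphi_0(4\theta) = 1$), and using the convention $\theta^{-j+\frac{n-3}{2}} = (\theta-i0)^{-j+\frac{n-3}{2}}$ stipulated by \cref{lem:homdist}, one checks that
\begin{align*}
	\varphi_j\td{a}'_j + \td{a}''_j &= \abs{g(y)}^{-1} U_j(x,y) (\theta-i0)^{-j+\frac{n-3}{2}}\,,\\
	\varphi_j a'_j + a''_j &= \abs{g(y)}^{-1}(t+d_g)^{j+\frac{1-n}{2}} U_j(x,y)(\theta-i0)^{-j+\frac{n-3}{2}}\,.
\end{align*}
Pulling out the common prefactor $\abs{g(y)}^{-1}U_j(x,y)$, the desired identity \cref{eq:twoampssame} becomes precisely item 4 of \cref{lem:homdist}.

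For \cref{eq:paranew}, substitute the Fourier representation from \cref{eq:rewriteEj} and the definition \cref{eq:defofnewuj}: writing $c_j = 2^{-2j-1}\pi^{\frac{1-n}{2}}e^{-i(j+\frac{3-n}{2})\pi/2}$, we have $U_j = c_j\td{U}_j$ and $E_j(t,d_g) = c_j \int e^{i\theta(t^2-d_g^2)}(\theta-i0)^{-j+\frac{n-3}{2}}\dd\theta$, so the constants cancel in every product $\td{U}_j E_j$ and $(P\td{U}_j)E_j$; combined with the minus signs in the definitions of $r_N'$ and $r_N''$, the identity \cref{eq:ref1} rewrites verbatim as \cref{eq:paranew} once one replaces $\abs{g(y)}^{-1}U_j(\theta-i0)^{-j+\frac{n-3}{2}}$ by $\varphi_j\td{a}'_j + \td{a}''_j$ (and likewise for the remainder). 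The only truly delicate point is the uniform $j!$-growth in the symbol estimates, but this is guaranteed by the analyticity bounds \cref{eq:ujana} on the Hadamard coefficients.
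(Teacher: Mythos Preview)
Your proof is correct and follows essentially the same approach as the paper: both establish the formal amplitude properties from the analytic bounds \cref{eq:ujana} on the $U_j$, then use the identity $\varphi_j(\theta)\varphi_0(4\theta) = \varphi_j(\theta)$ together with \cref{eq:rewriteEj} (equivalently, item~4 of \cref{lem:homdist}) to recognize $\varphi_j\td{a}'_j + \td{a}''_j$ and $\varphi_j a'_j + a''_j$ as the full distributions $\abs{g(y)}^{-1}U_j(\theta-i0)^{-j+\frac{n-3}{2}}$ and its $(t+d_g)^{j+\frac{1-n}{2}}$-weighted counterpart, and finally plug into \cref{eq:ref1} to obtain \cref{eq:paranew}. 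Your treatment is slightly more explicit about the $(t+d_g)^{j+\frac{1-n}{2}}$ factor and the cancellation of the constants $c_j$, but this is the same argument.
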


\begin{proof}[Proof of Lemma \ref{lem:amplitudes}]
	From the first part of \cref{eq:ujana} we conclude that $\ud{\td{a}'} \in \mathrm{FS}_{\psi a}^{\frac{n-3}{2}}(X\times X)$ due to the common growth rate among the $\td{a}'_j$. That $\td{a}_0'$ is non-vanishing for $\theta$ away from the origin follows from the second part of \cref{eq:ujana}. The same arguments are valid for $a_0'$ and $\ud{a'}$.

	Note that $\varphi_j(\theta)\varphi_0(4\theta) = \varphi_j(\theta)$ for all $j\geq 0$. Using \cref{eq:rewriteEj}, for all $j\in\mathbb{N}$ and all $(t,x,y) \in \RR_+\times X\times X\setminus \dd$, we find \cref{eq:twoampssame}. Also due to \cref{eq:rewriteEj}, for $(t,x,y) \in \RR_+\times X\times X$, 
	\[ 
		\int e^{i\theta(t^2-d_g(x,y)^2)} (\varphi_j\td{a}'_j+\td{a}''_j)(x,y,\theta)\dd\theta = \abs{g(y)}^{-1}\td{U}_j(x,y)E_j(t,d_g(x,y))\,,
	\]
	with a similar formula for $r_N'+r_N''$. Plugging this into \cref{eq:ref1}, we find \cref{eq:paranew}.
\end{proof}

Note that neither $a_j''$ nor $\td{a}_j''$ nor $r_j''$ are amplitudes as we have defined them in \cref{sec:analyticFIO} (as they are distributions with respect to $\theta$), but we will show that their contributions are negligible and focus on $\td{a}_j'$ and $a_j'$ which are amplitudes. 
	
We are now in position to perform the
\begin{proof}[Proof of \cref{thm:paraana}]
	Recall the definitions of $a'_j,a''_j, r'_j,r''_j,\varphi_j$ from \cref{lem:amplitudes}. 
	Applying \cref{lem:finrea}, 
	$\td{a}' \coloneqq \sum_{j=0}^\infty \varphi_j\td{a}_j' \in S^{\frac{n-3}{2}}_{\psi a}(X\times X)$ is a well-defined finite realization of $\ud{\td{a}'}$. 
%
	Notice that for all $N\in\mathbb{N}$,
	\[
		\td{a}' = \sum_{j=0}^N (\varphi_j\td{a}_j'+\td{a}_j'') - \sum_{j=0}^N\td{a}_j'' + \left(\td{a}' - \sum_{j=0}^N \varphi_j\td{a}_j'\right)\,,
	\]
%
%
	and defining
	\begin{gather}
		\mathcal{D}'(D(T,X)\times X)\ni \td{q}(t,x,y)\coloneqq \label{eq:tdaexpand2}\\
			S\left[\int e^{i\theta(\cdot_t^2-d_g(\cdot_x,y)^2)}(r_N'(\cdot_x,y,\theta)+ r_N''(\cdot_x,y,\theta)) \dd\theta\right](t,x) 
			- \int e^{i\theta(t^2-d_g(x,y)^2)}\left(-\sum_{j=0}^N\td{a}_j'' +\td{a}' - \sum_{j=0}^N \varphi_j\td{a}_j'\right)\dd\theta\,,\notag
	\end{gather}
	\cref{eq:paranew} gives us, on $D_+(T,X)\times X$,
	\begin{equation*}
		\int e^{i\theta(t^2-d_g(x,y)^2)}\td{a}'(x,y,\theta)\dd\theta + \td{q}(t,x,y) = S(\delta_{(0,y)})(t,x)\,.
	\end{equation*}
	Notice that we have different formulas for $\td{q}$ for each $N\in\mathbb{N}$ that have to agree. Because the difference of any two finite realizations of the same formal pseudoanalytic amplitude is an exponentially decaying term that gives rise to an analytic error by \cite[Prop.~17.1.19]{MR4436039}, the proof of the first statement in \cref{thm:paraana} is complete once we show
	\begin{lemma}\label{lem:qana}
		We have 
	\begin{equation*}
		\td{q}(t,x,y) \in 
		C^\omega(D(T,X)\times X)\,.
	\end{equation*}
	\end{lemma}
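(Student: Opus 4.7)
The plan is to apply $\del_t^2+P_x$ to $\td{q}$ and show that $(\del_t^2+P_x)\td{q}$ is real-analytic on $D(T,X)\times X$; since $\td{q}$ vanishes for $t<0$ and $\del_t^2+P_x$ has analytic coefficients, the propagation of analytic singularities (\cite[Thm.~9.1]{AST_1982__95__R3_0}) will then yield $\td{q}\in C^\omega(D(T,X)\times X)$.

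Using $(\del_t^2+P_x)S(\delta_{(0,y)})=\delta_{(0,y)}$, this reduces to showing that
\[
\mathcal{R}(t,x,y):=(\del_t^2+P_x)\int e^{i\theta(t^2-d_g^2(x,y))}\td{a}'(x,y,\theta)\,\dd\theta-\delta_{(0,y)}
\]
is real-analytic. For any $N\in\mathbb{N}$, combining \cref{eq:paranew} with the decomposition $\td{a}'=\sum_{j=0}^N\varphi_j\td{a}_j'+(\td{a}'-\sum_{j=0}^N\varphi_j\td{a}_j')$ gives $\mathcal{R}=-\int e^{i\theta(t^2-d_g^2)}(r_N'+r_N'')\,\dd\theta+\mathcal{A}_N$, where $\mathcal{A}_N\in C^\omega(D(T,X)\times X)$ by two routine observations: each $\td{a}_j''$ and $r_N''$ is compactly supported in $\theta$ (in $\{\abs{\theta}\leq 2j\rho\}$), so its oscillatory $\theta$-integral is the Fourier--Laplace transform of a compactly supported distribution (an entire function) composed with the analytic map $(t,x,y)\mapsto t^2-d_g^2(x,y)$; and $\td{a}'-\sum_{j=0}^N\varphi_j\td{a}_j'=\sum_{j>N}\varphi_j\td{a}_j'$ admits the uniform bound $Ce^{-C^{-1}\abs{\theta}}$ from the proof of \cref{lem:finrea}, giving its integral an analytic extension to a complex strip in $(t,x,y)$.

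The remaining term $\mathcal{I}_N:=\int e^{i\theta(t^2-d_g^2)}r_N'\,\dd\theta=-\abs{g(y)}^{-1}(P_xU_N(x,y))\int e^{i\theta(t^2-d_g^2)}\varphi_N(\theta)\theta^{-N+(n-3)/2}\,\dd\theta$ is not analytic for fixed $N$, but converges to zero in $C^\infty_{\mathrm{loc}}$ as $N\to\infty$: using the factorial bound $\abs{\del^\alpha_{x,y}(P_xU_N)}\leq C^{\abs{\alpha}+N+1}N!\alpha!$ from \cref{eq:ujana}, the support $\abs{\theta}\geq N\rho$ of $\varphi_N$, and the resulting decay $\abs{\int e^{i\theta s}\varphi_N(\theta)\theta^{-N+(n-3)/2}\,\dd\theta}\lesssim(N\rho)^{-N+(n-1)/2}$, a Stirling calculation shows $\norm{\mathcal{I}_N}_{C^k(K)}\lesssim(Ce^{-1}/\rho)^N$ on any compact $K$ once $\rho$ is large. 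Hence $\mathcal{A}_N\to\mathcal{R}$ in $C^\infty_{\mathrm{loc}}(D(T,X)\times X)$, and to conclude analyticity of $\mathcal{R}$ it suffices to establish uniform-in-$N$ analytic-norm bounds for $\mathcal{A}_N$ on a fixed complex neighborhood of $D(T,X)\times X$. These come from: the complex extension of the exponentially decaying tail into a fixed strip $\abs{\mathrm{Im}(t,x,y)}\leq\varepsilon$ (width independent of $N$); and an Ehrenpreis-type bookkeeping for the $\sum_{j=0}^N\td{a}_j''$-contribution, in which the factorial growth $\abs{U_j}\leq C^{j+1}j!$ is absorbed by the bounded order of the compactly supported distribution $(1-\varphi_j(\theta))(\theta-i0)^{-j+(n-3)/2}$, exactly as in the proof of \cref{lem:finrea}.

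The main technical obstacle is precisely this uniform-in-$N$ analytic-norm control of the $\td{a}_j''$-contribution near the light cone $\{t^2=d_g^2(x,y)\}$, and in particular at the tip $(0,y,y)$ where the phase $t^2-d_g^2$ vanishes; this is the analytic analogue of the classical Hadamard singularity cancellation, enabled by the specific Ehrenpreis cut-offs built into the finite realization $\td{a}'$ so that the truncation errors are genuinely analytic rather than merely $C^\infty$.
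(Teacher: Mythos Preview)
There is a genuine gap at the very first step: it is \emph{not} true that $\td{q}$ vanishes for $t<0$. In the definition of $\td{q}$ (eq.~\eqref{eq:tdaexpand2}) only the term $S[\ldots](t,x)$ is supported in $\{t\ge 0\}$; the remaining oscillatory integral has phase $\theta(t^{2}-d_g^{2}(x,y))$, which is even in $t$, and so persists for $t<0$. Consequently the propagation-of-singularities step cannot be seeded by vanishing. What one actually needs, and what the paper proves, is the weaker statement that $\td{q}$ is \emph{analytic} on $(-T,0)\times X\times X$; this requires its own argument (the paper's Lemma~\ref{lem:tdqnegtime}) of exactly the same flavour as the argument for $(\partial_t^{2}+P_x)\td{q}$, since for $t<0$ one has to control the very same tail $\td a'-\sum_{j\le N}\varphi_j\td a_j'$ and the compactly supported pieces $\sum_{j\le N}\td a_j''$.

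Your route for $(\partial_t^{2}+P_x)\td{q}$ --- take a $C^{\infty}$-limit of the analytic $\mathcal{A}_N$ and close with uniform complex-strip bounds --- is different from the paper's. The paper never extends into a complex strip in $(t,x,y)$; instead it packages the problem as a single oscillatory integral whose amplitude can be taken to be $r_N'+b_N'-R_N''$ for \emph{any} $N$, and invokes the general Lemma~\ref{lem:manyamplitudes}: if the same distribution is represented by amplitudes $a_N$ satisfying $|a_N|\lesssim C^{N}(1+|\theta|)^{m}$ together with $|a_N|\lesssim (CN/|\theta|)^{N}|\theta|^{m}$, then it is analytic. This Fourier--Ehrenpreis criterion sidesteps exactly the difficulty you flag at the tip $(0,y,y)$, where complex shifting of the phase gains nothing. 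The paper then supplies the required amplitude bounds (Lemmas~\ref{lem:tdqnegtime} and~\ref{lem:twostats}); the only nontrivial one is the Paley--Wiener estimate for $\sum_{j\le N}\td a_j''$, where the factorial $|U_j|\le C^{j+1}j!$ is cancelled by the factor $|c_{\bar j}|^{-1}\le 1/(\bar j-2)!$ coming from writing $(\theta-i0)^{-j+(n-3)/2}=c_{\bar j}^{-1}\partial^{\bar j}(\theta-i0)^{-j+\bar j+(n-3)/2}$ before applying Paley--Wiener--Schwartz at bounded order. Your ``Ehrenpreis-type bookkeeping, exactly as in the proof of Lemma~\ref{lem:finrea}'' is not the right pointer for this cancellation --- Lemma~\ref{lem:finrea} never handles the distributions $(\theta-i0)^{-j+(n-3)/2}$ at all --- and without this $c_{\bar j}$-mechanism the uniform-in-$N$ bounds you need do not follow.
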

	We defer the proof of \cref{lem:qana} to \cref{sec:paralemmas} and move on to prove the second statement in \cref{thm:paraana}. 

	Define $a' \coloneqq \sum_{j=0}^\infty \varphi_j a'_j \in S^{\frac{n-3}{2}}_{\psi a}(\RR_+\times X\times X\setminus\mathrm{d})$, which is a finite realization of the formal amplitude $\ud{a}'$ according to \cref{lem:finrea}. 
	We proceed as before to find that for all $N\in\mathbb{N}$,
	\[
		a' = \sum_{j=0}^N (\varphi_j a_j'+a_j'') - \sum_{j=0}^Na_j'' + \left(a' - \sum_{j=0}^N \varphi_j a_j'\right)\,,
	\]
	so that 
	\begin{equation}\label{eq:diffisq}
		\int e^{i\theta(t-d_g(x,y))}a'(t,x,y,\theta)\dd\theta- \int e^{i\theta(t^2-d_g(x,y)^2)}\td{a}'(x,y,\theta)\dd\theta = q(t,x,y)\,,
	\end{equation}
	where \cref{eq:twoampssame} will give, for $(t,x,y)\in \RR_+\times X\times X\setminus \dd$,
	\begin{equation}\label{eq:deftdtdq}
		q \coloneqq \int e^{i\theta(t-d_g(x,y))}\left(-\sum_{j=0}^Na_j'' +a' - \sum_{j=0}^N \varphi_j a_j'\right) - e^{i\theta(t^2-d_g(x,y)^2)}\left(-\sum_{j=0}^N\td{a}_j'' +\td{a}' - \sum_{j=0}^N\varphi_j\td{a}_j'\right)\dd\theta\,.
	\end{equation}
	\begin{lemma}\label{lem:qananotd}
	We have
	\begin{equation*}
		{q}(t,x,y) \in 
		C^\omega(\RR_+\times X\times X\setminus\mathrm{d})\,.
	\end{equation*}
	\end{lemma}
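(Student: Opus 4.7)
The plan is to prove \cref{lem:qananotd} in close parallel to the proof of \cref{lem:qana} but omitting the steps that dealt with the solution-operator term $S[\cdots]$, which does not appear here. The key observation is that on $\RR_+\times X\times X\setminus\mathrm{d}$ both phases $\phi_1(t,x,y) = t - d_g(x,y)$ and $\phi_2(t,x,y) = t^2-d_g^2(x,y)$ are real-analytic in $(t,x,y)$, since $d_g$ is analytic on $X\times X\setminus\mathrm{d}$ and $X$ is a normal neighborhood. Likewise $(t+d_g(x,y))^{j+\frac{1-n}{2}}$ is analytic on $\RR_+\times X\times X$ (there $t+d_g>0$) and the factors $\abs{g}^{-1}U_j$ are analytic throughout $X\times X$ by \cref{eq:ujana}.

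Starting from \cref{eq:deftdtdq}, for each $N\in\mathbb{N}$ the first oscillatory integral splits into two pieces: the contribution $\int e^{i\theta\phi_1}(-\sum_{j=0}^N a_j'')\,\dd\theta$, whose amplitude is a sum of $\theta$-distributions each compactly supported in $\abs{\theta}\leq 2N\rho$ (since $1-\varphi_j$ is), and so pairs against the entire function $\theta\mapsto e^{i\theta\phi_1(t,x,y)}$ to yield an entire function of $\phi_1$ after multiplication by the analytic factors; and the contribution $\int e^{i\theta\phi_1}(a'-\sum_{j=0}^N\varphi_j a_j')\,\dd\theta$, whose tail amplitude is exponentially decaying in $\theta$ provided $N\geq R^{-1}\abs{\theta}$ in view of \cref{def:finitereaz} (since for $j\leq R^{-1}\abs{\theta}$ we have $\varphi_j(\theta)\equiv 1$ and the finite-realization bound applies). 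An analogous split applies to the second integral. The Paley--Wiener theorem for compactly supported distributions disposes of the compact pieces immediately, while for the exponentially-decaying tails I would establish Cauchy-type bounds $\abs{\del^\alpha q|_K}\leq C^{\abs{\alpha}+1}\alpha!$ on any compact $K\subset \RR_+\times X\times X\setminus\mathrm{d}$ by differentiating under the integral, using analyticity bounds for $U_j$, $\abs{g}^{-1}$, and $(t+d_g)^{j+(1-n)/2}$ to control the $(t,x,y)$-derivatives, and using repeated partial integration in $\theta$ (as in \cref{sec:pio}) to absorb the polynomial-in-$\abs{\theta}$ growth from differentiating the phase; choosing $N$ to grow linearly in $\abs{\alpha}$ is crucial to balance the two terms.

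The main obstacle I anticipate is the careful coordination of $N$ with $\abs{\alpha}$ and with the $\theta$-region of integration: one must partition $\theta$-space into a bounded part (where the compactly-supported distribution pieces live and analyticity comes from Paley--Wiener) and a tail part (where the exponential decay of the finite-realization remainder dominates the $\alpha!$-growth produced by differentiating the phase). This bookkeeping mirrors that of \cref{prop:K2decay} and the construction in \cref{lem:finrea}, and should go through once $X$ is chosen small enough so that $t+d_g$ stays uniformly bounded away from $0$ on compact subsets of $\RR_+\times X\times X$ and $d_g$ stays uniformly bounded away from $0$ on compact subsets of $X\times X\setminus\mathrm{d}$, the latter being the only reason the diagonal must be excluded from the statement.
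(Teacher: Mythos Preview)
Your overall decomposition (compactly supported $a_j''$ pieces via Paley--Wiener, tail $a'-\sum_{j\le N}\varphi_j a_j'$ via decay, coordinate $N$ with a growing parameter) matches the paper's, but the mechanism you propose for the tail is different from the paper's and, as stated, has a gap.

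The paper does not establish Cauchy bounds by differentiating under the integral. Instead it invokes \cref{lem:manyamplitudes}: one checks that the sequence of amplitudes $c_N=a'-\sum_{j\le N}\varphi_j a_j'$ (and the analogous packaged $a_j''$-term) satisfies the two bounds \cref{eq:no1,eq:no2}, and then the analyticity of $q$ follows from the Ehrenpreis-cutoff Fourier test for $\wf$ (\cref{prop:laxwfa}). The tail bound comes directly from \cref{lem:finreaamp}; the $a_j''$-bound is the Paley--Wiener estimate of \cref{lem:tdqnegtime}, where the $N$-dependence of the constants is tracked carefully. In the proof of \cref{lem:manyamplitudes} the partial integration is carried out in the \emph{spatial} variable (via $\mathcal{L}_{\kappa,2}$ with $\kappa=\varphi(z,\eta)-z\cdot\sigma$), which works because $\nabla_z\kappa\neq 0$ for small $\eta$ regardless of whether the original phase vanishes.

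Your route of differentiating under the integral and then ``partial integration in $\theta$'' runs into trouble on the characteristic set $\{t=d_g(x,y)\}$: there $\phi_1=0$, so $e^{i\theta\phi_1}=(i\phi_1)^{-1}\del_\theta e^{i\theta\phi_1}$ is unavailable. Without it, differentiating the phase produces a factor $(1+|\theta|)^{|\alpha|}$; combining this with the tail bound $|c_N|\le C(CN/|\theta|)^N|\theta|^m$ and choosing $N\sim|\alpha|$ gives at best $|\del^\alpha q|\lesssim C^{|\alpha|}(\alpha!)^2$, i.e.\ Gevrey--$2$, not analytic. (A contour shift $t\mapsto t+i\eps\,\mathrm{sgn}\,\theta$ would repair this, giving $e^{i\theta\phi_1}$ genuine exponential decay, but that is not what you wrote.) Also, Paley--Wiener does not dispose of the $a_j''$ pieces ``immediately'': since the support grows like $|\theta|\le 2N\rho$ and you are letting $N$ depend on the derivative order, you must control how the Paley--Wiener constant grows with $N$, exactly as in \cref{lem:tdqnegtime}.
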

	The proof of this lemma can also be found in \cref{sec:paralemmas}, so that the penultimate step is to remove the dependence of $a'$ on $t$, compare these arguments to those in \cite[\S~21.1.2]{MR4436039}. 

	Notice first that by Taylor's theorem, for any $N\in\mathbb{N}$, $(t,x,y)\in \RR_+\times X\times X\setminus \mathrm{d}$, 
	\begin{equation}\label{eq:arterm}
		a'(t,x,y,\theta)
		= 
		\sum_{j=0}^N\frac{\del_t^j a'(d_g(x,y),x,y,\theta)}{j!}(t-d_g(x,y))^j + R_{N+1}(t,x,y,\theta)(t-d_g(x,y))^{N+1}\,,
	\end{equation}
	where
	\begin{equation}\label{eq:rterm}
		R_{N+1}(t,x,y,\theta) = \frac{1}{N!}\int_0^1 (1-s)^N (\del_t^{N+1} a')((1-s)t+sd_g(x,y),x,y,\theta) \dd s\,.
	\end{equation}
	By partial integration
	\[
		\int e^{i\theta(t-d_g(x,y))}  \frac{\del_t^j a'(d_g(x,y),x,y,\theta)}{j!}(t-d_g(x,y))^j\dd\theta = \int e^{i\theta(t-d_g(x,y))}  \frac{\del_\theta^j\del_t^j a'(d_g(x,y),x,y,\theta)}{j!(-1)^j}\dd\theta\,,
	\]
	so that we define for $j\geq 0$,
	\begin{equation}\label{eq:firstdiff}
		a_j^{\circ}(x,y,\theta) \coloneqq  \frac{(\del_\theta^j\del_t^ja')(d_g(x,y),x,y,\theta)}{j!(-1)^j}\,,
	\end{equation}
	and it is an easy exercise to check that $\sum_{j\geq 0} a_j^{\circ} \in \mathrm{FS}^{\frac{n-3}{2}}_{\psi a}(X\times X\setminus\mathrm{d})$. 
 	Thus, by \cref{lem:finrea}, 
	\[
		a^{\circ}(x,y,\theta) \coloneqq \sum_{j=0}^\infty \varphi_j(\theta)a^{\circ}_j(x,y,\theta) \in S^{\frac{n-3}{2}}_{\psi a}(X\times X\setminus\mathrm{d})
	\]
	is a pseudoanalytic amplitude. In \cref{sec:paralemmas} we prove
	\begin{lemma}\label{lem:replaceaprime}
		We have
		\[
			\int e^{i\theta(t-d_g(x,y))}(a'-a^{\circ})\dd\theta \in C^\omega(\RR_+\times X\times X\setminus \mathrm{d})\,.
		\]
	\end{lemma}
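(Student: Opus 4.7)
The idea is to combine Taylor's formula \eqref{eq:arterm} for $a'$ at $t = d_g(x,y)$ with the partial integration in $\theta$ from \eqref{eq:firstdiff}, producing a cancellation with $a^\circ$ up to an exponentially decaying error. Writing $d = d_g(x,y)$ for brevity, the identity $(t-d)^j e^{i\theta(t-d)} = (-i)^j\del_\theta^j e^{i\theta(t-d)}$ together with integration by parts in $\theta$ — justified because $a'$ is supported in $|\theta|\geq \rho/4$ thanks to the factor $\varphi_0(4\theta)$, so no boundary terms at $\theta=0$ appear, and because at infinity the integrals are oscillatory — yields, for every fixed $N\in\mathbb{N}$,
\begin{equation*}
\int e^{i\theta(t-d)}(a' - a^\circ)\,\dd\theta = \int e^{i\theta(t-d)}\Bigl(\sum_{j=0}^N a_j^\circ - a^\circ\Bigr)\,\dd\theta + \int e^{i\theta(t-d)} R_{N+1}(t,x,y,\theta)(t-d)^{N+1}\,\dd\theta.
\end{equation*}

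To conclude analyticity on an arbitrary compact $K\Subset \RR_+\times X\times X\setminus\dd$, I would take $N = \lfloor c_K|\theta|\rfloor$ for a small $c_K>0$ to be fixed, and show that both terms on the right are oscillatory integrals with amplitudes of size $O(e^{-c'|\theta|})$ on $K$ (modulo a piece supported in bounded $|\theta|$, which is trivially analytic). For the first term, Lemma~\ref{lem:finrea} applied to the finite realization $a^\circ = \sum_j \varphi_j a_j^\circ$ gives exactly $|\sum_{j=0}^N a_j^\circ - a^\circ|\leq C e^{-C^{-1}|\theta|}$ once $N$ is chosen as a suitable fraction of $|\theta|$. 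For the second term, one further partial integration in $\theta$ rewrites it as $i^{N+1}\int e^{i\theta(t-d)}\del_\theta^{N+1}R_{N+1}\,\dd\theta$, and the task reduces to an estimate of $\del_\theta^{N+1}R_{N+1}$ on $K$: combining Cauchy's estimate on $\del_t^{N+1} a'$ (which holds because $a'$ is holomorphic in $t$ on a complex neighborhood of $K$, with a radius depending on the distance of $K$ to $\{t=0\}\cup\dd$) with the pseudoanalytic-amplitude bound \eqref{eq:defofsym} on $\del_\theta^{N+1} a'$, one obtains
\[
|\del_\theta^{N+1} R_{N+1}(t,x,y,\theta)| \leq M_K C_K^{2(N+1)}(N+1)!\,|\theta|^{\frac{n-3}{2}-N-1}
\]
for $|\theta|\geq R_K(N+1)$ and $(t,x,y)\in K$. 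Choosing $c_K$ so that $C_K^2 c_K/e < 1$ and using Stirling's estimate turns the right-hand side into $M_K e^{-c'_K |\theta|}$ at $N = \lfloor c_K |\theta|\rfloor$.

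An oscillatory integral $\int e^{i\theta(t-d)}f(t,x,y,\theta)\,\dd\theta$ with $|f|\leq Ce^{-c|\theta|}$ on $K$ is real-analytic on $K$ (e.g.\ by \cite[Prop.~17.1.19]{MR4436039}, or directly by Cauchy-estimating its derivatives), so both terms give analytic contributions and the proof concludes.

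The main obstacle is the combined $(t,\theta)$-derivative bookkeeping, and making the $\theta$-dependent choice of $N$ rigorous — most cleanly by applying the decomposition to each multi-index derivative $\del^\alpha F$ of $F(t,x,y) = \int e^{i\theta(t-d)}(a'-a^\circ)\,\dd\theta$ separately, with $N = N_\alpha$ picked large enough, and verifying the analytic growth bound $|\del^\alpha F|\leq C_K^{|\alpha|+1}|\alpha|!$ on $K$ directly from the estimates above. The $K$-dependence of the radius of holomorphy in $t$ (which shrinks near $\{t=0\}$ and the diagonal $\dd$) forces the constant $c_K$ and hence all estimates to be $K$-dependent, but as the conclusion is local in $K$ this is not a real obstruction.
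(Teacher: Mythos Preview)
Your approach is essentially the paper's: Taylor-expand $a'$ at $t=d_g(x,y)$, integrate by parts in $\theta$ to produce the $a_j^\circ$, and control the remainder by combining Cauchy estimates in $t$ with the pseudoanalytic $\theta$-estimate on $a'$. The paper carries this out with two refinements worth noting.

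First, the paper does not attempt to let $N$ depend on $\theta$ inside a single identity. Instead it formalizes your closing remark (``apply the decomposition to each $\del^\alpha F$ with $N=N_\alpha$'') as Lemma~\ref{lem:manyamplitudes}: if a distribution admits, for every $N$, an oscillatory-integral representation with amplitude $c_N$ satisfying \eqref{eq:no1}--\eqref{eq:no2}, it is analytic. This cleanly sidesteps the awkwardness you flagged.

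Second, the paper inserts the cutoffs $\varphi_j$ before integrating by parts, writing each Taylor term as $\varphi_j\cdot(\ldots)+(1-\varphi_j)\cdot(\ldots)$ and likewise splitting $R_{N+1}$. This generates six pieces $T_{1,N},\dots,T_{6,N}$ rather than your two: the extra four are the compactly-supported $(1-\varphi_j)$ contributions and the Leibniz terms where $\del_\theta$ hits $\varphi_j$. Your two-term decomposition implicitly absorbs these into $\sum_{j\le N}a_j^\circ - a^\circ$, and indeed they vanish for $|\theta|>2N\rho$; but for the fixed-$N$ verification of \eqref{eq:no1} (the bound uniform in $(1+|\theta|)$) they must be tracked, since $\sum_{j\le N}(1-\varphi_j)a_j^\circ$ contributes on $\{|\theta|\le 2N\rho\}$ with growth $\sim C^N$ in $N$. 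The paper therefore checks \eqref{eq:no1}--\eqref{eq:no2} for each $T_{l,N}$ separately. Your estimates on the two ``main'' pieces (your second term is exactly the paper's $T_{5,N}$) are correct and match the paper's.
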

	We modify the amplitude another time. The reason for this is: the amplitude $a^{\circ}$ is the finite realization of a formal pseudoanalytic amplitude $\sum_{j\geq 0} a^{\circ}_j$, but each $a^{\circ}_j$ is not necessarily homogeneous in $\theta$ away from the origin (which we require for classicality).

	Recalling that $a' = \sum_{k\geq 0} \varphi_k a_k'$ with absolute convergence, we have for $a^\circ$ from above,
	\[
		a^{\circ}(x,y,\theta)= \sum_{j,k\geq 0} \varphi_k(\theta)\varphi_j(\theta)\frac{\del_\theta^j\del_t^j a_k'(d_g(x,y),x,y,\theta)}{j!(-1)^j}\,.
	\]

	This motivates us to define
	\begin{equation}\label{eq:finalabar}
		a_s(x,y,\theta) \coloneqq \sum_{\substack{j,k \geq 0\\ j+k=s}}\frac{(\del_\theta^j\del_t^ja'_k)(d_g(x,y),x,y,\theta)}{j!(-1)^j}\in S_{\psi a}^{-s+\frac{n-3}{2}}(X\times X\setminus\mathrm{d})\,,\an \ud{a} \coloneqq \sum_{s\geq 0} a_s \in \mathrm{FS}^{\frac{n-3}{2}}_{\psi a}(X\times X\setminus\mathrm{d})\,.
	\end{equation}
	Checking that $\ud{a}$ is a formal pseudoanalytic amplitude is routine: use
	\[
		\abs{\frac{1}{j!}\del_\theta^j\del_t^j a'_k}\leq M C^{2j+k}k!\frac{(2j)!}{j!}\abs{\theta}^{-k-j+\frac{n-3}{2}}
	\]
	and the fact that $(j!)^{-1}(2j)! \leq 4^jj!$, and $j!k!\leq (j+k)!$ to get for a new constant $C>0$, for $j+k=s$,
	\[
		\abs{\frac{1}{j!}\del_\theta^j\del_t^j a'_k} \leq M C^{s}s!\abs{\theta}^{-s+\frac{n-3}{2}}\,,
	\]
	and noting that there are $s$ many choices of $(j,k) \in \mathbb{N}^2$ so that $s=j+k$ (and estimating $s \leq e^s$ for $s\geq 1$), we indeed see that $\ud{a} \in \mathrm{FS}_{\psi a}^{\frac{n-3}{2}}$ (after performing similar calculations for all derivatives of $a_s$ with respect to $\theta$).
	
	Define $a(x,y,\theta)\coloneqq \sum_{s\geq 0} \varphi_s(2\theta) a_s(x,y,\theta)$, which is a finite realization of $\ud{a}$ (defined in \cref{eq:finalabar}), and thus differs from any other by an exponentially decaying function, which by \cite[Prop.~17.1.19]{MR4436039} leads to an analytic error. We prove in \cref{sec:paralemmas} that
	\begin{lemma}\label{lem:newcuts}
		The pseudoanalytic amplitude $a^{\circ} - a$ 
		is exponentially decaying. 
	\end{lemma}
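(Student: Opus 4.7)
The plan is to write the difference $a^{\circ}-a$ as a single sum, analyze supports of the cutoff differences, and bound each summand with the pseudoanalytic amplitude estimate for $\ud{a'}$ combined with Stirling.

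First, combining the formulas for $a^{\circ}$ and $a$,
\[
a^{\circ}(x,y,\theta) - a(x,y,\theta) = \sum_{s\geq 0}\sum_{\substack{j,k\geq 0\\ j+k=s}}\bigl[\varphi_j(\theta)\varphi_k(\theta)-\varphi_s(2\theta)\bigr]\,\frac{\del_\theta^j\del_t^j a'_k(d_g(x,y),x,y,\theta)}{j!(-1)^j}.
\]
From the definition $\varphi_\ell(\theta)=\tilde{\varphi}_\ell(\theta/(\ell\rho))$, a case check shows that for $j+k=s$, both $\varphi_j\varphi_k$ and $\varphi_s(2\theta)$ vanish for $|\theta|<s\rho/2$, and both equal $1$ for $|\theta|\geq 2s\rho$. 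Consequently, the bracketed difference is supported in the annulus $s\rho/2\leq |\theta|\leq 2s\rho$, so at any fixed $\theta$ only the indices $s$ with $|\theta|/(2\rho)\leq s\leq 2|\theta|/\rho$ contribute to the sum.

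Next, the key estimate. Because $a'_k$ has the explicit form in \cref{lem:amplitudes}, $\theta$-derivatives hit only $\varphi_0(4\theta)\theta^{-k+\frac{n-3}{2}}$ and $t$-derivatives hit only $(t+d_g)^{k+\frac{1-n}{2}}$; after evaluation at $t=d_g$ and restricting $(x,y)$ to a compact $K\Subset X\times X\setminus\mathrm{d}$, the $t$-factor is bounded below away from zero, so the product rule together with the classicality of $\ud{a'}$ yields, for $\rho\geq 2R_{a'}$,
\[
\Bigl|\tfrac{1}{j!}\del_\theta^j\del_t^j a'_k(d_g,x,y,\theta)\Bigr|\leq M\,C^{j+k}k!\,\tfrac{(2j)!}{j!}\,|\theta|^{-s+\frac{n-3}{2}}\quad\text{on }K\times\supp(\varphi_j\varphi_k-\varphi_s(2\theta)).
\]
Using $(2j)!/j!\leq 4^jj!$, $j!\,k!\leq s!$, and the support bound $|\theta|\geq s\rho/2$, each summand is bounded by $M'(16C^2/\rho)^s\,s!\,s^{-s+\frac{n-3}{2}}$, which by Stirling is at most $M''\sqrt{s}\,s^{\frac{n-3}{2}}\bigl(16C^2/(e\rho)\bigr)^s$.

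Finally, choose $\rho$ so large that $16C^2/(e\rho)\leq e^{-1}$, so that each summand is bounded by $M''\sqrt{s}\,s^{\frac{n-3}{2}}e^{-s}$. Summing over the at most $3|\theta|/\rho$ admissible values of $s$ (and $s+1$ pairs $(j,k)$ for each $s$), one obtains, uniformly on $K$,
\[
|a^{\circ}(x,y,\theta)-a(x,y,\theta)|\leq C_1 |\theta|^{\frac{n+3}{2}}e^{-c|\theta|}
\]
for some $c>0$ and all $|\theta|$ large. Since $K$ was arbitrary, this proves that $a^{\circ}-a$ is exponentially decaying on $X\times X\setminus\mathrm{d}$. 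The main obstacle is bookkeeping the interaction between the two indexing conventions (double-sum $(j,k)$ versus the single index $s$ in $a_s$), which is precisely engineered away by the factor $2\theta$ in $\varphi_s(2\theta)$: it forces the single cutoff to turn on at the same threshold as $\varphi_j\varphi_k$ does for the largest $j,k$ with $j+k=s$, localizing the difference to a thin shell where the amplitude bound is sharp enough to beat $s!$.
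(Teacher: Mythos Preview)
Your proof is correct and follows essentially the same route as the paper: express $a^{\circ}-a$ as a double sum over $(j,k)$ with $j+k=s$, localize each bracket $\varphi_j\varphi_k-\varphi_s(2\theta)$ to a shell $|\theta|\sim s\rho$, and then use the pseudoanalytic bound together with Stirling and the freedom in $\rho$ to extract exponential decay. Your support bound $s\rho/2\le|\theta|\le 2s\rho$ is in fact the sharper one (the paper writes $s\rho$ for the upper limit, which is a slight slip), and your bookkeeping of the factorial factors via $(2j)!/j!\le 4^jj!$ and $j!k!\le s!$ mirrors exactly the estimates the paper records just before \cref{eq:finalabar}.
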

	We thus conclude that
	\begin{align*}
		\int e^{i\theta(t-d_g(x,y))} &a(x,y,\theta) \dd\theta - \int e^{i\theta(t^2-d_g(x,y)^2)}\td{a}'(x,y,\theta)\dd\theta \\
		&= \int e^{i\theta(t-d_g(x,y))} a' \dd\theta-\int e^{i\theta(t^2-d_g(x,y)^2)}\td{a}'\dd\theta + \int e^{i\theta(t-d_g(x,y))} (a-a')\dd\theta\\
		&= q+ \int e^{i\theta(t-d_g(x,y))} ((a-a^{\circ})+(a^{\circ}-a'))\dd\theta\,,
	\end{align*}
	where according to \cref{lem:qananotd} the first term is analytic, and according to \cref{lem:replaceaprime,lem:newcuts} (and an application of \cite[Prop.~17.1.19]{MR4436039}) the last term is analytic, all of this for $t>0$ and away from the diagonal $x=y$. This completes the proof that indeed $\ud{a}$ is the formal pseudoanalytic amplitude that we were looking for in this result.
\end{proof}

\subsection{Proofs of Lemmas}\label{sec:paralemmas}

In this section we prove \cref{lem:qana,lem:qananotd,lem:replaceaprime,lem:newcuts}, for which we shall first state and prove two other technical lemmas. Let $N^\ast \in \mathbb{N}$ and open $\mho \subset \RR^{N^\ast}$ be arbitrary.
\begin{lemma}\label{lem:manyamplitudes}
	Let $m\in \RR$ and $a_j, j = 0,1,\dots$ be a sequence of pseudoanalytic amplitudes $a_j \in S^{m-j}_{\psi a}(\mho)$ so that there exists a complex open neighborhood $\mho^{\CC}\supset \mho$, independent of $j$, so that $a_j, j\geq 0$ can be extended smoothly to a function on $\mho^{\CC}\times\RR^n\sem$ that is holomorphic with respect to $z$ and to every compact $K \subset \mho^{\CC}$ there exist $C >0, R\geq 0$ so that
	\begin{align}
		\sup_{K \times \RR^n\sem} \abs{a_j(z,\eta)} &< C^{j+1}\left(1+\abs{\eta}\right)^{m}\,,\label{eq:no1}\\
		\eta\in \RR^n\sem\,,\abs{\eta} > Rj \implies \abs{a_j(z,\eta)} &\leq C\left(C j \abs{\eta}^{-1}\right)^{j}  \abs{\eta}^{m}\,.\label{eq:no2}
	\end{align}
	
	Let $\varphi$ be an analytic phase and assume that there exists a distribution $A\in \mathcal{D}'(\mho)$ so that
	\[
		Au = \int e^{i\varphi(z,\eta)}a_j(z,\eta) u(z)\dd\eta\dd z
	\]
	for all $j$, namely, this distribution is independent of $j$. Under these conditions, $A\in C^\omega(\mho)$.
\end{lemma}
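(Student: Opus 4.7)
The plan is to show $\wf(A)=\emptyset$, whence $A\in C^\omega(\mho)$. Fix $z_0\in\mho$ and a unit direction $\sigma_0\in\RR^{N^\ast}$; by \cref{prop:laxwfa} it suffices to produce a sequence of Ehrenpreis cutoffs $(\psi_L)_L$ identically $1$ near $z_0$ and constants $r,C>0$, $k\in\mathbb{N}$ so that
\[
\abs{\mathcal{F}(\psi_L A)(t\sigma)} \leq t^k C(CL/t)^L
\]
for all $t\geq rL$ and all unit $\sigma$ in a conic neighborhood of $\sigma_0$.

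The single essential idea is to exploit the freedom in $j$: taking $j=L$ in the representation $Au=\int e^{i\varphi}a_ju\,\dd\eta\,\dd z$, we get (interpreted as an oscillatory integral)
\[
\mathcal{F}(\psi_L A)(t\sigma) = \iint e^{i(\varphi(z,\eta)-tz\cdot\sigma)}\psi_L(z)a_L(z,\eta)\,\dd z\,\dd\eta\,.
\]
The substitution $\eta=t\tilde\eta$ and homogeneity of $\varphi$ turn this into $t^n\iint e^{it(\varphi(z,\tilde\eta)-z\cdot\sigma)}\psi_L(z)a_L(z,t\tilde\eta)\,\dd z\,\dd\tilde\eta$. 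I would then insert a smooth cutoff $\chi(\tilde\eta)\in C_c^\infty$ equal to $1$ on $\{\abs{\tilde\eta}\leq\rho/2\}$ and supported in $\{\abs{\tilde\eta}\leq\rho\}$, with $\rho>0$ small enough that $\abs{\varphi_z(z,\tilde\eta)}\leq 1/2$ on the support (possible because $\varphi_z$ is homogeneous of degree $1$ in $\tilde\eta$, hence continuously extends by $0$ at the origin).

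On the low-frequency piece $\{\abs{\tilde\eta}\leq\rho\}$, the phase $\kappa(z,\tilde\eta):=\varphi(z,\tilde\eta)-z\cdot\sigma$ satisfies $\abs{\nabla_z\kappa}\geq 1/2$, i.e.\ assumption~\ref{as:kappa2}. Applying $\mathcal{L}_{\kappa,2}$ from \cref{eq:defofLz} exactly $L$ times and invoking \cref{cor:inducL}, combined with the compact-support bound \cref{eq:no1} and the analyticity of $a_L$ in $z$ (which, via Cauchy's inequalities, converts $\sup_{K^{\CC}}\abs{a_L}\lesssim C^{L+1}(1+\abs{\eta})^m$ into bounds on $\del_z^\alpha a_L$ with only an extra $C_z^{|\alpha|}\alpha!$), produces an estimate of the form $t^{n+\abs{m}}\tilde C(\tilde C L/t)^L$; the stray $C^{L+1}$ is absorbed into the geometric $(\tilde CL/t)^L$. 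On the high-frequency piece $\{\abs{\tilde\eta}\geq\rho/2\}$, the choice $r\rho\geq 2R$ together with $t\geq rL$ forces $\abs{\eta}=t\abs{\tilde\eta}\geq RL$, activating the stronger bound \cref{eq:no2}: $\abs{a_L(z,t\tilde\eta)}\leq C^{L+1}L^L(t\abs{\tilde\eta})^{m-L}$. For $L>m+N^\ast+1$ the $\tilde\eta$-integral is absolutely convergent, so no partial integration is required, and a direct estimate yields $\lesssim t^{n+m}C(CL/t)^L$; the finitely many smaller $L$ are absorbed by shifting the indices of $(\psi_L)$ as in the remark after \cref{prop:laxwfa}.

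Summing the two contributions gives the desired bound, and \cref{prop:laxwfa} implies $(z_0,\sigma_0)\notin\wf(A)$. Since $(z_0,\sigma_0)$ was arbitrary, $\wf(A)=\emptyset$, and thus $A\in C^\omega(\mho)$. The main obstacle is purely bookkeeping: one must verify that the $L^L$ growth coming from the Ehrenpreis cutoff derivatives, the coefficient estimates for $\mathcal{L}_{\kappa,2}$ from \cref{cor:inducL}, the analytic $z$-derivative bounds on $a_L$, and the uniform factor $C^{L+1}$ in \cref{eq:no1} all combine into a clean $(CL/t)^L$ decay rather than overwhelming it; the machinery of \cref{sec:pio} (in particular \cref{cor:inducL}) is precisely designed for this accounting.
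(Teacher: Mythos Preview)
Your proposal is correct and follows essentially the same route as the paper: choose $j=L$, split into small and large $|\tilde\eta|$, handle the small-frequency piece by $L$ iterations of $\mathcal{L}_{\kappa,2}$ (using \cref{eq:no1} and Cauchy's inequalities for the $z$-derivatives of $a_L$), and handle the large-frequency piece by a direct application of \cref{eq:no2} with no partial integration. The only cosmetic difference is that the paper takes Ehrenpreis cutoffs $\gamma_L(\eta)$ for the frequency splitting whereas you use a single smooth $\chi$; since $\mathcal{L}_{\kappa,2}$ differentiates only in $z$, this makes no difference.
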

In particular, any formal pseudoanalytic amplitude (as the sequence of $a_j$) will satisfy the above conditions.
\begin{proof}
	We follow the proof of \cref{prop:wfaexp} and thus keep this proof short. Fix some $z_0 \in \mho$ and $\sigma \in \RR^{N^\ast}$ with $\abs{\sigma}=1$. Let $\psi_L$ be a sequence of Ehrenpreis cut-offs supported near $z_0$ and identically $1$ near $z_0$. The lemma is proved if we can show that for all large enough $t>0$,
	\[
		\mathcal{F}(\psi_L A)(t\sigma) = t^n\iint e^{it(\varphi(z,\eta)-z\cdot\sigma)}a_L(z,t\eta)\psi_L(z)\dd\eta\dd z
	\]
	satisfies $\abs{\mathcal{F}(\psi_L A)(t\sigma)} \leq t^k C(CL/t)^L$ for some constant $C>0$. The reason we were allowed to choose $a_j = a_L$ is because $A$ is independent of $j$ by assumption.

	Following the proof of \cref{prop:wfaexp}, there is $\delta>0$ so that if $z\in \supp \psi_L$ and $\abs{\eta}\leq 2\delta$ then $\abs{\del_z\varphi(z,\eta)-\sigma} \geq 1/2$. We let $\gamma_L$ be a sequence of Ehrenpreis cut-offs identically $1$ in $\abs{\eta} \leq \delta$ and supported in $\abs{\eta} \leq 2\delta$. 

	We split up the integral as $\mathcal{F}(\psi_L A)(t\sigma) = t^n(I_1+I_2)$ with 
	\begin{align*}
		I_1 &= 	\iint e^{it(\varphi(z,\eta)-z\cdot\sigma)}a_L(z,t\eta)\gamma_L(\eta)\psi_L(z)\dd\eta\dd z \\
		I_2 &= \iint e^{it(\varphi(z,\eta)-z\cdot\sigma)}a_L(z,t\eta)(1-\gamma_L(\eta))\psi_L(z)\dd\eta\dd z\,.
	\end{align*}
	We will want to use partial integration with respect to $z$ on the integral in $I_1$. Note that \cref{eq:no1} and \cref{eq:inducbdd} imply that for $\kappa = \varphi(z,\eta)-z\cdot \sigma$, (using the Cauchy-inequalities, and recall $\mathcal{L}_{\kappa,2}$ from \cref{eq:defofLz})
	\[
		\abs{\mathcal{L}_{\kappa,2}^L \left(a_L(z,t\eta)\gamma_L(\eta)\psi_L(z)\right)} \leq C (C_{\mathcal{L}} CL)^L
	\]
	for some $C>0$. We conclude that 
	\[
		\abs{I_1} \leq t^{-L} \abs{\iint e^{it(\varphi(z,\eta)-z\cdot\sigma)}\mathcal{L}_{\kappa,2}^L \left(a_L(z,t\eta)\gamma_L(\eta)\psi_L(z)\right)\dd\eta\dd z} \leq C(CL/t)^L
	\]
	for some $C>0$. 

	Turning to the integral $I_2$, we use the fact that on the support of the integrand we have $\abs{\eta}\geq \delta$, and assuming $L$ to be large enough and $t\geq RL\delta^{-1}$, from \cref{eq:no2} we have the bound
	\[
		\abs{a_L(z,t\eta)} \leq \left(\frac{CL}{t\abs{\eta}}\right)^L \abs{\eta}^m L^k \leq L^k \left(\frac{CL}{t\delta}\right)^L \abs{\eta}^{-n-1} \delta^{n+1-m}\,,
	\]
	from which we conclude that 
	\[
		\abs{I_2} \leq L^k C \left(\frac{CL}{t}\right)^L \leq C t^k \left(\frac{CL}{t}\right)^L
	\]
	for some $C>0$. This concludes the proof.
\end{proof}
We will repeatedly use the above lemma in a specific context, which we state here.
\begin{lemma}\label{lem:finreaamp}
	Let $m\in \RR$ and $\ud{a} = \sum_{j\geq 0} a_j \in \mathrm{FS}^m_{\psi a}(\mho)$ be a formal pseudoanalytic amplitude. Let $a \in S^m_{\psi a}(\mho)$ be a finite realization of $\ud{a}$. Let $(\td{\varphi}_j)_{j\in\mathbb{N}} \subset C_c^\infty(\RR)$ be a sequence of Ehrenpreis cut-offs vanishing for $\abs{\eta}\leq 1$ and identically $1$ for $\abs{\eta}\geq 2$, and let $\rho > 0$ be large enough. Define $\varphi_j(\eta) \coloneqq \td{\varphi}_j(\eta (j\rho)^{-1}), j\geq 1$ and $\varphi_0 \coloneqq \varphi_1$. 

	The sequence of pseudoanalytic amplitudes $c_k$, defined via
	\[
		c_k \coloneqq  a - \sum_{j=0}^k \varphi_j a_j
	\]
	satisfies the assumptions of \cref{lem:amplitudes} (for $R = 0$ in \cref{eq:no2}).
\end{lemma}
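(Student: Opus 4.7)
The plan is to use the near-uniqueness of finite realizations to split $c_k$ into pieces each of which is either exponentially small, compactly supported in $\eta$, or a tail sum with explicit control. By \cref{lem:finrea} the expression $\td{a} \coloneqq a_0 + \sum_{j\geq 1}\varphi_j a_j$ is itself a finite realization of $\ud{a}$, and the same lemma guarantees that two finite realizations agree modulo $O(e^{-\eps|\eta|})$. Hence I would begin by writing
\[
c_k = (a-\td{a}) + (1-\varphi_0)\,a_0 + \sum_{j>k}\varphi_j a_j,
\]
and then treat each piece separately. The common complex neighborhood $\mho^\CC$ needed in \cref{lem:manyamplitudes} is inherited directly from \cref{def:formalamp}, since both $a$ and each $a_j$ extend holomorphically in $z$ to the same $\mho^\CC$ and $\varphi_j$ depends only on $\eta$.

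The first term $a-\td{a}$ is $O(e^{-\eps|\eta|})$ on compacts of $\mho^\CC$, which dominates any power of $(1+|\eta|)^m$ and any product of the form $C(Ck|\eta|^{-1})^k|\eta|^m$, so both \eqref{eq:no1} and \eqref{eq:no2} hold trivially for it. The second term $(1-\varphi_0)a_0$ is supported in the compact shell $|\eta|\leq 2\rho$ where $a_0$ is bounded on compacts; \eqref{eq:no1} is immediate, and for \eqref{eq:no2} the factor $(Ck|\eta|^{-1})^k$ on this shell is $\gtrsim (C/(2\rho))^k$ and dominates the pointwise bound once $C$ is taken large enough. The heart of the argument is the tail $\sum_{j>k}\varphi_j a_j$: on $\supp\varphi_j$ we have $|\eta|\geq j\rho$, so choosing $\rho\geq R_a$ activates the analytic estimate from \cref{eq:defofclasym}, giving $|a_j(z,\eta)|\leq M_a C_a^j j!\,|\eta|^{m-j}$ there, and only finitely many $j\leq |\eta|/\rho$ contribute for each fixed $\eta$.

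To estimate the tail I would apply Stirling and note the ratio bound
\[
\frac{C_a^{j+1}(j+1)!\,|\eta|^{-(j+1)}}{C_a^j j!\,|\eta|^{-j}} = \frac{C_a(j+1)}{|\eta|} \leq \frac{C_a}{\rho}\Bigl(1+\tfrac{1}{j}\Bigr) \leq \tfrac{1}{2}
\]
valid for $j+1\leq |\eta|/\rho$ once $\rho\geq 4C_a$; this collapses the finite sum into a geometric series dominated by its leading term, yielding
\[
\Bigl|\sum_{j>k}\varphi_j(\eta)\,a_j(z,\eta)\Bigr|\leq C\sqrt{k+1}\,\Bigl(\frac{C_a(k+1)}{|\eta|}\Bigr)^{k+1}|\eta|^m,
\]
uniformly on compacts of $\mho^\CC$. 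Using $(k+1)^{k+1}\leq 2^{k+1}(k+1)k^k$ and absorbing $\sqrt{k+1}$ and the linear factor into enlarged constants puts this in the form $C(Ck|\eta|^{-1})^k|\eta|^m$ required by \eqref{eq:no2} with $R=0$; the same estimate also gives \eqref{eq:no1} with a constant $C^{k+1}$ (in fact a $k$-independent multiple of $(1+|\eta|)^m$). The principal obstacle I expect is purely bookkeeping: ensuring that the constants $C$ emerging from the Stirling/geometric-series step depend only on $\rho$, $C_a$, $M_a$ and the chosen compact $K\subset\mho^\CC$, not on $k$; this amounts to verifying that $\rho$ can be fixed once and for all large enough (independently of $k$) to force the ratio-$\tfrac{1}{2}$ inequality above throughout the range $k<j\leq |\eta|/\rho$.
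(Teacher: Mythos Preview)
Your approach is essentially the same as the paper's: the paper also reduces, via \cref{lem:finrea}, to the case where $a$ is the canonical realization $\sum_{j\geq 0}\varphi_j a_j$ (absorbing the exponential difference exactly as you do), so that $c_k=\sum_{j>k}\varphi_j a_j$, and then bounds this tail using the support property $\supp\varphi_j\subset\{|\eta|\geq j\rho\}$ together with \cref{eq:defofclasym}. The only cosmetic differences are that the paper uses $\sum_{j\geq 0}\varphi_j a_j$ rather than $a_0+\sum_{j\geq 1}\varphi_j a_j$ (so your extra $(1-\varphi_0)a_0$ term is simply folded into the WLOG step), and that the paper bounds the tail by (number of terms)$\times$(leading term) rather than by your geometric-series ratio test; both give the required $C(Ck/|\eta|)^k$ shape.
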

\begin{proof}
	By \cref{lem:finrea}, we know that $\sum_{j\geq 0}\varphi_j a_j$ is a finite realization of $\ud{a}$ that can differ from $a$ by a term bounded by $Ce^{-C^{-1}\abs{\eta}}$ for some $C>0$, which naturally satisfies both 
	\cref{eq:no1,eq:no2}. Therefore, we may assume without loss of generality that $a= \sum_{j\geq 0} \varphi_j a_j$. 

	This means that $c_k$ is given by
	\begin{equation}\label{eq:splitatd}
		c_k = a- \sum_{j=0}^k \varphi_ja_j = \sum_{j=k+1}^\infty \varphi_ja_j = \sum_{j=k+1}^{\abs{\eta}\rho^{-1}} \varphi_ja_j 
	\end{equation}
	due to the support properties of $\varphi_j$. 
	Since $\sum_{j\geq 0} a_j$ is a formal pseudoanalytic amplitude, we find for some $C,R>0$,
	\[
		\abs{\eta} \geq Rj\implies \abs{a_j} \leq C\left(\frac{Cj}{\abs{\eta}}\right)^j \abs{\eta}^m\,.
	\]
	Note that if $\rho> R$, then $\varphi_j a_j$ is supported in $\abs{\eta} \geq \rho j > Rj$, so that for all $\abs{\eta}>0$,
	\begin{equation}\label{eq:decrease}
		\abs{\varphi_ja_j} \leq C\left(\frac{Cj}{\abs{\eta}}\right)^j \leq C(C/\rho)^j \leq C e^{-j}\,,
	\end{equation}
	if $\rho>Ce$ is large enough and we used the support properties of $\varphi_j$ again. Thus, by \cref{eq:splitatd},
	\[
		\abs{c_k} \leq \sum_{j=k+1}^{\abs{\eta}\rho^{-1}} \abs{\varphi_ja_j} \leq C\sum_{j=0}^\infty e^{-j} < C'
	\]
	for some $C'>0$ independent of $k$, from which we conclude the first part of \cref{eq:no1} for $c_k$.

	It is clear from \cref{eq:decrease} that $\abs{\varphi_{j} a_{j}} \leq C(Ck/\abs{\eta})^k$ for all $j\geq k$, so that 
	\[
	\abs{c_k} \leq \sum_{j=k+1}^{\abs{\eta}\rho^{-1}} \abs{\varphi_ja_j} \leq \abs{\eta}\rho^{-1} C\left(\frac{C(k+1)}{\abs{\eta}}\right)^{k+1} \leq C^2k\rho^{-1} \left(\frac{2Ck}{\abs{\eta}}\right)^{k} \leq C^2\rho^{-1} \left(\frac{2eCk}{\abs{\eta}}\right)^{k}\,,
	\]
	which gives \cref{eq:no2} for $c_k$. 
\end{proof}

We have all tools in hand to provide the
\begin{proof}[Proof of Lemma \ref{lem:qana}]
	According to \cite[Thm.~8.6.1]{hoermander1} we have 
	\[
		\wf(\td{q}) \subset \mathrm{Char}\left(\del_t^2+P(x,D)\right)\cup \wf\left((\del_t^2+P(x,D))\td{q}\right)\,.
	\]
	We claim that
	\begin{equation}\label{eq:tdqclaim}
		\td{q} \in C^\omega((-T,0)\times X)\,, \an (\del_t^2+P(x,D))\td{q}\in C^\omega(D(T,X)\times X)\,.
	\end{equation}
	Before proving this claim let us show how the lemma follows from it. The second part of \cref{eq:tdqclaim} allows us to apply the propagation of analytic singularities, see \cite[Thm.~7.3]{zbMATH03359011} (or \cite[Thm.~24.8.6]{MR4436039}), and we deduce from the first part of \cref{eq:tdqclaim} that
	\[
		\td{q}\in C^\omega(D(T,X)\times X)\,.
	\]
	The proof of this lemma is thus reduced to proving the claim \cref{eq:tdqclaim}. 

	Let us remind ourselves that for each $N\in\mathbb{N}$,
	\begin{gather*}
		\td{q}(t,x,y) =\\
		S\left[\int e^{i\theta(\cdot_t^2-d_g(\cdot_x,y)^2)}(r_N'(\cdot_x,y,\theta)+ r_N''(\cdot_x,y,\theta)) \dd\theta\right](t,x) - \int e^{i\theta(t^2-d_g(x,y)^2)}\left(-\sum_{j=0}^N\td{a}_j'' +\underbrace{\td{a}' - \sum_{j=0}^N \varphi_j\td{a}_j'}_{\eqqcolon R_N'}\right)\dd\theta\,.
	\end{gather*}
	Because $\sum_{j=0}^N\td{a}_j''$ is a distribution that is compactly supported with respect to $\theta$, introducing some $\gamma \in C_c^\infty(\RR)$ so that $\int \gamma(\theta)\dd\theta = 1$, we find
	\[
		\int e^{i\theta(t^2-d_g(x,y)^2)}\sum_{j=0}^N\td{a}_j''\dd\theta = \int e^{i\theta(t^2-d_g(x,y)^2)} \underbrace{\gamma(\theta) e^{-i\theta(t^2-d_g(x,y)^2)}  \left\langle \sum_{j=0}^N\td{a}_j''(x,y,\xi), e^{i\xi(t^2-d_g(x,y)^2)}\right\rangle_\xi}_{\eqqcolon \td{R}_N} \dd\theta\,.
	\]
	Because $S(\cdot)$ vanishes for $t<0$, for $t<0$ we may then write
	\[
		\td{q} = \int e^{i\theta(t^2-d_g(x,y)^2)}\left(\td{R}_N  - R_N'\right)\dd\theta\,.
	\]
	We now aim to show that the sequence $\td{R}_N - R_N'$ satisfies the assumptions of \cref{lem:manyamplitudes} so that we may conclude that $\td{q}$ is analytic for $t<0$. It is clear that for each fixed $N$ this term is a pseudodifferential amplitude, where the analyticity of $\td{R}_N$ in $(t,x,y)$ is a consequence of the Paley-Wiener-Schwartz theorem.
	
	By an application of \cref{lem:manyamplitudes}, the proof of the first part of the claim \cref{eq:tdqclaim} is complete once we show
	\begin{lemma}\label{lem:tdqnegtime}
		\begin{enumerate}
		\item For each compact subset of some open complex neighborhood of $X\times X$, there exists $C>0$ so that
		\begin{equation}\label{eq:estRNprimetwo}
			\abs{R_N'} \leq C^{N+1}(1+\abs{\theta})^{\frac{n-3}{2}}\,,\an\abs{\theta}>0 \implies \abs{R_N'} \leq C\left(\frac{CN}{\abs{\theta}}\right)^N\abs{\theta}^{\frac{n-3}{2}}\,.
		\end{equation}
		\item Furthermore, there is $C_0>0$ so that for each compact subset of some open complex neighborhood of $\RR_+\times X\times X$, there exists $C_1>0$ so that 
		\begin{equation}\label{eq:estRN}
			\abs{\td{R}_N} \leq C_1^{N+1}\,,\an\abs{\theta}>C_0 \implies \abs{\td{R}_N} \leq C_1\left(\frac{C_1N}{\abs{\theta}}\right)^N\,.
		\end{equation}
		\end{enumerate}
	\end{lemma}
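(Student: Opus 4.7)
My plan is to split the lemma into its two assertions. Part 1 is essentially a direct corollary of \cref{lem:finreaamp}: the identification $R_N' = \tilde{a}' - \sum_{j=0}^N\varphi_j\tilde{a}_j'$ exhibits the remainder sequence $(c_N)_{N\in\mathbb{N}}$ of that lemma applied to the formal pseudoanalytic amplitude $\ud{\tilde{a}'} = \sum_{j\geq 0}\tilde{a}_j' \in \mathrm{FS}^{(n-3)/2}_{\psi a}(X\times X)$ (established in \cref{lem:amplitudes}) with finite realization $\tilde{a}'$. The conclusion of \cref{lem:finreaamp}, namely that $c_N$ satisfies \cref{eq:no1,eq:no2} of \cref{lem:manyamplitudes} with $R=0$ and $m = (n-3)/2$, is precisely the pair of estimates in \cref{eq:estRNprimetwo}. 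Holomorphic extension of each $U_j$ to a common complex neighborhood of $X\times X$, built into the analytic Hadamard parametrix, ensures these bounds hold uniformly on compact subsets of such a neighborhood.

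For Part 2, my first observation is that $\tilde{R}_N(t,x,y,\theta) = \gamma(\theta)e^{-i\theta s}F_N(t,x,y)$ with $s = t^2 - d_g(x,y)^2$ and $F_N(t,x,y) = \sum_{j=0}^N \langle\tilde{a}_j''(x,y,\xi), e^{i\xi s}\rangle_\xi$. The decay estimate for $|\theta|>C_0$ is then trivial by taking $C_0 = \sup\{|\theta|\colon \theta\in\supp\gamma\}$, since then $\tilde{R}_N\equiv 0$ there. It remains to show $|F_N|\leq C_1^{N+1}$ on a compact subset of some open complex neighborhood of $\RR_+\times X\times X$. For real $(t,x,y)$ I would use the distributional identity
\[
	\varphi_j\tilde{a}_j' + \tilde{a}_j'' = |g(y)|^{-1}U_j(x,y)(\theta-i0)^{-j+(n-3)/2}\,,
\]
which holds because $\varphi_0(4\theta)\varphi_j(\theta) = \varphi_j(\theta)$ for every $j\geq 0$ by the support properties chosen in \cref{lem:amplitudes}. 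Combined with \cref{lem:homdist} this yields, for real $s$,
\[
	\langle\tilde{a}_j''(x,y,\cdot), e^{i\xi s}\rangle_\xi = |g(y)|^{-1}\tilde{U}_j(x,y)E_j(t,d_g(x,y)) - |g(y)|^{-1}U_j(x,y)\int \varphi_j(\xi)(\xi-i0)^{-j+(n-3)/2}e^{i\xi s}\dd\xi\,.
\]
The first term is controlled by $C^{j+1}$ upon combining $|\tilde{U}_j|\leq C^{j+1}j!$ from \cref{eq:ujanaold} with $|E_j|\leq C^j/j!$ on compact $s$-sets, the latter from the explicit formula $E_j = cH(s)s^{j+(1-n)/2}/\Gamma(j+(3-n)/2)$ and Stirling. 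The second integral is bounded by $\int_{|\xi|\geq j\rho}|\xi|^{(n-3)/2-j}\dd\xi \leq C(j\rho)^{(n-1)/2-j}$, which after multiplication by $|U_j|\leq C^{j+1}j!$ decays geometrically for $\rho$ large, again by Stirling; summation over $j\leq N$ then concludes this case.

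The hard part is extending these estimates to complex $s$, since $E_j$ is not holomorphic at $s=0$ and the decomposition above is only valid for real $s$. Here I would work directly with the entire function $K_j(s) = \langle(1-\varphi_j(\xi))(\xi-i0)^{-j+(n-3)/2}, e^{i\xi s}\rangle_\xi$ via repeated integration by parts based on $(\xi-i0)^{-a-1} = -(1/a)\del_\xi(\xi-i0)^{-a}$. Iterating this $a = j-(n-1)/2$ times (for $n$ odd; for $n$ even one stops at the nearest half-integer and the same estimate prevails modulo constants) gives
\[
	K_j(s) = \tfrac{1}{a!}\left\langle(\xi-i0)^{-1}, \psi_j^{(a)}(\cdot)\right\rangle\,,\qquad \psi_j(\xi) = (1-\varphi_j(\xi))e^{i\xi s}\,.
\]
The Ehrenpreis bound $|\varphi_j^{(k)}|\leq (C/\rho)^k$ for $k\leq j$, Leibniz's formula, and $|e^{i\xi s}|\leq e^{|\xi||\mathrm{Im}(s)|}$ yield $|\psi_j^{(a)}(\xi)|\leq C^a(1+|s|)^a e^{2j\rho|\mathrm{Im}(s)|}$ on $\supp(1-\varphi_j)$; pairing against the principal value distribution $(\xi-i0)^{-1}$ contributes only a logarithmic factor in $j\rho$. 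Shrinking the complex neighborhood so that $2\rho|\mathrm{Im}(s)|\leq \tfrac{1}{2}$ tames the exponential factor, so $|K_j(s)|\leq C^{j+1}/a!$; combined with $|U_j|\leq C^{j+1}j!$ and $j!/a!\leq j^{(n-1)/2}$, this gives $|U_j K_j|\leq C_1^{j+1}\cdot j^{(n-1)/2}$, and summing over $0\leq j\leq N$ absorbs the polynomial factor into a constant of the form $C_1^{N+1}$, completing the estimate.
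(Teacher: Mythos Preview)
Your proof is correct and follows essentially the same route as the paper: Part~1 via \cref{lem:finreaamp}, and Part~2 by reducing $(\xi-i0)^{-j+(n-3)/2}$ through the derivative identity to a fixed-exponent distribution and estimating the pairing against $\partial^a[(1-\varphi_j)e^{i\xi s}]$ using the Ehrenpreis bounds together with the support restriction $|\xi|\leq 2j\rho$ (the paper packages this last step as an application of Paley--Wiener--Schwartz rather than bounding the principal-value pairing by hand, but the content is identical, and your factorial bookkeeping $j!/a!\leq j^{(n-1)/2}$ plays exactly the role of the paper's $j!/|c_{\bar j}|\leq C^j$). Your separate real-$s$ argument via the Hadamard $E_j$ is correct but redundant, since the complex-$s$ estimate already subsumes it; note also that handling the principal-value singularity at $\xi=0$ requires one more derivative $\psi_j^{(a+1)}$, which is available since $a=j-(n-1)/2<j$ and the Ehrenpreis bounds hold up to order $j$.
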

	The proof of \cref{lem:tdqnegtime} can be found below.

	In order to prove that $(\del_t^2+P(x,D))\td{q}$ is analytic, ie. the second part of the claim \cref{eq:tdqclaim}, we shall proceed analogously. Let us write 
	\begin{equation}\label{eq:bnprime}
	\begin{aligned}
		(\del_t^2+P(x,D))\left(e^{i\theta(t^2-d_g(x,y)^2)}\left(\td{a}' - \sum_{j=0}^N \varphi_j\td{a}_j'\right)\right) &=  e^{i\theta(t^2-d_g(x,y)^2)} b'_N\\
		(\del_t^2+P(x,D))\left(e^{i\theta(t^2-d_g(x,y)^2)}\sum_{j=0}^N \td{a}_j''\right) &=  e^{i\theta(t^2-d_g(x,y)^2)} b''_N
	\end{aligned}
	\end{equation}
	for some pseudoanalytic amplitude $b'_N$ (of order $\leq \frac{n-3}{2}+2$) and some $b_N'' \in \mathcal{D}'$.

	Noting that $r_{N}''$ and $b''_N$ are compactly supported with respect to $\theta$, introducing some $\gamma \in C_c^\infty(\RR)$ so that $\int \gamma(\theta)\dd\theta = 1$, we find
	\[
		\int e^{i\theta(t^2-d_g(x,y)^2)}(r_{N}''+b_N'')\dd\theta = \int e^{i\theta(t^2-d_g(x,y)^2)} \underbrace{\gamma(\theta) e^{-i\theta(t^2-d_g(x,y)^2)}  \langle (r_{N}''+b_N'')(x,y,\xi), e^{i\xi(t^2-d_g(x,y)^2)}\rangle_\xi}_{\eqqcolon R_{N}''} \dd\theta\,,
	\]
	which 
	allows us to write
	\begin{equation*}
		(\del_t^2+P(x,D))\td{q}(t,x,y) = \int e^{i\theta(t^2-d_g(x,y)^2)}\big(r_N'(x,y,\theta) + b'_N(x,y,\theta) - R_{N}''\big)\dd\theta\,,
	\end{equation*}
	where we interpret $\int \dd\theta$ as true integration over $\theta$ by enforcing
	\[
		N>\frac{n-3}{2}+3\,,
	\]
	which guarantees that the pseudoanalytic amplitudes $r_N', b_N'$ have negative enough order. 

	Again, we want to show that the sequence $r_N' + b'_N - R_{N}''$ satisfies the assumptions of \cref{lem:manyamplitudes} to conclude that $(\del_t^2+P(x,D))\td{q}$ is analytic. 

	We break up these estimates into individual statements as well, and remark that proving the following lemma completes the proof of the claim \cref{eq:tdqclaim}, which thus completes the proof of \cref{lem:qana}. The proof of the following lemma is found after the proof of \cref{lem:tdqnegtime}.
	\begin{lemma}\label{lem:twostats}
		For each compact subset of some open complex neighborhood of $X\times X$, there exists $C>0$ so that
		\begin{enumerate}
		\item we have
		\begin{equation}\label{eq:estrN}
			\abs{r_N'} \leq C^{N+1}(1+\abs{\theta})^{\frac{n-3}{2}}\,,\an\abs{\theta}>0 \implies \abs{r_N'} \leq C\left(\frac{CN}{\abs{\theta}}\right)^N\abs{\theta}^{\frac{n-3}{2}}\,,
		\end{equation}
		\item and
		\begin{equation}\label{eq:estbN}
			\abs{b_N'} \leq C^{N+1}(1+\abs{\theta})^{\frac{n+1}{2}}\,,\an\abs{\theta}>0 \implies \abs{b_N'} \leq C\left(\frac{CN}{\abs{\theta}}\right)^N\abs{\theta}^{\frac{n+1}{2}}\,.
		\end{equation}
		\item Furthermore, there is $C_0>0$ so that for each compact subset of some open complex neighborhood of $\RR_+\times X\times X$, there exists $C_1>0$ so that 
		\begin{equation}\label{eq:estRNprime}
			\abs{R_N''} \leq C_1^{N+1}\,,\an\abs{\theta}>C_0 \implies \abs{R_N''} \leq C_1\left(\frac{C_1N}{\abs{\theta}}\right)^N\,.
		\end{equation}
		\end{enumerate}
	\end{lemma}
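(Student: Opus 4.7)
The plan is to establish each of the three estimates in turn, relying on three ingredients: the pointwise bounds \eqref{eq:ujana} for $U_j$, the finite-realization-type estimates of \cref{lem:finreaamp} applied to $c_N \coloneqq \td{a}' - \sum_{j=0}^N \varphi_j \td{a}_j'$, and the support property $\supp \varphi_j \subset \{|\theta| \geq j\rho\}$, combined with Stirling's formula to convert factorials $N!$ into powers $N^N$.

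For item 1, the formula $r_N' = -|g(y)|^{-1}(P(x,D)U_N(x,y))\varphi_N(\theta)\theta^{-N+\frac{n-3}{2}}$ combined with Cauchy's inequalities applied to \eqref{eq:ujana} yields $|P(x,D)U_N(x,y)| \leq C^{N+1}N!$ on any relatively compact complex neighborhood of $X\times X$. Using $|\varphi_N|\leq 1$ and that $\varphi_N$ vanishes for $|\theta|\leq N\rho$, one can absorb $N!/(N\rho)^N\leq C^N e^{-N}$ into the constant for the first estimate in \eqref{eq:estrN}; the second estimate follows by distributing the factor $(N/|\theta|)^N$ directly on $\supp \varphi_N$ and invoking Stirling.

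For item 2, I would compute $b_N'$ explicitly. Since $c_N$ is independent of $t$ and the phase $\phi = t^2 - d_g(x,y)^2$ satisfies the eikonal-type identity
\[
  -\phi_t^2 + g^{jk}\phi_{x_j}\phi_{x_k} = -4t^2 + 4d_g^2\, g^{jk}\del_{x_j}d_g\,\del_{x_k}d_g = -4(t^2 - d_g^2) = -4\phi
\]
(using $g^{jk}\del_{x_j}d_g\,\del_{x_k}d_g = 1$), a direct expansion gives
\[
  b_N'(x,y,\theta) = -4\theta^2\phi\, c_N + i\theta\, T_1(c_N) + T_0(c_N),
\]
where $T_1$ and $T_0$ are differential operators in $x$ of orders $1$ and $0$ with analytic coefficients on $X\times X$. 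Since \cref{lem:finreaamp} gives the two desired estimates for $c_N$ (of order $\frac{n-3}{2}$) and, via Cauchy's inequalities on a complex neighborhood, for its $x$-derivatives, absorbing the $(1+|\theta|)^2$ factor raises the order to $\frac{n+1}{2}$ and yields both estimates in \eqref{eq:estbN}.

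For item 3, I would first choose $C_0$ larger than the support of $\gamma$, which makes the second estimate trivial since then $R_N''\equiv 0$ for $|\theta|>C_0$. The first estimate reduces to bounding $|\langle (r_N''+b_N'')(x,y,\xi), e^{i\xi\phi}\rangle_\xi|\leq C^{N+1}$ uniformly in $(t,x,y)$ in a complex neighborhood of $\RR_+\times X\times X$ and uniformly in $|\theta|\in\supp\gamma$. A computation analogous to item 2 (with $f=\sum_{j\leq N}\td{a}_j''$ in place of $c_N$) expresses both $r_N''$ and $b_N''$ as linear combinations of terms of the form $p(\xi)\cdot|g(y)|^{-1}D^\alpha U_j(x,y)(1-\varphi_j(\xi))(\xi-i0)^{-j+\frac{n-3}{2}}$ with $p$ a polynomial of degree at most $2$ and $|\alpha|\leq 2$. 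Using the identity
\[
  \int e^{i\xi\phi}(\xi - i0)^{-j+\frac{n-3}{2}}\dd\xi = 2\pi e^{i(j-\frac{n-3}{2})\pi/2}\chi_+^{j-\frac{n-1}{2}}(\phi)
\]
and splitting $(1-\varphi_j)=1-\varphi_j$, the relevant pairings decompose into the explicit piece $\chi_+^{j-\frac{n-1}{2}}(\phi)\leq C^j/j!$ (for $\phi$ bounded on the compact set) and an absolutely convergent tail integral over $|\xi|\geq j\rho$ also of size $\leq C^j/j!$. The $1/j!$ factor cancels the $N!$-growth of $D^\alpha U_j$ from \eqref{eq:ujana}, and summing over $j\leq N$ produces the required uniform bound $C_1^{N+1}$. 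The hard part will be this item~3: evaluating the distributional pairings of $(\xi-i0)^a$ against $e^{i\xi\phi}$ with polynomial prefactors, and tracking precisely the cancellation between the $N!$-growth coming from $U_N$ and the $1/N!$-decay of $\chi_+^{N-\frac{n-1}{2}}$.
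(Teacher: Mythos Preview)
Your treatment of items~1 and~2 is correct and matches the paper: item~1 uses the support of $\varphi_N$ together with \eqref{eq:ujana} and Stirling, and item~2 follows from \cref{lem:finreaamp} applied to $c_N=\td a'-\sum_{j\le N}\varphi_j\td a_j'$, after the explicit conjugation by $e^{i\theta\phi}$ that you wrote out.

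Item~3 has a genuine gap. Your splitting $(1-\varphi_j)=1-\varphi_j$ decomposes the compactly supported distribution $(1-\varphi_j(\xi))(\xi-i0)^{-j+\frac{n-3}{2}}$ into two pieces neither of which is compactly supported in $\xi$. The Fourier identity you invoke,
\[
\int e^{i\xi\phi}(\xi-i0)^{-j+\frac{n-3}{2}}\,\dd\xi \;=\; 2\pi\,e^{i(\cdots)}\,\chi_+^{\,j-\frac{n-1}{2}}(\phi),
\]
is a tempered-distribution identity valid only for real $\phi$; moreover $\chi_+^{\,j-\frac{n-1}{2}}$ vanishes for $\phi<0$ and is not analytic across $\phi=0$, so it has no holomorphic extension to a complex neighborhood. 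Likewise the ``tail'' $\int_{|\xi|\ge j\rho}e^{i\xi\phi}\varphi_j(\xi)\xi^{-j+\frac{n-3}{2}}\,\dd\xi$ diverges as soon as $\Im\phi\neq 0$. Since the statement asks for bounds on a compact subset of a \emph{complex} neighborhood of $\RR_+\times X\times X$, your decomposition cannot produce them: only the full (compactly supported) pairing is entire in $\phi$, and a bound on the real axis alone does not propagate to a strip without further argument.

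The paper proceeds differently and avoids this problem. It writes
\[
(\xi-i0)^{-j+\frac{n-3}{2}} \;=\; c_{\bar j}^{-1}\,\partial_\xi^{\bar j}(\xi-i0)^{-j+\bar j+\frac{n-3}{2}},\qquad -j+\bar j+\tfrac{n-3}{2}\in\{-1,-\tfrac12\},
\]
and integrates by parts, transferring the $\bar j\approx j$ derivatives onto $(1-\varphi_j(\xi))e^{i\xi z}$. This keeps everything compactly supported in $\xi$, so Paley--Wiener--Schwartz applies with a \emph{fixed} constant (only the two distributions $(\xi-i0)^{-1}$ and $(\xi-i0)^{-1/2}$ appear). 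The derivatives of $(1-\varphi_j)$ contribute at most $(2C)^{\bar j}$ by the Ehrenpreis bounds, the derivatives of $e^{i\xi z}$ contribute $|z|^{\bar j}$, and the crucial factorial cancellation with the $j!$ from $U_j$ comes from $|c_{\bar j}|\ge(\bar j-2)!$. Summing over $j\le N$ then yields $|R_N''|\le C_1^{N+1}$ on the complex neighborhood, with $e^{2j\rho|\Im z|}$ absorbed by taking the neighborhood thin enough.
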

\end{proof}

\begin{proof}[Proof of \cref{lem:tdqnegtime}]
	Recalling the definition of $R_N' = \td{a}' - \sum_{j=0}^N \varphi_j\td{a}_j'$, the estimate \cref{eq:estRNprimetwo} is a consequence of \cref{lem:finreaamp}.
	
	We move on to show \cref{eq:estRN}. Recall that
		\[
			\td{R}_N(t,x,y,\theta)=\gamma(\theta) e^{-i\theta(t^2-d_g(x,y)^2)} \left\langle \sum_{j=0}^N \td{a}_j''(x,y,\xi), e^{i\xi(t^2-d_g(x,y)^2)}\right\rangle_\xi\,.
		\]
		Because $\gamma$ is compactly supported, $\td{R}_N$ satisfies the second part of \cref{eq:estRN}, and it suffices to prove the first part of \cref{eq:estRN} for each term $\left\langle\td{a}_j''(x,y,\xi), e^{i\xi(t^2-d_g(x,y)^2)}\right\rangle_\xi$, because $\gamma(\theta) e^{-i\theta(t^2-d_g(x,y)^2)}$ admits some bound independent of $N$.
	
		Let $j\in \{0,\dots,N\}$ be arbitrary, and recall the definition of the expressions $\td{a}_j''$ from \cref{lem:amplitudes}. According to the Paley-Wiener-Schwartz theorem \cite[Thm.~7.3.1]{hoermander1} ($\td{a}_j''$ is supported in $\abs{\xi} \leq 2j\rho$), we indeed have the desired bound (the first part of \cref{eq:estRN}) on each individual $\td{R}_j$: however, we have no a priori control on the constant $C$ in \cite[Eq.~(7.3.2)]{hoermander1}. Nevertheless, the cases $0 \leq j \leq \frac{n-3}{2}$ have been taken care of and we may assume that $j> \frac{n-3}{2}$. 

		We shall denote by $\bar j$ the largest integer so that $j - \bar j > \frac{n-3}{2}$. According to \cite[Eq.~(3.2.12)]{hoermander1}, we have
		\begin{equation}\label{eq:i0deriv}
			(\xi-i0)^{-j+\frac{n-3}{2}} = \del^{\bar j}(\xi-i0)^{-j+\bar j + \frac{n-3}{2}} c_{\bar j}^{-1}\,,\qquad\text{where}\quad c_j = \prod_{k=0}^{\bar j-1} \left(\frac{n-3}{2} -j+\bar j - k\right)\,,
		\end{equation}
		and because $\frac{n-3}{2}-j+\bar j \in \{-1/2,-1\}$, we see that $\abs{c_{\bar j}} \geq (\bar j-2)!$. As a consequence of \cref{lem:morefact} there is some $C>0$ depending only on $j-\bar j+2 \in \{\frac{n}{2},\frac{n-1}{2}\}$, so that
		\begin{equation}\label{eq:barj}
			\frac{j!}{\abs{c_{\bar j}}} \leq \frac{(\bar j - 2 + (j-\bar j +2))!}{(\bar j -2)!} \leq C^j\,.
		\end{equation}

		According to the Leibnitz rule and \cref{eq:i0deriv}, for any $g\in C^\infty(\RR)$, we have
		\[
			\langle (1-\varphi_j(\xi)) (\xi-i0)^{-j+\frac{n-3}{2}}, g(\xi)\rangle = \left\langle \sum_{k=0}^{\bar j} {\bar j\choose k}\left(\del^{\bar j-k}(1-\varphi_j(\xi))\right) c_{\bar j}^{-1} (\xi-i0)^{-j+\bar j + \frac{n-3}{2}}, \del^k g(\xi)\right\rangle\,,
		\]
		and because $\varphi_j(\xi) = \td{\varphi}_j(\xi/(j\rho))$, $\td{\varphi}_j$ an Ehrenpreis cut-off sequence, we have, for some $C>0$ (independent of $j$),
		\[
			\sum_{k=0}^{\bar j} {\bar j\choose k}\del^{\bar j-k}(1-\varphi_j(\xi)) \leq \sum_{k=0}^{\bar j} {\bar j\choose k} C^{\bar j-k} \leq (2C)^{\bar j}\,.
		\]
		In particular, taking $C_0$ the maximum of the two constants $C$ appearing in \cite[Eq.~(7.3.2)]{hoermander1} for the distributions $\mathbb{1}_{\{\abs{\xi}\leq 2j\rho\}}(\xi-i0)^{-k}\,, k\in \{-1/2,-1\}$, we find that for all $z\in \CC$,
		\[
		\abs{\langle (1-\varphi_j(\xi)) (\xi-i0)^{-j+\frac{n-3}{2}}, e^{i\xi z}\rangle_\xi} \leq (2C)^{\bar j}\abs{c_{\bar j}^{-1}} C_0 e^{2j\rho \abs{\mathrm{Im}(z)}}\,.
		\]
		Therefore, we find from \cref{eq:ujana}, for some other $C,C'>0$ (independent of $j$) and all $z\in\mathbb{C}$,
		\[
			\abs{\langle \td{a}_j''(x,y,\xi), e^{i\xi z}\rangle_\xi} \leq C^{j} j! c_{\bar j}^{-1} e^{2j\rho\abs{\mathrm{Im}(z)}} \leq \left(C' e^{2\rho\abs{\mathrm{Im}(z)}}\right)^{j+1}\,,
		\]
		where the second inequality came as a consequence of \cref{eq:barj}. Plugging in $z = t^2-d_g(x,y)^2$, where we may choose the complex neighborhood of $\RR_+\times X\times X$ so that $\abs{\mathrm{Im}(z)}\leq \eps$ for some $\eps>0$, and 
	by summing up the terms over $j\in \{0,\dots,N\}$, we thus conclude that $\td{R}_N$ satisfies \cref{eq:estRN}.
\end{proof}

\begin{proof}[Proof of \cref{lem:twostats}]
		Looking at the definition of $r_N'$ in \cref{lem:amplitudes}, the second part of \cref{eq:estrN} is an immediate consequence of \cref{eq:ujana}. For the first part we merely remark that $r_N'$ is supported in $\abs{\theta} \geq N\rho$, so that the first part of \cref{eq:estrN} follows from the second.
		
		Recalling the definition of $b_N'$ in \cref{eq:bnprime} (and the definition of $R_N'$), the proof of \cref{eq:estbN} is analogous to the proof of \cref{eq:estRNprimetwo}: it will be a consequence of \cref{lem:finreaamp}.

		All that remains to prove is statement 3., which, however, is entirely analogous to the proof of \cref{eq:estRN}.
\end{proof}

We have thus completed the proofs of \cref{lem:tdqnegtime,lem:twostats}, so that indeed the proof of \cref{lem:qana} is complete. 

We turn our attention to \cref{lem:qananotd}.

\begin{proof}[Proof of \cref{lem:qananotd}]
	Recall that for each $N\in\mathbb{N}$,
	\[
	q = \int e^{i\theta(t-d_g(x,y))}\left(-\sum_{j=0}^Na_j'' +a' - \sum_{j=0}^N \varphi_j a_j'\right) - e^{i\theta(t^2-d_g(x,y)^2)}\left(-\sum_{j=0}^N\td{a}_j'' +\td{a}' - \sum_{j=0}^N\varphi_j\td{a}_j'\right)\dd\theta\,.
	\]
	The proof of this lemma is analogous to the proof of the analyticity of $\td{q}$ for $t<0$ in \cref{lem:qana} aside from the following: 
	we have to make sure to exclude the diagonal $x=y$ and restrict to $t\geq 0$, as otherwise the amplitudes $a',a_j'$ are not guaranteed to be analytic (as well as the phase $\theta (t-d_g(x,y))$). 

	Despite the fact that \cref{lem:manyamplitudes} is stated for distributions defined by an oscillatory integral with only one phase, it is easy to see that one may repeat the proof in the case of the sum of multiple phases as they occur in $q$, and indeed, in \cref{lem:tdqnegtime} we have already shown bounds for the second group of terms in $q$ (the ones with tildes). One will thus only have to show bounds on the amplitudes that we group as follows:
	\[
		a' - \sum_{j=0}^N \varphi_j a_j'\,,\qquad  \sum_{j=0}^Ne^{-i\theta(t-d_g(x,y))}\gamma(\theta)\langle a_j'', e^{i\xi(t-d_g(x,y))}\rangle_\xi\,,
	\]
	where naturally, we took $\gamma \in C_c^\infty(\RR)$ with $\int \gamma(\theta) \dd\theta = 1$. The first term is dealt with using \cref{lem:finreaamp}. The arguments to estimate $R_N'$ in the proof of \cref{lem:tdqnegtime} can be repeated for the term coming from $a_j''$. 

	These estimates complete the proof of this lemma.
%
%
\end{proof}

We now proceed to the
\begin{proof}[Proof of \cref{lem:replaceaprime}]
		From the calculation in \cref{eq:arterm,eq:rterm}, namely the application of Taylor's theorem, for every $N\in\mathbb{N}$, where the argument of each $a'$ is $(d_g(x,y),x,y,\theta)$, and the argument of each $R_{N+1}$ is $(t,x,y,\theta)$, 
	\begin{align*}
		&\int e^{i\theta(t-d_g(x,y))}a'\dd\theta\\
		&= \int e^{i\theta(t-d_g(x,y))} \left(\sum_{j=0}^N\frac{\del_t^j a'}{j!}(t-d_g(x,y))^j + R_{N+1}(t-d_g(x,y))^{N+1}\right)\dd\theta \\
			&=  \int e^{i\theta(t-d_g(x,y))} \left(\sum_{j=0}^N\varphi_j\frac{\del_t^j a'}{j!}(t-d_g(x,y))^j + \sum_{j=0}^N(1-\varphi_j)\frac{\del_t^j a'}{j!}(t-d_g(x,y))^j+ R_{N+1}(t-d_g(x,y))^{N+1}\right)\dd\theta
	\shortintertext{so that using partial integration on the first term on the RHS, and recalling $a_j^\circ$ from \cref{eq:firstdiff},}
		&= \int e^{i\theta(t-d_g(x,y))} \left(\sum_{j=0}^N \sum_{k=0}^j {j\choose k}(\del_\theta^{j-k} \varphi_j)\frac{\del_\theta^k\del_t^j a'}{j!(-1)^j}+ \sum_{j=0}^N(1-\varphi_j)\frac{\del_t^j a'}{j!}(t-d_g(x,y))^j+ R_{N+1}(t-d_g(x,y))^{N+1}\right)\dd\theta \\
		&= \int e^{i\theta(t-d_g(x,y))} \left(\sum_{j=0}^N \varphi_ja_j^{\circ}+\sum_{j=0}^N \sum_{k=0}^{j-1} {j\choose k}(\del_\theta^{j-k} \varphi_j)\frac{\del_\theta^k\del_t^j a'}{j!(-1)^j}+ \sum_{j=0}^N(1-\varphi_j)\frac{\del_t^j a'}{j!}(t-d_g(x,y))^j \right.\\
		&\qquad+ R_{N+1}(t-d_g(x,y))^{N+1}\Bigg)\dd\theta\,.
	\end{align*}
	Splitting up $R_{N+1} = \varphi_{N+1}R_{N+1}+(1-\varphi_{N+1})R_{N+1}$ and applying partial integration as well, we see that
	\begin{align*}
		&\int e^{i\theta(t-d_g(x,y))} R_{N+1}(t-d_g(x,y))^{N+1}\dd\theta \\
		&= \int e^{i\theta(t-d_g(x,y))} \left((1-\varphi_{N+1})R_{N+1}(t-d_g(x,y))^{N+1}+ (-1)^{N+1}\del_\theta^{N+1}(\varphi_{N+1}R_{N+1})\right)\dd\theta\,.
	\end{align*}

	Now, because $a^{\circ} = \sum_{j\geq 0}\varphi_j a_j^{\circ}$, from the calculations above we see that
	\begin{align}
		&\int e^{i\theta(t-d_g(x,y))}(a'-a^{\circ})\dd\theta \label{eq:taylorona}\\
		&= \int e^{i\theta(t-d_g(x,y))} \left(\overbrace{-\sum_{j=N+1}^\infty \varphi_ja_j^{\circ}}^{T_{1,N}}+\overbrace{\sum_{j=0}^N \sum_{k=0}^{j-1} {j\choose k}(\del_\theta^{j-k} \varphi_j)\frac{\del_\theta^k\del_t^j a'}{j!(-1)^j}}^{T_{2,N}} + \overbrace{\sum_{j=0}^N(1-\varphi_j)\frac{\del_t^j a'}{j!}(t-d_g(x,y))^j}^{T_{3,N}}\right.\notag\\
		&+\left. \underbrace{(1-\varphi_{N+1})R_{N+1}(t-d_g(x,y))^{N+1}}_{T_{4,N}}-(-1)^{N}\left(\underbrace{\varphi_{N+1}\del_\theta^{N+1}R_{N+1}}_{T_{5,N}}+\underbrace{\sum_{k=0}^{N} {N+1\choose k}(\del_\theta^{N+1-k}\varphi_{N+1})\del_\theta^{k}R_{N+1}}_{T_{6,N}}\right)\right)\dd\theta\,.\notag
	\end{align}
		We will want to apply \cref{lem:manyamplitudes} to prove this lemma. Thus it will suffice to show that the six terms in the integrand on the RHS of \cref{eq:taylorona}, $T_{l,N}, l\in\{1,\dots,6\}$ satisfy the assumptions of \cref{lem:manyamplitudes}. That for each fixed $N$ each term $T_{l,N}, l\in\{1,\dots,6\}$ is in fact a pseudoanalytic amplitude is immediate from the properties of the cut-offs $\varphi_j$ and the fact that $a'$ and each $a^{\circ}_j$ is a pseudoanalytic amplitude.

		It is understood that the following estimates are all valid on some compact subset of some open complex neighborhood of $\RR_+\times X\times X\setminus\mathrm{d}$.

		\begin{enumerate}[leftmargin=*]
		\item The term $T_{1,N}$ satisfies \cref{eq:no1,eq:no2} by an application of \cref{lem:finreaamp}.
		\item Notice that in every term in $T_{2,N}$ at least one derivative will hit $\varphi_{j}$, which makes $T_{2,N}$ supported in $\abs{\theta} \leq N\rho$, so that we can conclude that $T_{2,N}$ satisfies \cref{eq:no2}. In addition, using the pseudoanalytic estimates of $a'$, for some $C>0$, we can estimate
		\[
		\abs{\sum_{j=0}^N \sum_{k=0}^{j-1} {j\choose k}(\del_\theta^{j-k} \varphi_j)\frac{\del_\theta^k\del_t^j a'}{j!}} \leq \sum_{j=0}^{N}\sum_{k=0}^{j-1} \frac{j!}{(j-k)!} C^{j-k} C^{k+j+1} \abs{\theta}^{-k}\abs{\theta}^{\frac{n-3}{2}}\,,
		\]
		where we used the fact that $(\td{\varphi}_j)_{j\in\mathbb{N}}$ is a sequence of Ehrenpreis cut-offs and thus (for $C$ depending on $\rho$)
		\[
			\abs{\del_\theta^{j-k} \varphi_{j}(\theta)} = \abs{\del_\theta^{j-k} \td{\varphi}_{j}(\theta/(j\rho))} \leq C^{j-k}\,.
		\]
		Now on the support of $\varphi_{j}$, for some $C,C',C''>0$,
		\begin{align*}
\sum_{k=0}^{j-1} \frac{j!}{(j-k)!} C^{j-k} C^{k+j+1} \abs{\theta}^{-k} &= C^{2j+1}\sum_{k=0}^{j-1} \frac{j!}{k!}\abs{\theta}^{-j+k} \leq C^{2j+1}\sum_{k=0}^{j-1} \frac{j!}{k!} (j\rho)^{-j+k} \\
&= C^{2j+1}(j\rho)^{-j} \sum_{k=0}^{j-1}\frac{j!}{k!} (j\rho)^k \leq C^{2j+1}2^n 
		\end{align*}
		where we applied \cref{lem:incomplgamma} in the last inequality. Thus, for some $C>0$,
		\[
			      \abs{T_{2,N}} \leq \sum_{j=0}^N C^{j+2} \abs{\theta}^{\frac{n-3}{2}}
		\]
		and since $T_{2,N}$ is supported in $\abs{\theta}\geq \rho$, we find that $T_{2,N}$ also satisfies \cref{eq:no1}.
		\item The term $T_{3,N}$ has support in  $\abs{\theta}\leq 2N\rho$ and thus satisfies \cref{eq:no2}. In addition, this term can be estimated by 
		\[
		\abs{\sum_{j=0}^N (1-\varphi_j(\theta)) a^{\circ}_j} \leq \sum_{j=0}^N C^{j+1} \abs{\theta}^{\frac{n-3}{2}} \leq (C')^N\,,
		\]
		for some $C,C'>0$, so that it satisfies \cref{eq:no1}. 
		\item The term $T_{4,N}$ satisfies \cref{eq:no2} because it is supported in $\abs{\theta} \leq 2(N+1)\rho \leq 4N\rho$. Furthermore, by explicit observation of the definition of $R_{N+1}$ from \cref{eq:rterm},
		\[
			\abs{R_{N+1}} \leq \frac{1}{N!} C^{N+1} (N+1)! \int_0^1 (1-s)^N\dd s \abs{\theta}^{\frac{n-3}{2}} \leq C^{N+1} \abs{\theta}^{\frac{n-3}{2}}\,,
		\]
		and since $R_{N+1}$ is supported in $\abs{\theta}>\rho$, we see that $T_{4,N}$ satisfies \cref{eq:no1} too.
		\item For the term $T_{5,N}$, notice that for some $C,C'>0$,
		\[
			\abs{\varphi_{N+1} \del^{N+1}_\theta R_{N+1}} \leq \frac{1}{N!}C^{N+2} (N+1)! N! \int_0^1 (1-s)^N\dd s \abs{\theta}^{\frac{n-3}{2}-N} \leq C'\left(\frac{C'N}{\abs{\theta}}\right)^N
		\]
		which thus satisfies \cref{eq:no2}. Because $T_{5,N}$ is supported in $\abs{\theta} \geq (N+1)\rho \geq N\rho$, it also satisfies \cref{eq:no1} from this above calculation. 
		\item We turn to the term $T_{6,N}$. Notice that in every term in $T_{6,N}$ at least one derivative will hit $\varphi_{N+1}$, which makes $T_{6,N}$ supported in $\abs{\theta} \leq 2(N+1)\rho \leq 4N\rho$, so that we can conclude that $T_{6,N}$ satisfies \cref{eq:no2}. 

		Showing that $T_{6,N}$ satisfies \cref{eq:no1} proceeds analogously to the treatment of $T_{2,N}$.

		\end{enumerate}
		We have thus concluded that all six terms in \cref{eq:taylorona} satisfy \cref{eq:no1,eq:no2} which completes the proof of this lemma. 
\end{proof}

Finally, we shall complete this section with the 
\begin{proof}[Proof of \cref{lem:newcuts}]
	Because
	\[
		\varphi_j\varphi_k= \begin{cases} 0\quad&\text{if}\quad\abs{\theta} \leq (j+k)\frac{\rho}{2}\\ 1\quad&\text{if}\quad\abs{\theta} \geq (j+k)\rho \\ \ast &\text{else}\end{cases}\,,
	\]
	in particular, if $s=j+k$,
	\[
		\supp \left(\varphi_j\varphi_k - \varphi_s\right) \subset \left\{s\frac{\rho}{2} \leq \abs{\theta} \leq s\rho\right\}\,,
	\]
	so that
	\begin{equation}\label{eq:aneq}
		a^{\circ}-a = \sum_{s\frac{\rho}{2} \leq \abs{\theta} \leq s\rho}\sum_{\substack{j,k \geq 0\\j+k=s}}\left(\varphi_j\left(\frac{\abs{\theta}}{j\rho}\right)\varphi_k\left(\frac{\abs{\theta}}{k\rho}\right)-\varphi_s\left(\frac{\abs{\theta}}{s(\rho/2)}\right)\right)\frac{(-1)^j}{j!}\del_\theta^j\del_t^j a'_k\,.
	\end{equation}

	Thus, the RHS of \cref{eq:aneq} is a locally finite sum of terms bounded above by
	\[
		M\abs{\theta}^{n-1} C^s s! \abs{\theta}^{-s} \leq M\abs{\theta}^{n-1} \left(\frac{Cs}{\abs{\theta}}\right)^{s}\,,
	\]
	where if $\rho$ large enough so that $2C/\rho < e^{-1}$, this can be bounded by 
	\[
		\leq M\abs{\theta}^{n-1} e^{-s} \leq M\abs{\theta}^{n-1} e^{-\abs{\theta}/\rho}\,.
	\]
	This implies that $a^{\circ}-a$ is an exponentially decaying pseudoanalytic amplitude, completing the proof.
\end{proof}

%
%

\section{Seismic Inversion}\label{sec:seis}

We consider a problem arising in exploration seismology, the physical setup of which is as follows: at the surface of the earth one detonates a charge at one point inducing acoustic waves traveling through the earth underground. These waves have singularities propagating at the wavefront and will thus reflect off of differences in material undergound, allowing these reflections to be measured at the surface of the earth again. This is the situation considered in \cite{zbMATH04097945}, and \cite{zbMATH01725533} which is made more rigorous and expanded on in \cite{zbMATH01725602}. See also \cite{zbMATH00010064,MR776132,zbMATH06005205}, and for the one-dimensional case, also containing a stability statement, we refer to \cite{zbMATH00020888}. A more general setup of this problem is when multiple sources of the detonations lie in a manifold, see for example \cite{zbMATH01044192} or \cite{zbMATH07675878} and the references therein, which will not be discussed here. We also point the reader to the survey article \cite{zbMATH05655830}. We will be discussing this problem in an analytic setting.

Let $c \colon \RR^n \to \RR_{>0}$ be the wave speed and consider the forward problem of finding a solution $u=u(t,x)$ to
\begin{equation}\label{eq:wave}
\begin{aligned}
	\frac{1}{c(x)^2}\del_t^2 u -\Delta_x u &= \delta(t,x)\,,\qquad \supp u \subset \{t\geq 0\}\,.
	\end{aligned}
\end{equation}
We are interested in the inverse problem of recovering the wave speed $c$ based on surface measurements of the solution $u$, ie.\ recovering $c$ with data $u\vert_{x_n=0}$. 
Motivated by this -- deferring to \cite[\S~1]{zbMATH04097945} the well-posedness under assumptions we will also make -- 
we define the forward map 
\begin{equation}\label{eq:defineA}
	A \colon c \mapsto u\vert_{x_n = 0}\,.
\end{equation}
In fact we will simplify the problem by considering the linearization $DA[c]$ of the operator $A$, defined in \cref{eq:der1}. 

In order to state our results on the injectivity of this operator we fix a few expressions. As we assume that measurements are made in a bounded set over a finite amount of time we will restrict ourselves to this situation. 

Let open $\mathcal{M}\subset \RR^n$ be bounded with $0\in\mathcal{M}$. We shall assume there are open $\Omega\subset \RR^n$ and $T_{\mathcal{M}} > 0$ so that $(0,T_{\mathcal{M}})\times \mathcal{M}\subset D_+(2T_{\mathcal{M}},\Omega)$ (defined prior to \cref{thm:paraana}). We define
%
%
\begin{equation}\label{eq:definecalC}
	\mathcal{C} \coloneqq \left\{c \in C^\omega(\Omega)\colon \inf_{\Omega} c > 0\,,\text{and}\ (\mathcal{M},g)\ \text{is a simple Riemannian manifold where}\ g_{jk}(x) \coloneqq c^2(x)\delta_{jk}\right\}\,.
\end{equation}
See \cite[\S~3.8]{zbMATH07625517} for a definition and properties of simple manifolds. 

When $c \in \mathcal{C}$, we will in fact abuse notation and write $(\mathcal{M},c)$ for the manifold $\mathcal{M}$ with metric $g_{jk}(x) = c^2(x)\delta_{jk}$, and we will also write $d_c$ for the Riemannian distance function with this metric. We shall assume also that $T_{\mathcal{M}} > 2\mathrm{diam}_c(\mathcal{M})$. 



Finally, let us fix $\mathcal{M}' \coloneqq \mathcal{M} \cap \{x_n = 0\}$ and for any $\eps>0$ we define $\mathcal{M}_\eps \coloneqq  \mathcal{M}\cap \{x_n < -\eps\}$. The set $\mathcal{M}'$ is the \emph{measurement surface}, it is where in a physical setup one would have seismographs that measure displacement in the ground. The necessity of $\mathcal{M}_\eps$ arises as a technicality to guarantee that the linearization of $A$ is an FIO.

Throughout this entire section the definitions made above will remain fixed. 

Let us introduce `physical' conditions on the geometry $(\mathcal{M},c)$ for some compact $\mathcal{K}\Subset\mathcal{M}_\eps\subset \mathcal{M}$.
We introduce
\begin{gather}\label{eq:piscatter}
	\text{there is no ray starting at $0$ passing through $\mathcal{K}$ that returns to $\mathcal{M}'$}\,.\tag{A1}\\
	\text{A ray from $\mathcal{K}$ hitting $\mathcal{M}'$ has non-vanishing vertical momentum when hitting}\ \mathcal{M}'\,.\tag{A2}\label{eq:nograze}
\end{gather}
Condition \cref{eq:piscatter} is that in \cite[Assumption~(iii)]{zbMATH04097945} or \cite[Thm.~2.4~(2.)]{zbMATH01725533} and is sometimes referred to as no scattering over $\pi$. This condition precludes the existence of rays starting from the origin that return to $\{x_n=0\}$, ie. they must have scattered (aka broken) at an angle unequal to $\pi$ at some point in their journey. In this formulation, however, we explicitly allow rays traveling along the surface $\{x_n=0\}$, or this scattering over $\pi$ to happen as long as such rays never intersect the set $\mathcal{K}$.

Condition \cref{eq:nograze} is \cite[Assumption~(i)]{zbMATH04097945} or \cite[Thm.~2.4~(1.)]{zbMATH01725533} and precludes the existence of rays that graze the surface $\{x_n=0\}$ (again restricted to the set of interest $\mathcal{K}$).

We will have to assume \cref{eq:piscatter,eq:nograze} so that the linearization of $A$ is an FIO with salutary phase (see \cref{def:bolkone}). However, in order to show injectivity of this linearization we will use a layer stripping argument which will require two additional conditions:
\begin{align}\label{eq:gloas}
	\begin{matrix*}\text{In the metric induced by $c$, for any $x\in\mathcal{K}$, the vertical momentum at $x$}\\
	\text{of the ray from $0$ to $x$ is non-vanishing.}
	\end{matrix*} \tag{B1}\\
	\begin{matrix*}\text{In the metric induced by $c$, for all $x\in \mathcal{K}$, putting $(\omega',\omega_n) = \omega = -\exp_x^{-1}(0)$,}\\
		\text{the ray $y(t) = \exp_x\left(t\left(\frac{\omega'}{\abs{\omega}}+\sqrt{1-\abs{\frac{\omega'}{\abs{\omega}}}^2} e_n\right)\right)$ intersects $\mathcal{M}'$ at some $t \in(0,\infty)$}\,.
	\end{matrix*} \tag{B2}\label{eq:gloas2}
\end{align}
Condition \cref{eq:gloas} is that the ray from the origin to some point $x\in\mathcal{K}$ must still be `pointing downward' as it reaches $x$. 
Condition \cref{eq:gloas2} states that the ray from the origin to any $x\in\mathcal{K}$, reflected 
with respect to its vertical momentum (now pointing upward when before it was downward, given \cref{eq:gloas}) will 
hit the measurement surface $\mathcal{M}'$ after some time. 
These assumptions will guarantee that we can observe the singular directions corresponding to the vectors pointing precisely upward, see the discussion near \cite[Eq.~(77)]{zbMATH01725533} for which singular directions can be observed. In essence we are guaranteeing the microlocal ellipticity in the upward direction at every point. While in the smooth setting this would not suffice in order to `invert' the measurement, the microlocal Holmgren theorem together with a layer stripping argument show that this suffices in the analytic framework. 

We will argue in \cref{sec:phys} that if $\mathcal{M}$ is a subset of the earth near the surface and $\mathcal{M}$ is shallow enough (or equivalently if the initial detonation is powerful enough), then assuming a radial sound speed satisfying the Herglotz condition for the earth (see \cite[\S~2.1.2]{zbMATH07625517}) will guarantee assumptions \cref{eq:gloas,eq:nograze} within $\mathcal{M}$ and will ensure that there are `reasonably large' sets $\mathcal{K}$ satisfying \cref{eq:gloas2,eq:piscatter}.

Writing $DA[c]$ for the linearization of $A$ from \cref{eq:defineA} in the direction $c$ (see \cref{eq:der1}), we will prove the injectivity of this linearized operator, in fact, at the end of \cref{sec:DAFIO} we show
%
%
%
%
%
%
%
\begin{theorem}\label{thm:inj}
	Let $c\in\mathcal{C}$, 
	compact $\mathcal{K}\Subset \mathcal{M}_\eps$ for some $\eps>0$ 
	and let $T \in (2\mathrm{diam}_c(\mathcal{M}),T_{\mathcal{M}})$ and $c_\delta \in \mathcal{E}'(\mathcal{M})\cap C(\mathcal{K}), \supp c_\delta \subset \mathcal{K}$. We assume that $(\mathcal{M},c)$ with $\mathcal{K}$ satisfy assumptions \cref{eq:piscatter,eq:nograze,eq:gloas,eq:gloas2}.
	
	The following three conditions are equivalent
	\begin{enumerate}
		\item $DA[c](c_\delta) \in C^\omega((0,T)\times\mathcal{M}')$,
		\item $c_\delta \equiv 0$,
		\item $DA[c](c_\delta) = 0$. 
	\end{enumerate}
%
%
\end{theorem}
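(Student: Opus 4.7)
The implications $(2) \implies (3) \implies (1)$ are immediate (by linearity and by the fact that $0$ is analytic), so the substantive content is $(1) \implies (2)$. The plan is to carry this out in three steps using the FIO machinery of \cref{sec:2} together with the explicit parametrix of \cref{thm:paraana}.

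\textbf{Step 1: $DA[c]$ as a salutary analytic FIO.} Differentiating $(c^{-2}\del_t^2 -\Delta_x)u = \delta_0$ in the direction $c_\delta$ gives, at leading order, that $DA[c](c_\delta)$ equals the restriction to $\{x_n=0\}$ of $S(2c^{-3}c_\delta \del_t^2 S(\delta_0))$, where $S$ is the solution operator of \cref{lem:ex}. Using the representation of $S(\delta_{(0,0)})$ as an FIO with linear-in-$t$ phase $\theta(t-d_c(x,0))$ supplied by part~2 of \cref{thm:paraana}, then composing with multiplication by $c_\delta$ and with $\del_t^2$, applying $S$ once more, and restricting to $(0,T)\times \mathcal{M}'$, produces (modulo analytic smoothing) an elliptic classical pseudoanalytic FIO $DA[c] \colon \mathcal{E}'(\mathcal{M}_\eps) \to \mathcal{D}'((0,T)\times \mathcal{M}')$. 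Its canonical relation $\Lambda_{DA}$ is the broken-ray relation: a conormal $(x,\xi) \in T^\ast\mathcal{M}_\eps\sem$ is mapped to $(t,z;\tau,\zeta)$ obtained by following the $c$-geodesic from $0$ to $x$, reflecting the outgoing direction across the hyperplane $\xi^\perp$, and flowing until the reflected ray first hits $\mathcal{M}'$ at time $t < T$. Assumption \cref{eq:piscatter} removes direct rays $0 \to \mathcal{M}'$ passing through $\mathcal{K}$, which gives injectivity and singleton preimages for $\pi_L$, i.e.~\ref{as:n1} and \cref{eq:extraWFempty}; \cref{eq:nograze} provides \ref{as:n3}; simplicity of $(\mathcal{M},c)$ with non-degeneracy of the parametrix phase yields \ref{as:n2} and the unique frequency property \ref{as:n4}. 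Hence the phase is salutary in the sense of \cref{def:bolkone} and \cref{cor:globmain,thm:hypside} are applicable to $DA[c]$.

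\textbf{Step 2: Controlling the vertical direction of $\wf(c_\delta)$.} For any $x \in \mathcal{K}$, assumption \cref{eq:gloas} says the $c$-geodesic from $0$ to $x$ has non-vanishing vertical momentum at $x$, and \cref{eq:gloas2} says that the ray at $x$ obtained by flipping the vertical component of that momentum reaches $\mathcal{M}'$. The total travel time is bounded by $2\mathrm{diam}_c(\mathcal{M}) < T$, so the measurement point lies in $(0,T)\times \mathcal{M}'$. This broken ray is precisely the image of $(x, e_n)$ under $\Lambda_{DA}$, so hypothesis (1) that $DA[c](c_\delta) \in C^\omega((0,T)\times\mathcal{M}')$ together with \cref{cor:globmain} gives
\[
	(x, e_n) \notin \wf(c_\delta) \quad\text{for every } x \in \mathcal{K}\,.
\]

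\textbf{Step 3: Layer stripping.} Suppose for contradiction that $c_\delta \not\equiv 0$. Then $\supp c_\delta$ is a nonempty compact subset of $\mathcal{K}$, and the continuous map $y \mapsto y_n$ attains its maximum on $\supp c_\delta$ at some $x^\ast$. By the choice of $x^\ast$, the continuous distribution $c_\delta$ vanishes on the open half-space $\{y : y_n > x^\ast_n\}$ (both inside $\mathcal{K}$, by maximality, and outside $\mathcal{K}$, by the support hypothesis), which is one side of the smooth hypersurface $H = \{y_n = x^\ast_n\}$ with outward normal $+e_n$ at $x^\ast$. Step~2 supplies $(x^\ast, e_n) \notin \wf(c_\delta)$, so \cref{thm:hypside} applied to the salutary analytic FIO $DA[c]$ (with the measurement-side analyticity coming from hypothesis (1)) forces $c_\delta$ to vanish in a full neighborhood of $x^\ast$, contradicting $x^\ast \in \supp c_\delta$.

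The main obstacle is Step~1: explicitly patching the local parametrix of \cref{thm:paraana} into a global FIO representation of $DA[c]$ with a single real-analytic non-degenerate globally salutary phase, and checking that the ray-geometric hypotheses \cref{eq:piscatter,eq:nograze} translate precisely into the abstract conditions \ref{as:n1}--\ref{as:n4}. Once Step~1 is in hand, Steps~2 and~3 are direct applications of the analytic-wavefront machinery built in \cref{sec:2}.
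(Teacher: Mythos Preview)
Your proposal is essentially correct and follows the paper's own route: Step~1 is exactly what the paper carries out in \cref{thm:waveFIO}, and Steps~2--3 reproduce the layer-stripping argument. Two minor points: your attribution of which geometric hypothesis gives which salutary condition is slightly off (\ref{as:n3} follows immediately from $\theta\neq 0$ without any assumption, while \cref{eq:nograze} is actually needed for the \emph{immersion} half of the Bolker condition~\ref{as:n1}, and \cref{eq:piscatter} gives \ref{as:n2}, \cref{eq:extraWFempty}, and the injectivity half of~\ref{as:n1}); and your top-point contradiction in Step~3 is a clean variant of the paper's open--closed argument on the set $I = \{s\in(0,S] : c_\delta=0 \text{ in } \{x_n\geq S-s\}\}$, both being standard layer-stripping implementations.
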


This result may be interpreted as follows: the observations on the surface $\mathcal{M}'$, collected over the time $(0,T)$ determine the analytic singularities of the subsurface inside $\mathcal{K}$. 

Let $S$ be the solution operator from \cref{lem:ex} for the operator $P(x,D)=-c^2(x)\Delta_x$, and define $S_c$ via $S_c(f) = S(c^2f)$. We see that $S_c$ is then the solution operator for the equation \cref{eq:wave} and behaves exactly like $S$ does as stated in \cref{lem:ex}. Because we assumed simplicity of the metric, reformulating the second statement of \cref{thm:paraana} in the present setting we have
\begin{proposition}\label{thm:paraseis} 
	Let $c\in\mathcal{C}$ and $T \in (0,T_{\mathcal{M}})$. Writing $\mathcal{M}^2_{\setminus d} \coloneqq \mathcal{M}^2\setminus \mathrm{diag}(\mathcal{M}^2)$, there is $\mathrm{FS}_{\psi a}^{\frac{n-3}{2}}(\mathcal{M}^2_{\setminus d}) \ni \ud{a} = \sum_{j\geq 0} a_j$ classical and elliptic with the following property. For every finite realization $a \in S_{\psi a}^{\frac{n-3}{2}}(\mathcal{M}^2_{\setminus d})$ of $\ud{a}$, defining $q_c \in \DD'(\RR\times \mathcal{M}^2)$ via
	\begin{equation}\label{eq:giveqseis}
		q_c(t,x,y) = \int e^{i\theta(t-d_c(x,y))}a(x,y,\theta)\dd\theta
	\end{equation}
	there is $q_c'(t,x,y) \in \DD'(\RR\times \mathcal{M}^2) \cap C^\omega(\RR_+\times\mathcal{M}^2_{\setminus d})$ so that for $(t,x,y)\in (0,T)\times \mathcal{M}^2$,
	 \begin{equation}\label{eq:hypsolveseis}
		\begin{alignedat}{2}
			S_c(\delta_{(0,y)})(t,x) = (q_c+q_c')(t,x,y)\,.
	\end{alignedat}
	\end{equation}
%
%
\end{proposition}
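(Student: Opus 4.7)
The plan is to invoke Theorem~\ref{thm:paraana} with $P(x,D)=-c^2(x)\Delta_x$, for which $\partial_t^2+P$ is a strictly hyperbolic operator of order two with analytic coefficients whose principal symbol $\tau^2-c^2(x)\abs{\xi}^2$ induces (up to the paper's conventions) the Riemannian metric determined by $c$, so that its eikonal distance is $d_c$. The bridge between $S_c$ and the solution operator $S$ of Lemma~\ref{lem:ex} for $\partial_t^2+P$ is
\[
    S_c(f)=S(c^2 f),\qquad c^2(x)\delta_{(0,y)}=c^2(y)\delta_{(0,y)},
\]
so that $S_c(\delta_{(0,y)})(t,x)=c^2(y)\,S(\delta_{(0,y)})(t,x)$.

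The next step is to check that Theorem~\ref{thm:paraana}, as stated pointwise in $y$, applies with the single choice $X=\mathcal{M}$ uniformly in $y\in\mathcal{M}$. Simplicity of $(\mathcal{M},c)$ guarantees that $\mathcal{M}$ is a normal neighborhood of every $y\in\mathcal{M}$ and that $\inf_{y\in\mathcal{M}}\mathrm{inj}_c(y)>\mathrm{diam}_c(\mathcal{M})$; combined with the standing assumption $T_{\mathcal{M}}>2\,\mathrm{diam}_c(\mathcal{M})$, one may pick $T\in(0,T_{\mathcal{M}})$ smaller than this uniform injectivity radius. The assumption $(0,T_{\mathcal{M}})\times\mathcal{M}\subset D_+(2T_{\mathcal{M}},\Omega)$ ensures the parametrix identity of statement~1 of Theorem~\ref{thm:paraana} is valid on all of $(0,T)\times\mathcal{M}^2$.

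Subtracting statement~2 from statement~1 of Theorem~\ref{thm:paraana} produces a classical elliptic formal pseudoanalytic amplitude $\ud{a_0}\in\mathrm{FS}_{\psi a}^{(n-3)/2}(\mathcal{M}^2_{\setminus d})$ such that, for any finite realization $a_0$ and for $(t,x,y)\in(0,T)\times\mathcal{M}^2_{\setminus d}$,
\[
    \int e^{i\theta(t-d_c(x,y))}a_0(x,y,\theta)\,\dd\theta - S(\delta_{(0,y)})(t,x)\in C^\omega\bigl((0,T)\times\mathcal{M}^2_{\setminus d}\bigr).
\]
I would then set $\ud{a}(x,y,\theta)\coloneqq c^2(y)\,\ud{a_0}(x,y,\theta)$, i.e.\ multiply each homogeneous term of the formal series by the strictly positive analytic factor $c^2(y)$. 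This preserves the pseudoanalytic estimates \cref{eq:defofclasym}, classicality \cref{eq:ahomdef} (since multiplication by a function independent of $\theta$ commutes with the homogeneity relation), and ellipticity \cref{eq:anonvanishdef} (since $c^2>0$). Multiplying the displayed analytic identity by $c^2(y)$ and using $S_c(\delta_{(0,y)})=c^2(y)S(\delta_{(0,y)})$ yields \eqref{eq:hypsolveseis}, with $q_c'$ the resulting analytic remainder on $\RR_+\times\mathcal{M}^2_{\setminus d}$, and arbitrariness up to exponential error in the choice of finite realization of $\ud{a}$ follows from \cref{lem:finrea}.

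The main technical point will be the global (in $y$) nature of the construction: Theorem~\ref{thm:paraana} produces the amplitude $\ud{a}$ from the Hadamard coefficients $U_j(x,y)$ built in geodesic normal coordinates centered at $y$. One must verify that the estimates \cref{eq:ujana} hold with constants uniform over $(x,y)\in\mathcal{M}^2$, so that a single finite realization is available throughout $\mathcal{M}^2_{\setminus d}$. This uniformity is again a consequence of simplicity, which guarantees global normal coordinates with uniformly analytic transition maps and a uniformly analytic distance function $d_c$ on $\mathcal{M}^2_{\setminus d}$; the rest of the argument is then just bookkeeping.
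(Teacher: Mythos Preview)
Your proposal is correct and follows the same route as the paper, which simply introduces \cref{thm:paraseis} with the sentence ``Because we assumed simplicity of the metric, reformulating the second statement of \cref{thm:paraana} in the present setting we have \cref{thm:paraseis}.'' You have supplied the details behind that sentence accurately: the identification $S_c(\delta_{(0,y)})=c^2(y)\,S(\delta_{(0,y)})$, the use of simplicity to obtain global normal coordinates and hence apply \cref{thm:paraana} with $X=\mathcal{M}$ uniformly in $y$, and the harmless multiplication of the formal amplitude by the positive analytic factor $c^2(y)$.
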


\subsection{Derivatives of the Forward Operator}

%
We shall follow \cite{zbMATH04097945,zbMATH01725533,zbMATH05655830} in considering the formal linearization of $A$ from \cref{eq:defineA} about some $c \in C^\infty(\mathcal{M}), c>0$:
\begin{equation}\label{eq:der1}
		DA[c](c_\delta)(t,y') \coloneqq 2\del_t^2 S_{c}\left(c^{-3} c_\delta S_{c}(\delta_0)\right)\Big\vert_{\mathcal{M}'}\,,
\end{equation}
where $c_\delta \in \mathcal{E}'(\mathcal{M}_\eps)$ for some $\eps>0$. \cite{zbMATH04097945} shows that $DA[c]$ is in fact a well-defined FIO, which we will establish again in \cref{thm:waveFIO}.

We refer the reader to \cite{stolkthesis,zbMATH06386318} which discuss the Gateaux or Fr\'echet differentiability of $A$ when restricted to some spaces of initial data, but we will be content with treating $DA[c]$ as a formal linearization.

When the coefficient $c$ is analytic, we may use \cref{thm:paraseis} to give a more explicit representation of $DA[c]$. 
\begin{corollary}\label{cor:welldefDA}
	Let $c\in\mathcal{C}$, $\eps>0$ and $T \in (0,T_{\mathcal{M}})$. 
	The map
	\begin{align}
		\mathcal{E}'(\mathcal{M}_\eps) \to \mathcal{D}'((0,T)\times\mathcal{M}')\,,\quad c_\delta \mapsto 2\del_t^2\circ \vert_{\mathcal{M}'} \circ \left(S_{c}(c^{-3}c_\delta S_{c}(\delta_0))(t,y)\right) 
		\label{eq:tgreater02}
	\end{align}
	is well-defined. In particular, for all $c_\delta \in \mathcal{E}'(\mathcal{M}_\eps)$,
	\begin{equation}\label{eq:DAmod}
		\begin{aligned}
			DA[c](c_\delta)(t,y')&= 
			2\iint \del_t^2q_c(t-s,y',x) c^{-3}(x)c_\delta(x)q_c(s,x,0)\dd s\dd x \mod C^\omega((0,T)\times\mathcal{M}')\,.
		\end{aligned}
	\end{equation}
\end{corollary}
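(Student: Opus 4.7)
The plan is to establish (i) well-definedness of the map in \cref{eq:tgreater02}, and (ii) the explicit formula \cref{eq:DAmod} by substituting the parametrix decomposition of \cref{thm:paraseis}. For (i), extending $c_\delta\in\mathcal{E}'(\mathcal{M}_\eps)$ trivially in the time variable produces a distribution on $\RR\times\mathcal{M}$ with wavefront set contained in $\{\sigma=0\}$, while $\mathrm{WF}(S_c(\delta_0))$ lies in the null characteristic variety $\{\sigma^2=c^2\abs{\xi}^2\}\setminus\{0\}$. H\"ormander's product criterion then renders $c^{-3}c_\delta S_c(\delta_0)$ well-defined; $S_c$ extends it to a distribution by \cref{lem:ex}, and the restriction to $\mathcal{M}'$ is admissible because $\supp c_\delta\subset\mathcal{M}_\eps$ is separated from $\mathcal{M}'$, with the arriving wavefront set having no tangential component. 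This is essentially the setup of \cite[\S~3]{zbMATH04097945}.

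For (ii), time-translation invariance of the operator $c^{-2}\del_t^2-\Delta$ identifies the Schwartz kernel of $S_c$ with $(t,x;s,z)\mapsto S_c(\delta_{(0,z)})(t-s,x)$, so
\[
	S_c\bigl(c^{-3}c_\delta S_c(\delta_0)\bigr)(t,x) = \iint S_c(\delta_{(0,z)})(t-s,x)\,c^{-3}(z)c_\delta(z)\,S_c(\delta_{(0,0)})(s,z)\,\dd s\,\dd z\,.
\]
Substituting the decomposition $S_c(\delta_{(0,z)})=q_c+q_c'$ from \cref{thm:paraseis} into both factors yields four contributions. Passing $2\del_t^2$ inside the integral (it acts only on the $t$-dependence of $q_c(t-s,y',x)$) and restricting to $x=y'\in\mathcal{M}'$, the pure $q_c\cdot q_c$ term reproduces precisely the right-hand side of \cref{eq:DAmod}. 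It remains to show that the three terms with at least one factor $q_c'$ sum to an element of $C^\omega((0,T)\times\mathcal{M}')$. Since $\supp c_\delta\subset\mathcal{M}_\eps$ is bounded away from both $0$ and $\mathcal{M}'$, for $y'\in\mathcal{M}'$ and $s\in(0,t)\subset(0,T)$ the arguments $(t-s,y',z)$ and $(s,z,0)$ of $q_c'$ both lie in $\RR_+\times\mathcal{M}^2_{\setminus\dd}$, where $q_c'$ is real-analytic by \cref{thm:paraseis}.

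The main obstacle is treating these three remainder terms rigorously. The pure $q_c'\cdot q_c'$ term is the pairing of an integrand analytic in $(t,y',s,z)$ against the compactly supported distribution $c_\delta$ over a compact $s$-interval, and differentiating under the integral shows it is analytic in $(t,y')$. The two mixed contributions, of the form $\iint q_c'(t-s,y',z)\,c^{-3}(z)c_\delta(z)\,q_c(s,z,0)\,\dd s\,\dd z$ and its companion, are more delicate because $q_c'$ is analytic only on $\RR_+\times\mathcal{M}^2_{\setminus\dd}$ and not globally, and it must be paired with the oscillatory FIO kernel $q_c$ and a distribution $c_\delta$ of arbitrary analytic wavefront set. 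The cleanest route is to recast each mixed term as an operator composition involving the integral operator $E_{q_c'}$ with analytic kernel $q_c'$ and invoke the propagation of analytic singularities \cite[Thm.~7.3]{zbMATH03359011} applied to the hyperbolic equation satisfied by $q_c+q_c'$: since the analytic singularities of the source $c_\delta$ are carried along bicharacteristics which, by separation of $\mathcal{M}_\eps$ from $\mathcal{M}'$ together with the wavefront bookkeeping afforded by the analytic kernel $q_c'$, produce only analytic output on $(0,T)\times\mathcal{M}'$, the remainder is in $C^\omega((0,T)\times\mathcal{M}')$ as required.
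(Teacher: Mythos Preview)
Your overall strategy---check well-definedness via H\"ormander's product and restriction criteria, then substitute the parametrix decomposition $S_c(\delta_{(0,\cdot)})=q_c+q_c'$ into both factors and sort the four resulting terms---is sound and close in spirit to the paper's argument. The paper performs the substitution sequentially (first in the outer $S_c$, then in the inner one), which avoids ever writing down the $q_c'\cdot q_c'$ term, but this is cosmetic.

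The genuine gap is in your treatment of the mixed term
\[
\iint q_c(t-s,y',z)\,c^{-3}(z)c_\delta(z)\,q_c'(s,z,0)\,\dd s\,\dd z.
\]
Invoking propagation of analytic singularities here is the wrong tool, and the accompanying reasoning (``singularities of $c_\delta$ are carried along bicharacteristics which\ldots produce only analytic output on $(0,T)\times\mathcal{M}'$'') is simply false: bicharacteristics from $\supp c_\delta$ \emph{do} reach $\mathcal{M}'$, and this is precisely how the main $q_c\cdot q_c$ term carries information. The reason this particular mixed term is analytic has nothing to do with geometric separation; it is that the factor $q_c'(s,z,0)$ is analytic in $(s,z)$, so that after integrating out $s$ against the oscillatory kernel $q_c$ one obtains an analytic function of $(t,y',z)$, and the remaining $z$-integral against $c_\delta$ is then analytically regularizing. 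The paper makes this explicit: writing $h(s,x)=c^{-3}(x)q_c'(s,x,0)$ and substituting $s\mapsto t-d_c(x,y')-s$ in the explicit FIO formula for $q_c$ gives
\[
\int q_c(t-s,y',x)\,h(s,x)\,\dd s = -\iint e^{i\theta s}\,a(x,y',\theta)\,h(t-d_c(x,y')-s,x)\,\dd\theta\,\dd s,
\]
whose phase $\theta s$ is independent of $(t,y',x)$; \cref{cor:simpleWfdirec} then yields analyticity in $(t,y',x)$. Equivalently, one may observe that the wavefront set of $c_\delta(z)h(s,z)$ as a distribution in $(s,z)$ has vanishing $s$-dual component, while the canonical relation of $q_c$ forces that component to equal $\theta\neq 0$. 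Either way, propagation of singularities along bicharacteristics is not what is being used.

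A secondary point: the decomposition $S_c(\delta_{(0,y)})=q_c+q_c'$ is asserted in \cref{thm:paraseis} only for positive time, while $q_c$ itself is globally defined. You should justify, as the paper does via finite speed of propagation and the explicit analytic singular support of $q_c$, that the kernel difference $S_c(\delta_{(0,x)})(t-s,y')-q_c(t-s,y',x)$ is analytic on the full domain $(0,T)\times\mathcal{M}'\times(0,T)\times\mathcal{M}_\eps$, not merely on $\{t-s>0\}$.
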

\begin{proof}
%
%
%
	Because $S_c(\delta_0)$ solves 
	\[
		\left(c^{-2}\del_t^2 -\Delta_x\right)S_c(\delta_0) = \delta_0\,,
	\]
	we know by \cite[Thm.~8.3.1]{hoermander1}, that 
	\[
		WF(S_c(\delta_0)) \subset \left\{(t,x,\tau,\xi)\colon c^{-2}(x)\tau^2 = \abs{\xi}^2\right\} \cup T^\ast_{(0,0)}(\RR^{1+n})\setminus\{0\}\,, 
	\]
	so that because $c_\delta$ is supported away from $x=0$, by \cite[Thm.~8.2.10]{hoermander1} the product $c_\delta(x) S_c(\delta_0)(t,x)$ is well-defined, and 
	has support in $t>0$ (by finite speed of propagation, see for example \cite[p.~35]{zbMATH05129478}). 

	We show \cref{eq:tgreater02}: we first argue the well-definedness of \cref{eq:tgreater02}, which is realized as the composition of two distributions $K_1,K_2$ and a final application of $2\del_t^2$, where
	\[
		K_2 \colon \mathcal{E}'(\mathcal{M}_\eps) \to \mathcal{D}'((0,T)\times \mathcal{M}_\eps)\,,\quad c_\delta\mapsto c^{-3}(x)c_\delta(x) S_c(\delta_0)(t,x)\,,
	\]
	which is well-defined by the arguments above, and
	\[
		K_1 \colon \mathcal{E}'((0,T)\times \mathcal{M}_\eps) \to \mathcal{D}'((0,T)\times \mathcal{M}')\,, \quad u \mapsto S_c(u)(t,y') = \vert_{\mathcal{M}'} \circ S_c(u)(t,y)\,.
	\]
	Let us argue why $K_1$ is well-defined: by \cite[Thm.~8.3.1]{hoermander1}, for $u\in \mathcal{E}'((0,T)\times \mathcal{M}_\eps)$
	\[
		WF(S_{c}(u)) \subset \left\{(t,x,\tau,\xi)\colon c^{-2}(x)\tau^2 = \abs{\xi}^2\right\} \cup WF(u)
	\]
	where the second term on the RHS restricted to $x_n=0$ is empty because $u$ is identically $0$ there, and the first term cannot contribute wavefront set that is perpendicular to $x_n=0$ at $x_n=0$. This concludes arguing that $K_1$ is well-defined.
	
	Now, note that by the finite speed of propagation, denoting by $K_2(s,x,z)$ the kernel of $K_2$, the projection $\supp K_2 \ni (s,x,z) \mapsto (s,x)$ is proper. 

	Since we let $K_1$ act only on expressions supported in $\mathcal{M}_\eps$, and $y' \in \mathcal{M}'$ lies in $\{y_n=0\}$, from the finite speed of propagation we gather that $K_1(t,y',s,x)$, the kernel of $K_1$, is supported in $t-s>0$ so that it admits the representation
	\[
		K_1(t,y',s,x) = S_c(\delta_{(0,x)})(t-s,y') = (q_c+q_c')(t-s,y',x)\,.
	\]

	Thus we may use the explicit representation \cref{eq:giveqseis} to calculate the wavefront set of $K_1$, and find that (using the notation of \cite[\S~8.2]{hoermander1}),
	\[
		WF'(K_1)_{((0,T)\times\mathcal{M}')} = \emptyset.
	\]

	Therefore, by the remarks before \cite[Thm.~8.2.14]{hoermander1} (or see \cite[Thm.~1.3.7]{zbMATH05817029}) we may conclude that 
	\[
		\left[c_\delta \mapsto \vert_{\mathcal{M}'} \circ S_{c}(c^{-3}c_\delta S_{c}(\delta_0))\right] = K_1 \circ K_2 (c_\delta) 
	\]
	is well-defined, from which we conclude \cref{eq:tgreater02}. 

	We start proving \cref{eq:DAmod}. Because $K_1(t,y',s,x)$ is supported in $t-s>0$, and $\mathrm{sing}\ \mathrm{supp}_a q_c(t-s,y',x) \subset \{t-s>0\}$ by the explicit \cref{eq:giveqseis} and \cref{prop:wfaexp}, we have 
	\[
		\mathrm{sing}\ \mathrm{supp}_a \left(K_1 (t,y',s,x) -  q_c(t-s,y',x)\right) \subset \{t-s>0\}\,.
	\]
	However, by \cref{eq:hypsolveseis}, when $t-s>0$, we have $K_1 (t,y',s,x) - q_c(t-s,y',x) = q_c'(t-s,y',x)$ which is analytic because $x$ is away from $y'$. Therefore,
	\begin{equation}\label{eq:K1toqc}
		K_1(t,y',s,x) - q_c(t-s,y',x) 
		\in C^\omega((0,T)\times\mathcal{M}'\times (0,T)\times\mathcal{M}_\eps)\,.
	\end{equation}

	By \cref{eq:K1toqc}, the operator with kernel defined by $K_1-q_c$ is analytically regularizing (\cite[Def.~17.1.17]{MR4436039}), so that by \cite[Prop.~17.1.18]{MR4436039} modulo a term in $C^\omega((0,T)\times\mathcal{M}')$, we have
	\[
		K_1 \circ K_2 (c_\delta)(t,y') = \iint q_c(t-s,y',x) c^{-3} c_\delta(x)S_c(\delta_0)(s,x)\dd s\dd x\,.
	\]
	We find that modulo a term in $C^\omega((0,T)\times\mathcal{M}')$,
	\begin{equation}\label{eq:k2kernelaway}
		K_1 \circ K_2(c_\delta)(t,y') = \iint q_c(t-s,y',x) c^{-3}(x)q_c(s,x,0)c_\delta(x)\dd s \dd x + \iint q_c(t-s,y',x) h(s,x) c_\delta(x)\dd s\dd x\,,
	\end{equation}
	where (arguing as for \cref{eq:K1toqc})
	\begin{equation}\label{eq:hanalytic}
		h(s,x) \coloneqq c^{-3}(x)S_c(\delta_0)(s,x)-c^{-3}(x)q_c(s,x,0) 
		\in C^\omega(\RR\times\mathcal{M}_\eps)\,.
	\end{equation}
	Using the explicit expression of $q_c$ from \cref{eq:giveqseis}, 
	performing the substitution $s\mapsto t-d_c(x,y')-s$, we have 
	\[
		\int q_c(t-s,y',x) h(s,x)\dd s = -\iint e^{i\theta s}a(x,y,\theta) h(t-d_c(x,y')-s,x)\dd \theta \dd s\,,
	\]
	where one sees from \cref{cor:simpleWfdirec} that the 
	RHS is an analytic function of $(t,y',x)$ in $C^\omega((0,T)\times \mathcal{M}'\times\mathcal{M}_\eps)$, because $h \in C^\omega(\RR\times \mathcal{M}_\eps)$ by \cref{eq:hanalytic}. Therefore, the kernel of the operator giving rise to the second term on the RHS of \cref{eq:k2kernelaway} is analytically regularizing and we find 
	\[
		K_1 \circ K_2 (c_\delta)(t,y') = \iint q_c(t-s,y',x) c^{-3}(x)c_\delta(x)q_c(s,x,0)\dd s\dd x \mod C^\omega((0,T)\times\mathcal{M}')\,,
	\]
	so that, in fact, we may conclude \cref{eq:DAmod} from \cref{eq:der1}.
%
\end{proof}


The next section is dedicated to studying the linearization $DA[c]$ when $c$ is analytic.

\subsection{The Linearization as an Analytic FIO}\label{sec:DAFIO}

We will now prove the analytic version of \cite[Thm.~B]{zbMATH04097945}, see also \cite[Thm.~2.4]{zbMATH01725533}.
\begin{theorem}\label{thm:waveFIO}
	Let $c\in\mathcal{C}$ and $c_\delta \in \mathcal{E}'(\mathcal{K})$ for some $\mathcal{K}\Subset\mathcal{M}_\eps, \eps>0$ so that $(\mathcal{M},g)$ satisfies \cref{eq:piscatter,eq:nograze} for $\mathcal{K}$, and let $T \in (0,T_{\mathcal{M}})$.

	There is $\mathrm{FS}^{n-1}_{\psi a}(\mathcal{M}'\times \mathcal{K}) \ni \ud{b} = \sum_{j\geq 0} b_j$ classical and elliptic so that for any finite realization $b$ of $\ud{b}$, for $0<t<T, y' \in \mathcal{M}'$, 
	\begin{equation}\label{eq:fullcalcDA}
		DA[c](c_\delta)(t,y') = \int e^{i\theta(t-d_c(y',x)-d_c(x,0))}b(y',x,\theta)\dd\theta c_\delta(x)\dd x\mod C^\omega((0,T)\times \mathcal{M}')
	\end{equation}
	which (as the kernel of a distribution in $((0,T)\times\mathcal{M}')\times \mathcal{K}$) is an FIO with non-degenerate analytic salutary phase according to \cref{def:bolkone}, with canonical relation satisfying \cref{eq:extraWFempty}.
\end{theorem}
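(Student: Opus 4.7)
The plan is to start from the representation in \cref{eq:DAmod} and substitute the explicit formula \cref{eq:giveqseis} for $q_c$ twice. Writing the two phases as $\theta_1(t-s-d_c(y',x))$ and $\theta_2(s-d_c(x,0))$ and integrating out $s$ produces a factor of $2\pi\delta(\theta_1-\theta_2)$, which reduces the oscillatory integral (after also performing the two $\del_t$'s, which bring down two factors of $i\theta$) to
\[
DA[c](c_\delta)(t,y') = -4\pi\iint e^{i\theta(t-d_c(y',x)-d_c(x,0))}\,\theta^{2}\,a(y',x,\theta)\,c^{-3}(x)\,a(x,0,\theta)\,\dd\theta\,c_\delta(x)\dd x
\]
modulo an analytic term. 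Setting $b(y',x,\theta)\coloneqq -4\pi\,\theta^{2}a(y',x,\theta)c^{-3}(x)a(x,0,\theta)$ gives the desired form. Since $\ud{a}\in\mathrm{FS}^{(n-3)/2}_{\psi a}$ is classical and elliptic on its domain, and since $c^{-3}$ is analytic and nowhere vanishing on $\mathcal{M}_\eps$, the product $\ud{b}$ is a classical elliptic formal pseudoanalytic amplitude of order $2+2\cdot(n-3)/2=n-1$ on $\mathcal{M}'\times\mathcal{K}$; this is a routine computation using the product rule for the bounds in \cref{def:formalamp}. The only subtlety is that $s$-integration must be justified as an oscillatory integral (using a cut-off in $\theta_1-\theta_2$ and partial integration, or equivalently by observing that the $s$-integral of the two factors is a well-defined composition since both kernels are FIOs with transverse canonical relations near $\theta_1=\theta_2$).

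Next I would analyze the phase $\varphi(y',t,x,\theta)=\theta(t-d_c(y',x)-d_c(x,0))$. Real-valued analyticity on $\mathcal{M}'\times(0,T)\times\mathcal{K}\times(\RR\sem)$ follows from simplicity of $(\mathcal{M},c)$, which guarantees that $d_c$ is real-analytic away from the diagonal. The frequency variable is one-dimensional here, so non-degeneracy reduces to $\nabla_{y',t,x,\theta}\varphi_\theta\neq 0$, which is immediate from the $t$-component being $1$. The stationary set is $\Sigma_\varphi=\{t=d_c(y',x)+d_c(x,0)\}$, i.e.\ the total geodesic travel time from $0$ via $x$ to $y'$, and
\[
\Lambda_\varphi=\{((y',t),(-\theta\nabla_{y'}d_c(y',x),\theta);x,\theta(\nabla_xd_c(y',x)+\nabla_xd_c(x,0)))\colon (y',t,x,\theta)\in\Sigma_\varphi\}.
\]
The unique frequency condition~\ref{as:n4} is trivial since $\theta$ is recovered as the $t$-dual coordinate $\tau$. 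Assumption~\ref{as:n3} follows because $\tau=\theta\neq 0$ on $\RR^n\sem$.

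The main obstacle is verifying the Bolker condition~\ref{as:n1} and the immersion property~\ref{as:n2}, together with \cref{eq:extraWFempty}. The fiber component at $x$ is $\xi=\theta(\nabla_xd_c(y',x)+\nabla_xd_c(x,0))$, and $\xi=0$ forces $\nabla_xd_c(y',x)=-\nabla_xd_c(x,0)$, which (by first variation of arc length) means the geodesic from $0$ through $x$ continues as the geodesic from $x$ to $y'$; this is exactly a ray from $0$ through $\mathcal{K}$ returning to $\mathcal{M}'$, ruled out by assumption \cref{eq:piscatter}. The same non-cancellation makes $\theta\mapsto\xi$ a local diffeomorphism once $x$ is fixed, which together with the obvious fact that $(y',t,x)$ appear as coordinates gives~\ref{as:n2}. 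For~\ref{as:n1}, given left data $(y',t,\zeta_{y'},\tau)$ one reads $\theta=\tau$; the covector $\zeta_{y'}\in T^*\mathcal{M}'$ together with the $c$-normalization $|\nabla d_c(y',x)|_c=1$ determines the vertical momentum up to sign, and the no-grazing assumption \cref{eq:nograze} fixes that sign (the ray must emerge from below). This gives a unique geodesic $\gamma$ emanating from $y'$, and $x=\gamma(s)$ is determined by requiring $s+d_c(\gamma(s),0)=t$. Here one uses the identity $\tfrac{d}{ds}(s+d_c(\gamma(s),0))=1+\cos\alpha(s)$, where $\alpha(s)$ is the angle between $\dot\gamma(s)$ and the geodesic from $\gamma(s)$ back to $0$; this quantity is nonnegative and vanishes only when $\alpha(s)=\pi$, i.e.\ precisely the scattering-over-$\pi$ configuration ruled out by \cref{eq:piscatter}. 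Thus the function is strictly increasing and $x$ is uniquely recovered, giving both injectivity of $\pi_L$ and of $d\pi_L$ (the latter by the implicit function theorem applied to the same monotonicity). Finally, \cref{eq:extraWFempty} is the already-noted non-vanishing of $\xi$, a consequence of \cref{eq:piscatter}. The delicate bookkeeping of vertical momenta at $y'$ and of the monotonicity in $s$ is the part of the argument that really exploits the three geometric hypotheses, and will require the small neighborhoods $V$ and $X$ of the base point to be chosen in a coordinated way.
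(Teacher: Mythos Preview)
Your proposal is correct and follows essentially the same route as the paper: plug the explicit parametrix \cref{eq:giveqseis} into \cref{eq:DAmod}, integrate out $s$ to collapse the two frequencies, read off the amplitude, and then verify the salutary conditions by the same geometric reasoning (the $\pi$-scattering assumption kills $\xi=0$ and gives the travel-time monotonicity, no-grazing fixes the sign of the vertical momentum at $y'$).

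Two points where the paper is more careful and where your write-up should be tightened. First, your $b=-4\pi\theta^2 a\cdot c^{-3}\cdot a$ is a \emph{product of finite realizations}, not a priori a finite realization of a formal amplitude; the paper forms the Cauchy product $b_s=-4\pi\sum_{j+k=s}c^{-3}\theta^2 a_j(y',x,\theta)a_k(x,0,\theta)$ explicitly and invokes the argument of \cref{lem:newcuts} to show the product of realizations agrees with a realization of $\ud{b}=\sum b_s$ modulo $\mathcal{O}(e^{-\eps|\theta|})$. Second, for the immersion half of the Bolker condition you appeal to the implicit function theorem on the same monotonicity, which is fine here because $\dim\Lambda=\dim T^\ast((0,T)\times\mathcal{M}')$ so a smooth local inverse gives a local diffeomorphism; the paper instead proves that $\pi_R$ is a submersion by an explicit Jacobian computation (again reducing to $1+\langle v,w\rangle\neq 0$ and $(Gv)_n\neq 0$, i.e.\ \cref{eq:piscatter} and \cref{eq:nograze}) and then cites a standard lemma to transfer this to injectivity of $d\pi_L$. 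Your route is more economical; the paper's is more explicit about exactly where each geometric hypothesis enters the linear algebra. (Minor: your derivative should read $1-\cos\alpha(s)$ with $\alpha$ the angle between $\dot\gamma(s)$ and the direction toward $0$; the critical case $\alpha=0$ is the $\pi$-scattering configuration after reversing orientation, so your conclusion is unaffected.)
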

\begin{proof}\renewcommand{\qedsymbol}{}
	Let $t > 0$ and let $y = (y',0) \in \mathcal{M}'$, where we interpret $y' = (y',0) \in \mathcal{M}'$ by abuse of notation. Throughout this proof, when writing $\equiv$, we shall mean equality modulo $C^\omega((0,T)\times\mathcal{M}')$.

	According to \cref{eq:DAmod}, and the explicit formula for $q_c$ from \cref{eq:giveqseis}, we have
	\[
		DA[c](c_\delta)(t,y') \equiv 2\iiiint e^{i\theta((t-s)-d_c(y',x))+i\sigma(s-d_c(x,0))}a(y',x,\theta)i^2\theta^2c^{-3}(x)a(x,0,\sigma)c_\delta(x)\dd\theta\dd\sigma\dd s\dd x\,,
	\]
	and since only the phase depends on $s$, we identify that the integral over $s$ is in fact $(2\pi)\delta_0(\theta-\sigma)$, which gives
	\begin{equation}\label{eq:calcDA}
		DA[c](c_\delta)(t,y') \equiv -4\pi\iint e^{i\theta(t-d_c(y',x)-d_c(x,0))}a(y',x,\theta)\theta^2c^{-3}(x)a(x,0,\theta)c_\delta(x)\dd\theta\dd x\,.
	\end{equation}
	Notice that each $a$ in the integrand of \cref{eq:calcDA} is the finite realization of $\ud{a} = \sum_{k\geq 0} a_k \in \mathrm{FS}_{\psi a}^{\frac{n-3}{2}}(\mathcal{M}^2_{\setminus d})$ defined in \cref{thm:paraseis}. We introduce 
	\[
		b_s(y',x,\theta) \coloneqq -4\pi\sum_{\substack{j,k\geq 0\\j+k=s}} c^{-3}(x)\theta^2 a_j(y',x,\theta)a_k(x,0,\theta) \in S^{n-1-s}_{\psi a}(\mathcal{M}'\times\mathcal{K})\,,\quad \ud{b} \coloneqq \sum_{s\geq 0} b_s \in \mathrm{FS}^{n-1}_{\psi a}(\mathcal{M}'\times\mathcal{K})
	\]
	and will want to show that we can replace the amplitude in \cref{eq:calcDA} by a finite realization of $\ud{b}$. In fact, one may repeat the proof of \cref{lem:newcuts} (found in \cref{sec:paralemmas}) together with an application of \cite[Prop.~17.1.19]{MR4436039} to find that indeed, for $b$ a finite realization of $\ud{b}$, we have 
	\begin{equation}\label{eq:finalFIO}
		DA[c_0](c_\delta)(t,y')\equiv \iint e^{i\theta(t-d_c(y',x)-d_c(x,0))}b(y',x,\theta)c_\delta(x)\dd\theta\dd x\,,
	\end{equation}
	with $b_0$ non-vanishing as long as $\theta$ away from $0$ and $x$ away from $\mathcal{M}'$, which is guaranteed by $x \in \supp c_\delta$.

	The phase $\theta(t-d_c(y',x)-d_c(x,0))$ is non-degenerate since there is only one frequency variable and for example $\del_t\del_\theta \theta (t-d_c(y',x)-d_c(x,0)) = 1\neq 0$. It is analytic because we assumed that $c_\delta$ vanishes for $x_n$ near $0$, so that $d_c(y',x)$ and $d_c(x,0)$ are analytic.

	We now check that the phase of the FIO in \cref{eq:finalFIO} is salutary, where we will use that $\nabla_x d_c(x,y') = -\frac{\exp_x^{-1}(y')}{d_c(x,y')}$ and similar when differentiating with respect to $y'$.

	Notice first that the stationary set of the phase and canonical relation of the FIO (in \cref{eq:finalFIO}) are given by, respectively, 
	\begin{equation}\label{eq:CR}
	\begin{aligned}
		\Sigma &= \left\{(t,y',x,\theta) \in (0,T)\times \mathcal{M}'\times \mathcal{K}\times \RR\sem\colon t = d_c(y',x)+d_c(x,0)\right\}\,,\\
		\Lambda &= \left\{\left((t,y'),\left(\theta,\theta\frac{(\exp_{y'}^{-1}(x))'}{d_c(y',x)}\right);x,-\theta\left(\frac{\exp_x^{-1}(y')}{d_c(y',x)}+\frac{\exp_x^{-1}(0)}{d_c(x,0)}\right)\right)\colon (t,y',x,\theta)\in \Sigma\right\}\,,
	\end{aligned}
	\end{equation}
	where by $(\exp_{y'}^{-1}(x))'$ we mean the first $n-1$ components of the vector $\exp_{y'}^{-1}(x)$. See \cref{fig:raypic} for a sketch of the roles that each component of $\Lambda$ plays. 
	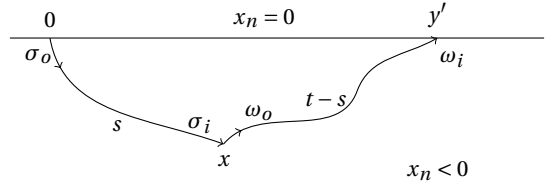
\begin{figure}
	\centering
	\begin{tikzpicture}[middlearrow/.style ={
        decoration={             
            markings, 
            mark=at position 0.15 with {\arrow{>}}
        },
        postaction={decorate}
    },]
	\draw (-100pt,0) -- (100pt,0);
	\node[above] at (-5pt,0) {\footnotesize$x_n=0$};
	\node[above] at (60pt,0) {\footnotesize$y'$};
	\node at (60pt,-50pt) {\footnotesize $x_n < 0$};
	\node[below] at (-20pt,-40pt) {\footnotesize$x$};
	\node[above] at (-85pt,0) {\footnotesize $0$};
	\node[below left] at (-80pt,0) {\footnotesize $\sigma_o$};
	\draw[->,middlearrow] (-85pt,0pt) to [out=280,in=160] (-20pt,-40pt);
	\draw[middlearrow] (-20pt,-40pt) to [out=50,in=250] (30pt,-20pt);
	\node[above left] at (-20pt,-40pt) {\footnotesize $\sigma_i$};
	\node[above right] at (-16pt,-35pt) {\footnotesize $\omega_o$};
	\draw[->] (30pt,-20pt) to [out=70,in=210] (60pt,0pt);
	\node[right] at (57pt,-8pt) {\footnotesize $\omega_i$};
	\node at (-60pt,-33pt) {\footnotesize $s$};
	\node at (18pt,-23pt) {\footnotesize $t-s$};
	\end{tikzpicture}
	\caption{The points $0, x$ and $y'$ marked in space. The arrows indicate the (outgoing and incoming) momenta $\sigma_o = \exp_0^{-1}(x)$ and $\sigma_i = -\exp_x^{-1}(0)$ for the first part of the broken ray. The arrows marked $\omega_0$ and $\omega_i$ are the analogous momenta for the second part of the ray. The symbol $s$ indicates the time $s=d_c(x,0)$ to travel from $0$ to $x$. Note that if $(x,\xi) \in \pi_R \Lambda$, then (after unit normalization of $\sigma_i,\omega_o$) we have $\xi = \sigma_i-\omega_o$, the `reflection' angle, in some sense. Furthermore, if $(t,y',\tau,\zeta') \in \pi_L\Lambda$, then $\zeta'=-\omega_i'$ (or a scalar multiple thereof).}
	\label{fig:raypic}
\end{figure}

	First we note that since $t>0$, on the support of $c_\delta$ ($x$ away from the surface $\{y_n=0\}$), the set $\Lambda$ is a well-defined analytic canonical relation corresponding to the phase $\theta(t-d_c(y',x)-d_c(x,0))$.

	We now proceed to check that $\Lambda$ and the phase $\theta(t-d_c(y',x)-d_c(x,0))$ are salutary according to \cref{def:admisCR,def:bolkone} and that \cref{eq:extraWFempty} holds for $\Lambda$.  Let $(t,y',x,\theta)\in \Sigma$.

	\begin{enumerate}[leftmargin=*,listparindent=\parindent]
		\item To check \cref{eq:extraWFempty} we must verify that $\frac{\exp_x^{-1}(y')}{d_c(y',x)}+\frac{\exp_x^{-1}(0)}{d_c(x,0)} \neq 0$. Note that for two vectors $u,v$ with $\abs{v}=\abs{u}=1$ we have
	\[
		u+v = 0 \iff \langle u+v,u+v\rangle = 0 \iff 2\langle u,v\rangle = -2 \iff \langle u,v\rangle = -1 \iff \arccos(\langle u,v\rangle) = \pi\,.
	\]
	Therefore, the failure of \cref{eq:extraWFempty} is equivalent to the statement that the reflection angle
	\[
		\arccos\left(\left\langle \frac{\exp_x^{-1}(y')}{d_c(y',x)}, \frac{\exp_x^{-1}(0)}{d_c(x,0)} \right\rangle\right) = \pi\,,
	\]
	which is precluded by \cref{eq:piscatter} (because from $(t,y',x,\theta)\in \Sigma$ we know that $x_n <0$).

	\item We see immediately that \ref{as:n4} is satisfied since the time-derivative of the phase is $\theta$, the frequency.

	\item The truth of \ref{as:n3} is implied by the fact that $\left(\theta,\theta\frac{(\exp_{y'}^{-1}(x))'}{d_c(y',x)}\right) \neq 0$ for $\theta\neq 0$.

	\item Checking \ref{as:n2} reduces to showing that $\frac{\exp_x^{-1}(y')}{d_c(y',x)}+\frac{\exp_x^{-1}(0)}{d_c(x,0)} \neq 0$, which, exactly as in checking \cref{eq:extraWFempty}, is precluded by \cref{eq:piscatter}.
	\end{enumerate}

	Thus, the proof of \cref{thm:waveFIO} is complete once we prove
	\begin{lemma}\label{lem:bolker}
		The canonical relation $\Lambda$ defined in \cref{eq:CR} satisfies the Bolker condition, \ref{as:n1}.
	\end{lemma}
	\end{proof}
\begin{proof}[Proof of \cref{lem:bolker}]
	We begin with the injectivity of $\pi_L$. 

	Let
	\[
		\left(d_c(y',x_0)+d_c(x_0,0),y',\theta,\theta \frac{(\exp_{y'}^{-1}(x_0))'}{d_c(y',x_0)};x_0,-\theta \left(\frac{\exp_{x_0}^{-1}(y')}{d_c(y',x_0)}+\frac{\exp_{x_0}^{-1}(0)}{d_c(x_0,0)}\right)\right) \in \Lambda
	\]
	and
	\[
		 \left(d_c(y',x_1)+d_c(x_1,0),y',\theta,\theta \frac{(\exp_{y'}^{-1}(x_1))'}{d_c(y',x_1)};x_1,-\theta \left(\frac{\exp_{x_1}^{-1}(y')}{d_c(y',x_1)}+\frac{\exp_{x_1}^{-1}(0)}{d_c(x_1,0)}\right)\right) \in \Lambda
	\]
	satisfy
	\[
		(t,\zeta') \coloneqq \left(d_c(y',x_0)+d_c(x_0,0),\frac{(\exp_{y'}^{-1}(x_0))'}{d_c(y',x_0)}\right) = \left(d_c(y',x_1)+d_c(x_1,0),\frac{(\exp_{y'}^{-1}(x_1))'}{d_c(y',x_1)}\right)\,.
	\]
	We want to show that $x_0 = x_1$. 

	Note that for $j\in\{0,1\}$,
	\[
		1= \abs{\frac{\exp_{y'}^{-1}(x_j)}{d_c(y',x_j)}}^2\,,\quad \text{so that}\quad (\exp_{y'}^{-1}(x_j))_n = \sqrt{1 - \abs{\frac{(\exp_{y'}^{-1}(x_j))'}{d_c(y',x_j)}}^2} = \sqrt{1-\abs{\zeta'}^2}
	\]
	where the RHS of the second equation is analytic in $\zeta'$ due to \cref{eq:nograze}. We took the positive square root which corresponds to the ray coming from below the surface.

	We thus define $\zeta \coloneqq \left(\zeta', \sqrt{1-\abs{\zeta'}^2}\right)$, which has norm $1$ and have
	\[
		x_j = \exp_{y'}\left(d_c(x_j,y') \zeta\right)\,,\quad j\in\{0,1\}\,,
	\]
	so that showing that $d_c(x_0,y')=d_c(x_1,y')$ will imply that $x_0 = x_1$, which will prove the injectivity of $\pi_L$.

	We now define two functions
	\[
		x(s) = \exp_{y'}(s\zeta)\,,\an f(s) = t-s-d_c(x(s),0)
	\]
	and note that $x_j = x(d_c(x_j,y'))$ and $f(d_c(x_j,y')) = 0$ for $j\in\{0,1\}$. 

	Notice first that $x(s)$ is a smooth function for $s>0$ and that it satisfies
	\[
		\nabla x(s) = \del_h \exp_{y'}((s+h)\zeta)\vert_{h=0} = D\exp_{y'}(s\zeta)\zeta\,,	
	\]
	and by the Gauss lemma \cite[Thm.~6.2]{ilmavirta2020geometry} and the fact that $\abs{\zeta}=1$ we have
	\[
		\abs{\nabla x(s)}^2 = \langle \nabla x(s),\nabla x(s)\rangle = \frac{1}{s}\langle D\exp_{y'}(s\zeta)s \zeta,D\exp_{y'}(s\zeta)\zeta\rangle = \frac{1}{s} \langle s\zeta, \zeta\rangle = 1\,.
	\]
	Note further that because $\abs{\zeta}=1$ we have $d_c(x(s),y')=s$ and thus 
	\[
		0 = \del_s(s-d_c(x(s),y')) = 1+\frac{\exp_{x(s)}^{-1}(y')}{d_c(x(s),y')}\nabla x(s)\,,
	\]
	and because $\nabla x(s)$ and $\frac{\exp_{x(s)}^{-1}(y')}{d_c(x(s),y')}$ both have norm $1$, we must have that
	\begin{equation}\label{eq:nablax}
		\nabla x(s) = - \frac{\exp_{x(s)}^{-1}(y')}{d_c(x(s),y')}\,.
	\end{equation}
	We remark that \cref{eq:nablax} is also seen intuitively if one were to draw a picture of the ray $x(s)$. 

	Notice also that $f$ is a smooth function for $s>0$ and its derivative is 
	\[
		f'(s) = -1+\frac{\exp_{x(s)}^{-1}(0)}{d_c(x(s),0)}\cdot \nabla x(s)\,,
	\]
	where both vectors $\frac{\exp_{x(s)}^{-1}(0)}{d_c(x(s),0)}$ and $\nabla x(s)$ have norm $1$, so that
	\[
		f'(s) =0 \iff \left\langle \frac{\exp_{x(s)}^{-1}(0)}{d_c(x(s),0)}, \nabla x(s)\right\rangle = 1\,.
	\]
	Therefore, using \cref{eq:nablax},

	\[
		f'(s) =0 \iff \left\langle \frac{\exp_{x(s)}^{-1}(0)}{d_c(x(s),0)}, \frac{\exp_{x(s)}^{-1}(y')}{d_c(x(s),y')}\right\rangle = -1\,,
	\]
	which again is precluded by \cref{eq:piscatter}.

	We therefore see that $f$ is a smooth function with non-vanishing derivative for $s>0$. Therefore, it attains at most one zero in $s>0$, and since both $d_c(x_0,y')$ and $d_c(x_1,y')$ are zeroes of $f$ and neither of these distances is $0$ by \cref{eq:piscatter}, we must have $d_c(x_0,y')=d_c(x_1,y')$.
	
	We have thus concluded that $x_0 = x_1$, wherefore $\pi_L$ is injective. 

	Thus, let us turn to showing that $\pi_L \colon \Lambda \to T^\ast((0,T)\times\mathcal{M}')\sem$ is an immersion. This will follow from the fact that $\pi_R \colon \Lambda \to T^\ast\mathcal{M}\sem$ is a submersion and \cite[Lem.~4.3]{zbMATH01860565}.

	We will show that for any $(x_0,\xi_0) \in \pi_R(\Lambda)$, for some analytic function $\alpha$,
	\[
		 \left\{(\alpha(x,\xi); x,\xi) \colon (x,\xi)\ \text{near}\ (x_0,\xi_0)\right\} = \Lambda \cap \{\text{an open neighborhood of}\ \pi_R^{-1}((x_0,\xi_0))\}\,,
	\]
	from which one can readily see that $\pi_R$ must be a submersion.


	Throughout the rest of this proof, for any vector $u$, the notation $u'$ refers to the first $n-1$ components of $u$. Let $(x,\xi)$ near $(x_0,\xi_0) \in \pi_R(\Lambda)$.

	By \cref{eq:CR},
	\[
		(t,y',\theta,\zeta';x,\xi) \in \Lambda \iff F(t,y',\theta,\zeta';x,\xi)\coloneqq \begin{pmatrix}t-d_c(x,0)-d_c(x,y')\\\begin{pmatrix}y' \\ 0\end{pmatrix}-\exp_x\left((t-d_c(x,0))\left(-\frac{\xi}{\theta}-\frac{\exp_x^{-1}(0)}{d_c(x,0)}\right)\right)\\ \zeta' -\theta\frac{(\exp_{y'}^{-1}(x))'}{d_c(x,y')}\end{pmatrix} = 0\,.
	\]
	We will want to appeal to the implicit function theorem to show that $F = 0 \iff (t,y',\theta,\zeta') = \alpha(x,\xi)$ for some analytic function $\alpha$. 

	Because $(x_0,\xi_0)\in \pi_R(\Lambda)$, there is some $(t_0,y'_0,\theta_0,\zeta_0')$ so that $F(t_0,y_0',\theta_0,\zeta_0';x_0,\xi_0) = 0$. Throughout the rest of this proof, to lighten notation we write $(t,y',\theta,\zeta';x,\xi)$ for $(t_0,y_0',\theta_0,\zeta_0';x_0,\xi_0)$, the argument of $F$ will always be this point.

	Let us introduce the shorthands
	\[
		v \coloneqq \left(-\frac{\xi}{\theta}-\frac{\exp_x^{-1}(0)}{d_c(x,0)}\right)\,,\quad  G \coloneqq (D\exp_x)\left((t-d_c(x,0))\left(-\frac{\xi}{\theta}-\frac{\exp_x^{-1}(0)}{d_c(x,0)}\right)\right) = (D\exp_x)\left((t-d_c(x,0))v\right)
	\]
	and note that $\abs{v} = 1$ because $F=0$, because then $v = \frac{\exp_x^{-1}(y')}{d_c(x,y')}$ by \cref{eq:CR}.

	We calculate (where $\ast$ refers to some expression we do not explicitly calculate):
	\[
		\del_{(t,y',\theta,\zeta')}F = \begin{pmatrix} 1 & \frac{(\exp_{y'}^{-1}(x))'}{d_c(x,y')} & 0 & 0 \\
		-(Gv)' & \id' & (\theta^{-1}(t-d_c(x,0))G(-\frac{\xi}{\theta}))' & 0 \\
		-(Gv)_n & 0 & (\theta^{-1}(t-d_c(x,0))G(-\frac{\xi}{\theta}))_n & 0 \\
		0 & \ast & \ast & \id'\end{pmatrix}
	\]
	which is invertible if and only if the matrix
	\[
		K \coloneqq \begin{pmatrix} 1 & \frac{(\exp_{y'}^{-1}(x))'}{d_c(x,y')} & 0  \\
		-(Gv)' & \id' & (\theta^{-1}(t-d_c(x,0))G(-\frac{\xi}{\theta}))'  \\
		-(Gv)_n & 0 & (\theta^{-1}(t-d_c(x,0))G(-\frac{\xi}{\theta}))_n\end{pmatrix}
	\]
	is invertible.


	Putting $y(s) \coloneqq \exp_x(s v)$, we proceed as in the proof of the injectivity of $\pi_L$ to calculate that because $d_c(y(s),x) = s$, we have by differentiation with respect to $s$ that
	\[
		-1 = \frac{\exp_{y(s)}^{-1}(x)}{d_c(y(s),x)}\cdot \dot y(s)\,,
	\]
	where by the Gauss lemma and the fact that $\abs{v}=1$ we find that $\abs{\dot y(s)}=1$, so that we may conclude that 
	\[
		\dot y(s) = -\frac{\exp_{y(s)}^{-1}(x)}{d_c(y(s),x)}\,,
	\]
	and since $y' = y(d_c(x,y')) = y(t-d_c(x,0))$ (here $y$ is the function $y(s)$), we find
	\[
		\frac{\exp_{y'}^{-1}(x)}{d_c(x,y')} = -\del_s \exp_x((t-d_c(x,0)+s)v)\vert_{s=0} = -Gv\,,
	\]
	which we can insert into the matrix $K$.

	Using elementary matrix operations to remove the entries to the left and right of the central $\id'$, we see that $K$ is invertible if and only if 
	\[
		\begin{pmatrix} 1- \langle (Gv)',(Gv)'\rangle & -(Gv)' & \theta^{-1}(t-d_c(x,0))\langle (G(-\frac{\xi}{\theta}))',(Gv)'\rangle  \\
		0 & \id' & 0  \\
		-(Gv)_n & 0 & (\theta^{-1}(t-d_c(x,0))G(-\frac{\xi}{\theta}))_n\end{pmatrix}
	\]
	is invertible.

	This is true if and only if the $2$-vectors
	\[
		\begin{pmatrix} 1- \langle (Gv)',(Gv)'\rangle  \\ -(Gv)_n \end{pmatrix},\an\begin{pmatrix} \theta^{-1}(t-d_c(x,0))\langle (G(-\frac{\xi}{\theta}))',(Gv)'\rangle \\ (\theta^{-1}(t-d_c(x,0))G(-\frac{\xi}{\theta}))_n\end{pmatrix}
	\]
	are linearly independent and non-zero. 

	Now define $w \coloneqq \frac{\exp_x^{-1}(0)}{d_c(x,0)}$, which has norm $1$, and note that
	\begin{equation}\label{eq:vplusw}
		\frac{-\xi}{\theta} = v+w\,.
	\end{equation}
	
	Use Gauss' lemma \cite[Thm.~6.2]{ilmavirta2020geometry} 
	again to write that
	\[
		\langle (Gv)',(Gv)'\rangle = \langle Gv,Gv\rangle - (Gv)_n^2 = \langle v,v\rangle -(Gv)_n^2 = 1- (Gv)_n^2\,,
	\]
	and using \cref{eq:vplusw},
	\[
		\left\langle \left(G\left(-\frac{\xi}{\theta}\right)\right)',(Gv)'\right\rangle = \langle Gv,Gv\rangle - (Gv)_n^2 + \langle Gw,Gv\rangle - (Gw)_n(Gv)_n = 1 - (Gv)_n^2 + \langle w,v\rangle - (Gw)_n(Gv)_n\,.
	\]
	So that we must check that 
	\[
		\begin{pmatrix} (Gv)_n^2  \\ -(Gv)_n \end{pmatrix},\an\begin{pmatrix} 1 + \langle w,v\rangle - (Gv)_n^2 - (Gw)_n(Gv)_n\\ (Gw)_n + (Gv)_n\end{pmatrix}
	\]
	are non-zero and linearly independent. 

	The first of these vectors is non-zero by assumption of no grazing rays \cref{eq:nograze}: $(Gv)_n = -\left(\frac{\exp_{y'}^{-1}(x)}{d_c(x,y')}\right)_n \neq 0$ (again since we must have $x_n < 0$), whereas the second vector vanishes if and only if $1+\langle w,v\rangle =0$, which is (due to the fact that $\abs{v}=\abs{w}=1$) equivalent to $w = -v$, which is precluded exactly by the assumption of no scattering over $\pi$, \cref{eq:piscatter}.

	Finally, checking that the vectors are linearly independent reduces to showing that $1 + \langle w,v\rangle \neq 0$, which is still true by \cref{eq:piscatter}.

	We have thus shown that $\del_{(t,y',\theta,\zeta')}F$ is invertible at the point $(t,y',\theta,\zeta';x,\xi)=(t_0,y_0',\theta_0,\zeta_0';x_0,\xi_0)$, so that an application of the implicit function theorem completes the proof.
\end{proof}

\begin{remark}
	We remark how the above construction can be repeated for non-smooth but regular enough $c\in C^k, c>0, k\gg 1$. In the special case of the linear wave equation with variable coefficients, one may repeat the proof of \cref{lem:ex} with references to \cite[Cor.~6.2]{zbMATH07741454} instead to give existence of the operator $S_c$ for rough coefficients. Then, while not explicitly stated, the functions $U_j$ in the proof of \cref{thm:paraseis} (that is \cref{thm:paraana}) satisfy $U_j \in C^{k-2j}$ if $c\in C^k$, and $E_j \in C^{s}$ if $j>s+\frac{n-1}{2}$ (\cite[p.~36]{zbMATH05129478}). This allows one to repeat the proof of \cref{thm:paraseis} with the modification that \cref{eq:paranew} only holds for a finite amount of $N\in\mathbb{N}$. The arguments that remove dependence of the amplitude on $t$ only depend on the analyticity of the amplitude with respect to $t,\theta$, which will still be guaranteed.

	One will then find for $c\in C^k, c>0$ with $k$ sufficiently large that for some rough amplitude $a$ (see for example definitions in \cite{zbMATH04113185} or \cite{zbMATH07703335}),
	\[
		S_c(\delta_{(0,y)}) = \int e^{i\theta(t-d_c(x,y))}a(x,y,\theta)\dd\theta \mod C^{s}\,,
	\]
	where $s$ can be chosen as large as one wishes by taking $k$ large enough. One may then repeat the proof of \cref{thm:waveFIO} for this rough FIO without modification, which will show that $DA[c]$ is given by \cref{eq:fullcalcDA}, where the amplitude and phase are as regular as one wishes by taking $k$ large enough.

	In fact, due to the explicit nature of the formula \cref{eq:fullcalcDA} one will be able to compose $DA[c]$ with its adjoint by hand to find that $DA[c]^\ast DA[c]$ is a rough elliptic $\Psi$DO that is microlocally invertible on the set $\pi_R\Lambda$. After a microlocal change of variables to replace the phase by $\xi\cdot(x-y)$ near $\pi_R\Lambda$ (with $x,y,\xi \in \RR^n$) the amplitude of $DA[c]^\ast DA[c]$ will have only finite smoothness in all of its variables (including the frequency variable). Performing the usual reduction step of an amplitude to a symbol up to finitely many steps
%
	one may employ the calculus of \cite{zbMATH04113185} or \cite{zbMATH00856809} to argue that the normal operator $DA[c]^\ast DA[c]$ maps between Sobolev spaces $H^s\to H^{s-n+1}$ (for a bounded set of $s\in \RR$ determined by the size of $k$).

	In this way one will be able to recover the results \cite[Thm.~B,Cor.~C,Cor.~D]{zbMATH04097945} (with a restricted set of Sobolev space orders) with wave coefficient $c \in C^k$ for $k$ sufficiently large.
\end{remark}

\quad

We now understand $DA[c]$ sufficiently well to prove its injectivity.
\begin{proof}[Proof of \cref{thm:inj}]
	Note first that 2. $\implies$ 3. follows from the fact that $DA[c](\cdot)$ is a linear operator, and 3.$\implies$1. is immediate. Thus we must only show that 1. $\implies$ 2. to complete this proof which we do now.

	This is a consequence of \cref{thm:waveFIO} and \cref{thm:hypside}. 
	We follow a layer stripping argument, compare to the proof of \cite[Thm.~1.4]{2306.05906v1}. 
	Recall that $c_\delta \in \mathcal{E}'(\mathcal{M}) \cap C(\mathcal{K})$ with $\supp c_\delta \subset \mathcal{K} \cap \{x_n < -\eps\}$ for some $\eps>0$, so that we may interpret $c_\delta \in \mathcal{E}'(\mathcal{M}_\eps)$.

	Note that by assumption $\mathcal{K}$ is compact, so we may let $S = -\min_{x \in \mathcal{K}} x_n > 0$, and we then consider the set
	\[
		I \coloneqq \left\{s \in (0,S] \colon c_\delta = 0\ \text{in}\ \{x_n \geq S-s\}\right\}\,,
	\]
	where we note that by assumption on $c_\delta$, $(S-\eps,S]\subset I$, so that $I$ is non-empty and closed by the continuity of $c_\delta$.

	We now make the claim that if $s\in I$, then an open neighborhood of $s$ is in $I$. This claim implies that $I$ is open and closed and thus $I = (0,S]$, so that $c_\delta \equiv 0$ on $\mathcal{K} \setminus \{x_n = S\}$, and by continuity we can conclude that $c_\delta = 0$ on $\mathcal{K}$.

	We thus conclude this proof by proving the claim that $s\in I$ implies there is an open neighborhood of $s$ in $I$: Let us first show that for all $x$ with $x_n < 0$ there exist $t,y',\theta,\zeta'$ so that 
	\[
		((t,y'),(\theta,\zeta');x, e_n) \in \Lambda\,.
	\]
	%


	Using the explicit formula from \cref{eq:CR}, in order for $((t,y'),(\theta,\zeta');x, \xi) \in \Lambda\,,$ we must have that 
	\begin{equation}\label{eq:requirexi}
		\xi = -\theta\left(\frac{\exp_{x}^{-1}(0)}{d_c(x,0)} + \frac{\exp_x^{-1}(y')}{d_c(x,y')}\right)\,,
	\end{equation}
	where we want to choose $\xi$ to be $e_n$ as this is the normal to the hypersurface $\{x_n=s\}$. 
	Writing $\omega= -\exp_x^{-1}(0)$ we note that \cref{eq:gloas} gives $\omega_n < 0$, and noting that $\abs{\frac{\exp_x^{-1}(y')}{d_c(x,y')}}=1$, we find the condition 
	\[
		1 = \abs{-\theta^{-1}e_n + \frac{\omega}{\abs{\omega}}}^2\,,\quad\text{which is equivalent to}\quad \sqrt{1-\abs{\frac{\omega'}{\abs{\omega}}}^2} - \frac{\omega_n}{\abs{\omega}} = -\theta^{-1}\,,
	\]
	which can be solved by $0<\theta<\infty$ due to the fact that $-\omega_n > 0$. Choosing $\xi = \theta e_n$ for this $\theta$, by \cref{eq:requirexi} we know that 
	\[
		\frac{\exp_x^{-1}(y')}{d_c(x,y')} = \frac{\omega'}{\abs{\omega}}+ \sqrt{1-\frac{\abs{\omega'}^2}{\abs{\omega}^2}}e_n\,,
	\]
	which is to say that if the ray from the origin to $x$ were to `reflect at an angle' $\theta e_n$, then the reflected ray would be parametrized by
	\[
		y(r) = \exp_x\left(r\left( \frac{\omega'}{\abs{\omega}}+ \sqrt{1-\frac{\abs{\omega'}^2}{\abs{\omega}^2}}e_n\right)\right)\,,
	\]
	which by \cref{eq:gloas2} will hit the measurement surface $\mathcal{M}'$ at some time $r=r^\ast > 0$. 

	Noting that by calculations similar to those in the proof of \cref{thm:waveFIO}, 
	\[
		\del_r y(r)\vert_{r=r^\ast} = -\frac{\exp_{y'}^{-1}(x)}{d_c(y',x)}\,,
	\]
	so that we have established that
	\begin{equation}\label{eq:inCR}
		((t,y'),(\theta,\zeta');x,\theta e_n) \coloneqq \left((d_c(x,0)+r^\ast,(y(r^\ast))'),(\theta,-\theta(\del_r y(r^\ast))'); x,\theta e_n\right) \in \Lambda\,.
	\end{equation}
	Here we used the fact that $T> 2\mathrm{diam}_c(\mathcal{M}) \geq d_c(x,0)+r^\ast$.

	Now let $s\in I$, and we start layer stripping by taking $H_s = \{x_n = -s\}$. Notice that the conormal vector to $H_s$ at each point is just $\xi_0 = \pm e_n$ (standard unit vector). By \cref{eq:inCR}, applying \cref{thm:hypside} and the assumption that $DA[c](c_\delta) \in C^\omega((0,T)\times\mathcal{M}')$, we know that $c_\delta$ must vanish in a neighborhood of $x$ for all $x\in H_s \cap \mathcal{K}$. 
	The compactness of $\mathcal{K}$ lets us conclude there is some $\delta>0$ so that $(s-\delta,s]\subset I$, this completes the proof of the claim that $I$ is open, which completes the proof.
\end{proof}

\begin{remark}
In essence, in \cref{eq:inCR} we show that $\pi_R\Lambda \supset \mathcal{K} \times \{e_n\}$. By the ellipticity of the amplitude in $DA[c]$, this in fact implies that $DA[c]$ is microlocally elliptic at any $x\in\mathcal{K}$ in the direction $e_n$. Thus, the proof of \cref{thm:inj} boils down to making the right assumptions to guarantee this microlocal ellipticity and exploit Holmgren's theorem which allows us to upgrade this microlocal ellipticity to injectivity (via a layer stripping argument). 
\end{remark}

\section{Appendix}
\appendix

%

\subsection{Some Numerics}

For combinatorical results useful for dealing with estimates of analytic functions, see also \cite[\S~1.2]{doi:10.1142/1550}.
\begin{lemma}\label{lem:morefact}
	Let $N \in \mathbb{N}$ be fixed. 
	\begin{enumerate}
		\item
	For all $n \in \mathbb{N}$ there exists $C = C(n)$, independent of $N$, so that
	\[
		(N+n)! \leq C^N N!\,.
	\]
\item Let $n\in\mathbb{N}$. We have for $\alpha \in \mathbb{N}_0^{n}$ denoting multi-indices,
	\begin{equation}\label{eq:boundonmultiindices}
		\sum_{\abs{\alpha} = N} 1 \leq n^N\,,\an \sum_{\abs{\alpha} \leq N} 1 \leq Nn^{N} \leq (en)^N\,.
	\end{equation}
	\end{enumerate}
\end{lemma}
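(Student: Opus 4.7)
The plan is to prove the two parts independently, each by an elementary combinatorial argument.

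For part~1, I will reduce the factorial ratio to a polynomial expression: writing
\[
\frac{(N+n)!}{N!} = \prod_{k=1}^n (N+k)\,,
\]
the left side is bounded by a polynomial of degree $n$ in $N$, which for any fixed $n$ is dominated by an exponential. The cleanest way I see to get a clean constant is Bernoulli's inequality $(k+1)^N \geq 1 + Nk \geq N+k$, valid for $N\geq 1$ and $k\geq 1$. Applying this factor by factor yields
\[
\prod_{k=1}^n (N+k) \leq \prod_{k=1}^n (k+1)^N = ((n+1)!)^N\,,
\]
so $C(n)\coloneqq (n+1)!$ works for all $N\geq 1$. The edge case $N=0$ is vacuous in all uses of this lemma in the paper (the argument always sits in an induction where $N\geq 1$), but if needed one enlarges $C(n)$ so that $C(n)^0 \cdot 0! = 1 \geq n!$ is also satisfied; this only changes the constant, not its dependence structure.

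For part~2, the bound on the number of multi-indices of fixed length follows from exhibiting a surjection
\[
\{1,\dots,n\}^N \twoheadrightarrow \{\alpha\in\mathbb{N}_0^n \colon \abs{\alpha}=N\}\,,\quad (a_1,\dots,a_N)\mapsto \alpha\,,\quad \alpha_j\coloneqq \#\{i\colon a_i=j\}\,.
\]
Surjectivity is immediate (given $\alpha$, list index $j$ exactly $\alpha_j$ times), so the cardinality of the target is at most that of the source, $n^N$. This gives the first inequality of \cref{eq:boundonmultiindices}.

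For the second inequality, I will sum the previous bound over $k=0,1,\dots,N$:
\[
\sum_{\abs{\alpha}\leq N} 1 = \sum_{k=0}^N \sum_{\abs{\alpha}=k} 1 \leq \sum_{k=0}^N n^k \leq (N+1) n^N\,,
\]
(using that $n\geq 1$ makes every term at most $n^N$). The final estimate $(N+1)n^N \leq (en)^N$ follows from $N+1\leq e^N$ for $N\geq 1$ (with equality easily checked at $N=0$ only in the trivial sense). There is no real obstacle here; the only subtlety is verifying the small-$N$ edge cases, which is routine bookkeeping.
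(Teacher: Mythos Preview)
Your proof is correct. For part~1 you take a more elementary route than the paper: the paper argues via Stirling's approximation, first bounding $(N+n)^{N+n}\le C^N N^N$ by induction on $n$ and then converting back to factorials, whereas your Bernoulli-type bound $(k+1)^N\ge N+k$ applied factor by factor to $(N+n)!/N!=\prod_{k=1}^n(N+k)$ gives the result in one line with the explicit constant $C(n)=(n+1)!$. This is cleaner and avoids the asymptotic machinery entirely. (One small quibble: your aside about enlarging $C(n)$ to cover $N=0$ cannot work, since $C(n)^0=1$ regardless; but the statement takes $N\in\mathbb{N}$, so this is moot.)

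For part~2 your surjection argument is essentially a repackaging of the paper's use of the multinomial theorem --- the multinomial coefficient $N!/\alpha!$ is precisely the number of preimages under your map --- so the two proofs coincide in substance. Your intermediate bound $(N+1)n^N$ is in fact the correct one (the paper's stated $Nn^N$ fails e.g.\ at $N=n=1$), and your verification $N+1\le e^N$ for $N\ge 1$ then yields the final estimate $(en)^N$ as claimed.
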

\begin{proof}
	Notice that for $N \geq 1$, using $x \leq e^{x-1}$ for $x\geq 1$, 
	\[
		(N+1)^{N+1} \leq (2N)^{N+1} \leq 2^{N+1} N N^{N} \leq 2^{N+1}e^{N-1} N^N \leq c^{N+1}N^N\,.
	\]
	Thus by induction, for $n \in \mathbb{N}$,
	\[
		(N+n)^{N+n} \leq c^{N+n+1}c^{N+n}\cdots c^{N+1} N^N = (c^n)^{N} c^{\frac{n(n-1)}{2}} N^N \leq C^N N^N\,,
	\]
	for some $C = C(n)$.

	Using Stirling's approximation twice,
	\[
		(N+n)! \leq 3\sqrt{N+n}\left(\frac{N+n}{e}\right)^{N+n} \leq 3\sqrt{N+n} (CN)^N e^{-N} \leq \frac{3}{2}\frac{\sqrt{N+n}}{\sqrt{N}} C^N N! \leq \td{C}^N N!\,,
	\]
	which concludes the proof of item 1.

	By the multinomial theorem, for $x_1,\dots,x_{n} \in \RR$, $x = (x_1,\dots,x_{n})$
	\[
		(x_1 + \dots + x_{n})^N = \sum_{\abs{\alpha}=N} \frac{N!}{\alpha!} x^\alpha
	\]
	and taking $x = (1,\dots,1)$ we see that
	\[
		n^N = \sum_{\abs{\alpha}=N} \frac{N!}{\alpha!} \geq \sum_{\abs{\alpha}=N} 1\,.
	\]
	The second part of \cref{eq:boundonmultiindices} follows immediately from the first.
\end{proof}

\begin{lemma}\label{lem:incomplgamma}
	Let $C,c >0$ be constants with $C \geq 4c$, $\gamma \in \mathbb{N}_0^n$ a multi-index and $L \in \mathbb{N}, L\geq 1$ with $\abs{\gamma} \leq L$. We have	
	\[
		\sum_{\beta\leq\gamma} \frac{\gamma!}{\beta!}c^{\abs{\gamma-\beta}}(CL)^{\abs{\beta}} \leq  2^{n} C^{\abs{\gamma}} L^{\abs{\gamma}}\,.
	\]
\end{lemma}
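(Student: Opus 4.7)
The plan is to substitute $\alpha = \gamma - \beta$, factor out the maximal power $(CL)^{\abs{\gamma}}$, and then bound the remaining factorial ratio using the hypothesis $\abs{\gamma}\leq L$. After that the sum becomes a (truncated) multi-dimensional geometric series whose total is controlled by the condition $C\geq 4c$.

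Concretely, first I would rewrite the sum via $\alpha \coloneqq \gamma - \beta$ as
\[
\sum_{\beta\leq\gamma}\frac{\gamma!}{\beta!}c^{\abs{\gamma-\beta}}(CL)^{\abs{\beta}}
= (CL)^{\abs{\gamma}} \sum_{\alpha\leq\gamma}\frac{\gamma!}{(\gamma-\alpha)!}\left(\frac{c}{CL}\right)^{\abs{\alpha}}.
\]
The factorial ratio is a product of falling factorials,
\[
\frac{\gamma!}{(\gamma-\alpha)!} = \prod_{i=1}^n \gamma_i(\gamma_i-1)\cdots(\gamma_i-\alpha_i+1) \leq \prod_{i=1}^n \gamma_i^{\alpha_i},
\]
and using $\gamma_i \leq \abs{\gamma}\leq L$ componentwise gives the clean estimate $\gamma!/(\gamma-\alpha)! \leq L^{\abs{\alpha}}$. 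Inserting this into the displayed identity absorbs the factor of $L^{\abs{\alpha}}$ into the denominator and leaves the ratio $c/C$.

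Finally, the assumption $C\geq 4c$ yields $c/C\leq 1/4$, so extending the summation to all of $\mathbb{N}_0^n$ and splitting as a product of one-dimensional geometric series gives
\[
\sum_{\alpha\leq\gamma}\left(\frac{c}{C}\right)^{\abs{\alpha}}
\leq \prod_{i=1}^n \sum_{\alpha_i\geq 0} (1/4)^{\alpha_i}
= (4/3)^n \leq 2^n,
\]
which combined with the previous display produces the desired bound $2^n C^{\abs{\gamma}} L^{\abs{\gamma}}$. There is no real obstacle here: the whole argument is a two-line reindexing followed by the elementary falling-factorial bound and a geometric series, and the constant $2^n$ on the right-hand side is a comfortable overestimate of the sharp $(4/3)^n$ produced by the calculation.
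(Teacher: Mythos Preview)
Your argument is correct, and it is genuinely different from (and considerably more elementary than) the paper's own proof. The paper also begins by factoring the multi-index sum into a product of one-dimensional sums and normalizing so that $C\geq 4$, but then it recognizes the one-dimensional sum $\sum_{k=0}^{l}\frac{l!}{k!}(CL)^k$ as $e^{CL}\Gamma(l+1,CL)$ and invokes an asymptotic bound for the incomplete Gamma function from the literature to obtain the factor $2$ per coordinate. By contrast, your reindexing $\alpha=\gamma-\beta$ together with the falling-factorial estimate $\gamma!/(\gamma-\alpha)!\leq L^{\abs{\alpha}}$ (using only $\gamma_i\leq\abs{\gamma}\leq L$) reduces everything to a geometric series with ratio $c/C\leq 1/4$, yielding $(4/3)^n\leq 2^n$ directly. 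Your route is self-contained and avoids any external citation; the paper's route, while heavier, makes the connection to $\Gamma(\cdot,\cdot)$ explicit, which is presumably why the lemma carries the label \texttt{incomplgamma}.
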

\begin{proof}
	Notice first that
	\[
		\sum_{\beta\leq\gamma} \frac{\gamma!}{\beta!}c^{\abs{\gamma-\beta}}(CL)^{\abs{\beta}} = \sum_{\beta_1 \leq \gamma_1} \frac{\gamma_1!}{\beta_1!}c^{\gamma_1-\beta_1}(CL)^{\beta_1} \dots \sum_{\beta_n \leq \gamma_n} \frac{\gamma_n!}{\beta_n!}c^{\gamma_n-\beta_n}(CL)^{\beta_n}\,,
	\]
	and pulling out the powers of $c$, the proof is reduced to showing for $l \in \mathbb{N}_0$ with $l \leq L$ that
	\begin{equation}\label{eq:herefirstincga}
		\sum_{k=0}^l \frac{l!}{k!} (CL)^k \leq 2 C^l L^{l}\,.
	\end{equation}
	for any $C\geq 4$. 

	Writing $\Gamma(\cdot,\cdot)$ for the incomplete Gamma function (see \cite[\S~11.2]{zbMATH00884971} for a definition), the left-hand side of \cref{eq:herefirstincga} is equal to
	\[
		e^{CL}\Gamma\left(l+1, CL\right)\,.
	\]
	Now, if we define $\tau \coloneqq \frac{CL}{l+1}$, then since $C \geq 4$ and $l \leq L$,
	\[
		\tau = \frac{CL}{l+1} \geq 2\,,\an l = \tau^{-1} CL -1\,.
	\]
	
	According to \cite[Thm.~1.1]{zbMATH06619847}, for any $M > 0$ and any $\lambda > 1$ we have
	\[
		\Gamma(M,\lambda M) = (\lambda M)^M e^{-\lambda M} \left(\frac{1}{(\lambda-1)M} + R_1(M,\lambda)\right)
	\]
	with 
	\[
		R_1(M,\lambda) = -M^{-2}\int_0^\infty \frac{t^{t}e^{-t(\lambda -\log \lambda)}}{\Gamma(t)(1+t/M)}\dd t\,,
	\]
	which is non-positive for positive $M$ and $\lambda > 1$.

	Thus, for $\lambda \geq 2$,
	\[
		\Gamma(M,\lambda M) \leq (\lambda M)^{M-1} e^{-\lambda M} \frac{\lambda}{\lambda-1} \leq 2 (\lambda M)^{M-1} e^{-\lambda M}\,.
	\]
	Thus, putting $M = \lambda^{-1}C L$ and $\lambda = \tau \geq 2$, we see that
	\[
		e^{CL}\Gamma\left(l+1, CL\right) = e^{\lambda M}\Gamma(M, \lambda M) \leq 2 (\lambda M)^{M-1} = 2 (CL)^l\,. 
	\]
	This concludes the proof.
\end{proof}

The result above lets us determine bounds on an iterated differential operator.
\begin{lemma}\label{lem:inducops}
	Let $n\in\mathbb{N}$ and $\Omega \subset \RR^n$ be an open set, and let $L\in \mathbb{N}$. Let 
	\[
		\mathcal{L}(x,\del) = \sum_{\abs{\alpha}\leq 1} c_{\alpha,1}(x)\del^\alpha
	\]
	be a differential operator of order $1$ on $\Omega$ with coefficients $c_{\alpha,1} \colon \Omega\to \CC$ satisfying the following: there are $C,M\colon \Omega \to (0,\infty)$ so that
	\begin{equation}\label{eq:firstc}
		x\in\Omega, \abs{\beta}\leq L\implies \abs{\del^\beta c_{\alpha,1}(x)} \leq M(x) C(x)^{\abs{\beta}}\beta!\,,
	\end{equation}
	then for every $k\leq L$, the operator $\mathcal{L}^k$ is a differential operator of order $\leq k$, 
	\[
		\mathcal{L}^k(x,\del) = \sum_{\abs{\alpha}\leq k} c_{\alpha,k}(x)\del^\alpha
	\]
	with $c_{\alpha,k} \colon \Omega\to \CC$ so that for all $\abs{\alpha}\leq k$, for $x\in\Omega$ and $\abs{\beta} \leq L-k+\abs{\alpha}$
	\begin{equation}\label{eq:inducknew}
		\abs{\del^\beta c_{\alpha,k}(x)} \leq (2^n(2n+1) M(x))^{k-1} M(x) (4C(x)(\abs{\beta}+k-\abs{\alpha}))^{\abs{\beta}+k-\abs{\alpha}},
	\end{equation}
\end{lemma}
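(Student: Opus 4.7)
The plan is to proceed by induction on $k\in\{1,\ldots,L\}$, combining $\mathcal{L}^{k+1}=\mathcal{L}\circ\mathcal{L}^k$ with the Leibniz rule at each step. For the base case $k=1$ the bound \cref{eq:inducknew} reads $\abs{\del^\beta c_{\alpha,1}(x)}\leq M(x)(4C(x)(\abs{\beta}+1-\abs{\alpha}))^{\abs{\beta}+1-\abs{\alpha}}$, which follows from \cref{eq:firstc} together with $\beta!\leq\abs{\beta}^{\abs{\beta}}$: for $\abs{\alpha}=1$ we have $MC^{\abs{\beta}}\beta!\leq M(C\abs{\beta})^{\abs{\beta}}\leq M(4C\abs{\beta})^{\abs{\beta}}$, and the case $\abs{\alpha}=0$ is analogous after assuming WLOG that $C\geq 1$ (which only strengthens the hypothesis while preserving the target bound).

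For the inductive step, applying the Leibniz rule once to decompose $\mathcal{L}\circ\mathcal{L}^k$ and once more to $\del^\beta$ yields
\[
	\del^\beta c_{\alpha'',k+1}(x) =\sum_{\substack{\abs{\alpha'}\leq 1,\,\abs{\alpha}\leq k\\ \gamma\leq\alpha',\,\alpha'-\gamma+\alpha=\alpha''}}\binom{\alpha'}{\gamma}\sum_{\delta\leq\beta}\binom{\beta}{\delta}(\del^\delta c_{\alpha',1}(x))(\del^{\beta-\delta+\gamma}c_{\alpha,k}(x)).
\]
The derivative order $\abs{\beta-\delta+\gamma}=\abs{\beta}-\abs{\delta}+\abs{\gamma}\leq L-k+\abs{\alpha}$ (using $\abs{\beta}\leq L-(k+1)+\abs{\alpha''}$ and $\abs{\alpha''}=\abs{\alpha'}-\abs{\gamma}+\abs{\alpha}$), so the inductive hypothesis applies to $c_{\alpha,k}$. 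Combining it with the hypothesis on $c_{\alpha',1}$, each summand is bounded by
\[
	M(x)\cdot (2^n(2n+1)M(x))^{k-1}M(x)\cdot\binom{\beta}{\delta}C(x)^{\abs{\delta}}\delta!\,(4C(x)Q)^Q,\qquad Q\coloneqq\abs{\beta}-\abs{\delta}+\abs{\gamma}+k-\abs{\alpha},
\]
while the target bound involves $(4C(x)P)^P$ with $P\coloneqq\abs{\beta}+k+1-\abs{\alpha''}\geq Q$ and excess $P-Q=1-\abs{\alpha'}+\abs{\delta}\geq 0$.

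The key algebraic step is the identity $(4CQ)^Q\leq (4CP)^P(4CP)^{-(P-Q)}$, valid whenever $4CP\geq 1$, with the edge case $P=0$ handled directly since then $\abs{\beta}=\abs{\delta}=\abs{\gamma}=0$, forcing $\abs{\alpha'}=1$. The extra factor $(4CP)^{-(1-\abs{\alpha'}+\abs{\delta})}$ combined with $\binom{\beta}{\delta}\delta!\leq\abs{\beta}^{\abs{\delta}}\leq P^{\abs{\delta}}$ collapses the inner $\delta$-sum to a geometric series $\sum_{\delta\leq\beta}4^{-\abs{\delta}}\leq(4/3)^n\leq 2^n$, while the outer sum contributes at most $2n+1$ terms (one with $\abs{\alpha'}=0$, $\gamma=0$, and two each for the $n$ choices $\alpha'=e_i$, $\gamma\in\{0,e_i\}$). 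Together these furnish exactly the factor $2^n(2n+1)$ needed to advance the induction. The principal obstacle is the bookkeeping that ties these ingredients together: one must keep careful track of the identity $P-Q=1-\abs{\alpha'}+\abs{\delta}$, exploit the slack factor $4$ in the base $4C$ (which powers the geometric summation), and handle the edge cases $P=0$ and small $C$, the latter by the WLOG assumption $C\geq 1$ and the former by direct verification.
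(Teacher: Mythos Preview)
Your proof follows the same induction-plus-Leibniz strategy as the paper's, with the same recursive decomposition of $c_{\alpha'',k+1}$ and the same final count of at most $2n+1$ outer terms. The only substantive difference is in bounding the inner $\delta$-sum: the paper invokes \cref{lem:incomplgamma} (an incomplete-Gamma estimate) to get $\sum_{\delta\leq\beta}\frac{\beta!}{\delta!}4^{|\delta|}(|\beta|+k+1-|\alpha|)^{|\delta|}\leq 2^n 4^{|\beta|}(|\beta|+k+1-|\alpha|)^{|\beta|}$, whereas you factor out $(4CP)^{-(P-Q)}$ and collapse directly to a geometric series $\sum_{\delta\leq\beta}4^{-|\delta|}\leq(4/3)^n\leq 2^n$. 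Your route is slightly more elementary and makes the role of the slack factor $4$ explicit; both arrive at the same bound.

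One caveat: your justification that $C\geq 1$ is ``WLOG'' because enlarging $C$ ``preserves the target bound'' is not correct---enlarging $C$ also enlarges the target $(4CP)^P$, so you would only prove a weaker conclusion. This is not a genuine gap, though: the lemma as stated already fails for very small $C$ (take constant coefficients, $k=2$, $\alpha=\beta=0$; then one needs $|c_{0,1}|^2\leq 2^n(2n+1)M^2(8C)^2$), and the paper's own ``similar estimates for all other terms'' carry the same implicit lower bound on $C$ when handling the $c_{0,1}c_{\alpha,k}$ term.
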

\begin{proof}
	We proceed by induction noting that for $k=1$ our assumption \cref{eq:firstc} guarantees the statement as $\beta!\leq \abs{\beta}! \leq \abs{\beta}^{\abs{\beta}}$ (and we say $0^0=1$ here). Let us thus assume $k\geq 2$ and \cref{eq:inducknew} is valid for positive integers $<k$.

	A calculation gives
	\[
		\mathcal{L}^{k+1} = \sum_{\abs{\alpha}\leq k+1} c_{\alpha,k+1}(x)\del^\alpha\,,
	\]
	where for all $\abs{\alpha} \leq k+1$
	\begin{align*}
		c_{\alpha,k+1} = c_{0,1}c_{\alpha,k}+\sum_{\abs{\gamma}=1}c_{\gamma,1}\del^\gamma c_{\alpha,k} + \sum_{\abs{\gamma}=1} c_{\gamma,1}c_{\alpha-\gamma,k}
	\end{align*}
	and we adopted the convention that $c_{\alpha,j}=0$ if $\alpha$ has a negative index or if $\abs{\alpha} > j$.

	We will show explicitly that $c_{\gamma,1}\del^\gamma c_{\alpha,k}$ satisfies the desired bound, where the proof can be copied for all other terms.

	Put $K\coloneqq 2^n(2n+1)$. We consider for $\abs{\gamma}=1$, using the induction hypothesis \cref{eq:inducknew} and \cref{eq:firstc}, for $\abs{\beta} \leq L-k+\abs{\alpha}-1$,
	\begin{align*}
		\abs{\del^\beta (c_{\gamma,1} \del^\gamma c_{\alpha,k})} &= \abs{\sum_{\delta\leq \beta} {\beta\choose \delta} \del^{\beta-\delta} c_{\gamma,1} \del^{\delta+\gamma} c_{\alpha,k}} \\
		&\leq M(x) (M(x)K)^{k-1} C(x)^{\abs{\beta}}(4C(x))^{k+1-\abs{\alpha}}\sum_{\delta\leq \beta} \frac{\beta!}{\delta!}4^{\abs{\delta}}(\abs{\delta}+k+1-\abs{\alpha})^{\abs{\delta}+k+1-\abs{\alpha}}\\
		&\leq M(x) (M(x)K)^{k-1} C(x)^{\abs{\beta}}(4C(x)(\abs{\beta}+k+1-\abs{\alpha}))^{k+1-\abs{\alpha}}\sum_{\delta\leq \beta} \frac{\beta!}{\delta!}4^{\abs{\delta}}(\abs{\beta}+k+1-\abs{\alpha})^{\abs{\delta}}\,,
	\end{align*}
	where we note that $\abs{\beta} \leq \abs{\beta}+k-\abs{\alpha}$, so that we may apply \cref{lem:incomplgamma} to bound
	\[
		\sum_{\delta\leq \beta} \frac{\beta!}{\delta!}4^{\abs{\delta}}(\abs{\beta}+k+1-\abs{\alpha})^{\abs{\delta}} \leq 2^n 4^{\abs{\beta}}(\abs{\beta}+k+1-\abs{\alpha})^{\abs{\beta}}\,,
	\]
	which gives
	\[
		\abs{\del^\beta (c_{\gamma,1} \del^\gamma c_{\alpha,k})} \leq M(x)^2 2^n (M(x) K)^{k-1} (4C(x)(\abs{\beta}+k+1-\abs{\alpha}))^{\abs{\beta}+k+1-\abs{\alpha}}\,,
	\]
	as desired. 

	Similar estimates for all other terms will conclude the proof if we sum up the $\leq (2n+1)$ terms of $c_{\alpha,k+1}$.
\end{proof}

As a consequence of the above we can state
\begin{corollary}\label{cor:repeatL}
	Let $n\in\mathbb{N}$ and $\Omega \subset \RR^n$ be an open set, and
	\[
		\mathcal{L}(x,\del) = \sum_{\abs{\alpha}\leq 1} c_{\alpha}(x)\del^\alpha
	\]
	be a differential operator of order $1$ on $\Omega$ with coefficients $c_{\alpha} \colon \Omega\to \CC$ satisfying for all multi-indices $\beta$
	\[
		\abs{\del^\beta c_{\alpha}(x)} \leq M(x) C(x)^{\abs{\beta}} \beta!
	\]
	for some $C,M\colon \Omega \to (0,\infty)$.

	Let $L\in\mathbb{N}$ and $a \colon \Omega \to \CC$ be a function so that there exist $R_a,M_a,C_a \colon \Omega \to (0,\infty)$ so that
	\[
		\abs{\alpha}\leq R_a(x)\implies \abs{\del^\alpha a(x)} \leq M_a(x) (C_a(x)L)^{\abs{\alpha}}\,.
	\]
	For $L\leq R_a(x)$,
	\[
		\abs{\mathcal{L}^L a(x)} \leq M_a(x) \left(en2^n(2n+1) M(x) \max\{C_a(x), 4C(x)\}L\right)^L\,.
	\]
\end{corollary}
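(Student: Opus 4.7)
The plan is to reduce the corollary to a direct application of Lemma~\ref{lem:inducops} combined with the assumed bound on the derivatives of $a$, followed by some book-keeping to collect constants. Since the hypotheses on the coefficients $c_\alpha$ of $\mathcal{L}$ are exactly those required in Lemma~\ref{lem:inducops} (with the same $C,M$), I can invoke it with $k=L$ to write
\[
	\mathcal{L}^L(x,\partial) = \sum_{\abs{\alpha}\leq L} c_{\alpha,L}(x)\,\partial^\alpha,
\]
and specialize the conclusion \eqref{eq:inducknew} to $\beta = 0$, which yields
\[
	\abs{c_{\alpha,L}(x)} \leq (2^n(2n+1) M(x))^{L-1}\, M(x)\, (4C(x)(L-\abs{\alpha}))^{L-\abs{\alpha}}.
\]

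Next, since $L \leq R_a(x)$ and $\abs{\alpha}\leq L$, the hypothesis on $a$ gives $\abs{\partial^\alpha a(x)} \leq M_a(x)(C_a(x)L)^{\abs{\alpha}}$ for every term appearing in $\mathcal{L}^L a(x)$. Triangle inequality then produces
\[
	\abs{\mathcal{L}^L a(x)} \leq M_a(x)\,M(x)\,(2^n(2n+1)M(x))^{L-1}\sum_{\abs{\alpha}\leq L} (4C(x)(L-\abs{\alpha}))^{L-\abs{\alpha}} (C_a(x) L)^{\abs{\alpha}}.
\]
I will bound $(L-\abs{\alpha})^{L-\abs{\alpha}} \leq L^{L-\abs{\alpha}}$ to factor $L^L$ out of every summand, after which the sum reads $L^L \sum_{\abs{\alpha}\leq L} (4C(x))^{L-\abs{\alpha}} C_a(x)^{\abs{\alpha}}$. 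Replacing both $4C(x)$ and $C_a(x)$ by $\max\{C_a(x),4C(x)\}$ and using the cardinality estimate $\sum_{\abs{\alpha}\leq L} 1 \leq (en)^L$ from Lemma~\ref{lem:morefact} bounds this sum by $(en\,\max\{C_a(x),4C(x)\}\,L)^L$.

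Finally, the prefactor $M(x)(2^n(2n+1)M(x))^{L-1}$ is at most $(2^n(2n+1)M(x))^L$, so combining all constants produces
\[
	\abs{\mathcal{L}^L a(x)} \leq M_a(x)\bigl(en\,2^n(2n+1)\,M(x)\max\{C_a(x),4C(x)\}\,L\bigr)^L,
\]
as required. There is no real obstacle here: the only delicate point is making sure the bound on $c_{\alpha,L}$ is applied with $\beta=0$ (so that the factor $(\abs{\beta}+L-\abs{\alpha})^{\abs{\beta}+L-\abs{\alpha}}$ does not produce an extra unwanted $L^L$ term), and that the combinatorial factor from the number of multi-indices is handled via $(en)^L$ rather than the cruder $2^L n^L$, so that the exponential base comes out cleanly as $en\,2^n(2n+1)$.
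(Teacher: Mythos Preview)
Your proof is correct and follows essentially the same route as the paper: apply Lemma~\ref{lem:inducops} with $\beta=0$ and $k=L$, bound $(L-\abs{\alpha})^{L-\abs{\alpha}}L^{\abs{\alpha}}\leq L^L$, replace $4C(x)$ and $C_a(x)$ by their maximum, and use the multi-index count \eqref{eq:boundonmultiindices} to absorb the sum as $(en)^L$. The only cosmetic difference is that the paper writes the prefactor as $M(x)^L K^{L-1}$ with $K=2^n(2n+1)$, while you bound it immediately by $(KM(x))^L$; the two are equivalent for the stated conclusion.
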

\begin{proof}
	Using \cref{lem:inducops} we calculate that 
	\[ 
		L \leq R_a(x)\implies \abs{\mathcal{L}^L a(x)} \leq M(x)^L K^{L-1}M_a(x) \sum_{\abs{\alpha}\leq L} (L-\abs{\alpha})^{L-\abs{\alpha}}(4C(x))^{L-\abs{\alpha}} (C_a(x) L)^{\abs{\alpha}}\,,
	\]
	where estimating $(L-\abs{\alpha})^{L-\abs{\alpha}}L^{\abs{\alpha}} \leq L^L$ and using \cref{eq:boundonmultiindices} concludes the proof of the estimate.
\end{proof}

\subsection{A Result from Section~\ref{sec:pio}}\label{sec:proofspio}

The expressions in this section are those from \cref{sec:pio}. 

We introduce notation for an application of Fa\`a di Bruno's formula. For every multi-index $\alpha$, let
\begin{equation}\label{eq:defPialpha}
	\Pi_\alpha \coloneqq \{\text{partitions of } \{1,\dots,\abs{\alpha}\}\}\,,
\end{equation}
and for every $\pi \in \Pi_\alpha$, and any $P \in \pi$, let $\alpha_P$ be the multi-index with 
\begin{equation}
	(\alpha_P)_j = \#\left(P\cap \left\{1+\sum_{i=1}^{j-1} \alpha_i, \dots, \sum_{i=1}^{j}\alpha_i\right\}\right)\,,\label{eq:faaindex}
\end{equation}
where if $1+\sum_{i=1}^{j-1} \alpha_i > \sum_{i=1}^{j}\alpha_i$ we set $(\alpha_P)_j = 0$.

\begin{lemma}\label{lemm:boundonrho}
	Assume $\kappa$ satisfies assumption~\ref{as:kappa1}. For $\rho_\kappa$ as above, for any multi-indices $\alpha,\beta$ 
	we may bound $(\rho_\kappa)^{-1}$ as follows: 
	\begin{align*}
		(z,\eta) \in \Omega_1\times\Gamma &\implies \abs{\del^\alpha_{z}\del^\beta_{\eta}\left(\rho_\kappa^{-1}(z,\eta)\right)} \leq C^{\abs{\alpha}+\abs{\beta}+1}\alpha!\beta!\abs{\eta}^{-2-\abs{\beta}} 
	\end{align*}
	for some $C>0$.
\end{lemma}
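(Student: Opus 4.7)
The proof reduces to two ingredients: analytic bounds on the derivatives of $\rho_\kappa$ itself, and the multivariate Fa\`a di Bruno formula applied to $F\circ \rho_\kappa$ with $F(t) = 1/t$. The lower bound $\rho_\kappa \geq c\abs{\eta}^2$ from assumption~\ref{as:kappa1} will provide the denominator control, while the analytic upper bounds on $\kappa$ in \cref{eq:kappa1} provide the numerator control.

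First I would establish the auxiliary estimate
\[
\abs{\del^\alpha_z \del^\beta_\eta \rho_\kappa(z,\eta)} \leq C^{\abs{\alpha}+\abs{\beta}+1}\alpha!\beta!\abs{\eta}^{2-\abs{\beta}}\,,\qquad (z,\eta)\in \Omega_1\times \Gamma\,,
\]
by expanding $\rho_\kappa = \sum_i \kappa_{z_i}\overline{\kappa_{z_i}} + \abs{\eta}^2 \sum_i \kappa_{\eta_i}\overline{\kappa_{\eta_i}}$ and applying Leibniz's rule. The bounds on $\kappa$ in \cref{eq:kappa1} transfer without loss to each $\kappa_{z_i}$ and $\kappa_{\eta_i}$ (the additional combinatorial factor $(\abs{\alpha}+1)$ or $(\abs{\beta}+1)$ from one extra derivative being absorbed into a slightly larger $C$), with the improved $\eta$-decay $\abs{\eta}^{-\abs{\beta}}$ for $\kappa_{\eta_i}$. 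Squaring via Leibniz matches homogeneities to give the claimed exponent $\abs{\eta}^{2-\abs{\beta}}$, and since $\abs{\eta}^2$ is a polynomial of degree two it contributes only finitely many non-vanishing $\eta$-derivatives, each polynomially bounded.

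Next I would apply the multivariate Fa\`a di Bruno formula to $F\circ \rho_\kappa$, yielding a sum over set partitions $\pi$ of the combined multi-index $(\alpha,\beta)$ (using the notation of \cref{eq:defPialpha,eq:faaindex} coordinate-wise):
\[
\del^\alpha_z \del^\beta_\eta (F\circ \rho_\kappa) = \sum_\pi F^{(\abs{\pi})}(\rho_\kappa) \prod_{P\in\pi} \del^{P}\rho_\kappa\,.
\]
Since $F^{(k)}(t) = (-1)^k k!\,t^{-k-1}$, the lower bound $\rho_\kappa\geq c\abs{\eta}^2$ gives $\abs{F^{(k)}(\rho_\kappa)}\leq k!\,c^{-k-1}\abs{\eta}^{-2k-2}$. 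Feeding in the estimate from the previous step, each block $P$ contributes a factor $C^{\abs{P}+1}\alpha_P!\beta_P!\abs{\eta}^{2-\abs{\beta_P}}$; summing the $\eta$-exponents over the $\abs{\pi}$ blocks one obtains $2\abs{\pi}-\abs{\beta}$, which combined with the $-2\abs{\pi}-2$ from $F^{(\abs{\pi})}(\rho_\kappa)$ leaves exactly $\abs{\eta}^{-\abs{\beta}-2}$, the target $\eta$-power. The constants $C$ and $c$ then combine into a single constant depending only on the bounds in \cref{eq:kappa1}.

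The main obstacle is the combinatorial bookkeeping: one must bound
\[
\sum_{\pi} \abs{\pi}! \prod_{P\in \pi} \alpha_P!\,\beta_P!
\]
by $K^{\abs{\alpha}+\abs{\beta}}\alpha!\beta!$ for some $K>0$. This is a standard Bell-polynomial/composition identity applied coordinate-wise, and can alternatively be derived by iterating estimates in the spirit of \cref{lem:incomplgamma,lem:inducops}; it is the only genuinely combinatorial part of the argument and presents no conceptual difficulty once the analytic bound on $\rho_\kappa$ is in hand. Absorbing $K$ into the base constant $C$ completes the proof.
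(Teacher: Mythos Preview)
Your proposal is correct and follows essentially the same route as the paper: first bound $\abs{\del^\alpha_z\del^\beta_\eta\rho_\kappa}\le C^{\abs{\alpha}+\abs{\beta}+1}\alpha!\beta!\abs{\eta}^{2-\abs{\beta}}$, then apply Fa\`a di Bruno to $t\mapsto 1/t$ composed with $\rho_\kappa$, and use the lower bound $\rho_\kappa\ge c\abs{\eta}^2$ so that the $\eta$-powers collapse to $\abs{\eta}^{-2-\abs{\beta}}$. The only cosmetic difference is in the combinatorial bookkeeping: where you invoke a ``standard Bell-polynomial/composition identity,'' the paper makes this explicit via $\prod_{P\in\pi}\abs{P}!\le \abs{(\alpha,\beta)}!/\abs{\pi}!$ (cancelling the $\abs{\pi}!$), the multinomial estimate $\abs{(\alpha,\beta)}!\le (N+n)^{\abs{\alpha}+\abs{\beta}}\alpha!\beta!$, and the Bell-number bound $\#\Pi_{(\alpha,\beta)}\le 3^{\abs{\alpha}+\abs{\beta}}$.
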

%
\begin{proof} Throughout this proof let $(z,\eta) \in \Omega_1\times\Gamma$.

	By an application of Fa\`a di Bruno's formula (using notation from \cref{eq:defPialpha,eq:faaindex}) we have
	\begin{equation}\label{eq:applybruno}
		\del^\alpha_z\del^\beta_\eta\rho_\kappa^{-1} = \sum_{\pi\in\Pi_{(\alpha,\beta)}}(-1)^{\abs{\pi}}\abs{\pi}!\rho_\kappa^{-(1+\abs{\pi})}\prod_{P\in\pi} \del^{(\alpha,\beta)_P}\rho_\kappa\,.
	\end{equation}
	We will thus try to find bounds for $\del^{(\alpha,\beta)_P}\rho_\kappa$.

	Note here that using notation \cref{eq:faaindex} for $\beta_P$, for each $\pi \in \Pi_{(\alpha,\beta)}$ 
	\begin{equation}\label{eq:sumofPindex}
		\sum_{P\in\pi}\abs{P} = \abs{(\alpha,\beta)}\,,\an \sum_{P\in\pi}\abs{\beta_P} = \abs{\beta}\,.
	\end{equation}
	Furthermore, by \cite[Eq.~(3.8)]{zbMATH06496469}, 
	\begin{equation}\label{eq:Pfacbound}
		\prod_{P\in\pi} \abs{P}! \leq \frac{\abs{(\alpha,\beta)}!}{\abs{\pi}!}
	\end{equation}
	and by elementary properties of multi-indices
	\begin{equation}\label{eq:indexstuff}
	\abs{(\alpha,\beta)}! \leq (N+n)^{\abs{\alpha+\beta}}\alpha!\beta!\,,\an \abs{(\alpha,\beta)_P} = \abs{P}\,,\an (\alpha,\beta)_P! \leq \abs{P}!\,.
	\end{equation}
	According to the second part of \cref{eq:kappa1} we see that
	\[
		\abs{\del^\alpha_z\del^\beta_\eta \rho_\kappa} \leq C^{\abs{\alpha}+\abs{\beta}+1}\alpha!\beta! \abs{\eta}^{2-\abs{\beta}}\,,
	\]
	which implies 
	\[
	\prod_{P\in\pi}\abs{\del^{(\alpha,\beta)_P}\rho_\kappa}\leq \prod_{P\in\pi} C^{\abs{(\alpha,\beta)_P}+1}(\alpha,\beta)_P!\abs{\eta}^{2-\abs{\beta_P}}\,,
	\]
	so that using \cref{eq:sumofPindex,eq:Pfacbound,eq:indexstuff}, for some $C>0$ we may bound
	\[
		\prod_{P\in\pi}\abs{\del^{(\alpha,\beta)_P}\rho_\kappa}\leq \prod_{P\in\pi} C^{\abs{P}+1}\abs{P}!\abs{\eta}^{-\abs{\beta_P}}\abs{\eta}^{2} \leq C^{\abs{\alpha}+\abs{\beta}+\abs{\pi}}(N+n)^{\abs{\alpha}+\abs{\beta}} \alpha!\beta! (\abs{\pi}!)^{-1} \abs{\eta}^{2\abs{\pi}-\abs{\beta}}\,,
	\]
	which, using \cref{eq:applybruno}, leads to the bound
	\begin{align*}
		\abs{\del^\alpha_{z}\del^\beta_{\eta}\rho_\kappa^{-1}} 
		&\leq (C(N+n))^{\abs{\alpha+\beta}}\alpha!\beta!\abs{\eta}^{-\abs{\beta}}\rho_\kappa^{-1}\sum_{\pi\in\Pi_{(\alpha,\beta)}}\left(\frac{C\abs{\eta}^{2}}{\rho_\kappa}\right)^{\abs{\pi}}
	\end{align*}
	where 
	\[
		\sum_{\pi\in\Pi_{(\alpha,\beta)}}\left(\frac{C\abs{\eta}^{2}}{\rho_\kappa}\right)^{\abs{\pi}} \leq \left(\frac{C\abs{\eta}^{2}}{\rho_\kappa}\right)^{\abs{\alpha}+\abs{\beta}}\sum_{\pi\in\Pi_{(\alpha,\beta)}}1 \leq (3\td{C})^{\abs{\alpha}+\abs{\beta}}\,,
	\]
	where we used $\sum_{\pi\in\Pi_{(\alpha,\beta)}} 1 \leq 3^{\abs{\alpha+\beta}}$ from \cite{zbMATH05636932}, and $(C\abs{\eta}^2/\rho_\kappa) \leq \td{C}$ from the first part of \cref{eq:kappa1}, which completes the proof.
\end{proof}

We are now in position to give
\begin{proof}[Proof of \cref{cor:Lphifitskappa1}]
	The inequality \cref{eq:phifitskappaside} is a simple consequence of Leibnitz's rule together with \cref{lemm:boundonrho} and \cref{eq:kappa1}, where we used that $\abs{\eta}^{-1} \leq C$ for $\eta \in \Gamma$.
\end{proof}

\subsection{Classical Analytic Stationary Phase}

\renewcommand{\mf}{}

We recall the definition of formal classical analytic amplitudes from \cite[\S~1]{AST_1982__95__R3_0}:
\begin{definition}\label{def:sjoamp}
	Let $\Omega \subset \CC^{d}$ be open for some $d\in\mathbb{N}$ and let $(q_k)_{k\in\mathbb{N}} \subset \mathcal{O}(\Omega)$ be a sequence of holomorphic functions so that for every compact $K\Subset \Omega$ there are $M, C>0$ independent of $k$, so that 
	\[
		\sup_K \abs{q_k} \leq M C^k k!\,.
	\]
	We call $\ud{q} = (q_k)_{k\in\mathbb{N}}$ a \emph{formal classical analytic amplitude}. We say $\ud{q}$ is \emph{elliptic} if $q_0 \neq 0$ on $\Omega$.

	For any $R>eC$ sufficiently large and any $\lambda\geq 1$, we call 
	\[
		q(a;\lambda) \coloneqq \sum_{0\leq k\leq \lambda R^{-1}} \lambda^{-k}q_k(a) \in \mathcal{O}(K)
	\]
	a \emph{finite realization} of $\ud{q}$ on $K$. 
\end{definition}
We recall that as a consequence of \cite[Ex.~1.1]{AST_1982__95__R3_0}, two finite realizations of the same formal classical analytic amplitude differ by a term $\mathcal{O}(e^{-\eps\lambda})$ for some $\eps>0$.

We will first provide a supplementary lemma and a version of the method of stationary phase that is taken from \cite[\S~19.3]{MR4436039}. We briefly remark that the following result can also be seen as a consequence of \cite[Thm.~7.7.1]{hoermander1}. 
\begin{lemma}\label{lem:cptpartint2}
	Let $d\in \mathbb{N}$ and let $X\subset \RR^d$ be a bounded open set. There is a constant $C>0$ so that the following holds. Let $b,\kappa \in C^\omega(X; \CC)$ be complex valued real-analytic functions with $\kappa^{i\RR}(x) \geq -\delta$ for some $\delta \geq 0$ and $\abs{\del^\alpha b} \leq M_b C_b^{\abs{\alpha}}\alpha!$. 
	Let $(\chi_{j})_{j\in\mathbb{N}}$ be a sequence of Ehrenpreis cutoffs compactly supported in $X$, and $\inf_{j\in\mathbb{N}, x\in\supp \chi_j} \abs{\nabla \kappa(x)} > 0$. 
	
	For $j>0$ large enough, we have
	\[
		\int_{X} e^{i\lambda \kappa(x)} \chi_{j}(x) b(x)\dd x \leq CM_be^{(\delta-j^{-1}/2)\lambda}\,.
	\]
\end{lemma}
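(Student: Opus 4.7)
My plan is to prove the lemma by iterated analytic non-stationary phase integration by parts, with the number of iterations chosen as a function of $\lambda$ and the Ehrenpreis parameter $j$. The operator
\[
\mathcal{L}^* = (i\lambda)^{-1}\sum_{l=1}^d \frac{\overline{\del_l\kappa}}{|\nabla \kappa|^2}\del_l
\]
satisfies $\mathcal{L}^* e^{i\lambda\kappa} = e^{i\lambda\kappa}$, and because $\inf_{x\in\supp\chi_j}|\nabla \kappa(x)|$ is bounded below uniformly in $j$ while $\kappa$ extends holomorphically to a fixed complex tubular neighborhood of $\bigcup_j \supp \chi_j \Subset X$, Cauchy's estimates give that the formal adjoint $\mathcal{L}$ (with the factor $\lambda^{-1}$ extracted) has real-analytic coefficients satisfying $|\del^\beta c| \leq M_\kappa C_\kappa^{|\beta|+1}\beta!$ uniformly in $j$.

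I would first verify, via Leibniz's rule applied to the Ehrenpreis bound $|\del^\gamma\chi_j| \leq (C_0 j)^{|\gamma|}$ (for $|\gamma|\leq j$), the analytic bound on $b$, and the combinatorial estimate of \cref{lem:incomplgamma}, that $|\del^\alpha(\chi_j b)| \leq M_b(K_1 j)^{|\alpha|}$ for $|\alpha|\leq j$. Since $\chi_j$ is compactly supported, iterating the identity $\mathcal{L}^* e^{i\lambda\kappa} = e^{i\lambda\kappa}$ exactly $L$ times yields, without boundary contribution,
\[
\int_X e^{i\lambda\kappa}\chi_j b\,\dd x = (i\lambda)^{-L}\int_X e^{i\lambda\kappa}\mathcal{L}_0^L(\chi_j b)\,\dd x,
\]
where $\mathcal{L}_0 = \lambda\mathcal{L}$ is $\lambda$-independent. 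Applying \cref{cor:repeatL} to $\mathcal{L}_0^L(\chi_j b)$ with $R_a = j$ and $C_a = K_1 j$ produces, for all $L\leq j$, the bound $|\mathcal{L}_0^L(\chi_j b)| \leq M_b(K_2 j L)^L$, with $K_2$ depending only on $\kappa$, $b$, $d$, $X$. Combined with $|e^{i\lambda\kappa}|\leq e^{\lambda\delta}$ and the boundedness of $X$, this gives
\[
\left|\int_X e^{i\lambda\kappa}\chi_j b\,\dd x\right| \leq |X|\,M_b\, e^{\lambda\delta}\left(\frac{K_2 j L}{\lambda}\right)^L \quad\text{for every}\quad L \leq j.
\]

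The concluding step is the optimization over $L\leq j$. For $\lambda\leq j$ the trivial estimate $|\int|\leq M_b |X|e^{\lambda\delta}$ already produces the desired bound upon enlarging the universal constant $C$, since $e^{\lambda/(2j)}\leq e^{1/2}$ in this range. For $\lambda \geq j$ one selects $L = \min\{j, \lfloor \lambda/(e K_2 j)\rfloor\}$, producing a decay factor of the form $e^{-c\lambda/j}$ with $c>0$ depending on $K_2$ only; the bound $e^{(\delta - 1/(2j))\lambda}$ is then obtained after absorbing constants into $C$ and restricting to "$j$ large enough". The main obstacle is the careful bookkeeping of constants through \cref{cor:repeatL}, in particular ensuring that the growth constant $K_2$ is independent of $j$, which follows from the uniformity in $j$ of both $\inf|\nabla\kappa|$ and the complex radius of analyticity of $\kappa$. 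The constraint $L\leq j$, imposed by the Ehrenpreis bound, is precisely the source of the $1/j$ scaling in the final decay rate.
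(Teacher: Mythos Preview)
Your overall strategy---iterated partial integration with the first–order operator $\mathcal{L}$ whose formal adjoint fixes $e^{i\lambda\kappa}$, combined with analytic estimates on the iterates---is exactly the paper's approach. The paper's proof is, however, a one–liner: one performs \emph{precisely $j$} integrations by parts and quotes \eqref{eq:inducbddana} from \cref{cor:inducL} (with the Ehrenpreis cutoff $\psi_L=\chi_j$ and $L=j$) to get $|\mathcal{L}^j(\chi_j b)|\le M_b (C_bC_{\mathcal{L}_\kappa}j)^j$, hence $\bigl|\int\bigr|\le M_b|X|e^{\delta\lambda}(C_bC_{\mathcal{L}_\kappa}j/\lambda)^j$. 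Coupling $\vartheta\lambda/2\le j\le\vartheta\lambda$ with $\vartheta\le (eC_bC_{\mathcal{L}_\kappa})^{-1}$ then gives $e^{(\delta-\vartheta/2)\lambda}$. No auxiliary parameter $L\le j$ is introduced and no optimization is needed.

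Your version has a genuine gap in the invocation of \cref{cor:repeatL}. The hypothesis there reads $|\partial^\alpha a|\le M_a(C_aL)^{|\alpha|}$, with the same $L$ as the number of iterations. Having established $|\partial^\alpha(\chi_j b)|\le M_b(K_1 j)^{|\alpha|}$, the matching choice is $C_aL=K_1 j$, i.e.\ $C_a=K_1 j/L$, which yields the bound $|\mathcal{L}_0^L(\chi_j b)|\le M_b(K_2 j)^L$. Your choice $C_a=K_1 j$ is admissible (the hypothesis is only weakened) but produces the strictly worse bound $(K_2 jL)^L$. That extra factor of $L$ is fatal: in the regime that actually matters, namely $j\sim\vartheta\lambda$, one has $(K_2 jL/\lambda)^L\approx (K_2\vartheta L)^L$, whose minimum over $L\le j$ is attained at the \emph{bounded} value $L\approx (eK_2\vartheta)^{-1}$ and equals a constant, not something decaying in $\lambda$. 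Consequently your optimization step does not deliver $e^{-c\lambda/j}$; it delivers $O(1)$. The fix is either to quote \eqref{eq:inducbddana} directly (which already has the Ehrenpreis index equal to the iteration count), or to apply \cref{cor:repeatL} with $C_a=K_1 j/L$ and then simply take $L=j$, which recovers the paper's bound $(K_2 j/\lambda)^j$ and makes the separate $L$ superfluous.
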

\begin{proof}
	Due to the assumptions we may use partial integration with the operator $\mathcal{L}_{\kappa}$ on $X$ which satisfies assumption~\ref{as:kappa2} on $\mf{X}$, 
	to see that after $j$ applications of $\mathcal{L}_\kappa$,
	\[
		\int_{\mf{X}} e^{i\lambda \kappa(\mf{x})} \chi_j(\mf{x}) b(\mf{x})\dd \mf{x} = (-i\lambda)^{-j}\int_{\mf{X}} e^{i\lambda \kappa(\mf{x})} \mathcal{L}_\kappa^j(\chi_j b)(\mf{x})\dd \mf{x}
	\]
	where choosing $\vartheta \leq \vartheta_0 \coloneqq (e C_{\mathcal{L}_\kappa}C_b)^{-1}$ and $j$ according to $\vartheta\lambda/2\leq j\leq \vartheta\lambda$, estimate \cref{eq:inducbddana} gives	
	\[
		\abs{\int_{\mf{X}} e^{i\lambda \kappa(\mf{x})} \mathcal{L}^j_\kappa(\chi_j b)(\mf{x})\dd \mf{x}} \leq M_b\abs{\mf{X}} e^{\delta\lambda} \left(\frac{C_{\mathcal{L}_\kappa}C_b j}{\lambda}\right)^j \leq M_b C e^{\delta\lambda} e^{-j} \leq C M_b e^{(\delta-\vartheta/2)\lambda}\,.
	\]
\end{proof}

\begin{lemma}[{\cite[\S~19.3]{MR4436039}}]\label{prop:stat}
	Let $d_1,d_2,k \in \mathbb{N}$, $r > 0$ and $\Omega\subset \CC^{d_2}$. Let $\ud{p}=(p_k)_{k\in\mathbb{N}} \subset \mathcal{O}(\{\mf{a}\in \CC^{d_1}\colon \abs{\mf{a}} < 2r\}\times \Omega)$ be a formal classical analytic amplitude satisfying for some $M_p, C_p > 0$ independent of $k$,
	\[
		\sup_{\abs{\mf{a}}\leq r, \mf{b}\in \Omega} \abs{p_k(\mf{a},\mf{b})} \leq M_p C_p^k k!\,.
	\]
	Let $\lambda \geq 1$ and $\sigma_1,\dots,\sigma_{d_1} \in \RR$. There exist $C_1, C_2 >0$ so that for every $K \in \mathbb{N}, K \geq 1$,
	\begin{align}\label{eq:stat}
		\int_{\abs{\mf{a}}\leq r} e^{-\frac{1}{2}\lambda\sum_{j=1}^{d_1}(1-i \sigma_j)\mf{a}_j^2} p_k(\mf{a},\mf{b}) \dd \mf{a} = \left(\frac{2\pi}{\lambda}\right)^{\frac{d_1}{2}}\sum_{\abs{\alpha}<K} \frac{(2\lambda)^{-\abs{\alpha}}}{\alpha!}\sum_{j=1}^{d_1}(1-i\sigma_j)^{-\alpha_j-\frac{1}{2}} \del_{\mf{a}}^{2\alpha} p_k(0,\mf{b}) + R_k^{K}(\mf{b},\lambda)
	\end{align}
	and for every $R > 0$ large enough and $\lambda \geq R$, if $K = \lfloor \lambda R^{-1} \rfloor + 1$, and $k < K$,
	\begin{equation}\label{eq:boundsonRs}
		\abs{\lambda^{-k}R_k^{K-k}(\mf{b},\lambda)} \leq C e^{-\eps\lambda}
	\end{equation}
	for some $C,\eps>0$.
\end{lemma}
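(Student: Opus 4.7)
The plan is to adapt the classical method of stationary phase in a direct, hands-on fashion, exploiting that the phase here is an explicit quadratic form with complex coefficients $1-i\sigma_j$ and that $p_k$ is holomorphic on a polydisk.

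First, I would localize the integral near $\mf{a}=0$. On real $\mf{a}$ the exponent satisfies $\mathrm{Re}\bigl(-\tfrac{\lambda}{2}\sum(1-i\sigma_j)\mf{a}_j^2\bigr)=-\tfrac{\lambda}{2}|\mf{a}|^2$, so the Gaussian provides exponential decay outside any neighborhood of the origin. Choosing a cut-off $\chi\in C_c^\infty(\{|\mf{a}|<r\})$ with $\chi\equiv 1$ on $\{|\mf{a}|\leq r/2\}$, the integral of $(1-\chi)p_k$ against the Gaussian is bounded by $C\lambda^{-d_1/2}M_pC_p^kk!\,e^{-\lambda r^2/8}$, hence exponentially small. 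After this truncation, I would extend the integration over all of $\RR^{d_1}$, introducing only another $O(e^{-c\lambda})$ error since the Gaussian integrand at $|\mf{a}|\geq r$ is exponentially small.

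Second, I would Taylor-expand $\chi p_k(\cdot,\mf{b})$ at $\mf{a}=0$ to order $2K$, using the integral form of the remainder. Terms $\mf{a}^\beta$ with $|\beta|$ odd in some component vanish against the Gaussian by parity; the surviving even-order terms $\frac{\del^{2\alpha}p_k(0,\mf{b})}{(2\alpha)!}\mf{a}^{2\alpha}$ are paired with the one-dimensional moment identity
\[
\int_\RR \mf{a}_j^{2\alpha_j}\, e^{-\tfrac{\lambda}{2}(1-i\sigma_j)\mf{a}_j^2}\,d\mf{a}_j=\sqrt{\tfrac{2\pi}{\lambda}}\,(1-i\sigma_j)^{-\alpha_j-\tfrac12}\frac{(2\alpha_j)!}{\alpha_j!\,(2\lambda)^{\alpha_j}},
\]
valid with branches of $(1-i\sigma_j)^{1/2}$ chosen on $\{\mathrm{Re}>0\}$ (proved by contour deformation in $\mf{a}_j$). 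Multiplying over $j=1,\dots,d_1$ and using $(2\alpha)!/(2\alpha)!=1$ after cancellation with the Taylor coefficient yields precisely the main sum on the RHS of \cref{eq:stat}.

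Third, and this is where the analytic estimates enter, I would bound the Taylor remainder
\[
R_k^K(\mf{b},\lambda)=\int_{\RR^{d_1}} e^{-\tfrac{\lambda}{2}\sum(1-i\sigma_j)\mf{a}_j^2}\sum_{|\alpha|=K}\frac{K\mf{a}^{2\alpha}}{\alpha!}\int_0^1(1-s)^{K-1}\del^{2\alpha}(\chi p_k)(s\mf{a},\mf{b})\,ds\,d\mf{a}
\]
(up to the exponentially small truncation errors above). Cauchy's integral formula on a polydisk of radius $r/2$ about the origin gives $|\del^{2\alpha}p_k(\mf{a},\mf{b})|\leq M_pC_p^kk!\,(2\alpha)!\,(r/4)^{-2|\alpha|}$ for $|\mf{a}|\leq r/4$. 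Combined with the Gaussian moment bound $\int|\mf{a}|^{2|\alpha|}e^{-\lambda|\mf{a}|^2/2}\,d\mf{a}\leq C_{d_1}\lambda^{-d_1/2}\lambda^{-|\alpha|}(2|\alpha|)!/(2^{|\alpha|}|\alpha|!)$ and the multinomial count $\#\{|\alpha|=K\}\leq d_1^K$, one obtains
\[
|R_k^K(\mf{b},\lambda)|\leq C_{d_1}\lambda^{-d_1/2}M_pC_p^kk!\bigl(C'K\lambda^{-1}\bigr)^K
\]
for a constant $C'$ depending only on $r,d_1$, and $C_p$. Replacing $K$ by $K-k$ and multiplying by $\lambda^{-k}$ introduces a factor $(C_p k/\lambda)^k\cdot k!/k^k$, which Stirling bounds by $(eC_p/R)^k$ whenever $k<K\leq\lambda R^{-1}+1$. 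Likewise $\bigl(C'(K-k)/\lambda\bigr)^{K-k}\leq (C'/R)^{K-k}$.

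The main obstacle will be the combinatorial bookkeeping in this last estimate: verifying, in particular, that after choosing $R>\max\{eC_p,eC'\}$ sufficiently large, the product $(eC_p/R)^k(C'/R)^{K-k}$ is bounded by $e^{-\eps\lambda}$ uniformly in $k<K$. This follows from $K+k\geq\lambda R^{-1}$ together with $\log(R/(eC_p))>0$ and $\log(R/C')>0$. Assembling these bounds yields \cref{eq:boundsonRs} and completes the proof.
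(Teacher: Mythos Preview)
Your strategy—Taylor expand, integrate monomials against the Gaussian, bound the remainder via Cauchy and Gaussian moments, then do the Stirling bookkeeping—is exactly the paper's, except that the paper outsources the expansion and the splitting $R_k^K=R_{k,1}^K+R_{k,2}^K$ to \cite[\S19.3]{MR4436039} and only performs the final estimate \cref{eq:boundsonRs} by hand. Your last paragraph reproduces that estimate.

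Two points in your execution need fixing. First, the displayed Taylor remainder is not right: the integral form of Taylor's theorem at order $2K$ gives
\[
\sum_{|\beta|=2K}\frac{2K}{\beta!}\,\mf{a}^\beta\!\int_0^1(1-s)^{2K-1}\,\del^\beta(\chi p_k)(s\mf{a},\mf{b})\,ds,
\]
summing over \emph{all} $\beta$ with $|\beta|=2K$, not only the even ones $\beta=2\alpha$; parity kills odd $\beta$ in the main terms (derivatives at $0$), but not in the remainder (derivatives at $s\mf{a}$). The clean fix, which is what Treves does and what the paper quotes, is to drop multi-indices and use the Cauchy bound $|p_k(\mf{a},\mf{b})-T_{2K-1}p_k(\mf{a},\mf{b})|\le (|\mf{a}|/r)^{2K}\sup_{|\mf{a}'|\le r}|p_k|$, then integrate $|\mf{a}|^{2K}e^{-\lambda|\mf{a}|^2/2}$ in polar coordinates to get the factor $\Gamma(K+d_1/2)(2/\lambda)^{K+d_1/2}$ appearing in $R_{k,1}^K$. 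Second, the cutoff $\chi$ is unnecessary and actually hurts you: you apply Cauchy only on $|\mf{a}|\le r/4$ but then integrate over all of $\RR^{d_1}$, and on the annulus $r/2<|\mf{a}|<r$ the derivatives of $\chi$ have no analytic bounds. The paper avoids this by never extending to $\RR^{d_1}$: it keeps the integral over $|\mf{a}|\le r$, Taylor-expands $p_k$ (not $\chi p_k$), and records in $R_{k,2}^K$ the discrepancy between the truncated Gaussian moments $\int_{|\mf{a}|\le r}\mf{a}^{2\alpha}e^{-\lambda(\cdots)}d\mf{a}$ and the full ones, which is $O(e^{-\lambda r^2/4})$ uniformly. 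With these two corrections your argument coincides with the paper's.
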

\begin{proof}
	According to \cite[\S~19.3.2]{MR4436039} we know that \cref{eq:stat} is true with remainder $R^{K}_k = R_{k,1}^K+R_{k,2}^K$ satisfying, according to \cite[Lem.~19.3.5,Lem.~19.3.6]{MR4436039} respectively, the estimates
	\[
		\abs{R_{k,1}^K(\mf{b},\lambda)} \leq r^{-2K} \abs{\mathbb{S}^{d_1-1}} \left(K+\frac{1}{2}\right)\Gamma\left(K+\frac{d_1}{2}\right)\left(\frac{2}{\lambda}\right)^{K+\frac{d_1}{2}}  M_pC_p^k k!
	\]
	and 
	\[
		\abs{R_{k,2}^K(\mf{b},\lambda)} \leq e^{-\frac{1}{4}\lambda r^2}r^{d_1} \abs{\mathbb{S}^{d_1-1}} \sum_{\abs{\alpha}< K}\Gamma\left(\abs{\alpha}+\frac{d_1}{2}\right)\left(\frac{2}{\lambda r^2}\right)^{\abs{\alpha}+\frac{d_1}{2}} M_pC_p^k k!\,.
	\]
	By \cref{lem:morefact} we can bound $\Gamma(K+d_1/2) \leq (K+d_1)! \leq C_0^K K!$ for some $C_0 > 0$ depending only on the dimension $d_1$, and putting $C\geq \max\{C_0,C_p\}$ and absorbing constants into $C>0$, 
	\[
		\abs{R_{k,1}^K(\mf{b},\lambda)} \leq C^K \lambda^{-K} (K+1/2) K!M_pC_p^k k!\leq C^{K+k} (K+1)!k!\lambda^{-K}\,.
	\]
	Using $K = \lfloor \lambda R^{-1}\rfloor + 1$, $k< K$ and $\lambda \geq R$, we have $K \leq R^{-1}\lambda + 1 \leq 2R^{-1}\lambda$, 
	so that with Stirling's estimate and $K \geq \lambda R^{-1}$,
	\begin{align*}
		\abs{\lambda^{-k}R_{k,1}^{K-k}(\mf{b},\lambda)} &\leq \left(\frac{C}{\lambda}\right)^{K}K! (K+1)\leq (K+1)\sqrt{2\pi K} \left(\frac{CK}{e\lambda}\right)^{K}\leq (K+1)\sqrt{2\pi K}\left(\frac{2CR^{-1}}{e}\right)^{K} \\
		&\leq (K+1)\sqrt{2\pi K}e^{-\lambda R^{-1}} \leq C'e^{-\lambda R^{-1}}
	\end{align*}
	as long as $R \geq 2C$ large enough, for some $C'>0$.

	We turn to estimating $R_{k,2}^K$. By \cref{lem:morefact} we can bound $\Gamma(\abs{\alpha}+d_1/2) \leq C^{\abs{\alpha}}\abs{\alpha}!$ so that for all $l \leq \lfloor \lambda R^{-1} \rfloor + 1$, for $R>0$ large enough,
	\begin{equation}\label{eq:Kk}
	\begin{aligned}
		r^{d_1}\lambda^{d_1/2}\sum_{\abs{\alpha}< l} \Gamma\left(\abs{\alpha}+\frac{d_1}{2}\right)\left(\frac{2}{\lambda r^2}\right)^{\abs{\alpha}+\frac{d_1}{2}} &\leq \sum_{\abs{\alpha}< l} \left(\frac{C}{r^2\lambda}\right)^{\abs{\alpha}} \abs{\alpha}! = \sum_{j=0}^{l-1}\left(\frac{C}{r^2\lambda}\right)^{j} j!\sum_{\abs{\alpha}=j} 1 \\
		&\leq C\sum_{j=0}^{l-1} \sqrt{j}\left(\frac{Cd_1 j}{er^2\lambda}\right)^j \leq \sqrt{\lambda R^{-1}} \sum_{j\geq 0} \left(\frac{d_1CR^{-1}}{er^2}\right)^j \leq C \lambda^{1/2}\,.
	\end{aligned}
	\end{equation}
	where we used Stirling's estimate, \cref{eq:boundonmultiindices}, and assumed $R\geq C d_1r^{-2}$ large enough.

	Additionally, using Stirling's estimate and $k<K\leq 2R^{-1}\lambda$ and taking $R > 2C_p$, we have 
	\[
		C_p^k k! \lambda^{-k} \leq \sqrt{2\pi k}\left(\frac{C_p k}{e\lambda}\right)^k \leq \sqrt{2\pi k} e^{-k} \leq C' \sqrt{\lambda}\,.
	\]
	Thus, we can conclude that, using \cref{eq:Kk} with $l = K-k$,
	\[
		\abs{\lambda^{-k}R_{k,2}^{K-k}(\mf{b},\lambda)} \leq e^{-\frac{1}{4}\lambda r^2} C\lambda^{1/2-d_1/2}M_p C_p^k k! \lambda^{-k} \leq e^{-\frac{1}{4}\lambda r^2} \lambda^{1-\frac{d_1}{2}} CC' \leq C'' e^{-\eps\lambda}\,,
	\]
	for some $C'' > 0$.
\end{proof}

The following theorem is proved in the same way as \cite[Prop.~1.4]{zbMATH03717772}, but due to some differences in the statement, we give the full proof.
\begin{theorem}[Classical Analytic Stationary Phase]\label{thm:statphase}
	Let $\mf{A}_0 \subset \RR^N, \mf{B}_0 \subset \RR^{N'}$ denote open sets where $N,N'\in\mathbb{N}$ and let $(\hat a,\hat b)\in A_0\times B_0$ be fixed. Let $A_0^\CC \supset A_0$ and $B_0^\CC\supset B_0$ be open complex neighborhoods of $A_0$ and $B_0$ respectively and let $\ud{q} =  (q_k)_{k\in\mathbb{N}} \subset \mathcal{O}(A_0^\CC\times B_0^\CC)$ be a formal classical analytic amplitude.

	Let $\Phi \in\mathcal{O}(\mf{A}_0^\CC\times \mf{B}_0^\CC)$ be holomorphic satisfying
	\begin{equation}\label{eq:phiprops}
		\nabla_{\mf{a}}\Phi(\hat{\mf{a}}, \hat{\mf{b}}) = 0\,,\quad \mathrm{det}\del_{\mf{a}}^2 \Phi(\hat{\mf{a}},\hat{\mf{b}}) \neq 0\,,\quad \mathrm{Im}\Phi\vert_{A_0\times B_0}\geq 0\,.
	\end{equation}
	For any small enough open $A\subset A_0$ with $\hat a\in A$ there exist open $B\subset B_0$ with $\hat b\in B$ and $\vartheta_0 > 0$ so that for all $0<\vartheta<\vartheta_0$ and any function $\tau \colon \RR_{>0} \to \mathbb{N}$ satisfying
	\[
		\vartheta \lambda/2 \leq \tau(\lambda) \leq \vartheta\lambda
	\]
	the following is true.

	Let $(\chi_j)_{j\in\mathbb{N}} \in C_c^\infty(A_0)$ be a sequence of Ehrenpreis cut-offs compactly supported inside $A$ and identically $1$ in a (real) neighborhood of $\hat a$ we call $A' \subset A$. 


	For every $\lambda \geq 1$ large enough, there is a formal classical analytic amplitude 
	$\ud{q^\sharp}(\mf{b})$ 
	so that if $q^\sharp(\mf{b};\lambda)$ is any finite realization of $\ud{q^\sharp}(\mf{b})$ 
	and $q(a,b;\lambda)$ 
	is any finite realization of $\ud{q}(a,b)$ 
	then for some $\eps>0$ and all $b\in B$,
	\begin{equation}\label{eq:defreala2}
		 \lambda^{-\frac{\mf{N}}{2}} e^{i\lambda \Phi(\mathbf{\mf{a}}(\mf{b}),\mf{b})}q^\sharp(\mf{b};\lambda) = \int e^{i\lambda\Phi(\mf{a},\mf{b})}\chi_{\tau(\lambda)}(\mf{a})q(\mf{a},\mf{b}; \lambda) \dd \mf{a} + \mathcal{O}(e^{-\eps\lambda})\,.
	\end{equation}
	Furthermore, if
	\begin{equation}\label{eq:anonvanish}
		q_0(\hat a, \hat b) \neq 0\,,
	\end{equation}
	then $\ud{q^\sharp}$ is elliptic in $B$, provided $B$ is sufficiently contracted about $\hat b$.
\end{theorem}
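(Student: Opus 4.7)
The strategy is to reduce the oscillatory integral to the Gaussian form handled by Lemma~\ref{prop:stat} via three ingredients: localization, contour deformation, and a Morse-lemma change of variables.

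First, I would invoke the implicit function theorem on $\nabla_{\mf{a}}\Phi(\mf{a},\mf{b}) = 0$. Since $\det \del_{\mf{a}}^2\Phi(\hat{\mf{a}},\hat{\mf{b}})\neq 0$, there exists a unique holomorphic $\mathbf{\mf{a}} \colon B^{\CC}\to A^{\CC}$ with $\nabla_{\mf{a}}\Phi(\mathbf{\mf{a}}(\mf{b}),\mf{b})=0$ and $\mathbf{\mf{a}}(\hat{\mf{b}})=\hat{\mf{a}}$, for $B$ a small complex neighborhood of $\hat{\mf{b}}$. Then I would pick an auxiliary $\psi\in C_c^\infty(A)$ with $\psi\equiv 1$ in a real neighborhood of $\hat{\mf{a}}$, contained in $A'$ (so that $\chi_{\tau(\lambda)}\psi = \psi$), and split
\[
\int e^{i\lambda\Phi}\chi_{\tau(\lambda)}q\,d\mf{a} = \int e^{i\lambda\Phi}\psi q\,d\mf{a} + \int e^{i\lambda\Phi}(\chi_{\tau(\lambda)}-\psi)q\,d\mf{a}.
\]
On the support of $\chi_{\tau(\lambda)}-\psi$, the function $\mf{a}$ stays in a region where $\nabla_{\mf{a}}\Phi$ is bounded away from zero (after shrinking $A,B$, using the uniqueness of the critical point), so Lemma~\ref{lem:cptpartint2} applied with $\tau(\lambda)$ partial integrations, combined with the Ehrenpreis derivative bounds on $\chi_{\tau(\lambda)}$ and the pseudoanalytic estimates on the finite realization $q(\mf{a},\mf{b};\lambda)$, produces $\mathcal{O}(e^{-\varepsilon\lambda})$ provided $\vartheta_0$ is taken small enough.

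Second, in the localized piece $\int e^{i\lambda\Phi}\psi q\,d\mf{a}$, I would deform the real contour through $\hat{\mf{a}}$ into a complex contour through $\mathbf{\mf{a}}(\mf{b})$, using the holomorphy of the integrand in a complex neighborhood (and the compactness of $\supp\psi$). The boundary corrections vanish because outside $\supp\psi$ we return to the real contour, and along the deformation the imaginary part of $\Phi-\Phi(\mathbf{\mf{a}}(\mf{b}),\mf{b})$ grows at least quadratically in $|\mf{a}-\mathbf{\mf{a}}(\mf{b})|$ (by the Taylor expansion of $\Phi$ using \cref{eq:phiprops}), giving exponential smallness. On the deformed contour I then apply the analytic Morse lemma: there is a biholomorphic change of variables $\mf{a} = \mathbf{\mf{a}}(\mf{b})+T(\mf{b},\mf{a}')$ near $\mathbf{\mf{a}}(\mf{b})$, with $T$ holomorphic in both arguments, such that
\[
\Phi(\mf{a},\mf{b}) - \Phi(\mathbf{\mf{a}}(\mf{b}),\mf{b}) = -\tfrac{1}{2}\sum_{j=1}^{N}(1-i\sigma_j(\mf{b}))\,(\mf{a}'_j)^2,
\]
after a further orthogonal diagonalization of $\del_{\mf{a}}^2\Phi(\mathbf{\mf{a}}(\mf{b}),\mf{b})$. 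The signs $\sigma_j$ are real for real $\mf{b}$ near $\hat{\mf{b}}$ by the hypothesis $\mathrm{Im}\,\Phi\geq 0$ on $A_0\times B_0$ combined with non-degeneracy of the Hessian, and depend analytically on $\mf{b}$. The Jacobian $|\det DT|$ is holomorphic and non-vanishing at $\mathbf{\mf{a}}(\mf{b})$, so after absorbing it and $\psi$ into $q$ one obtains an integral of exactly the form treated by Lemma~\ref{prop:stat}.

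Third, I would apply Lemma~\ref{prop:stat} termwise to each $\lambda^{-k}q_k$, for $0\leq k \leq K := \lfloor\lambda R^{-1}\rfloor+1$, and sum. The main term yields
\[
\lambda^{-\frac{N}{2}}e^{i\lambda\Phi(\mathbf{\mf{a}}(\mf{b}),\mf{b})}\sum_{k+|\alpha|<K}\lambda^{-(k+|\alpha|)}c_\alpha(\mf{b})\,\del_{\mf{a}}^{2\alpha}\bigl[q_k\circ(\mathbf{\mf{a}}(\mf{b})+T(\mf{b},\cdot))\cdot\psi\cdot|\det DT|\bigr]\bigr|_{\mf{a}'=0},
\]
where the $c_\alpha(\mf{b})$ are the analytic Gaussian coefficients from Lemma~\ref{prop:stat}. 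Regrouping by $j = k+|\alpha|$ defines the coefficients $q_j^\sharp(\mf{b})$ of $\ud{q^\sharp}$. Verifying that $\ud{q^\sharp}$ satisfies the estimates of \cref{def:sjoamp} will be the main technical obstacle: one must show that applying $\del_{\mf{a}}^{2\alpha}$ to holomorphic compositions of $q_k$ with the analytic map $T$ and multiplying by $c_\alpha$ still yields growth of type $M C^j j!$. This follows from Cauchy estimates on complex polydisks combined with the common complex neighborhood provided by $\ud{q}$ being a formal pseudoanalytic amplitude, together with the analytic-multinomial combinatorics of Leibniz and Fa\`a di Bruno applied to $T$. The exponentially small total remainder is controlled by summing the bounds \cref{eq:boundsonRs}: for $k<K$ the individual $\lambda^{-k}R_k^{K-k}$ are each $\mathcal{O}(e^{-\varepsilon\lambda})$, and there are at most $K\leq C\lambda$ of them, so a slight adjustment of $\varepsilon$ absorbs this polynomial factor. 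Independence of \cref{eq:defreala2} from the choice of finite realizations of $\ud{q}$ and $\ud{q^\sharp}$ follows since any two such realizations differ by $\mathcal{O}(e^{-\varepsilon\lambda})$. Finally, ellipticity: explicit inspection of the leading term $j=0$ gives $q_0^\sharp(\mf{b}) = (2\pi)^{N/2}\prod_j(1-i\sigma_j(\mf{b}))^{-1/2}\,|\det DT(\mf{b},0)|\,q_0(\mathbf{\mf{a}}(\mf{b}),\mf{b})$, which is non-vanishing at $\hat{\mf{b}}$ whenever $q_0(\hat{\mf{a}},\hat{\mf{b}})\neq 0$, and therefore non-vanishing in a sufficiently small $B$ by continuity.
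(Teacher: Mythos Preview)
Your overall architecture (implicit function theorem for $\mathbf{a}(b)$, contour deformation to the complex critical point, Morse lemma, then Lemma~\ref{prop:stat}) matches the paper's, but there is a genuine gap in the Morse-lemma step that the paper addresses with an extra contour deformation you do not have.

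You claim that after diagonalization the Hessian can be written with \emph{real} $\sigma_j$, justified by ``$\mathrm{Im}\,\Phi\geq 0$ on $A_0\times B_0$ combined with non-degeneracy of the Hessian''. But $\mathrm{Im}\,\Phi\geq 0$ on the real domain only forces $\mathrm{Im}\,\del_{\mf{a}}^2\Phi(\hat{\mf{a}},\hat{\mf{b}})$ to be positive \emph{semi}-definite, not definite; the complex non-degeneracy of $\del_{\mf{a}}^2\Phi$ says nothing about its imaginary part. If $\mathrm{Im}\,\del_{\mf{a}}^2\Phi(\hat{\mf{a}},\hat{\mf{b}})$ is singular (e.g.\ $\Phi$ real-valued with a real non-degenerate Hessian), the normal form $\tfrac{i}{2}\sum(1-i\sigma_j)\mf{a}_j^2$ with real $\sigma_j$ is unattainable, and Lemma~\ref{prop:stat} does not apply. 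The paper fixes this by a second contour deformation $\Gamma_s^\delta(\mf{a},\mf{b})=\mf{a}+is\delta\nabla_{\mf{a}}\overline{\tilde\Phi}^{\mathrm{hol}}(\mf{a};\mf{b})$, which replaces $\tilde\Phi$ by $\tilde\Phi_\delta$ and forces $\mathrm{Im}\,\del_{\mf{a}}^2\tilde\Phi_\delta(\hat{\mf{a}},\hat{\mf{b}})$ to be strictly positive definite (see \cref{eq:phitddeltaclaim} and \cref{eq:imphitd}). Only then is the reduction to Lemma~\ref{prop:stat}'s Gaussian form legitimate.

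Two smaller points. First, introducing a \emph{fixed} $\psi\in C_c^\infty$ is dangerous: the repeated partial integration in Lemma~\ref{lem:cptpartint2} requires Ehrenpreis bounds on $\tau(\lambda)$ derivatives of the cutoff, and $\del^\alpha\psi$ for $|\alpha|\sim\tau(\lambda)$ is uncontrolled. The paper instead carries the Ehrenpreis cutoff $\chi_{\tau(\lambda)}$ (extended trivially to $\CC^N$) through all three contour deformations, so that the Stokes remainders involve $\bar\del\chi_{\tau(\lambda)}$, which is supported away from the critical point and still satisfies Ehrenpreis bounds. Second, your assertion that ``along the deformation $\mathrm{Im}(\Phi-\Phi(\mathbf{a}(b),b))$ grows at least quadratically'' is not justified once you leave the real domain; the paper only obtains $\mathrm{Im}\,\Phi\geq -\varepsilon_1$ along the deformation and chooses $\varepsilon_1$ small relative to the gain from partial integration.
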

\begin{proof}
	This proof is segmented into nine parts. The contour deformations made in this proof are in the spirit of \cite[Thm.~2.8]{AST_1982__95__R3_0}, \cite[Prop.~1.2]{bonthonneau2020fbi} and those alluded at in \cite[\S~19.3.1]{MR4436039}.

	\bsc{1. Preparations}

	First we record a consequence of the holomorphic implicit function thereom. Namely, there exist complex open neighborhoods $A^\CC$ of $\hat a$ and $B^\CC$ of $\hat b$ and 
	a holomorphic function $\mathbf{a} \colon B^\CC\to A^\CC$ so that 
	\begin{equation}\label{eq:phicrit}
		(a,b) \in A^\CC\times B^\CC\colon \nabla_a\Phi(a,b) = 0 \iff a = \mathbf{a}(b)\,,
	\end{equation}
	and $\mathbf{a}(\hat b) = \hat a$. 
	
	Let us thus first take $A = \pi_\RR A^\CC$ and $B = \pi_\RR B^\CC$. Throughout this proof we adopt the following convention: when we write that we contract $\mf{A}$ about $\hat{\mf{a}}$ or write that we choose $\mf{A}$ small enough, we mean that we choose $\chi_{\tau(\lambda)}$ according to a small enough $\mf{A}$ (and thus also a modified (smaller) $\mf{A}'$). This will entail the constant determining the growth of this cut-off to possibly increase, which we will absorb into $C_q$, the growth of the amplitude $q_k$, by assuming the amplitude grows more quickly. This will done at most finitely many times. 

	Furthermore, we choose to extend $\chi_{\tau(\lambda)}$ as a $C^\infty$ function into $\CC$ as follows: $\chi_{\tau(\lambda)} = \chi_{\tau(\lambda)}^\CC = \chi_{\tau(\lambda)} \circ \pi_\RR$, where $\pi_\RR$ is the projection onto the real part of any complex value.

	Let $k \in \mathbb{N}$ be arbitrary. We shall consider the expression 
	\begin{equation}\label{eq:herewecalc}
		\int_{\mf{A}}e^{i\lambda\Phi(\mf{a};\mf{b})}\chi_{\tau(\lambda)}(\mf{a})q_k(\mf{a},\mf{b})\dd\mf{a}\,.
	\end{equation}
	The rest of the proof is organized as follows: we will consider $k$ to be fixed for parts \bsc{2.} through \bsc{7.}, performing calculations on the RHS of \cref{eq:herewecalc}, with part \bsc{7.} containing an application of stationary phase \cite[Thm.~19.3.3]{MR4436039} with estimates from \cref{prop:stat}. In parts \bsc{8.} and \bsc{9.} we allow $k$ to be variable and embed the previous calculations into the formal amplitude framework from \cref{def:formalamp} and \cite[\S~1]{AST_1982__95__R3_0}. 

	\bsc{2. First Contour Deformation}

	In order to apply stationary phase to the integral in \cref{eq:herewecalc}, we require that there exists exactly one non-degenerate critical point of the exponent in the domain of integration. However, we are integrating over $\mf{A} \subset \RR^{N}$, so that the critical point of $\Phi$ guaranteed by \cref{eq:phicrit} may not lie in this domain. In order to remedy this, we use a contour deformation.

	We define the contour $\Gamma_s$ on $\mf{A}\times\mf{B}$, 
	\[
		\Gamma_s(\mf{a},\mf{b}) = \mf{a}-s\hat{\mf{a}}+s\mathbf{a}(\mf{b})\,,
	\]
	and, after fixing $\mf{b}\in\mf{B}$ arbitrary, we let $M = \mf{A}\times\{\mf{b}\}\times [0,1]$ and define 
	\[
		H \colon M \to \CC^{N}\,, (\mf{a},\mf{b},s)\mapsto\Gamma_s(\mf{a},\mf{b})\,,
	\]
	and we choose $\mf{A},\mf{B}$ small enough so that the image of $H$ is in the domain of the holomorphic extensions of $q_k$ and $\Phi$ we have chosen above.


	Abusing notation, we denote by $\dd \mf{a}_1\wedge \dots \wedge \dd \mf{a}_N$ the complex volume form in $\CC^N$ and use the notation $\dd \mf{a}$ for the real volume form on $\RR^N$. 
	By holomorphicity,
	\begin{equation}\label{eq:sto01}
		\dd \left(H^\ast (e^{i\lambda\Phi}q_k\wedge \dd \mf{a}^1\wedge\dots\wedge\dd\mf{a}^{N})\right) = H^\ast\left(\bar\del_{\mf{a}} \left(e^{i\lambda\Phi}q_k\right)\wedge\dd \mf{a}^1\wedge\dots\wedge \dd\mf{a}^{N}\right) = 0\,,
	\end{equation}
	where $H^\ast$ is the pull-back for $H$ and we used the fact that
	\[
		(\del_{\mf{a}} e^{i\lambda\Phi}q_k) \wedge \dd\mf{a}^1\wedge\dots\wedge\dd\mf{a}^{N} =\left(\sum_{j=1}^{N} \del_{\mf{a}^j} e^{i\lambda\Phi}q_k \dd \mf{a}^j\right)\wedge\dd \mf{a}^1\wedge\dots\wedge\dd\mf{a}^{N}= 0\,.
	\]
	According to \cref{eq:sto01} we then have
	\[
		\dd \left(H^\ast (e^{i\lambda\Phi}\chi_{\tau(\lambda)} q_k\wedge \dd \mf{a}^1\wedge\dots\wedge\dd\mf{a}^{N})\right) = H^\ast (e^{i\lambda\Phi} q_k(\bar\del_{\mf{a}}\chi_{\tau(\lambda)}) \wedge \dd \mf{a}^1\wedge\dots\wedge\dd\mf{a}^{N})\,,
	\]
	where $(\bar\del_{\mf{a}}\chi_{\tau(\lambda)})$ is supported away from $\hat{\mf{a}}$. 

	Now applying Stokes' theorem to the $N$-form $H^\ast(e^{i\lambda\Phi}\chi_{\tau(\lambda)} q_k\dd \mf{a}^1\dots\wedge\dd\mf{a}^{N})$ over $M$, we find for the integral in the RHS of \cref{eq:herewecalc},
	\begin{equation}\label{eq:sto1}
	\begin{aligned}
		\int_{\mf{A}}e^{i\lambda\Phi(\mf{a};\mf{b})}\chi_{\tau(\lambda)}(\mf{a})q_k(\mf{a},\mf{b})\dd\mf{a}
		&= \int_{\mf{A}}e^{i\lambda\Phi(\mf{a}-\hat{\mf{a}}+\mathbf{a}(\mf{b});\mf{b})}\chi_{\tau(\lambda)}(\mf{a}-\hat{\mf{a}}+\mathbf{a}(\mf{b}))q_k(\mf{a}-\hat{\mf{a}} + \mathbf{a}(\mf{b}),\mf{b}) \dd\mf{a} \\
		&-\int_{M}  H^\ast (e^{i\lambda\Phi} (\bar\del_{\mf{a}}\chi_{\tau(\lambda)}) q_k\wedge \dd \mf{a}^1\wedge\dots\wedge\dd\mf{a}^{N})\,,
	\end{aligned}
\end{equation}
	where the integral over the set $\del\mf{A}\times [0,1]$ vanished because $\chi_{\tau(\lambda)}(\mf{a}-s\hat{\mf{a}}+s\mathbf{a}(\mf{b}))$ is identically $0$ in $\del\mf{A}\times[0,1]$ when $\mf{b}= \hat{\mf{b}}$, and thus this must also be true for all $\mf{b}\in\mf{B}$ when $\mf{B}$ is chosen small enough (depending on $\mf{A}$).


We will show later (in part \bsc{5.}) that the integral over $M$, that is to say the second term in the RHS of \cref{eq:sto1}, is exponentially decaying. However, before we do so we will carry out two more contour deformations.

	\bsc{3. Second Contour Deformation}

	Giving the new phase a name, 
	\begin{equation}\label{eq:defphitd}
		\td{\Phi}(\mf{a};\mf{b})\coloneqq \Phi(\Gamma_1(\mf{a},\mf{b});\mf{b}) = \Phi(\mf{a}-\hat{\mf{a}}+\mathbf{a}(\mf{b});\mf{b})\,,
	\end{equation}
	we see that that $\td{\Phi}$ has a unique non-degenerate critical point in $\mf{A}^\CC$ at $\mf{a}=\hat{\mf{a}}$. That this is the unique stationary point comes from \cref{eq:phicrit}. That it is non-degenerate comes from the calculation
	\begin{equation*}
		\del_{\mf{a}}^2\td{\Phi}(\hat{\mf{a}};\mf{b}) = \del_{\mf{a}}^2\Phi(\mf{a}-\hat{\mf{a}}+\mathbf{a}(\mf{b});\mf{b})\vert_{\mf{a}=\hat{\mf{a}}} = (\del_{a}^2\Phi)(\mathbf{a}(b);b)\,,
	\end{equation*}
	which by \cref{eq:phiprops} is invertible at $b = \hat b$ and thus also in a small neighborhood around $\hat b$.

	Thus, choosing $\mf{B}$ small enough, we see that for all $\mf{a} \in \mf{A}^\CC$
	\begin{equation}\label{eq:halfclaim}
		\nabla_{\mf{a}}\td{\Phi}(\mf{a};\mf{b}) = 0 \iff \mf{a}=\hat{\mf{a}}\,,\, \mathrm{det}\del_{\mf{a}}^2 \td{\Phi}(\mf{a};\mf{b}) \neq 0\,.
	\end{equation}
	However, notice that we are in the position that we have no guarantee that $\mathrm{Im}\del_a^2\Phi(\hat a;\hat b)$ is 
	non-degenerate. We will remedy this with another contour deformation.

	Thus, for some fixed small $\delta > 0$ that we will choose later, we define the contour
	\begin{equation}\label{eq:gammadeltas}
		\Gamma^\delta_s(\mf{a}, \mf{b}) = \mf{a} + is\delta \nabla_{\mf{a}} \bar{\td{\Phi}}^{\mathrm{hol}}(\mf{a};\mf{b})
	\end{equation}
	where in the definition of $\Gamma^\delta_s(\mf{a},\mf{b})$, $\bar{\td{\Phi}}^{\mathrm{hol}}$ is meant as the holomorphic extension of the real-analytic complex valued map $\bar{\td{\Phi}}$, so that $\Gamma^\delta_s$ is holomorphic with respect to $\mf{a},\mf{b}$. 
	
	Fixing $\mf{b} \in \mf{B}$ arbitrary, we let $M^\delta = \mf{A}\times \{\mf{b}\}\times[0,1]$ and define
	\[
		H^\delta \colon M^\delta \to \CC^{N}\,,\quad (\mf{a},\mf{b},s)\mapsto \Gamma^\delta_s(\mf{a},\mf{b})\,.
	\]
	Following the argument in part \bsc{2.} of this proof, putting
	\begin{equation}\label{eq:defatd}
		\td{q}_{k}(\mf{a},\mf{b}) = q_k(\mf{a}-\hat{\mf{a}}+\mathbf{a}(\mf{b}),\mf{b})\,,\an\td{\chi}_{\tau(\lambda)}(\mf{a},\mf{b}) =\chi_{\tau(\lambda)}(\mf{a}-\hat{\mf{a}}+\mathbf{a}(\mf{b}),\mf{b})\,,
	\end{equation}
	we see
	\[
		\dd (H^\delta)^\ast(e^{i\lambda\td{\Phi}}\td{\chi}_{\tau(\lambda)} \td{q}_{k}\dd \mf{a}^1 \wedge \dots\wedge\dd \mf{a}^{N}) = (H^\delta)^\ast(e^{i\lambda\td{\Phi}} \td{q}_{k}(\bar\del_{\mf{a}}\td{\chi}_{\tau(\lambda)}) \wedge \dd \mf{a}^1 \wedge \dots\wedge\dd \mf{a}^{N})\,,
	\]
	where we assumed that $\delta > 0$ was chosen small enough for the image of $H^\delta$ to be in the domain of the holomorphic functions $\td{\Phi}$ and $\td{q}_k$.

	Applying Stokes' theorem on $M^\delta$ to the first term in the RHS of \cref{eq:sto1},
	\begin{equation}\label{eq:sto3}
	\begin{aligned}
		\int_{\mf{A}}e^{i\lambda\td{\Phi}}\td{\chi}_{\tau(\lambda)} \td{q}_{k} \dd \mf{a} &= \int_{\mf{A}} e^{i\lambda\td{\Phi}(\mf{a}+i\delta \nabla_{\mf{a}}\bar{\td{\Phi}}^{\mathrm{hol}}(\mf{a};\mf{b});\mf{b})}(\td{\chi}_{\tau(\lambda)} \td{q}_{k})(\mf{a}+i\delta \nabla_{\mf{a}}\bar{\td{\Phi}}^{\mathrm{hol}}(\mf{a};\mf{b}),\mf{b}) \det J_{\mf{a}} \Gamma^\delta_1(\mf{a},\mf{b})\dd\mf{a}\\
		&- \int_{M^\delta} (H^\delta)^\ast\left(e^{i\lambda\td{\Phi}} \td{q}_{k}(\bar\del_{\mf{a}}\td{\chi}_{\tau(\lambda)}) \wedge \dd \mf{a}^1 \wedge \dots\wedge\dd \mf{a}^{N}\right)\,,
	\end{aligned}
	\end{equation} 
	where the boundary integral over $\del\mf{A}\times[0,1]$ vanished because $\td{\chi}_{\tau(\lambda)}(\mf{a}+is\delta \nabla_{\mf{a}}\bar{\td{\Phi}}^{\mathrm{hol}}(\mf{a};\mf{b}),\mf{b})$ is identically zero in $\del\mf{A}\times[0,1]$ when $\delta = 0$ and $\mf{b}=\hat{\mf{b}}$ so that this is also true for a small enough $\delta > 0$ and small enough $\mf{B}$. Here and throughout this proof $J_{\mf{a}}$ denotes the Jacobian with respect to $\mf{a}$.

	We will deal with the term in \cref{eq:sto3} integrating over $M^\delta$ in part \bsc{6.} and show that it is exponentially decaying.

	We claim now that, putting
	\begin{equation}\label{eq:defphitddelta}
		\td{\Phi}_\delta(\mf{a};\mf{b}) \coloneqq \td{\Phi}(\mf{a}+i\delta\nabla_{\mf{a}}\bar{\td{\Phi}}^{\mathrm{hol}}(\mf{a};\mf{b});\mf{b})\,,
	\end{equation}
	we have
	\begin{align}\label{eq:phitddeltaclaim}
		\nabla_{\mf{a}}\td{\Phi}_\delta(\mf{a};\mf{b}) = 0 \iff \mf{a}=\hat{\mf{a}}\,,\, \mathrm{det}\del_{\mf{a}}^2 \td{\Phi}_\delta(\mf{a};\mf{b}) \neq 0\,,\, \mathrm{det}\,\mathrm{Im}\del_{\mf{a}}^2\td{\Phi}_\delta(\mf{a};\mf{b}) \neq 0\,.
	\end{align}
	We begin proving this claim. 

	We use throughout this proof the fact that $\bar{\td{\Phi}}(\mf{a};\mf{b}) = \bar{\td{\Phi}}^{\mathrm{hol}}(\mf{a};\mf{b})$ for $(\mf{a},\mf{b}) \in \mf{A}\times\mf{B}\subset \RR^{N+N'}$. 

	First, using the Taylor expansion and $\nabla_{\mf{a}}\bar{\td{\Phi}}^{\mathrm{hol}}(\hat{\mf{a}};\mf{b}) = 0$, 
	\[
		\nabla_{\mf{a}}\bar{\td{\Phi}}^{\mathrm{hol}}(\mf{a};\mf{b}) = \del_{\mf{a}}^2\bar{\td{\Phi}}^{\mathrm{hol}}(\hat{\mf{a}};\mf{b})(\mf{a}-\hat{\mf{a}}) +\mathcal{O}(\abs{\mf{a}-\hat{\mf{a}}}^2)\,,
	\]
	where from \cref{eq:halfclaim} the non-degeneracy of $\del_{\mf{a}}^2\bar{\td{\Phi}}^{\mathrm{hol}}(\hat{\mf{a}},\mf{b})$ near $\hat{\mf{b}}$ tells us that, for $\mf{A},\mf{B}$ small enough, for some $C_1, C_2 > 0$
	\begin{equation}\label{eq:phibound}
		C_1\abs{\mf{a}-\hat{\mf{a}}} \leq \abs{\nabla_{\mf{a}}\bar{\td{\Phi}}^{\mathrm{hol}}(\mf{a};\mf{b})} \leq C_2\abs{\mf{a}-\hat{\mf{a}}}\,.
	\end{equation}
	Using \cref{eq:phibound}, for $\delta >0$ small enough, 
	\[
		\abs{\mf{a} - \hat{\mf{a}} + i\delta \nabla_{\mf{a}}\bar{\td{\Phi}}^{\mathrm{hol}}(\mf{a};\mf{b})} \geq (1-\delta C_2)\abs{\mf{a}-\hat{\mf{a}}}\,,\quad\text{and so}\quad \abs{\mf{a} - \hat{\mf{a}} + i\delta \nabla_{\mf{a}}\bar{\td{\Phi}}^{\mathrm{hol}}(\mf{a};\mf{b})} =0  \iff \mf{a}=\hat{\mf{a}}\,,
	\]
	so that together with \cref{eq:halfclaim}, for $\delta > 0$ small enough, and $\td{\Phi}_\delta$ defined in \cref{eq:defphitddelta},
	\[
		\nabla_{\mf{a}}\td{\Phi}_\delta(\mf{a};\mf{b}) = 0\iff \nabla_{\mf{a}}\td{\Phi}(\mf{a}+i\delta \nabla_{\mf{a}}\bar{\td{\Phi}}^{\mathrm{hol}}(\mf{a};\mf{b});\mf{b}) = 0 \iff \mf{a}+i\delta \nabla_{\mf{a}}\bar{\td{\Phi}}^{\mathrm{hol}}(\mf{a};\mf{b}) = \hat{\mf{a}} \iff \mf{a} = \hat{\mf{a}}\,. 
	\]
	Now let us check the first non-degeneracy claim in \cref{eq:phitddeltaclaim}, where due to continuity it suffices to check that 
	\[
		\mathrm{det}\del_{\mf{a}}^2 \td{\Phi}_\delta(\hat{\mf{a}};\hat{\mf{b}}) \neq 0\,,\, \mathrm{det}\,\mathrm{Im}\del_{\mf{a}}^2\td{\Phi}_\delta(\hat{\mf{a}};\hat{\mf{b}}) \neq 0
	\]
	and choosing $\mf{B}$ small enough. Using the chain rule we see 
	\begin{equation*}
		\del_{\mf{a}}^2 \td{\Phi}_\delta(\hat{\mf{a}};\hat{\mf{b}}) = (\del_{\mf{a}}^2 \td{\Phi})(\hat{\mf{a}};\hat{\mf{b}})(\id + i\delta \del_{\mf{a}}^2 \bar{\td{\Phi}}^{\mathrm{hol}}(\hat{\mf{a}};\hat{\mf{b}}))\,,
	\end{equation*}
	where according to \cref{eq:halfclaim} we know that $(\del_{\mf{a}}^2 \td{\Phi})(\hat{\mf{a}};\hat{\mf{b}})$ is non-degenerate, and thus choosing $\delta > 0$ small enough, the perturbation is small enough so that $\del_{\mf{a}}^2 \td{\Phi}_\delta(\hat{\mf{a}};\hat{\mf{b}})$ is non-degenerate and we have thus already shown the first non-degeneracy claim of \cref{eq:phitddeltaclaim}.

	To show the second non-degeneracy claim in \cref{eq:phitddeltaclaim}, we use the Taylor expansion to see that, recalling $\bar{\td{\Phi}}^{\mathrm{hol}}= \bar{\td{\Phi}}$ on $\mf{A}\times \mf{B}$,
	\[
		\td{\Phi}_\delta(\mf{a};\hat{\mf{b}}) = \td{\Phi}(\mf{a};\hat{\mf{b}}) + i\delta\abs{\nabla_{\mf{a}} \td{\Phi}(\mf{a};\hat{\mf{b}})}^2 + \mathcal{O}\left(\delta^2\abs{\nabla_{\mf{a}}\td{\Phi}(\mf{a};\hat{\mf{b}})}^2\right)
	\]
	so that, noting $\td{\Phi}(\mf{a};\hat{\mf{b}}) = \Phi(\mf{a};\hat{\mf{b}})$ has non-negative imaginary part (by \cref{eq:phiprops}),
	\[
		\mathrm{Im}\td{\Phi}_\delta(\mf{a};\hat{\mf{b}}) \geq \delta\abs{\nabla_{\mf{a}} \td{\Phi}(\mf{a};\hat{\mf{b}})}^2 + \mathcal{O}\left(\delta^2\abs{\nabla_{\mf{a}}\td{\Phi}(\mf{a};\hat{\mf{b}})}^2\right)\,.
	\]
	and using \cref{eq:phibound} (which is also true for $\td{\Phi}$ in  $\mf{A}\times\mf{B} \subset \RR^{N+N'}$)
	\begin{equation}\label{eq:imphitd}
		\mathrm{Im}\td{\Phi}_\delta(\mf{a};\hat{\mf{b}}) \geq C_\delta\abs{\mf{a}-\hat{\mf{a}}}^2 
	\end{equation}
	for some $C_\delta > 0$ when $\mf{A}$ and $\delta>0$ are chosen small enough. 

	According to Taylor's expansion we now have
	\begin{align*}
		\mathrm{Im}\td{\Phi}_\delta(\mf{a};\hat{\mf{b}}) &= \mathrm{Im}\td{\Phi}_\delta(\hat{\mf{a}};\hat{\mf{b}}) + \mathrm{Im}\nabla_{\mf{a}} \td{\Phi}_\delta(\hat{\mf{a}};\hat{\mf{b}})\cdot(\mf{a}-\hat{\mf{a}})+\mathrm{Im}\del_{\mf{a}}^2\td{\Phi}_\delta(\hat{\mf{a}};\hat{\mf{b}}) (\mf{a}-\hat{\mf{a}}) \cdot (\mf{a}-\hat{\mf{a}}) + \mathcal{O}(\abs{\mf{a}-\hat{\mf{a}}}^3)\\
		&= \mathrm{Im}\del_{\mf{a}}^2\td{\Phi}_\delta(\hat{\mf{a}};\hat{\mf{b}}) (\mf{a}-\hat{\mf{a}}) \cdot (\mf{a}-\hat{\mf{a}}) + \mathcal{O}(\abs{\mf{a}-\hat{\mf{a}}}^3)\,,
	\end{align*}
	which, combined with \cref{eq:imphitd} and choosing $\mf{A}$ small, gives for some $C_\delta'>0$,
	\[
		\abs{\mathrm{Im}\del_{\mf{a}}^2\td{\Phi}_\delta(\hat{\mf{a}};\hat{\mf{b}}) (\mf{a}-\hat{\mf{a}}) \cdot (\mf{a}-\hat{\mf{a}})} \geq C_\delta' \abs{\mf{a}-\hat{\mf{a}}}^2
	\]
	for all $\mf{a}$ in a (real) neighborhood of $\hat{\mf{a}}$. Therefore, we may conclude that $\mathrm{Im}\del_{\mf{a}}^2\td{\Phi}_\delta(\hat{\mf{a}};\hat{\mf{b}})$ is non-degenerate, concluding the proof of the claim. 

	\bsc{4. Morse Lemma and Third Contour Deformation}


	We apply holomorphic Morse's lemma to $\td{\Phi}_\delta$ with respect to $\mf{a}$ (see \cite[Lem.~19.3.1]{MR4436039}), which we are allowed to do due to \cref{eq:phitddeltaclaim}. That is to say, 
	contracting $\mf{A}^\CC,\mf{B}^\CC$ about $\hat{\mf{a}},\hat{\mf{b}}$ respectively if necessary, for each $\mf{b}\in\mf{B}^\CC$ there exists a biholomorphism $\mf{a} \mapsto \mf{a}+\kappa(\mf{a};\mf{b})$ from $\mf{A}^\CC$ to an open subset of $\mf{A}^\CC$ so that $\sup_{\mf{b}\in \mf{B}^\CC}\abs{\kappa(\mf{a},\mf{b})} \lesssim \abs{\mf{a}-\hat{\mf{a}}}^2$ and 
	\begin{equation}\label{eq:defphicirc}
		\Phi^\circ(\mf{a};\mf{b}) \coloneqq \td{\Phi}_\delta(\mf{a}+\kappa(\mf{a};\mf{b});\mf{b}) = 
		\Phi^\circ(\hat{\mf{a}};\mf{b}) + \frac{1}{2}\del_{\mf{a}}^2\Phi^\circ(\hat{\mf{a}};\hat{\mf{b}})(\mf{a}-\hat{\mf{a}})\cdot (\mf{a}-\hat{\mf{a}}) \,.
	\end{equation}

	We will want to use another contour deformation in order to have the phase $\Phi^\circ(\mf{a};\mf{b})$ inside the first integral on the RHS of \cref{eq:sto1}. Thus we define the contour
	\begin{equation}\label{eq:tdgammas}
		\td{\Gamma}_s(\mf{a}, \mf{b}) = \mf{a}+s\kappa(\mf{a},\mf{b})
	\end{equation}
	and fixing $\mf{b} \in \mf{B}$ arbitrary, we let $\td{M} = \mf{A}\times \{\mf{b}\}\times[0,1]$ and define
	\[
		\td{H} \colon \td{M} \to \CC^{N}, (\mf{a},\mf{b},s)\mapsto \td{\Gamma}_s(\mf{a},\mf{b})\,.
	\]
	Following the argument in parts \bsc{2.} or \bsc{3.} of this proof, putting
	\begin{equation}\label{eq:deftdadelta}
		\td{q}_{k,\delta}(\mf{a},\mf{b}) = \td{q}_{k}(\mf{a}+i\delta\nabla_{\mf{a}}\bar{\td{\Phi}}^{\mathrm{hol}}(\mf{a},\mf{b});\mf{b})\det J_{\mf{a}}\Gamma_1^\delta(\mf{a},\mf{b})\,,\an\td{\chi}_{\tau(\lambda),\delta}(\mf{a},\mf{b}) = \td{\chi}_{\tau(\lambda)}(\mf{a}+i\delta\nabla_{\mf{a}}\bar{\td{\Phi}}^{\mathrm{hol}}(\mf{a};\mf{b}),\mf{b})\,,
	\end{equation}
	we see
	\[
		\dd\td{H}^\ast(e^{i\lambda\td{\Phi}}\td{\chi}_{\tau(\lambda),\delta} \td{q}_{k,\delta}\dd \mf{a}^1 \wedge \dots\wedge\dd \mf{a}^{N}) = \td{H}^\ast(e^{i\lambda\td{\Phi}}\td{q}_{k,\delta} (\bar\del_{\mf{a}}\td{\chi}_{\tau(\lambda),\delta})\wedge \dd \mf{a}^1 \wedge \dots\wedge\dd \mf{a}^{N})\,,
	\]
%
	and applying Stokes' theorem on $\td{M}$ to the first term in the RHS of \cref{eq:sto3}, 
	\begin{equation}\label{eq:sto2}
	\begin{aligned}
		\int_{\mf{A}}e^{i\lambda\td{\Phi}_\delta}(\td{\chi}_{\tau(\lambda),\delta}\td{q}_{k,\delta})(\mf{a},\mf{b}) \dd \mf{a} &= \int_{\mf{A}} e^{i\lambda\td{\Phi}_\delta(\mf{a}+\kappa(\mf{a},\mf{b});\mf{b})}(\td{\chi}_{\tau(\lambda),\delta}\td{q}_{k,\delta})(\mf{a}+\kappa(\mf{a},\mf{b}),\mf{b}) \det J_{\mf{a}}\td{\Gamma}_1(\mf{a},\mf{b})\dd\mf{a}\\
		&- \int_{\td{M}}\td{H}^\ast(e^{i\lambda\td{\Phi}_\delta}\td{q}_{k,\delta} (\bar\del_{\mf{a}}\td{\chi}_{\tau(\lambda),\delta})\wedge \dd \mf{a}^1 \wedge \dots\wedge\dd \mf{a}^{N})\,,
	\end{aligned}
	\end{equation} 
	where the integral over the set $\del\mf{A}\times [0,1]$ vanished due to the following argument:
	
	Let $\inf\{\abs{\mf{a}-\hat{\mf{a}}} \colon \mf{a}\in\del\mf{A}\} \eqqcolon \eps >0$, and unraveling the definition of $\td{\chi}_{\tau(\lambda),\delta}$, we know that for all $s\in[0,1]$, at $\mf{b}=\hat{\mf{b}}$ we have
%
	\[
		\supp \td{\chi}_{\tau(\lambda),\delta}(\mf{a}+s\kappa(\mf{a},\hat{\mf{b}}),\hat{\mf{b}})  \subset \left\{\abs{\mf{a}-\hat{\mf{a}}+s\kappa(\mf{a},\hat{\mf{b}})+i\delta\nabla_{\mf{a}}\bar{\td{\Phi}}^{\mathrm{hol}}(\mf{a};\hat{\mf{b}})} < \eps'\right\}\,.
	\]
	for some $0< \eps' < \eps$ depending only on the choice of $\supp\chi_{\tau(\lambda)} \subset \mf{A}$, which we can contract about $\hat{\mf{a}}$ as we like.
	Using the fact that $\sup_{\mf{b}\in\mf{B}}\abs{\kappa(\mf{a},\mf{b})} \lesssim \abs{\mf{a}-\hat{\mf{a}}}^2$ and choosing $\delta > 0$ small enough (depending on these $\eps',\eps$), we then see that 
	\[
		s\in[0,1],\mf{a}\in\del\mf{A}\implies \abs{\mf{a}-\hat{\mf{a}}+s\kappa(\mf{a},\hat{\mf{b}})+i\delta\nabla_{\mf{a}}\bar{\td{\Phi}}^{\mathrm{hol}}(\mf{a};\hat{\mf{b}})} \geq \abs{\mf{a}-\hat{\mf{a}}}- \mathcal{O}(\delta) - \mathcal{O}(\abs{\mf{a}-\hat{\mf{a}}}^2) > \eps/2 > 2\eps'\,,
	\]
	for $\supp \chi_{\tau(\lambda)} \subset \mf{A}$ chosen small enough. By a perturbation argument, choosing $\mf{B}$ small enough we see that the integrand present in the boundary integral over $[0,1]\times\del\mf{A}$ arising from Stokes' theorem vanishes on $[0,1]\times\del\mf{A}$.



	
	We will now argue why the second terms in the right-hand-sides of \cref{eq:sto1}, \cref{eq:sto3} and \cref{eq:sto2} are exponentially decaying. 
	
	\bsc{5. Stokes' Theorem Remainders I}

	In this and the next part of the proof we will consider the the second terms in the right-hand-sides of \cref{eq:sto1,eq:sto3,eq:sto2}, and 
	show that they are exponentially decaying. 
	
	First note that $\bar\del_{\mf{a}}\chi_{\tau(\lambda)} = \sum_{j=1}^{N} \bar\del_{\mf{a}^j}\chi_{\tau(\lambda)} \dd \bar{\mf{a}}^j\,,$ so that by \cite[Lem.~14.16]{zbMATH06034615}, 
	\begin{equation*}
		H^\ast \left(e^{i\lambda\Phi} q_k(\bar\del_{\mf{a}}\chi_{\tau(\lambda)}) \wedge \dd \mf{a}^1\wedge\dots\wedge\dd\mf{a}^{N}\right) 
		=\sum_{j=1}^{N}(e^{i\lambda\Phi} q_k \bar\del_{\mf{a}^j}\chi_{\tau(\lambda)})\circ H \dd \bar{H}^j \wedge \dd H^1\wedge\dots\wedge\dd H^{N} 
	\end{equation*}
	Now abusing notation by writing $(s,\mf{a}_1,\dots,\mf{a}_{N})$ for coordinates on $M = [0,1]\times \mf{A}$ we have 
	\begin{align*}
		\dd \bar{H}^j \wedge \dd H^1\wedge\dots\wedge\dd H^{N}\left(\frac{\del}{\del s}, \frac{\del}{\del \mf{a}^1},\dots,\frac{\del}{\del \mf{a}^{N}}\right) = \det \begin{pmatrix}\del_s \bar H^j  & (\nabla_{\mf{a}} \bar H^j)^T \\ \del_s H & J_{\mf{a}} H \end{pmatrix} \eqqcolon \td{J}_{j} H\,.
	\end{align*}
	where $J_{\mf{a}}H$ is the Jacobian of $H$ with respect to $\mf{a}$. 

	We thus see that, using Fubini's theorem, for the second term in the RHS of \cref{eq:sto1},
	\begin{equation}\label{eq:rem1sto}
	\begin{aligned}	
&{} \int_{M} H^\ast \left(e^{i\lambda\Phi} q_k(\bar\del_{\mf{a}}\chi_{\tau(\lambda)}) \wedge \dd \mf{a}^1\wedge\dots\wedge\dd\mf{a}^{N}\right) \\
		&=\sum_{j=1}^{N} \int_0^1\int_{\mf{A}}e^{i\lambda\Phi(\mf{a}-s\hat{\mf{a}}+s\mathbf{a}(\mf{b});\mf{b})}\bar\del_{\mf{a}^j}\chi_{\tau(\lambda)}(\mf{a}-s\hat{\mf{a}}+s\mathbf{a}(\mf{b}),\mf{b}) w_{k,j}(s,\mf{a},\mf{b}) \dd\mf{a}\dd s\,.
	\end{aligned}
	\end{equation}
	where
	\[
		w_{k,j}(s, \mf{a},\mf{b}) = q_k(\mf{a}-s\hat{\mf{a}}+s\mathbf{a}(\mf{b}),\mf{b})(\td{J}_j H)(s,\mf{a},\mf{b})\,,
	\]
	satisfies for every $j\in\{1,\dots,N\}, \mf{b}\in \mf{B}$ and all multi-indices $\alpha$ the bound 
	\begin{equation}\label{eq:bfitslem224}
		\sup_{s\in[0,1],\mf{a}\in\mf{A}}\abs{\del^\alpha_{\mf{a}} w_{k,j}}\leq M_w C_q^{k} k! C_q^{\abs{\alpha}}\alpha!\,, 
	\end{equation}
	since $\td{J}_j H$ is real-analytic (and we assumed that $C_q$ is larger than the analytic bound on $\td{J}_j H$). Here $M_w>0$ is some constant.

	We now wish to apply \cref{lem:cptpartint2} using \cref{eq:bfitslem224} for which we have to set up two things: argue why the imaginary part of the phase $\Phi^{i\RR}(\mf{a}-s\hat{\mf{a}}+s\mathbf{a}(\mf{b});\mf{b})$ is bounded below by a small negative number, and that the phase has no critical point in the support of the cut-off $\bar\del_{\mf{a}^j}\chi_{\tau(\lambda)}(\mf{a}-s\hat{\mf{a}}+s\mathbf{a}(\mf{b}),\mf{b})$. We do this now:

	For any $\eps_1 > 0$ we may choose $\mf{B}$ small enough so that for all $s\in[0,1]$ we have $\Phi^{i\RR}(\mf{a}-s\hat{\mf{a}}+s\mathbf{a}(\mf{b});\mf{b}) \geq -\eps_1$, because $\Phi^{i\RR}(\mf{a}-s\hat{\mf{a}}+s\mathbf{a}(\hat{\mf{b}});\hat{\mf{b}}) = 0$. Furthermore, since $\chi_{\tau(\lambda)}$ is identically $1$ near $\hat{\mf{a}}$, we know that $\bar\del_{\mf{a}^j}\chi_{\tau(\lambda)}(\mf{a}-s\hat{\mf{a}}+s\mathbf{a}(\mf{b}),\mf{b})$ is supported away from $\hat{\mf{a}}$ for $\mf{B}$ small enough, so that $\Phi(\mf{a}-s\hat{\mf{a}}+s\mathbf{a}(\mf{b});\mf{b})$ has no critical point with respect to $\mf{a}$ on the support of $\bar\del_{\mf{a}^j}\chi_{\tau(\lambda)}(\mf{a}-s\hat{\mf{a}}+s\mathbf{a}(\mf{b}),\mf{b})$.

	We now apply \cref{lem:cptpartint2} to see that, for some $\vartheta_0 > 0$ and all $0<\vartheta\leq \vartheta_0$, if 
	\[
		\vartheta\lambda/2\leq \tau(\lambda) + 1 \leq \vartheta\lambda\,,
	\]
	we have, choosing $\mf{B}$ small enough so that $\eps_1 =\vartheta/4$, and using \cref{eq:rem1sto},
	\begin{equation}\label{eq:sto1decay}
		\abs{\int_{M}  H^\ast \left(e^{i\lambda\Phi} q_k(\bar\del_{\mf{a}}\chi_{\tau(\lambda)}) \wedge \dd \mf{a}^1\wedge\dots\wedge\dd\mf{a}^{N}\right)} \leq M_w C_q^k k! e^{-\frac{1}{4}\vartheta\lambda}\,.
	\end{equation}

	This concludes the estimation of the second term on the RHS of \cref{eq:sto1}. We will use the same arguments in the following part.

	\bsc{6. Stokes' Theorem Remainders II}

	Moving on to the terms coming from \cref{eq:sto3} and \cref{eq:sto2}, arguments similar to those at the beginning of part \bsc{5.} yield
	\begin{equation}\label{eq:sto2s}
	\begin{aligned}
		&{} \int_{M^\delta} (H^\delta)^\ast\left(e^{i\lambda\td{\Phi}} \td{q}_{k}(\bar\del_{\mf{a}}\td{\chi}_{\tau(\lambda)}) \wedge \dd \mf{a}^1 \wedge \dots\wedge\dd \mf{a}^{N}\right) \\
		&= \sum_{j=1}^{N} \int_0^1\int_{\mf{A}}e^{i\lambda\td{\Phi}(\mf{a}+is\delta\nabla_{\mf{a}}\bar{\td{\Phi}}^{\mathrm{hol}}(\mf{a};\mf{b});\mf{b})}\bar\del_{\mf{a}^j}\td{\chi}_{\tau(\lambda)}(\mf{a}+is\delta\nabla_{\mf{a}}\bar{\td{\Phi}}^{\mathrm{hol}}(\mf{a};\mf{b}),\mf{b}) w_{k,j,\delta}(s,\mf{a},\mf{b}) \dd\mf{a}\dd s
	\end{aligned}
	\end{equation}
	and
	\begin{equation}\label{eq:sto3s}
	\begin{aligned}
		&{} \int_{\td{M}} \td{H}^\ast\left(e^{i\lambda\td{\Phi}_\delta} \td{q}_{k,\delta}(\bar\del_{\mf{a}}\td{\chi}_{\tau(\lambda),\delta}) \wedge \dd \mf{a}^1 \wedge \dots\wedge\dd \mf{a}^{N}\right) \\
		&= \sum_{j=1}^{N} \int_0^1\int_{\mf{A}}e^{i\lambda\td{\Phi}_\delta(\mf{a}+s\kappa(\mf{a},\mf{b});\mf{b})}\bar\del_{\mf{a}^j}\td{\chi}_{\tau(\lambda),\delta}(\mf{a}+s\kappa(\mf{a},\mf{b}),\mf{b}) \td{w}_{k,j,\delta}(s,\mf{a},\mf{b}) \dd\mf{a}\dd s
	\end{aligned}
	\end{equation}
	where	
	\begin{equation}\label{eq:sto23amp}
		\sup_{s\in[0,1],\mf{a}\in\mf{A}}\abs{\del^\alpha_{\mf{a}} w_{k,j,\delta}}\leq M_w' k! C_q^{k+\abs{\alpha}}\alpha!\,,\an \sup_{s\in[0,1],\mf{a}\in\mf{A}}\abs{\del^\alpha_{\mf{a}} \td{w}_{k,j,\delta}}\leq \td{M}_b k! C_q^{k+\abs{\alpha}}\alpha!\,,
	\end{equation}
	and we made the assumption that $C_q$ was large enough and $M_w',\td{M}_b > 0$ are some constants.

	Using \cref{eq:sto23amp}, we will want to apply \cref{lem:cptpartint2} again to show exponential decay, so we will verify that the phases in both of the integrals on the right-hand-sides of \cref{eq:sto2s,eq:sto3s} have imaginary part bounded below by a small negative number and have no critical point in the support of the integrand.
	
	We turn first to \cref{eq:sto2s}.
	Notice that, writing $\delta' = s\delta > 0$ for $s>0$ we know by 
	\cref{eq:imphitd} that
	\[
		\td{\Phi}^{i\RR}(\mf{a}+is\delta\nabla_{\mf{a}}\bar{\td{\Phi}}^{\mathrm{hol}}(\mf{a};\mf{b});\mf{b}) = \mathrm{Im}\td{\Phi}_{\delta'}(\mf{a};\hat{\mf{b}}) \geq C_{\delta'}\abs{\mf{a}-\hat{\mf{a}}}^2\,,
	\]
	which is non-negative. At $s=0$ and $\mf{b}=\hat{\mf{b}}$, we know by definition of $\td{\Phi}$ (see \cref{eq:defphitd}) that $\td{\Phi}^{i\RR}(\mf{a}+is\delta\nabla_{\mf{a}}\bar{\td{\Phi}}^{\mathrm{hol}}(\mf{a};\mf{b});\mf{b}) = 0$. Thus, by a perturbation argument, for every $\eps_1 > 0$, choosing $\mf{B}$ small enough, we can maintain that
	\[
		s\in[0,1],\mf{b}\in\mf{B}\implies \td{\Phi}^{i\RR}(\mf{a}+is\delta\nabla_{\mf{a}}\bar{\td{\Phi}}^{\mathrm{hol}}(\mf{a};\mf{b});\mf{b}) \geq -\eps_1\,.
	\]
	Furthermore, for $s=0$ and $\mf{b}=\hat{\mf{b}}$, we know
	\[
		\td{\Phi}^{i\RR}(\mf{a}+is\delta\nabla_{\mf{a}}\bar{\td{\Phi}}^{\mathrm{hol}}(\mf{a};\mf{b});\mf{b}) =	\td{\Phi}^{i\RR}(\mf{a};\hat{\mf{b}}) =  \Phi^{i\RR}(\mf{a};\hat{\mf{b}})\,,
	\]
	which has a unique critical point in $\mf{A}$ at $\hat{\mf{a}}$. 

	For $0<s\leq 1$, writing $\delta' = \delta s> 0$, we know from
	\cref{eq:phitddeltaclaim} that $\td{\Phi}(\mf{a}+is\delta\nabla_{\mf{a}}\bar{\td{\Phi}}^{\mathrm{hol}}(\mf{a};\mf{b});\mf{b}) = \td{\Phi}_{\delta'}(\mf{a};\mf{b})$ has a unique critical point in $\mf{A}$ at $\hat{\mf{a}}$. Thus, in total, for all $0\leq s\leq 1$ and all $\delta>0$ small and $\mf{B}$ small, $\td{\Phi}^{i\RR}(\mf{a}+is\delta\nabla_{\mf{a}}\bar{\td{\Phi}}^{\mathrm{hol}}(\mf{a};\mf{b});\mf{b})$ has the unique critical point $\hat{\mf{a}}$ in $\mf{A}$.

	Because for $\delta = 0$ and $\mf{b}=\hat{\mf{b}}$ we know that $\bar\del_{\mf{a}^j}\td{\chi}_{\tau(\lambda)}(\mf{a}+is\delta\nabla_{\mf{a}}\bar{\td{\Phi}}^{\mathrm{hol}}(\mf{a};\mf{b}),\mf{b})$ is identically $0$ near $\hat{\mf{a}}$ this is also true for small $\delta > 0$ and $\mf{B}$ small, so that the critical point of the phase does not lie in the support of this cut-off.


	We turn to the phase in \cref{eq:sto3s}. Due to the fact that $\sup_{\mf{b}\in\mf{B}}\abs{\kappa(\mf{a},\mf{b})} \lesssim \abs{\mf{a}-\hat{\mf{a}}}^2$, we know by \cref{eq:imphitd} that
	\begin{align*}
		\td{\Phi}_\delta^{i\RR}(\mf{a}+s\kappa(\mf{a},\hat{\mf{b}});\hat{\mf{b}}) &\geq C_\delta\abs{\mf{a}+s\kappa(\mf{a},\hat{\mf{b}}) -\hat{\mf{a}}}^2 \geq C_\delta \left(\abs{\mf{a}-\hat{\mf{a}}}^2-s^2\abs{\kappa(\mf{a},\hat{\mf{b}})}^2 - 2s\abs{\mf{a}-\hat{\mf{a}}}\abs{\kappa(\mf{a},\hat{\mf{b}})}\right) \\
		&\geq C_{\delta}' \abs{\mf{a}-\hat{\mf{a}}}^2\,,
	\end{align*}
	for some $C_\delta' > 0$ uniform for $s\in[0,1]$, so that choosing $\mf{B}$ small enough, we can maintain 	
	\[
		s\in[0,1],\mf{b}\in\mf{B}\implies \td{\Phi}_\delta^{i\RR}(\mf{a}+s\kappa(\mf{a},\mf{b});\mf{b}) \geq -\eps_1\,,
	\]
	for any $\eps_1 > 0$.
	Note that	
	\[
		\abs{\mf{a}-\hat{\mf{a}} +s\kappa(\mf{a},\mf{b})} \geq \abs{\mf{a}-\hat{\mf{a}}} - sC\abs{\mf{a}-\hat{\mf{a}}}^2\,,
	\]
	where we used $\sup_{\mf{b}\in\mf{B}}\abs{\kappa(\mf{a},\mf{b})} \lesssim \abs{\mf{a}-\hat{\mf{a}}}^2$. Thus, for $\mf{A}$ (and $\mf{B}$) small enough, because $\td{\Phi}_{\delta}$ has a unique critical point at $\hat{\mf{a}}$, we see that the phase $\td{\Phi}_\delta(\mf{a}+s\kappa(\mf{a},\mf{b});\mf{b})$ has a unique critical point in $\mf{A}$ at $\hat{\mf{a}}$. 
	
	We check that $\bar\del_{\mf{a}^j}\td{\chi}_{\tau(\lambda),\delta}(\mf{a}+s\kappa(\mf{a},\mf{b});\mf{b})$ is not supported near $\mf{a}=\hat{\mf{a}}$: we know that $\bar\del_{\mf{a}^j}\td{\chi}_{\tau(\lambda),\delta}(\mf{a},\mf{b})$ is not supported near $\hat{\mf{a}}$ and because $\abs{\kappa(\mf{a},\mf{b})} \lesssim \abs{\mf{a}-\hat{\mf{a}}}^2$ this conclusion also holds for $\bar\del_{\mf{a}^j}\td{\chi}_{\tau(\lambda),\delta}(\mf{a}+s\kappa(\mf{a},\mf{b});\mf{b})$ when $\mf{A}$ and $\mf{B}$ are sufficiently small.
	
	We have thus verified that both of the phases appearing in \cref{eq:sto2s,eq:sto3s} have no critical point in the support of the integrand and have imaginary parts bounded below by $-\eps_1$, which can be chosen as small as we like by choosing $\delta>0$ and $\mf{B}$ small. In particular, we do \emph{not} have to shrink $\mf{A}$ (and thus increase $C_q$) to make $\eps_1>0$ as small as we wish.

%
%
	Applying \cref{lem:cptpartint2}, for some $\eps_2 > 0$, we deduce, for $L$ large enough,
	\begin{equation}\label{eq:sto3decay}
		\abs{\int_{M^\delta} (H^\delta)^\ast\left(e^{i\lambda\td{\Phi}} \td{q}_{k}(\bar\del_{\mf{a}}\td{\chi}_{\tau(\lambda)}) \wedge \dd \mf{a}^1 \wedge \dots\wedge\dd \mf{a}^{N}\right)} \leq M_w' C_q^k k! e^{-\frac{1}{2}\eps_2\lambda}\,,
	\end{equation}
	and 
	\begin{equation}\label{eq:sto2decay}
		\abs{\int_{\td{M}} \td{H}^\ast\left(e^{i\lambda\td{\Phi}_\delta} \td{q}_{k,\delta}(\bar\del_{\mf{a}}\td{\chi}_{\tau(\lambda),\delta}) \wedge \dd \mf{a}^1 \wedge \dots\wedge\dd \mf{a}^{N}\right)} \leq \td{M}_w C_q^k k! e^{-\frac{1}{2}\eps_2\lambda}\,.
	\end{equation}
	\bsc{7. Stationary Phase}

	Recapping the past six parts of this proof, we have shown, 
	using Stokes' theorem in \cref{eq:sto1}, \cref{eq:sto3}, and \cref{eq:sto2}, recalling the definition of $\Phi^\circ$ from \cref{eq:defphicirc}, that
	\begin{equation}\label{eq:donestokes}
		\begin{aligned}
			\int_A e^{i\lambda\Phi(a;b)}&\chi_{\tau(\lambda)}(a)q_k(a,b)\dd a\\
			&= \int_{\mf{A}} e^{i\lambda\Phi^\circ(\mf{a};\mf{b})}(\td{\chi}_{\tau(\lambda),\delta}\td{q}_{k,\delta})(\mf{a}+\kappa(\mf{a},\mf{b}),\mf{b}) \det J_{\mf{a}}\td{\Gamma}_1(\mf{a},\mf{b})\dd\mf{a} + R_{k,2}(\mf{b};\lambda) 
		\end{aligned}
	\end{equation}
	with $R_{k,2}(\mf{b};\lambda) \leq Ce^{-\eps\lambda}C_q^kk!$, where we used \cref{eq:sto1decay,eq:sto3decay,eq:sto2decay} for the estimates on the remainder integrals. 
	
	We will want to apply \cite[Lem.~19.3.2]{MR4436039} to $\Phi^\circ$, for which, using \cref{eq:defphicirc}, we make the calculation with the chain rule and using $\nabla_{\mf{a}}\kappa(\hat{\mf{a}},\hat{\mf{b}})=0$ to get
	\[
		\mathrm{Im}\del_{\mf{a}}^2\Phi^\circ(\hat{\mf{a}};\hat{\mf{b}}) = \mathrm{Im}\del_{\mf{a}}^2\td{\Phi}_\delta(\hat{\mf{a}};\hat{\mf{b}})(\id + \nabla_{\mf{a}}\kappa(\hat{\mf{a}},\hat{\mf{b}})) = \mathrm{Im}\del_{\mf{a}}^2\td{\Phi}_\delta(\hat{\mf{a}};\hat{\mf{b}})\,.
	\]
	which we know to be non-degenerate by \cref{eq:phitddeltaclaim}.

	We may thus apply \cite[Lem.~19.3.2]{MR4436039} to $\Phi^\circ$ to see that there is an invertible real $N\times N$-matrix $P$ and real numbers $\sigma_1,\dots,\sigma_{N}$ so that
	\[
		\Phi^\circ(\hat{\mf{a}}+P\mf{a};\mf{b}) = \Phi^\circ(\hat{\mf{a}};\mf{b})+\frac{1}{2}i\left(\sum_{j=1}^{N}(1-i\sigma_j)\mf{a}_j^2\right)\,,
	\]
	and we define
	\begin{equation}
	\begin{aligned}\label{eq:defofaflat}
		q_k^{\flat}(\mf{a},\mf{b}) &= \td{q}_{k,\delta}(\hat{\mf{a}}+P\mf{a}+\kappa\left(\hat{\mf{a}}+P\mf{a},\mf{b}\right),\mf{b})\det J_{\mf{a}}\td{\Gamma}_1(\hat{\mf{a}} + P\mf{a},\mf{b}) \\
		\chi_{\tau(\lambda)}^{\flat}(\mf{a},\mf{b}) &= \td{\chi}_{\tau(\lambda),\delta}(\hat{\mf{a}}+P\mf{a}+\kappa\left(\hat{\mf{a}}+P\mf{a},\mf{b}\right),\mf{b})\,,
	\end{aligned}
	\end{equation}
	in anticipation of using substitution. 

	Let us, however, first argue that $\chi_{\tau(\lambda)}^\flat(\mf{a};\mf{b})$ is identically $1$ in a neighborhood around $\mf{a} = 0$ (compare to the argument at the end of part \bsc{6.}). Without loss we make these considerations at $\mf{b}=\hat{\mf{b}}$. Unraveling the definition of $\td{\chi}_{\tau(\lambda),\delta}(\mf{a},\hat{\mf{b}})$, we see that $\td{\chi}_{\tau(\lambda),\delta}$ is identically $1$ in a neighborhood of $\mf{a}=\hat{\mf{a}}$ for $\delta >0$ small. Then, due to the fact that $\abs{\kappa(\mf{a},\hat{\mf{b}})} \lesssim \abs{\mf{a}-\hat{\mf{a}}}^2$, for $\mf{A}$ small enough, we see readily that $\chi_{\tau(\lambda)}^\flat(\mf{a};\mf{b})$ is identically $1$ in a neighborhood of $\mf{a} = 0$.

	Now, after using substitution with $\mf{a}\mapsto \hat{\mf{a}}+P\mf{a}$, 
	\begin{equation}\label{eq:subwithP}
		\begin{aligned}
			&\int_{\mf{A}} e^{i\lambda\Phi^\circ(\mf{a};\mf{b})}(\td{\chi}_{\tau(\lambda),\delta}\td{q}_{k})(\mf{a}+\kappa(\mf{a},\mf{b}),\mf{b}) \det J_{\mf{a}}\td{\Gamma}_1(\mf{a},\mf{b})\dd\mf{a} \\
			&= \abs{\det P}^{-1}e^{i\lambda \Phi(\mathbf{a}(b);b)} \int_{P^{-1}(\mf{A}-\hat{\mf{a}})} e^{-\frac{1}{2}\lambda\sum_{j=1}^{N} (1-i\sigma_j)\mf{a}_j^2}\chi^\flat_{\tau(\lambda)}(\mf{a},\mf{b}) q^\flat_{k}(\mf{a},\mf{b})\dd \mf{a}\,,
		\end{aligned}
	\end{equation}
	where we used the fact that $\Phi^\circ(\hat{\mf{a}},\mf{b}) = \td{\Phi}_\delta(\hat{\mf{a}}; \mf{b}) = \td{\Phi}(\hat{\mf{a}};\mf{b}) =\Phi(\mathbf{a}(b);b)$. 

	Now $P^{-1}(\mf{A}-\hat{\mf{a}}) \cap \{\chi_{\tau(\lambda)}^\flat \equiv 1\}$ is an open neighborhood of the origin in $\RR^{N}$ and will thus contain some open ball $\{\abs{\mf{a}} < r_P\} \eqqcolon \td{\mf{A}}$. 
	We have $\inf_{\mf{a}\in(P^{-1}(\mf{A}-\hat{\mf{a}}))\setminus{\td{\mf{A}}}}{\abs{\mf{a}}} \geq r_P > 0$ and so
	\begin{equation}\label{eq:outball}
		\abs{\int_{(P^{-1}(\mf{A}-\hat{\mf{a}}))\setminus\td{\mf{A}}} e^{-\frac{1}{2}\lambda\sum_{j=1}^{N} (1-i\sigma_j)\mf{a}_j^2}(\chi^\flat_{\tau(\lambda)} q^\flat_{k})(\mf{a},\mf{b})\dd \mf{a}} \leq C e^{-\frac{1}{2}\lambda r_P^2} \sup_{(P^{-1}(\mf{A}-\hat{\mf{a}}))\setminus\td{\mf{A}}}\abs{q^\flat_{k}(\cdot,\mf{b})}\leq C e^{-\frac{1}{2}\lambda r_P^2} M_qC_q^k k!\,.
	\end{equation}
	
	Thus, combining \cref{eq:donestokes} with \cref{eq:subwithP} and \cref{eq:outball} we get
	\begin{equation}\label{eq:finalball}
		\begin{aligned}
			&{} \int_A e^{i\lambda\Phi(a;b)}\chi_{\tau(\lambda)}(a) q_k(a,b)\dd a \\
			&=\abs{\det P}^{-1}e^{i\lambda \Phi(\mathbf{a}(b);b)}\left( \int_{\abs{\mf{a}} < r_P} e^{-\frac{1}{2}\lambda\sum_{j=1}^{N} (1-i\sigma_j)\mf{a}_j^2}q^\flat_{k}(\mf{a},\mf{b})\dd\mf{a}+ R_{k,3}(\mf{b};\lambda)\right) + R_{k,2}(\mf{b};\lambda)
		\end{aligned}
	\end{equation}
	for some remainder $R_{k,3}(\mf{b};\lambda)$ coming from 
	\cref{eq:outball} satisfying for some $\eps >0$,
	\[
		\abs{R_{k,3}(\mf{b};\lambda)} \leq Ce^{-\eps\lambda}C_q^kk!\,.	
	\]
%
%
	Applying stationary phase, that is \cref{prop:stat}, to the integral term in the RHS of \cref{eq:finalball}, for any $K > k$, we choose to expand $K-k$ terms in the method of stationary phase to get
	\begin{equation}
	\begin{aligned}\label{eq:inttosymb}
		&{}\int_{\abs{\mf{a}} < r_P} e^{-\frac{1}{2}\lambda\sum_{j=1}^{N} (1-i\sigma_j)\mf{a}_j^2}q^\flat_{k}(\mf{a},\mf{b})\dd\mf{a}- R_{k}^{K-k}(\mf{b},\lambda) \\ 
		&\qquad= \left(\frac{2\pi}{\lambda}\right)^{\frac{N}{2}}\sum_{\abs{\alpha}  < K-k}\frac{(2\lambda)^{-\abs{\alpha}}}{\alpha!}\sum_{j=1}^{N}(1-i\sigma_j)^{-\alpha_j-\frac{1}{2}}(\del_{\mf{a}}^{2\alpha} q^\flat_{k})(0,\mf{b}) 
	\end{aligned}
	\end{equation}
	with $R_k^{K-k}$ satisfying \cref{eq:boundsonRs}, where each $(1-i\sigma_j)^{1/2}$ stands for the main branch of the square root.

	\bsc{8. Analytic Symbol}
	
	We now let $k$ be variable and consider the expression defined by
	\begin{equation}\label{eq:defofasharp}
		q^\sharp_{k}(b) \coloneqq \abs{\det P}^{-1}(2\pi)^{\frac{N}{2}}\sum_{\abs{\alpha} \leq k}\frac{2^{-\abs{\alpha}}}{\alpha!}\sum_{j=1}^{N}(1-i\sigma_j)^{-\alpha_j-\frac{1}{2}}(\del_{\mf{a}}^{2\alpha} q^\flat_{k-\abs{\alpha}})(0,\mf{b})\,.
	\end{equation}
	Notice that by \cref{eq:defofclasym}, we have for some $C>0$, 
	(where we may assume the real-analytic bound on the Jacobian in $q^\flat_k$ is smaller than $C_q$)
	\[
		\abs{\left(\del_{\mf{a}}^{2\alpha} q^\flat_{k-\abs{\alpha}}\right)(\mf{a},\mf{b})} \leq C M_q C_q^{k+\abs{\alpha}} (2\alpha)!(k-\abs{\alpha})!
	\]
	so that
	\[
		\abs{\frac{2^{-\abs{\alpha}}}{\alpha!}\sum_{j=1}^{N}(1-i\sigma_j)^{-\alpha_j-\frac{1}{2}}\left(\del_{\mf{a}}^{2\alpha} q^\flat_{k-\abs{\alpha}}\right)(0,\mf{b})} \leq (N)2^{-\abs{\alpha}}\frac{(2\alpha)!}{\alpha!}M_q C_q^{k+\abs{\alpha}}(k-\abs{\alpha})!
	\]
	and taking into account that $(2\alpha)!/\alpha! \leq 4^{\abs{\alpha}}\alpha!\leq 4^{\abs{\alpha}}\abs{\alpha}!$ we get 
	\[
		\abs{q^\sharp_{k}} \leq M_qN^{k+2}k!(2 C_q^2)^{k} \leq C^{k+1} k! 
	\]
	where we used the fact that $\abs{\alpha}\leq k$ in bounding $C_q^{k+\abs{\alpha}}$ and \cref{eq:boundonmultiindices} from \cref{lem:morefact}.

	This implies that $\ud{q^\sharp}$ defined by 
	\begin{equation}\label{eq:defineamp}
		\ud{q^\sharp}(b) = (q^\sharp_{k}(b))_{k\in\mathbb{N}}
	\end{equation}
	is a formal classical analytic amplitude. 

	Notice at this point that for the contour deformations from \cref{eq:tdgammas,eq:gammadeltas}, we calculate that $J_{\mf{a}} \Gamma^\delta_1(\hat{\mf{a}},\mf{b})=\id+i\delta \nabla_{\mf{a}}^2\bar{\Phi}^{\mathrm{hol}}(\mathbf{a}(\mf{b});\mf{b})$ and $J_{\mf{a}} \td{\Gamma}_1(\hat{\mf{a}},\mf{b}) = \id$, where $\bar{\Phi}^{\mathrm{hol}}$ is the holomorphic extension of the real-analytic function $\bar{\Phi}$. 

	Unraveling the definition of $q^\sharp_{0}$ from \cref{eq:defofasharp}, then the definitions of $q_k^\flat,\td{q}_{k,\delta}$, and $\td{q}_k$ from \cref{eq:defofaflat}, \cref{eq:deftdadelta}, and \cref{eq:defatd}, we have
	\begin{align*}
		\abs{\det P}(2\pi)^{-\frac{N}{2}}q^\sharp_{0}(b) &= \sum_{j=1}^{N}(1-i\sigma_j)^{-\frac{1}{2}}q^\flat_0(0,\mf{b}) \\
		&= \sum_{j=1}^{N}(1-i\sigma_j)^{-\frac{1}{2}}\det\left(\id+i\delta \nabla_{\mf{a}}^2\bar{\Phi}^{\mathrm{hol}}(\mathbf{a}(b);b)\right)q_0(\mathbf{a}(b),b)\,.
	\end{align*}
	By \cref{eq:anonvanish}, $q_0$ is non-vanishing at $(\hat a, \hat b)$, so that for $A, B$ and $\delta>0$ small enough, we conclude that $\ud{q^\sharp_v}$, defined in \cref{eq:defineamp} is elliptic.

	\bsc{9. Equality of Finite Realizations}

	Let $q^\sharp$ be any finite realization of $\ud{q^\sharp}$ and $q$ any finite realization of $\ud{q}$, 
	which is to say that there exists large enough $R>0$ and $\eps>0$ so that
	\begin{equation}\label{eq:reazofasharp}
		q^\sharp(b;\lambda) = \sum_{k \leq \lambda R^{-1}} \lambda^{-k} q^\sharp_{k}(b) + \mathcal{O}(e^{-\eps\lambda})\,,\an q(a,b;\lambda) = \sum_{k \leq \lambda R^{-1}} \lambda^{-k} q_{k}(a,b) + \mathcal{O}(e^{-\eps\lambda})\,.
	\end{equation}
	
	We remark here that taking $K \coloneqq \lfloor \lambda R^{-1} \rfloor + 1$ (presuming $R>0$ to be large enough), by \cref{eq:inttosymb} and \cref{eq:defofasharp}, using Fubini's theorem, {\small
	\begin{align*}
		\sum_{k\leq \lambda R^{-1}} \lambda^{-N/2-k}q^\sharp_{k}(b;\lambda) &= \sum_{k < K}\sum_{\abs{\alpha}\leq k} \lambda^{-N/2-k+ \abs{\alpha}} \abs{\det P}^{-1}(2\pi)^{\frac{N}{2}}\frac{(2\lambda)^{-\abs{\alpha}}}{\alpha!}\sum_{j=1}^{N}(1-i\sigma_j)^{-\alpha_j-\frac{1}{2}}\left(\del_{\mf{a}}^{2\alpha} q^\flat_{k-\abs{\alpha}}\right)(0,\mf{b}) \\ 
		&= \sum_{k < K}\lambda^{-k}\sum_{\abs{\alpha}< K-k} \abs{\det P}^{-1} (2\pi/\lambda)^{\frac{N}{2}}\frac{(2\lambda)^{-\abs{\alpha}}}{\alpha!}\sum_{j=1}^{N}(1-i\sigma_j)^{-\alpha_j-\frac{1}{2}}\left(\del_{\mf{a}}^{2\alpha} q^\flat_{k}\right)(0,\mf{b}) \\ 
		&=\sum_{k<K}\lambda^{-k}\abs{\det P}^{-1}\int_{\abs{\mf{a}} < r_P} e^{-\frac{1}{2}\lambda\sum_{j=1}^{N} (1-i\sigma_j)\mf{a}_j^2}q^\flat_{k}(\mf{a},\mf{b})\dd\mf{a} - \sum_{k<K}\lambda^{-k}R_{k}^{K-k}(\mf{b},\lambda)\,.
	\end{align*}}
	According to \cref{eq:boundsonRs} we may bound, using the fact that $K = \lfloor R^{-1}\lambda \rfloor +1$,
	\[
		\sum_{k<K}\lambda^{-k}R_{k}^{K-k}(\mf{b};\lambda)\leq K \mathcal{O}(e^{-\eps\lambda}) = \mathcal{O}(e^{-\eps\lambda})\,.
	\]
	Therefore, we may conclude
	\begin{equation}\label{eq:gotonedecay}
		\sum_{k\leq \lambda R^{-1}} \lambda^{-N/2-k}q^\sharp_{k}(b;\lambda) = \sum_{k<K}\lambda^{-k}\abs{\det P}^{-1}\int_{\abs{\mf{a}} < r_P} e^{-\frac{1}{2}\lambda\sum_{j=1}^{N} (1-i\sigma_j)\mf{a}_j^2}q^\flat_{k}(\mf{a},\mf{b})\dd\mf{a} + \mathcal{O}(e^{-\eps\lambda})\,.
	\end{equation}
	From \cref{eq:finalball} we find that
	\begin{equation}\label{eq:finalball2}
		\begin{aligned}
		&{}\abs{\det P}^{-1}\int_{\abs{\mf{a}} < r_P} e^{-\frac{1}{2}\lambda\sum_{j=1}^{N} (1-i\sigma_j)\mf{a}_j^2}q^\flat_{k}(\mf{a},\mf{b})\dd\mf{a} 	\\
		&= e^{-i\lambda \Phi(\mathbf{a}(b);b)}\left(\int_A e^{i\lambda\Phi(a;b)}\chi_{\tau(\lambda)}(a) q_k(a,b)\dd a - R_{k,2}(\mf{b};\lambda)\right) - \abs{\det P}^{-1}R_{k,3}(\mf{b};\lambda)\,,
		\end{aligned}
	\end{equation}
	where routine calculations (see for example the proof of \cref{prop:stat}) give $\sum_{k<K}\lambda^{-k}R_{k,3}(\mf{b};\lambda) = \mathcal{O}(e^{-\eps\lambda})$ and $\sum_{k<K}\lambda^{-k}R_{k,2}(\mf{b};\lambda) = \mathcal{O}(e^{-\eps\lambda})$.

	Thus, combining \cref{eq:gotonedecay} with \cref{eq:finalball2}, we get (recall $k<K \iff k\leq R^{-1}\lambda$)
	{\small\[
		\sum_{k\leq \lambda R^{-1}} \lambda^{-N/2-k}q^\sharp_{k}(b;\lambda) = e^{-i\lambda\Phi(\mathbf{a}(b);b)}\left(\int_{\mf{A}} e^{i\lambda\Phi(\mf{a},\mf{b})}\chi_{\tau(\lambda)}(a)\sum_{k\leq R^{-1}\lambda}\lambda^{-k}q_k(\mf{a},\mf{b})\dd \mf{a} +  \mathcal{O}(e^{-\eps\lambda})\right)+\mathcal{O}(e^{-\eps\lambda})\,,
	\]}
	which together with \cref{eq:reazofasharp} gives
	\begin{equation}\label{eq:almostend}
		\begin{aligned}
			&{}e^{i\lambda\Phi(\mathbf{a}(b);b)}\left(\lambda^{-N/2}q^\sharp(b;\lambda) + \mathcal{O}(e^{-\eps\lambda})\right) \\
			&\qquad=\int_A e^{i\lambda\Phi(a;b)}\chi_{\tau(\lambda)}(a)\left(q(a,b;\lambda) + \mathcal{O}(e^{-\eps\lambda})\right)\dd a+ \mathcal{O}(e^{-\eps\lambda}) \\
			&\qquad=\int_A e^{i\lambda\Phi(a;b)}\chi_{\tau(\lambda)}(\eta)q(a,b;\lambda) \dd a + \mathcal{O}(e^{-\eps\lambda})\,,
		\end{aligned}
	\end{equation}
	because $\Phi^{i\RR}(a;b) \geq 0$ on $A\times B$ according to \cref{eq:phiprops} and $\chi_{\tau(\lambda)}$ is compactly supported. 
	Therefore, from \cref{eq:almostend} we find (for a possibly different $\eps>0$)
	\[
		\lambda^{-\frac{N}{2}}e^{i\lambda\Phi(\mathbf{a}(b);b)}q^\sharp(b;\lambda) = \int_A e^{i\lambda\Phi(a;b)}\chi_{\tau(\lambda)}(a)q(a,b;\lambda) \dd a + (1-e^{i\lambda\Phi(\mathbf{a}(b);b)})\mathcal{O}(e^{-\eps\lambda})\,.
	\]
	Because $\Phi(\mathbf{a}(\hat b);\hat b)$ has non-negative imaginary part, shrinking $B$ (which leaves $\eps>0$ unchanged), we can guarantee that $e^{i\lambda\Phi(\mathbf{a}(b);b)}\mathcal{O}(e^{-\eps\lambda}) = \mathcal{O}(e^{-\eps/2\lambda})$,
	which concludes the proof.
\end{proof}

\subsection{Plausibility of Assumptions}\label{sec:phys}

In this section we will argue that the assumptions \cref{eq:piscatter,eq:nograze,eq:gloas} are `reasonable' and that sets satisfying assumption \cref{eq:gloas2} are `not too small' given `reasonable assumptions'.

As this problem is motivated by exploration seismology of the earth, we will consider the earth to be a unit ball having radial sound speed satisfying the \emph{Herglotz condition}, see \cite[\S~2]{zbMATH07625517}, introduced by \cite{zbMATH02652337} and used by \cite{zbMATH02645973}. Since exploration seismology is interested in the upper layer of the earth it is not too far from the truth to presume that if $\mathcal{M}$ is the set near the boundary, its (straight) boundary $\mathcal{M}' = \mathcal{M}\cap \{x_n=0\}$ is in fact the (curved) boundary of the earth. The origin $0 \in \mathcal{M}$ will then be replaced by $e_n$, the north pole of the unit sphere.

We remark that in the preliminary reference model for the earth \cite{DZIEWONSKI1981297}, the wave speed will satisfy the Herglotz condition in the crust, in particular, near the boundary of the earth where we will be mainly interested in taking our measurements. For similar considerations see \cite{ilmavirta2023sphericallysymmetricterrestrialplanets}. 

We will use results and notation from \cite[\S~2]{zbMATH07625517} freely in this section and merely make the remark that our wavespeed $c$ must be replaced by $c^{-1}$ to match the notation in that reference. In particular, we consider the earth to be a ball of radius $1$ with center being the origin. For any point $x$ we let $r(x) = \abs{x}_e$ the euclidean distance of $x$ to the origin.

The result of this section is the following:
\begin{lemma}
	Let $\mathcal{M} \subset \{x\in \RR^n\colon \abs{x}\leq 1\}$ be open and bounded, with $e_n \in\mathcal{M}$.  
\begin{enumerate}
	\item For any $\mathcal{K} \Subset \mathcal{M} \cap \{x\in \RR^n\colon \abs{x} < 1\}$, \cref{eq:nograze} is satisfied. 
	\item If $\mathcal{M} \subset \{x\colon r(x)>1-\delta\}$ for some $\delta >0$ small enough, then every set $\mathcal{K}\Subset\mathcal{M}\cap \{x\in \RR^n\colon \abs{x} < 1\}$ will satisfy \cref{eq:gloas}.
	\item There are constants $k,K>0$ so that if $\mathcal{M}' \supset \{x'\colon \abs{x'}=1, d_c(x',e_n) < K\}$ and $\mathcal{K}$ satisfies $d_c(\mathcal{K},e_n) < k$, the set $\mathcal{K}$ satisfies \cref{eq:gloas2}, and \cref{eq:piscatter} is satisfied.
\end{enumerate}
\end{lemma}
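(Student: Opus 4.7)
The unifying tool throughout will be the conservation law for geodesics in a spherically symmetric medium: under the Herglotz condition, each geodesic $\gamma$ admits a conserved Snell invariant $p(\gamma) = r\sin\theta/c(r)$ (with $\theta$ the angle between $\dot\gamma$ and the outward radial direction), and the strict monotonicity of $r\mapsto r/c(r)$ determines the turning radius $r_{\min}(\gamma)$ uniquely via $r_{\min}/c(r_{\min}) = p(\gamma)$. Each geodesic from the surface lies in a two-dimensional meridian plane through the center, so the return angular distance $\Delta\phi(p)$ (from entry to exit on $\{r=1\}$) is a continuous function of $p$, satisfying $\Delta\phi(p)\to 0$ as $p\nearrow 1/c(1)$.

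For Part 1, fix $\mathcal{K}\Subset \mathcal{M}\cap\{|x|<1\}$ and let $\gamma$ be any geodesic passing through some $x\in\mathcal{K}$ and reaching the surface at a point $y'$. Conservation gives $p(\gamma)\leq r(x)/c(r(x))$, and since compactness of $\mathcal{K}$ inside the open ball yields $r(x)\leq 1-u_0$ for some $u_0>0$, the Herglotz monotonicity gives $p(\gamma)\leq (1-u_0)/c(1-u_0)<1/c(1)$. At $y'$ one thus has $\sin\theta_{y'} = p(\gamma)c(1)<1$, so $\dot\gamma(y')$ is not tangential to the sphere; via the identification of the radial direction at $y'$ with the outward normal, this supplies a uniformly non-vanishing vertical momentum component, which is the statement of~\cref{eq:nograze}.

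For Part 2, choose $\delta>0$ small enough that $\mathcal{M}\subset\{r>1-\delta\}$ lies strictly inside a simple normal neighborhood of $e_n$; then the unique Riemannian geodesic $\gamma_x$ from $e_n$ to $x\in\mathcal{K}$ is $C^1$-close to the Euclidean segment from $e_n$ to $x$, with error controlled by $\delta$. Compactness of $\mathcal{K}\Subset\mathcal{M}\cap\{|x|<1\}$ again forces $x_n\leq r(x)\leq 1-u_0$ for a uniform $u_0>0$, so the $e_n$-component of $\dot\gamma_x$ at $x$ is within $O(\delta)$ of $(x_n-1)/|x-e_n|_{\mathrm{Euc}}\leq -u_0/\mathrm{diam}(\mathcal{M})<0$, uniformly bounded away from zero. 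Taking $\delta$ sufficiently small then gives the non-vanishing vertical momentum of~\cref{eq:gloas}.

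Part 3 is the most delicate and constitutes the main obstacle. The key point is that $\mathcal{K}\Subset\mathcal{M}_\eps$ gives a uniform depth bound $r(x)\leq 1-\epsilon'$ for some $\epsilon'>0$, so every geodesic from $e_n$ through some $x\in\mathcal{K}$ has $p\leq (1-\epsilon')/c(1-\epsilon')<1/c(1)$ by Herglotz; combined with the asymptotics $\Delta\phi(p)\to 0$ as $p\nearrow 1/c(1)$, this yields a uniform lower bound $\Delta\phi_{\min}>0$ on the return angular distance, independent of $x\in\mathcal{K}$. Setting $K\coloneqq \tfrac12 c(1)\Delta\phi_{\min}$ ensures that every return of such a geodesic lies outside $\{d_c(\cdot,e_n)<K\}\subset\mathcal{M}'$, giving~\cref{eq:piscatter}. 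For~\cref{eq:gloas2}, continuity of the geodesic flow in initial data supplies a modulus $f\colon\RR_{>0}\to\RR_{>0}$ with $f(k)\to 0$ as $k\to 0$ such that, when $d_c(x,e_n)<k$, the reflected geodesic at $x$ (with flipped radial momentum component) exits $\{r=1\}$ within $c$-distance $f(k)$ of $e_n$; choosing $k$ small enough that $f(k)<K$ places all reflected exits inside $\mathcal{M}'$. The hard part will be reconciling the two constraints in tandem: one wants $\mathcal{K}$ close to $e_n$ in $d_c$ (to control the reflected exit) while keeping $\mathcal{K}$ bounded away from the surface (to keep $\Delta\phi_{\min}$ bounded below), which forces a careful quantitative analysis of the interplay between $k$, the depth parameter $\epsilon'$, and the asymptotics of $\Delta\phi(p)$ and $f(k)$ near their respective extremes.
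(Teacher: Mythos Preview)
Your Part~1 is correct and, with the explicit Snell--invariant argument, more transparent than the paper's one--line appeal to the impossibility of tangential rays penetrating; the two arguments are equivalent.

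Your Part~2, however, diverges from the paper and contains a real gap. You assert that for $\mathcal{M}\subset\{r>1-\delta\}$ the Riemannian geodesic from $e_n$ to $x$ is $C^1$--close to the Euclidean segment with error $O(\delta)$. But the Christoffel symbols of $c(r)^2\delta_{ij}$ are of size $O(|c'|/c)=O(1)$, not $O(\delta)$: making the shell thin controls the radial extent of $\mathcal{M}$ but not the curvature of geodesics inside it, so the deviation from a straight segment scales like the square of the geodesic length, which the hypothesis does not bound. Moreover, you compute the $e_n$--component of the tangent at $x$, whereas in the spherical model the natural reading of ``vertical'' (normal to the surface that replaces $\{x_n=0\}$) is the radial direction at $x$; for points away from $e_n$ these differ substantially. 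The paper instead works intrinsically with the Herglotz structure: it shows via the conserved quantity and the ODE for $r_p(t)$ that every ray from $e_n$ into $\mathcal{K}$ has its turning radius strictly below some $d_{\mathcal{K}}<1$, so that if $\mathcal{M}$ (hence $\mathcal{K}$) lies above that depth the unique point of vanishing radial momentum is never reached inside $\mathcal{K}$. That argument uses no perturbation from flat geometry and is the approach you should adopt.

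Your Part~3 is an outline rather than a proof: you correctly identify the two competing requirements (closeness of $\mathcal{K}$ to $e_n$ for the reflected exit, depth of $\mathcal{K}$ for the lower bound on $\Delta\phi$) but then stop, calling their reconciliation ``the hard part''. The paper resolves this cleanly by an explicit computation: for the purely radial geodesic from $e_n$ (initial momentum $-e_n$) one checks directly from the Hamiltonian equations that $x(t)/r(t)\equiv e_n$ and $\xi(t)=-c(r(t))e_n$, so this ray goes straight down and its reflection goes straight back to $e_n$; by continuity of the flow this gives an open set of depths and angles for which~\cref{eq:gloas2} holds, while the un--reflected continuation returns near the antipode $-e_n$, hence outside $\mathcal{M}'$ provided $K$ is not too large, giving~\cref{eq:piscatter}. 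This explicit anchor is what makes the continuity argument close; without it your two continuity moduli float independently and the ``reconciliation'' you flag cannot be completed.
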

These results can be interpreted as
\begin{enumerate}
	\item There are no grazing rays.
	\item If the set $\mathcal{M}$ is chosen shallow enough, rays will reach their deepeest point outside of $\mathcal{K}$, which is the unique point they have vanishing vertical momentum.
	\item If $\mathcal{M}'$ contains a disc around the origin and $\mathcal{K}$ lies near the origin, then a ray reflected off $\mathcal{K}$ along the `vertical' angle will return to the measurement surface $\mathcal{M}'$ and when not reflected this ray will return to the surface away from $\mathcal{M}'$.
\end{enumerate}
\begin{proof}
	Note first that the existence of grazing rays would imply there are rays with $0$ vertical momentum that penetrate into the earth. This is impossible for radial sound speeds, proving 1.

	Let us now characterize geodesics (rays) starting at $e_n$. For some $p\coloneqq (p_1,\dots,p_{n-1})$ with $\abs{p}^2\coloneqq \sum_{j=1}^{n-1}p_j^2 < 1$, consider the momentum defined by
	\[
		\xi_p = \frac{1}{c(1)}\left(\sum_{j=1}^{n-1}p_je_j - \sqrt{1-\sum_{j=1}^{n-1}p_j^2}e_n\right)\,,
	\]
	and we will now investigate the geodesic defined by the initial conditions $x_p(0)=e_n$ and $\xi_p(0)= \xi_p$. This is the $n$-dimensional formulation of \cite[Eq.~(2.4)]{zbMATH07625517} when taking the initial point $x_p(0)$ as $e_n$. 

	From this point onward we let $\gamma_p(t) = (x_p(t),\xi_p(t))$ denote the geodesic defined by the initial condition $(x_p(0),\xi_p(0))$ and we define $r_p(t)$ to be $\abs{x_p(t)}_e$, the (euclidean) distance of the ray $\gamma_p$ at time $t$ from the center of the earth, so that in particular $r_p(0)=1$.

	According to \cite[Thm.~2.1.6]{zbMATH07625517}, the geodesic $\gamma_p$ will 
	penetrate into the earth, reach its nadir at some time $t_p>0$ and return to the surface, and the vertical momentum of $\gamma_p$ will vanish uniquely at this time $t_p$. We will first find a lower bound on the magnitude of $t_p$. 

	By assumption on the Herglotz condition, we know that 
	\[
		\frac{\dd}{\dd r}\left(\frac{r}{c(r)}\right)\bigg\vert_{r=r(t)} > 0\,,
	\]
	so let $r_l \in (0,1)$ be the depth of $\mathcal{M}$:
	\[
		r_l \coloneqq \inf\left\{r \colon \exists x\in\mathcal{M}\colon \abs{x}_e=r\right\}\,,
	\]
	and let 
	\[
		\eps_l \coloneqq \min_{r\in [r_l,1]} c(r)\frac{\dd}{\dd r}\left(\frac{r}{c(r)}\right)\,,\an 	\eps_u \coloneqq \max_{r\in [r_l,1]} c(r)\frac{\dd}{\dd r}\left(\frac{r}{c(r)}\right)\,.
	\]
	The time $t_p$ is defined as the unique solution to 
	\[
		x_p(t) \cdot \xi_p(t) = 0\,.
	\]
	We consider only rays that reach their deepest point inside of $\mathcal{M}$. By \cite[Eq.~(2.9)]{zbMATH07625517} we have $\del_t (x_p(t)\cdot \xi_p(t)) = c(r_p(t))\frac{\dd}{\dd r}\left(\frac{r}{c(r)}\right)\vert_{r=r_p(t)} \leq \eps_u$, so we know that
	\[
		t_p \geq \frac{-x_p(0)\cdot\xi_p(0)}{\eps_u} = \frac{c(1)^{-1}\sqrt{1-\abs{p}^2}}{\eps_u}\,,
	\]
	where $-\sqrt{1-\abs{p}^2}$ is the vertical component of the (unit normalized) initial momentum at $x_p(0)=e_n$. 

	Let $\mathcal{K}$ be any compact subset of $\mathcal{M}$. We will successively shrink $\mathcal{K}$ in such a way that it will satisfy \cref{eq:gloas,eq:gloas2}.

	Since $\mathcal{K}$ is a compact set and $\mathcal{M}$ is simple, for each $x\in\mathcal{K}$ there is a unique unit speed geodesic starting at the origin $e_n$ that hits $x$. Each of these geodesics must have had vertical momentum $-\sqrt{1-\abs{p_x}^2}$ for some $p_x\in \RR^{n-1}$ with $\abs{p_x}^2 < 1$, so that by compactness there is some $b_{\mathcal{K}}>0$ so that a geodesic starting at the origin traveling into $\mathcal{K}$ must have vertical momentum $\abs{-\sqrt{1-p^2}} \geq b_{\mathcal{K}}$. Therefore, we may conclude that all rays starting at the origin traveling into $\mathcal{K}$ will reach their deepest point at some time $t_p$ with
%
%
	\[
		t_p \geq \frac{c(1)^{-1}b_{\mathcal{K}}}{\eps_u} > 0\,.
	\]
	We now determine a lower bound on how deep rays traveling through $\mathcal{K}$ must go. Let the geodesic with initial conditions $(x_p,\xi_p)$ pass into $\mathcal{K}$. 

	We calculate
	\[
		r_p(t_p/2) - 1 = r_p(t_p/2) - r_p(0) = \dot r_p(t) t_p
	\]
	for some $t\in [0,t_p/2]$, and since $\dot r_p(t) = -c(r_p(t))\sqrt{1-\left(\frac{pc(r_p(t))}{r_p(t)c(1)}\right)^2}$ for $t\leq t_p$ (see \cite[Eq.~(2.10)]{zbMATH07625517}) 
	we find that
	\[
		r_p(t_p) \leq r_p(t_p/2) \leq 1-t_p/2 \min_{0\leq t\leq t_p/2}c(r_p(t))\sqrt{1-\left(p\frac{c(r_p(t))}{r_p(t) c(1)}\right)^2} \eqqcolon r_p < 1\,,
	\]
	where the minimum above is non-zero because it is uniquely minimized at $t=t_p$. By compactness there is thus some $0<d_{\mathcal{K}}<1$ so that all rays traveling from the origin into $\mathcal{K}$ reach their deepest point at a depth $<d_{\mathcal{K}}$.

	Thus by possibly shrinking $\mathcal{K}$ so that $\mathcal{K} \subset \{x\in\mathcal{M}\colon\abs{x}_e >d_{\mathcal{K}}\}$, we have insured that all rays from the origin traveling through $\mathcal{K}$ reach their deepest point outside of $\mathcal{K}$. 
	Thus, we have guaranteed \cref{eq:gloas}, since this turning point is the unique one where the vertical momentum of the ray $\gamma_p$ vanishes. 

	We will now pay special attention to the ray pointing vertically downward to prove item 3.

%
%


	We prove that the ray from the origin pointing only vertically downward will continue to point downward vertically forever (until reflected off a point in $\mathcal{K}$, at which point it will point vertically upward). 

	The geodesic we consider is again given by the initial point $x(0)=e_n$ but now the initial momentum $\xi(0) = -e_n$. For any $x \in \RR^n$ let $\{x,x^\perp_1,\dots,x^\perp_{n-1}\}$ be a basis of $\RR^n$ and write $\hat x = \frac{x}{\abs{x}}$ and $\hat \xi = \frac{\xi}{\abs{\xi}}$, and calculate
	\[
		\hat \xi (t) = (\hat \xi(t) \cdot \hat x(t)) \hat x(t) + \sum_{j=1}^{n-1}(\hat \xi(t) \cdot \hat x^\perp_j(t)) \hat x_j^\perp(t) = (\hat \xi(t) \cdot \hat x(t)) \hat x(t)\,,
	\]
	which followed because $\hat \xi(t) \cdot \hat x^\perp_j (t) = \hat \xi(0) \cdot \hat x^\perp_j (0) = 0$ by \cite[Eq.~(2.8)]{zbMATH07625517}. Now because $\abs{\hat \xi(t)} =\abs{\hat x(t)} = 1$ we find from the above calculation that $\abs{\hat \xi(t)\cdot \hat x(t)} = 1$ and thus conclude that $\hat \xi(t) = \pm \hat x(t)$ and since at $t=0$ we have $\hat \xi(0) = -\hat x(0)$ we get $\hat \xi(t) = -\hat x(t)$, which is to say that $\frac{\xi(t)}{c(r(t))} = -\frac{x(t)}{r(t)}$ by \cite[Eq.~(2.7)]{zbMATH07625517} and the fact that $\abs{x} = r$. 

	Because $\dot r(t) = -c(r(t))$ by \cite[Eq.~(2.10)]{zbMATH07625517}, we find
	\[
		\dot x(t) = \xi(t) = -\frac{c(r(t))}{r(t)}x(t) = \frac{\dot r(t)}{r(t)}x(t)
	\]
	so we calculate
	\[
		\frac{x}{r} = \frac{\dot x}{\dot r} = \frac{r}{\dot r}\del_t\left(\frac{x}{r}\right) + \frac{x}{r}\,,
	\]
	from which we find $0= \frac{r}{\dot r}\del_t\left(\frac{x}{r}\right)$ which is to say that as long as $r(t) \neq 0$ we have $\del_t(\frac{x}{r}) = 0$, so that
	\[
		\frac{x}{r}(t) = \frac{x(0)}{r(0)} = x(0) = e_n\,,
	\]
	and thus 
	\[
		\xi(t) = \dot x(t) = -c(r(t))e_n\,,
	\]
	which is to say that this geodesic will only ever point vertically downward, and since its reflection (via the formula present in \cref{eq:gloas2}) is the ray itself traversed in the opposite direction, the reflected ray will point upward until hitting the initial point $e_n$. 

	Using a continuity argument, there is an open neighborhood of the points directly beneath the origin $e_n$ that will reflect back to an open neighborhood of the origin at the surface which gives the truth of \cref{eq:gloas2} in statement 3.

	To argue that \cref{eq:piscatter} holds as well we repeat the above argument as follows: rays starting at the origin with initial momentum in a neighborhood of the purely vertical momentum will reach their nadir near the origin (center of the ball) and return to the surface near the antipodal point of the inital point $e_n$ (see the reflection statement in \cite[Thm.~2.1.6]{zbMATH07625517}), so that if $\mathcal{M}'$ stays away from the antipodal point of $e_n$ we have proven 3.
\end{proof}

\printbibliography[title=Bibliography] 

\end{document}